\documentclass[a4paper]{article}

\usepackage[T1]{fontenc}
\usepackage[utf8]{inputenc}
\usepackage[width=15cm,height=22cm]{geometry}
\usepackage[dvipsnames]{xcolor}
\usepackage{amsmath,amsfonts,amssymb,amsthm}
\usepackage[hidelinks,pdfusetitle]{hyperref}
\usepackage[capitalise]{cleveref}
\usepackage[usenames,dvipsnames]{pstricks}
\usepackage{epsfig}
\usepackage{bbm}
\usepackage{stmaryrd}
\usepackage{tikz}
\usepackage{makecell}
\usepackage{shuffle}

\newtheorem{proposition}{Proposition}
\newtheorem{theorem}[proposition]{Theorem}
\newtheorem{definition}[proposition]{Definition}
\newtheorem{corollary}[proposition]{Corollary}
\newtheorem{open}[proposition]{Open problem}
\newtheorem{false}[proposition]{False proposition}
\newtheorem{lemma}[proposition]{Lemma}
\newtheorem{remark}[proposition]{Remark}
\newtheorem{example}[proposition]{Example}

\newcommand*\samethanks[1][\value{footnote}]{\footnotemark[#1]}
\title{On expansions for nonlinear systems, \\ error estimates and convergence issues}
\author{Karine Beauchard\texorpdfstring{\thanks{Univ Rennes, CNRS, IRMAR - UMR 6625, F-35000 Rennes, France}}{}, J\'er\'emy Le Borgne\texorpdfstring{\samethanks}{},  Fr\'ed\'eric Marbach\texorpdfstring{\samethanks}{}}

\newcommand{\N}{\mathbb{N}}

\newcommand{\K}{\mathbb{K}}
\newcommand{\C}{\mathbb{C}}
\newcommand{\R}{\mathbb{R}}
\newcommand{\dd}{\,\mathrm{d}}
\newcommand{\CC}{\mathcal{C}}

\newcommand{\bset}{{B}}
\newcommand{\basis}{{\mathcal{B}}}

\newcommand{\GHB}{{Hall basis}}
\newcommand{\GHBS}{{Hall bases}}

\DeclareMathOperator{\Br}{Br}
\newcommand{\ad}{\operatorname{ad}}
\newcommand{\dad}{\underline{\operatorname{ad}}}
\newcommand{\eval}{\textsc{e}}
\newcommand{\val}{\operatorname{val}}
\DeclareMathOperator{\vect}{span}
\DeclareMathOperator{\CBHD}{CBHD}

\newcommand{\intset}[1]{\llbracket #1 \rrbracket}
\newcommand{\opnorm}[1]{\left|\mkern-1.5mu\left|\mkern-1.5mu\left| #1 \right|\mkern-1.5mu\right|\mkern-1.5mu\right|}

\begin{document}

\maketitle

\begin{abstract}
 Explicit formulas expressing the solution to non-autonomous differential equations are of great importance in many application domains such as control theory or numerical operator splitting. In particular, intrinsic formulas allowing to decouple time-dependent features from geometry-dependent features of the solution have been extensively studied.  

 First, we give a didactic review of classical expansions for formal linear differential equations, including the celebrated Magnus expansion (associated with coordinates of the first kind) and Sussmann's infinite product expansion (associated with coordinates of the second kind). Inspired by quantum mechanics, we introduce a new mixed expansion, designed to isolate the role of a time-invariant drift from the role of a time-varying perturbation.

 Second, in the context of nonlinear ordinary differential equations driven by regular vector fields, we give rigorous proofs of error estimates between the exact solution and finite approximations of the formal expansions. In particular, we derive new estimates focusing on the role of time-varying perturbations. For scalar-input systems, we derive new estimates involving only a weak Sobolev norm of the input.
 
 Third, we investigate the local convergence of these expansions. We recall known positive results for nilpotent dynamics and for linear dynamics. Nevertheless, we also exhibit arbitrarily small analytic vector fields for which the convergence of the Magnus expansion fails, even in very weak senses. We state an open problem concerning the convergence of Sussmann's infinite product expansion.
 
 Eventually, we derive approximate direct intrinsic representations for the state and discuss their link with the choice of an appropriate change of coordinates.
\end{abstract}

\setcounter{tocdepth}{2}
\tableofcontents

\section{Introduction}

\subsection{Motivations}

There are multiple situations in which one desires to compute the solution to a differential equation whose dynamics depend on time. One often looks for explicit formulas, depending preferentially on intrinsic quantities, which describe the composition of flows, or even the continuous composition of flows. Some important applications are listed below.
\begin{itemize}
    \item \textbf{Control theory.} Here, the dynamics depend on time mostly through the choice of time-varying controls. One looks for explicit formulas of the continuous product of flows in order to be able to construct controls for which this resulting flow drives a given initial state to a desired target state. In order to establish necessary and sufficient conditions for controllability, one is interested in intrinsic formulas. It is our main motivation. 

    \item \textbf{Numerical splitting methods.} Here, the splitting algorithm applies sequentially a succession of basic flows, composed with appropriate time steps. One is interested in choosing correctly the base flows and the time steps in order to approximate the most precisely possible the solution to the true complex flow. Formulas concerning the composition of flows are essential to compute the order of the resulting numerical scheme. We refer to the survey~\cite{zbMATH06057121} and the introduction books~\cite{MR3642447,MR2840298}. Composition of flows formulas are also very useful in particular settings like Hamiltonian systems \cite{MR1843609} or in the presence of a small perturbation of a reference flow \cite{MR1429017}.
    {Concerning numerical methods, more generally, we refer to \cite{mclachlan2015butcher} (respectively \cite{curry2017post}) for a survey on Butcher series (resp.\ post-Lie algebras), algebraic tools related to the algebras manipulated in the sequel.} 
    
    \item \textbf{Stochastic differential equations.} Here, the dynamics depend on time through the sources of randomness, say Brownian motions. One wishes to investigate the influence of the randomness on the final state and thus looks for explicit formulas involving iterated Stratanovich integrals to construct a representation of the flow, see e.g.\ \cite{MR2154760,MR981567,MR1721138,MR1227033}.
    
    \item \textbf{Differential equations on Lie groups.} Sometimes, the state itself of the differential equation belongs to a Lie group, as in \cite{MR1883629}. Then, looking for an intrinsic approximation of the state helps to preserve structure which would be lost otherwise. In particular, writing the product of multiple flows as a single flow is important. There are also control problems for differential equations set on Lie groups, as in \cite{MR0331185}.
    {Some works, e.g.\ \cite{casas2006explicit}, also tackle the hard question of obtaining Magnus-type expansions, which are intrinsically linear, for nonlinear equations within matrix Lie groups.}
    
    \item {\textbf{Analysis of time-periodic systems.}
    When investigating the behavior of time-periodic systems, some authors borrow tools from ``chronological calculus'' or expressions of the ``logarithm of the flow'' (described below).
    For example, such techniques are used to study stability and asymptotic stability of time-periodic systems of ODEs; see the non linear Floquet Theorem 3.2 and the high-order averaging procedure Theorem 7.1 in \cite{zbMATH01635357}, or the recent higher-order averaging results of \cite{maggia2020higher}.}
\end{itemize}

\subsection{Short historical survey}

We start with a short survey of some of the many approaches related with the computation of solutions to formal linear differential equations, say
\begin{equation} \label{ydot.xy}
 \dot{x}(t) = X(t) x(t),
\end{equation}
together with some initial condition $x(0)$.
We recall in \cref{sec:intro-nonlinear} the consequences of such results for nonlinear ordinary differential equations.

\subsubsection{Iterated integration and Chen-Fliess expansion}

A straightforward approach to solving \eqref{ydot.xy} consists in what can be seen as {a Picard iteration}. 
For small times, starting from the initial approximation $x(t) \approx x(0)$, one then enhances the approximation {iteratively by substituting it} in the equation and obtains successively $x(t) \approx x(0) + \int_0^t X(s) x(0) \dd s$, then $x(t) \approx x(0) + \int_0^t X(s) x(0) \dd s + \int_0^t X(s) \int_0^s X(s') x(0) \dd s' \dd s$ and so on.

In the context of control theory, this expansion is known as the Chen-Fliess expansion, after being popularized by the works \cite{MR0073174, MR613847}.
Its main advantages are its simplicity and nice convergence properties (see \cref{sec:conv-chen}). However, it also has some strong drawbacks, which we detail in \cref{rk:cf-drawbacks} and \cref{rk:chen.gb.fb} and motivate the investigation of other expansions.

\subsubsection{Magnus expansion}

When $X(t)$ is piecewise constant, for example with values $X_1$ for $t \in [0,1]$ and $X_2$ for $t \in [1,2]$, one has formally, $x(2) = e^{X_2}e^{X_1}x(0)$. 
Hence, the computation of solutions to \eqref{ydot.xy} has a deep link with the famous Campbell \cite{MR1576434}, Baker \cite{MR1575931}, Hausdorff \cite{hausdorff1906symbolische}, Dynkin \cite{MR0030962} formula (``CBHD formula'' in the sequel).

This formula has a long and rich history which involves forgotten contributions of other authors such as Schur, Poincaré, Pascal or Yosida. 
As noted by Bourbaki in \cite{MR782480}, ``\emph{chacun considère que les démonstrations de ses prédécesseurs ne sont pas convaincantes}'' {(each one considers that the proofs of his predecessors are not convincing)}.
We therefore encourage the reader to dive into the fascinating retrospectives~\cite{MR2913666} and~\cite{MR2883818} to understand the progressive construction of its proof throughout the decades. 
This formula is a formal identity expressing the product of the exponentials of two (non-commutative) indeterminates $X_1$ and $X_2$ as the single exponential of a series of Lie brackets (i.e.\ nested commutators) of these indeterminates, of which the first terms are well-known:
\begin{equation} \label{cbh.1}
 e^{X_2} e^{X_1} = \exp \left( X_1 + X_2 + \frac 1 2 [X_2, X_1] + \dotsc \right).
\end{equation}
When more than two exponentials are multiplied, say $e^{X_1}$ through $e^{X_n}$, one can of course iterate the formula \eqref{cbh.1} with itself to formally express the product of $n$ exponentials as the single exponential of a complicated series. Letting $n \to +\infty$, one is lead to computing a continuous product of exponentials, which corresponds, heuristically, to solving \eqref{ydot.xy}.

Magnus performed a breakthrough by deriving in \cite{MR0067873} the first formal representation of the solution to \eqref{ydot.xy} as the exponential of a series, of which the first terms are
\begin{equation} \label{magnus.1}
 x(t) = \exp \left( \int_0^t X(\tau_1) \dd \tau_1 + \frac 1 2 \int_0^t \int_0^{\tau_1} \left[ X(\tau_1),  X(\tau_2)\right] \dd \tau_2 \dd \tau_1 + \dotsb \right) x(0).
\end{equation}
This formula can be seen as the continuous counterpart of the CBHD formula and highlights important structural properties of the solutions to \eqref{ydot.xy} (see \cref{sec:formal.log.flows}).

\subsubsection{Infinite products}

The CBHD formula and the Magnus formula share the goal of expressing the desired quantity as the exponential of a single, although complicated, object.
Other approaches go the other way around and try to express the desired quantity as a long (infinite) product of exponentials of very simple objects.

A well-known example is the Lie-Trotter product formula (see e.g.\ \cite{zbMATH03161502}), often used for numerical splitting methods which attempts to give a meaning to the equality
\begin{equation}
    e^{X_1+X_2} = \lim_{n\to + \infty} \left(e^{\frac{X_1}{n}} e^{\frac{X_2}{n}}\right)^n,
\end{equation}
the interest relying on the fact that the exponentials of $X_1$ and $X_2$ are assumed to be easier to compute in some sense than the direct exponential of $X_1+X_2$.

Another related formula is the Zassenhaus expansion, described by Magnus in \cite{MR0067873}, which allows to decompose the same quantity $e^{X_1+X_2}$ as an infinite product of exponentials of linear combinations of nested commutators of strictly increasing lengths, whose first terms are
\begin{equation}
    e^{X_1+X_2} = e^{X_1} e^{X_2} \exp\left(-\frac{1}{2}[X_1,X_2]\right)
    \exp\left(\frac{1}{3} [X_2,[X_1,X_2]] + \frac{1}{6} [X_1,[X_1,X_2]]\right) \dotsb
\end{equation}

In the context of differential equations such as \eqref{ydot.xy}, a nice formula is Sussmann's infinite product expansion, introduced in \cite{MR935387}. When $X(t)$ is given as a linear combination of elementary generators, e.g.\ $X(t) = a_1(t) X_1 + a_2(t) X_2$, Sussmann's infinite product expansion is given by a product of exponentials of Lie monomials, such as
\begin{equation}
    x(t) = e^{\xi_1 X_1} e^{\xi_2 X_2} e^{\xi_{12} [X_1,X_2]} e^{\xi_{112} [X_1,[X_1,X_2]]} e^{\xi_{212} [X_2,[X_1,X_2]]} \dotsb x(0),
\end{equation}
where the $\xi_i$ are scalar functions of time given by explicit formulas from the functions $a_1$ and $a_2$.
Compared to other expansions, this formula is both intrinsic (such as the Magnus expansion) and involves coefficients which are easily computed by induction (such as the Chen-Fliess expansion).
 
\subsubsection{Consequences for nonlinear ordinary differential equations}
\label{sec:intro-nonlinear}

Although the expansions mentioned above concern linear formal differential equations, they can be adapted to ordinary nonlinear differential equations on smooth manifolds governed by smooth vector fields. 
Indeed, one can identify vector fields with linear operators acting on smooth functions, and points of the manifold with the linear operator on smooth functions corresponding to evaluation at this point. 
This method allows to recast the nonlinear equation into a linear equation set on a larger space, for which the formal linear expansions can be used (see \cref{subsec:linear_trick}). 

This linearization technique has been used by Sussmann in \cite[Proposition 4.3]{MR710995} to prove the convergence of the Chen-Fliess expansion for nonlinear ordinary differential equations driven by analytic vector fields, by Agrachev and Gamkrelidze in the context of control theory (see \cite{MR524203,MR579930,MR535331} in which they derive an exponential representation of flows, very similar to Magnus' expansion, using the \emph{chronological calculus} framework) and by Strichartz (see \cite{MR886816} and his derivation of the \emph{generalized CBHD formula}, with applications related to sub-Riemannian geometry).

At a formal level, all identities mentioned above (almost) always make sense. 
However, if the indeterminates are replaced by {objects coming from analysis (say vector fields, matrices or differential operators)}, convergence issues arise.
Generally speaking, convergence often requires that one either assumes that the objects are small enough or that the generated Lie algebra has additional structure, like nilpotence.

\subsection{Main goals and organization of this article}

This article is both a survey on some classical expansions for nonlinear systems, a research article containing new results and counter-examples and a toolbox for future works. In particular, we aim at the following goals.

\begin{itemize}
    
    \item In \cref{sec:formal} we give a \textbf{didactic review of classical expansions for formal linear differential equations}. Our introduction to this algebraic topic is written with a view to making it understandable by readers with minimal algebraic background. We review the following classical expansions:  
    \begin{enumerate}
        \item the Chen-Fliess formula,
        \item the Magnus or generalized CBHD formula (associated with coordinates of the first kind),
        \item Sussmann's infinite product formula (associated with coordinates of the second kind).
    \end{enumerate}
    
    \item We introduce a \textbf{new formal mixed expansion}, inspired by quantum mechanics, designed to isolate the role of a time-invariant drift from the role of a time-varying perturbation {(see \cref{thm:Magnus1.0_formel})}, which we name \emph{Magnus expansion in the interaction picture} and for which we define coordinates of the \emph{pseudo-first kind} by analogy with first and second kind coordinates.
    
    \item We recall in \cref{sec:technical} \textbf{classical well-posedness results and estimates} for products and Lie brackets of analytic vector fields, which are used throughout the article.
    
    \item In the context of nonlinear ordinary differential equations driven by regular vector fields, we give in \cref{sec:estimates} \textbf{rigorous proofs of error estimates} between the exact solution and finite approximations of each of these four formal expansions. These estimates are part of the mathematical folklore for the Chen-Fliess and Magnus expansions, but are new for our mixed expansion {(see \cref{Prop:Magnus_2})} and for Sussmann's infinite product expansion {(see \cref{Prop:Error_Sussman})}. We strive towards providing estimates with similar structures for the four expansions and which are valid under parsimonious regularity assumptions.
    
    \item We investigate the convergence of these expansions in \cref{sec:convergence}. We recall \textbf{known positive convergence results} for smooth vector fields generating nilpotent Lie algebras and for small linear dynamics (matrices). For our new expansion, we investigate the subtle convergence under a natural partial {nilpotency assumption} {(see \cref{thm:analytic+brackets-nilpotent=>magnus11-equality})}. In this case, convergence requires analyticity, contrary to the proofs we give for the other expansions under a full {nilpotency assumption}.
    
    \item For analytic vector fields, only the Chen-Fliess expansion is known to converge. We give in \cref{sec:conv-magnus} \textbf{new strong counter examples} to the convergence of {CBHD and} Magnus expansions, which \textbf{disprove the convergence} of these expansions even for analytic vector fields and in very weak senses {(see \cref{prop:cbh.c-ex})}. We state an open problem concerning the convergence of Sussmann's infinite product for analytic vector fields {(see \cref{open:sussmann})}. 
    
    \item When the system involves a time-invariant drift and a time-varying perturbation, we show in \cref{sec:estimates-control} that only the Magnus expansion fails to provide \textbf{well-behaved estimates with respect to the perturbation size}. For the three other expansions, it turns out to be possible to obtain such estimates by summing well-defined infinite partial series which converge for analytical vector fields {(see mostly \cref{thm:Magnus_2_u,Prop:error_u_Prod})}. 
    
    \item In the particular case of scalar-input systems, we prove in \cref{sec:u1} \textbf{new errors estimates involving a negative Sobolev norm of the time-varying input} {(see mostly \cref{Magnus3_Cor1,Prop:error_u1_Prod})}. Such estimates are the best compatible with the regularity of the input-to-state map and can be helpful for specific applications.
    
    \item Eventually, we derive in \cref{sec:diffeo} \textbf{approximate direct intrinsic representations of the state} for nonlinear systems, which don't require the computation of flows {(see \cref{Prop:Approx_intrinsic_repr})}. Our formulas can be viewed as almost-diffeomorphisms and might be useful for applications in control theory. Unfortunately, we also study a counter-example which demonstrates that one cannot obtain an exact representation through a diffeomorphism.
\end{itemize}


\section{Formal expansions for linear dynamics}
\label{sec:formal}

In this section, we consider formal linear differential equations, recall classical expansions valid in this formal setting (for which there is no convergence issue{; see nevertheless \cref{rk:topo-a-hat}}) and introduce a new mixed expansion which isolates the role of a perturbation in the dynamics.
{Here and in the sequel, the adjective \emph{formal} denotes situations in which we work within the realm of formal power series (see \cref{-def:formalseries}).}

\subsection{Notations}
\label{sec:notations.algebra}

We recall classical definitions and notations for usual algebraic objects.
In the sequel, $\K$ denotes the field $\R$ or $\C$. All statements and proofs hold for both base fields. It will be implicit that all vector spaces and algebras are constructed from the base field $\K$. 

\subsubsection{Free algebras}

We refer to the books \cite{MR559927, MR1231799} for thorough introductions to Lie algebras and free Lie algebras.

\begin{definition}[Indeterminates]
 Let $I$ be a finite set. At the formal level, we consider a set $X := \{ X_i; \enskip i \in I \}$ of \emph{indeterminates}, indexed by $I$. For applications, we will substitute in their place matrices or vector fields. Most often, we will write $I = \intset{1,q}$ for some $q \in \N^*$, or $I = \intset{0,q}$ when we want to isolate the role of the indeterminate $X_0$.
\end{definition}

\begin{definition}[Free monoid] \label{def:free.monoid}
    For $I$ as above, we denote by $I^*$ the \emph{free monoid} over $I$, i.e.\ the set of finite sequences of elements of $I$ endowed with the concatenation operation. 
    {More precisely, if $\sigma = (\sigma_i)_{1 \leq i \leq \ell}$ and $\sigma' = (\sigma_i')_{1 \leq i \leq \ell'}$ are elements of $I^*$, then the concatenation of $\sigma$ and $\sigma'$ is the sequence $\sigma \cdot \sigma' = (\sigma_i'')_{1 \leq i \leq \ell+\ell'}$ where $\sigma_i'' = \sigma_i$ if $1 \leq i \leq \ell$ and $\sigma_i'' = \sigma'_{i-\ell}$ if $\ell+1 \leq i \leq \ell + \ell'$. 
    It is common to write the elements of $I^*$ as words whose letters are elements of $I$, by juxtaposition of the elements of the sequence. With this point of view, the concatenation operation is the juxtaposition of words. 
    For a more detailed exposition, see \cite[§7.2]{MR1727844}.}

    For $\sigma = (\sigma_1, \dotsc \sigma_k) \in I^*$, where $k$ is the length of $\sigma$ also denoted by $|\sigma|$, we let $X_\sigma := X_{\sigma_1} \dotsb X_{\sigma_k}$. 
    This operation defines an homomorphism from $I^*$ to $X^*$, the free monoid over $X$ (monomials over $X$).
\end{definition}

\begin{definition}[Free algebra] \label{def:free.algebra}
 For $X$ as above, we consider $\mathcal{A}(X)$ the \emph{free associative algebra} generated by $X$ over the field $\K$, i.e.\ the unital associative algebra of polynomials of the non commutative indeterminates $X$ (see also \cite[Chapter~3, Section~2.7, Definition~2]{MR1727844}). $\mathcal{A}(X)$ can be seen as a graded algebra:
 \begin{equation}
    \mathcal{A}(X) = \underset{n \in \N}{\bigoplus} \mathcal{A}_n(X),
 \end{equation}
 where $\mathcal{A}_n(X)$ is the finite-dimensional $\K$-vector space spanned by monomials of degree $n$ over $X$. In particular $\mathcal{A}_0(X) = \K$ and $\mathcal{A}_1(X) = \vect_{\K}(X)$.
\end{definition}

\begin{definition}[Free Lie algebra] \label{def:free.lie}
For $X$ as above, $\mathcal{A}(X)$ is endowed with a natural structure of Lie algebra, the Lie bracket operation being defined by $[a,b] = ab - ba$. This operation satisfies $[a, a] = 0$ and the Jacobi identity $[a,[b,c]]+[c,[a,b]]+[b,[c,a]] = 0$. 
{We also write $[a,b]$ as $\ad_a(b)$ (respectively $\dad_b(a)$) which allows for iterated left (resp.\ right) bracketing}.
We consider $\mathcal{L}(X)$, the \emph{free Lie algebra} generated by $X$ over the field $\K$, which is defined as the Lie subalgebra generated by $X$ in $\mathcal{A}(X)$. It can be seen as the smallest linear subspace of $\mathcal{A}(X)$ containing all elements of $X$ and stable by the Lie bracket  (see also \cite[Theorem~0.4]{MR1231799}). $\mathcal{L}(X)$ is a graded Lie algebra: 
\begin{equation}
    \mathcal{L}(X) = \underset{n \in \N}{\bigoplus} \mathcal{L}_n(X), \qquad 
    [\mathcal{L}_m(X),\mathcal{L}_n(X)] \subset \mathcal{L}_{m+n}(X)
 \end{equation}
 where, for each $n \in \N$, we define $\mathcal{L}_n(X) := \mathcal{L}(X) \cap \mathcal{A}_n(X)$.
\end{definition}

\begin{definition}[Nilpotent Lie algebra] \label{Def:Nilpotent Lie algebra}
     Let $L$ be a Lie algebra. 
     We define recursively the following two-sided Lie ideals: $L^1 := L$ and, for $k \geq 1$, $L^{k+1} := [L,L^k]$ i.e.\ $L^{k+1}$ is the linear subspace of $L$ generated by brackets of the form $[a,b]$ with $a \in L$ and $b \in L^k$.
     Let $m\in\N^*$. We say that $L$ is a \emph{nilpotent Lie algebra of index $m$} when $L^m = \{0\}$ and $m$ is the smallest integer for which this property holds.
\end{definition}

\begin{definition}[Free nilpotent Lie algebra]
    Let $m \in \N^*$.
    The \emph{free $m$-nilpotent Lie algebra} over $X$ is the quotient $\mathcal{N}_{m}(X) := \mathcal{L}(X) / \mathcal{L}(X)^m$ (with the notation of \cref{Def:Nilpotent Lie algebra}). 
    Then the canonical surjection $\sigma_m : \mathcal{L}(X) \rightarrow \mathcal{N}_{m}(X)$ is an homomorphism {of Lie algebras}.
\end{definition}

The universal properties of the various free algebras constructed above allow to transport on algebras relations proved at the free level.

\begin{lemma} \label{Lem:identification_libre}
    The following universal properties hold.
    \begin{itemize}
        \item For each unital associative algebra $A$ and map $\Lambda : X \to A$, there exists a unique homomorphism of algebras $\mathcal{A}(X) \to A$ that extends $\Lambda$.
        \item For each Lie algebra $L$ and map $\Lambda : X \to L$, there exists a unique homomorphism of Lie algebras $\mathcal{L}(X) \to L$ that extends $\Lambda$.
        \item  Let $m\in\N^*$. For each nilpotent Lie algebra $L$ of index $m$ and map $\Lambda: X \to L$, there exists a unique homomorphism of Lie algebras $\mathcal{N}_m(X) \to L$ that extends $\Lambda$. 
    \end{itemize}
 \end{lemma}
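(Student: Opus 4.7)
The plan is to establish the three parts in sequence, each leveraging the previous one. For part (i), I would use the monomial basis $\{X_\sigma : \sigma \in I^*\}$ of $\mathcal{A}(X)$ exhibited in \cref{def:free.algebra} and define $\Phi : \mathcal{A}(X) \to A$ by setting $\Phi(X_\sigma) := \Lambda(X_{\sigma_1}) \cdots \Lambda(X_{\sigma_k})$ for $\sigma = (\sigma_1,\dots,\sigma_k)$, with $\Phi(1) := 1_A$, and extending by $\K$-linearity. Multiplicativity reduces to the identity $\Phi(X_\sigma X_\tau) = \Phi(X_{\sigma\tau}) = \Phi(X_\sigma)\Phi(X_\tau)$, which is immediate from the concatenation law on $I^*$. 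Uniqueness is forced since any algebra homomorphism extending $\Lambda$ must take these values on monomials, and monomials span $\mathcal{A}(X)$.

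For part (ii), the key is to pass through an associative envelope. I would use the universal enveloping algebra $U(L)$ of $L$, into which $L$ embeds as a Lie subalgebra via the Poincar\'e--Birkhoff--Witt theorem. Composing $\Lambda$ with this embedding gives a map $X \to U(L)$, which by (i) extends to a unital algebra homomorphism $\bar{\Phi} : \mathcal{A}(X) \to U(L)$. Since $\bar{\Phi}$ preserves the associative product, it preserves the commutator bracket, so it is a Lie homomorphism between $\mathcal{A}(X)$ and $U(L)$ viewed as Lie algebras. The subset
\begin{equation*}
    V := \{ a \in \mathcal{A}(X) : \bar{\Phi}(a) \in L \}
\end{equation*}
is a $\K$-linear subspace containing $X$ (since $\bar{\Phi}(X_i) = \Lambda(X_i) \in L$) and stable under the bracket, because $L$ is a Lie subalgebra of $U(L)$. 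By the minimality characterization of $\mathcal{L}(X)$ recalled in \cref{def:free.lie}, $\mathcal{L}(X) \subset V$, so $\Phi := \bar{\Phi}|_{\mathcal{L}(X)}$ takes values in $L$ and provides the required Lie homomorphism. Uniqueness follows since any Lie homomorphism extending $\Lambda$ is determined on all iterated brackets of elements of $X$, which generate $\mathcal{L}(X)$.

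For part (iii), I would combine (ii) with the universal property of quotients. Let $\Phi : \mathcal{L}(X) \to L$ be the Lie homomorphism extending $\Lambda$ produced by (ii). A straightforward induction on $k$, using that $\Phi$ is a Lie homomorphism, shows $\Phi(\mathcal{L}(X)^k) \subset L^k$ for every $k \in \N^*$. Since $L^m = \{0\}$ by hypothesis, $\mathcal{L}(X)^m \subset \ker \Phi$, so $\Phi$ factors uniquely as $\Phi = \tilde{\Phi} \circ \sigma_m$ through the canonical surjection, yielding the desired $\tilde{\Phi} : \mathcal{N}_m(X) \to L$; uniqueness on the quotient is inherited from uniqueness on $\mathcal{L}(X)$ together with surjectivity of $\sigma_m$.

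The main obstacle is part (ii): a purely constructive definition of $\Phi$ by induction on bracket depth has to be checked against antisymmetry and the Jacobi identity, which is awkward without either invoking the PBW embedding $L \hookrightarrow U(L)$ or introducing an abstract presentation of $\mathcal{L}(X)$ by generators and relations and then identifying it with the Lie subalgebra of $\mathcal{A}(X)$ generated by $X$. Either route imports a nontrivial structural theorem, so in a didactic exposition it is cleaner to quote PBW and argue as above.
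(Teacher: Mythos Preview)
The paper does not actually prove this lemma: it is stated as a standard structural fact, with the references \cite{MR559927,MR1231799} at the start of \cref{sec:notations.algebra} serving as background. So there is no proof in the paper to compare against, and your sketch stands on its own.

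Your argument is correct in all three parts. For part~(ii), invoking PBW to embed $L \hookrightarrow U(L)$ and then restricting the algebra homomorphism $\mathcal{A}(X) \to U(L)$ is a clean route, and you are right that a naive inductive definition on bracket depth would otherwise require verifying compatibility with antisymmetry and Jacobi. An alternative that avoids PBW is suggested by \cref{rk:eval}: the free nonassociative algebra over $X$ (the $\K$-span of $\Br(X)$) admits an obvious homomorphism to any Lie algebra $L$ extending $\Lambda$, and since its kernel under $\eval$ is exactly the ideal generated by the antisymmetry and Jacobi relations, this map factors through $\mathcal{L}(X)$. Both approaches import a nontrivial identification (PBW versus the presentation of $\mathcal{L}(X)$ by generators and relations), so neither is more elementary than the other; your choice is perfectly reasonable.
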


\subsubsection{{Iterated brackets} and evaluation}
\label{s:iterated-brackets}

\begin{definition}[{Iterated brackets}]
 For $X$ as above, we consider $\Br(X)$ the \emph{set of {iterated brackets}} of elements of $X$. 
 This set can be defined by induction: for $X_i \in X$, $X_i \in \Br(X)$ and if $b_1, b_2 \in \Br(X)$, then the ordered pair {$(b_1,b_2)$} belongs to $\Br(X)$. More rigorously, one can define $\Br(X)$ as the free magma over $X$ or as the set of {rooted full} binary trees, with leaves labeled by $X$.
\end{definition}

For $b \in \Br(X)$, we will use the following notations:
\begin{itemize}
    \item $|b|$ will denote the length of $b$ (i.e.\ the number of leaves of the tree).
    \item If $|b| > 1$, there exists a unique {pair $b_1 \in \Br(X)$ and $b_2 \in \Br(X)$} such that {$b = (b_1, b_2)$} (left and right factors) which are denoted as $\lambda(b) = b_1$ and $\mu(b) = b_2$. We also write {$(b_1,b_2)$} as $\ad_{b_1}(b_2)$ {(respectively $\dad_{b_2}(b_1)$)} which allows iterated left {(resp.\ right)} bracketing.
    \item For $i \in I$, $n_i(b)$ denotes the number of occurrences of the indeterminate $X_i$ in $b$. When $I = \intset{0,q}$ we will also write $n(b) = n_1(b) + \dotsb + n_q(b) = |b| - n_0(b)$.
\end{itemize}

\begin{remark} \label{rk:eval}
    There is a natural \emph{evaluation} mapping $\eval$ from $\Br(X)$ to $\mathcal{L}(X)$ defined by induction by $\eval(X_i) := X_i$ for $X_i \in X$ and $\eval({(b_1, b_2)}) := [\eval(b_1), \eval(b_2)]$. 
    Through this mapping, $\Br(X)$ spans $\mathcal{L}(X)$ over $\K$, i.e.\ $\mathcal{L}(X) = \vect_\K \eval(\Br(X))$. 
    This mapping is however not injective: for example, ${(X_1, X_1)}$ and ${(X_2, (X_1, X_1))}$ are two different elements of $\Br(X)$, both evaluated to zero in $\mathcal{L}(X)$. 

    More precisely, the $\eval$ map extends to a surjective homomorphism {of algebras} from the nonassociative free algebra over $X$ (which is the free vector space over $\Br(X)$, whose elements are (finite) linear combinations of elements of $\Br(X)$, endowed with the natural product map induced by the product in $\Br(X)$).
    Moreover the kernel of the extended $\eval$ is precisely the ideal generated by the relations that define anticommutativity and the Jacobi identity in $\mathcal{L}(X)$.
    {This gives an alternative description of $\mathcal{L}(X)$ as a quotient of the free vector space over $\Br(X)$.}
\end{remark}

\begin{definition}[Subspaces of brackets] \label{def:SM}
    When $I = \intset{0,q}$ and {$M \in \N$, $S_M$} denotes the {vector subspace $\mathcal{L}(X)$} defined by
    \begin{equation}
        {S_M} := {\vect_\K} \left\{ {\eval(b)} ; \enskip b \in \Br(X), n(b) \leq M \right\}.
    \end{equation}
\end{definition}

\subsubsection{Formal power series, exponential and logarithms}

\begin{definition}[Formal power series]\label{-def:formalseries}
    We consider the (unital associative) algebra $\widehat{\mathcal{A}}(X)$ of \emph{formal power series} generated by $\mathcal{A}(X)$. 
    An element $a \in \widehat{\mathcal{A}}(X)$ is a sequence $a = (a_n)_{n\in\N}$ written $a = \sum_{n \in \N} a_n$, where $a_n \in \mathcal{A}_n(X)$ with, in particular, $a_0 \in \K$ being its constant term.
 We also define the Lie algebra of formal Lie series $\widehat{\mathcal{L}}(X)$ as the Lie algebra of formal power series $a \in \widehat{\mathcal{A}}(X)$ for which $a_n \in \mathcal{L}(X)$ for each $n \in \N$.
 For ${a} \in \widehat{\mathcal{A}}(X)$ and $\sigma \in I^*$, $\langle {a}, X_\sigma \rangle$ denotes the coefficient of $X_\sigma$ in ${a}$: ${a} =\sum_{\sigma \in I^*} \langle {a}, X_\sigma \rangle X_{\sigma}$.
\end{definition}
\begin{remark} \label{rk:topo-a-hat}
    The definition of $\widehat{\mathcal{A}}(X)$ can be made more rigorous by considering $\val : \mathcal{A}(X) \to \N \cup \{\infty\}$ {defined} by $\val(a) := \inf \{n \in \N; a \in \bigoplus_{k \geq n} {\mathcal{A}}_k(X)\} $. 
    Then ${\delta(a,b)} := e^{-\val(b-a)}$ is a distance on $\mathcal{A}(X)$, that induces the discrete topology on each $\mathcal{A}_n(X)$, and $\widehat{\mathcal{A}}(X)$ is defined as the completion of the metric space $\mathcal{A}(X)$, to which the operations on $\mathcal{A}(X)$ naturally extend as continuous operations, endowing it with a structure of topological algebra. 
    {This distance verifies a stronger triangular inequality: $\delta(a,b) \leq \max\{\delta(a,c),\delta(b,c)\}$ (usually referred to as the \emph{ultrametric inequality}).} 
    {This construction allows to write, for $a \in \widehat{\mathcal{A}}(X)$ with components $a_n \in \mathcal{A}_n(X)$,
    \begin{equation}
        a = \lim_{N \to +\infty} \sum_{n \leq N} a_n,
    \end{equation}
    where the convergence holds with respect to the topology described above.
    This justifies the notation $a = \sum_{n \in \N} a_n$ used in \cref{-def:formalseries}.}
    To avoid confusion {with convergence issues associated with the evaluation of formal power series when substituting the indeterminates by objects coming from analysis} we shall however not use the term \emph{convergence} in this context.
\end{remark}

If $a \in \widehat{\mathcal{A}}(X)$ has zero constant term, we define $\exp (a) \in \widehat{\mathcal{A}}(X)$ and $\log (1+a) \in \widehat{\mathcal{A}}(X)$ as
\begin{align}
 \label{def:exp}
 \exp (a) & := \sum_{m \geq 0} \frac{a^m}{m!}, \\
 \label{def:log}
 \log (1+a) & := \sum_{m \geq 1} \frac{(-1)^{m-1}}{m} a^m.
\end{align}
Since $a$ has zero constant term, one checks that the right-hand sides of \eqref{def:exp} and \eqref{def:log} indeed define formal power series of $\widehat{\mathcal{A}}(X)$ {(and the sums converge in the sense of the topology constructed in \cref{rk:topo-a-hat})}.
In particular, $\log(\exp(a)) = a$ and $\exp(\log(1+a)) = 1+a$.

\begin{lemma} \label{lemma:log_is_unique}
 Let $a, b \in \widehat{\mathcal{A}}(X)$ with zero constant term. Then $a = b$ if and only if $\exp(a) = \exp(b)$.
\end{lemma}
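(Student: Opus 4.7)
The ``only if'' direction is immediate by applying the series $\exp$ to the identity $a = b$. For the converse, the natural strategy is to apply $\log$ to both sides of $\exp(a) = \exp(b)$ and recover the equality $a = b$ from the fact that $\log \circ \exp$ is the identity on elements of $\widehat{\mathcal{A}}(X)$ with zero constant term. This application is legitimate: since $a$ has zero constant term, $\exp(a)$ has constant term $1$ and equals $1 + \tilde a$ where $\tilde a := \sum_{m \geq 1} a^m/m!$ has zero constant term, so $\log(\exp(a)) = \log(1 + \tilde a)$ is a well-defined element of $\widehat{\mathcal{A}}(X)$, and likewise for $b$.

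The remaining task is to justify the identity $\log(\exp(c)) = c$ for every $c \in \widehat{\mathcal{A}}(X)$ with zero constant term, which has already been asserted right before the lemma. The clean way is to reduce to the commutative one-variable case: the classical formal power series identity $L(E(t) - 1) = t$ in $\K[[t]]$, where $E(t) := \sum_{m \geq 0} t^m/m!$ and $L(s) := \sum_{m \geq 1} (-1)^{m-1} s^m/m$, is a standard computation of coefficients. Since $c$ commutes with itself, every partial sum of $L(E(c) - 1)$ only involves polynomials in the single element $c$, so substituting $t \mapsto c$ is an algebra homomorphism $\K[[t]] \to \widehat{\mathcal A}(X)$ (well-defined because $c$ has zero constant term, ensuring convergence of substitution in each graded component). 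This transports the scalar identity to $\log(\exp(c)) = c$ in $\widehat{\mathcal A}(X)$.

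Combining these two steps, $\exp(a) = \exp(b)$ implies $a = \log(\exp(a)) = \log(\exp(b)) = b$, which concludes the proof. The only conceptually non-trivial point is the justification of $\log \circ \exp = \mathrm{id}$, which is the step I would present with the most care, either by the substitution argument above or, alternatively, by an inductive argument on the graded pieces $\mathcal{A}_n(X)$ using the filtration structure recalled in the remark following \cref{-def:formalseries}.
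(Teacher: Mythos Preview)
Your proof is correct but takes a different route from the paper. The paper does not invoke the identity $\log(\exp(c)) = c$ (even though it is asserted just before the lemma); instead it proves injectivity of $\exp$ directly by induction on the graded components, writing $(\exp(a))_r = a_r + \Theta_r(a_1,\dotsc,a_{r-1})$ for some function $\Theta_r$ of the lower-degree terms, so that $\exp(a)=\exp(b)$ forces $a_r=b_r$ for all $r\geq 1$. Your approach---applying $\log$ and justifying $\log\circ\exp=\mathrm{id}$ via the substitution homomorphism $\K[[t]]\to\widehat{\mathcal{A}}(X)$, $t\mapsto c$---is equally valid and arguably more conceptual, since it explains \emph{why} the inverse exists rather than just verifying injectivity. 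The paper's approach buys a completely elementary argument that avoids any discussion of well-definedness of power-series substitution. Amusingly, the alternative you sketch in your final sentence (induction on the graded pieces) is precisely the route the paper takes.
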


\begin{proof}
 The forward implication is obvious. Conversely, if $\exp(a) = \exp(b)$ in $\widehat{\mathcal{A}}(X)$, then, for every $r \geq 1$, their components in $\mathcal{A}_r$ are equal. Moreover, from \eqref{def:exp}, one has:
 \begin{equation}
  \left( \exp(a) \right)_r = \sum_{k = 1}^r \sum_{r_1 + \ldots r_k = r} \frac{a_{r_1} \dotsc a_{r_k}}{k!} = a_r + \Theta_r \left(a_1, \ldots a_{r-1}\right),
 \end{equation}
 for some function $\Theta_r$ depending only on the $a_{r'}$ for $r' < r$. Hence, we obtain by induction on $r \geq 1$ that $a_r = b_r$ from the equalities $(\exp(a))_r = (\exp(b))_r$. 
\end{proof}

\subsection{Formal differential equations and iterated integrals} 

Using the notations of \cref{sec:notations.algebra}, for $i \in I$, let $a_i \in L^1(\R_+;\K)$ and define $a$ by
\begin{equation} \label{eq:a.ai}
 a(t) := \sum_{i \in I} a_i(t) X_i.
\end{equation}
In this section, we consider the following formal ordinary differential equation set on $\widehat{A}(X)$, driven by $a$ and associated with some initial data $x^\star$,
\begin{equation} \label{formalODE}
 \left\{
 \begin{aligned}
  \dot{x}(t) & = x(t) a(t), \\
  x(0) & = x^\star,
 \end{aligned}
 \right.
\end{equation}
whose solutions are precisely defined in the following way.

\begin{definition}[Solution to a formal differential equation] \label{def:formalODE}
    Let $a_i \in L^1(\R_+;\K)$ for $i \in I$ and define $a$ by~\eqref{eq:a.ai}.
    Let $x^\star \in \widehat{\mathcal{A}}(X)$ with homogeneous components $x^\star_n \in\mathcal{A}_n(X)$. 
    The \emph{solution to the formal differential equation} \eqref{formalODE} is the formal-series valued function $x: \R_+ \to \widehat{\mathcal{A}}(X)$, whose homogeneous components $x_n : \R_+ \to \mathcal{A}_n(X)$ are the unique continuous functions that satisfy, for every $t \geq 0$, $x_0(t) = x_0^\star$ and, for every $n \in \N^*$,
 \begin{equation} \label{eq:xn.xn1}
  x_{n}(t) = x^\star_n + \int_0^t x_{n-1}(\tau) a(\tau) \dd\tau.
 \end{equation}
\end{definition}

{
\begin{definition}[Ordered simplex] \label{def:simplex}
    For $r \in \N^*$ and $t > 0$, we introduce
    \begin{equation}
        \Delta^r(t) := \{ (\tau_1, \dotsc, \tau_r) \in (0,t)^r ; \enskip 0 < \tau_1 < \dotsb < \tau_r < t \}.
    \end{equation}
\end{definition}
}

Iterating this integral formula yields the following power series expansion, which is the most direct way to compute the solution to \eqref{formalODE} and {was introduced in \cite{MR0073174,zbMATH03126609} and popularized in control theory by \cite{MR613847}.
In the ``chronological calculus'' terminology (not used in the present article), it is called ``(right) formal Volterra chronological series'' \cite[Section 1.5]{MR524203}.}

\begin{lemma}[{Chen series}] \label{cf.formula}
 In the context of \cref{def:formalODE}, the solution to~\eqref{formalODE} with initial data $x^\star = 1$ can be expanded as
 \begin{equation} \label{eq:chen-fliess}
  x(t) = \sum_{\sigma \in I^*} \left( \int_0^t a_\sigma \right) X_\sigma,
 \end{equation}
 where $\int_0^t a_\emptyset = 1$ by convention and, for $\sigma \in I^*$ with $|\sigma| \geq 1$, we introduce the notation
 \begin{equation} \label{a.sigma}
   \int_0^t a_\sigma :=
   \int_{{\Delta^n(t)}} a_{\sigma_1}(\tau_1) \dotsm a_{\sigma_n}(\tau_n) \dd \tau.
 \end{equation}
 \end{lemma}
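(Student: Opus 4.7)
The plan is to identify the homogeneous components of the solution one by one and verify that they match the claimed iterated integral formula. Specifically, since $x^\star = 1$ has homogeneous components $x_0^\star = 1$ and $x_n^\star = 0$ for $n \geq 1$, the defining recursion of \cref{def:formalODE} becomes $x_0(t) = 1$ and
\begin{equation*}
 x_n(t) = \int_0^t x_{n-1}(\tau) a(\tau) \dd\tau \quad \text{for } n \geq 1,
\end{equation*}
so the natural approach is a straightforward induction on $n$.

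I would prove by induction on $n \in \N$ that
\begin{equation*}
 x_n(t) = \sum_{\substack{\sigma \in I^* \\ |\sigma| = n}} \left(\int_0^t a_\sigma\right) X_\sigma,
\end{equation*}
where the convention $\int_0^t a_\emptyset = 1$ handles the base case $n = 0$ trivially. For the induction step, I would plug the hypothesis for $x_{n-1}$ into the recursion and expand $a(\tau) = \sum_{i \in I} a_i(\tau) X_i$ by linearity, so that
\begin{equation*}
 x_n(t) = \sum_{\substack{\sigma' \in I^* \\ |\sigma'| = n-1}} \sum_{i \in I} \left( \int_0^t \left(\int_0^\tau a_{\sigma'}\right) a_i(\tau) \dd\tau \right) X_{\sigma'} X_i.
\end{equation*}
Then, for each $\sigma \in I^*$ with $|\sigma| = n$, writing $\sigma = (\sigma', i)$ with $\sigma' = (\sigma_1, \dotsc, \sigma_{n-1})$ and $i = \sigma_n$, one checks directly from \eqref{a.sigma} that $\int_0^t \bigl(\int_0^\tau a_{\sigma'}\bigr) a_i(\tau) \dd\tau = \int_0^t a_\sigma$, by Fubini's theorem (justified since all $a_i \in L^1(\R_+;\K)$) applied to the iterated integration on the ordered simplex $0 < \tau_1 < \dotsb < \tau_n < t$. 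Reindexing over all $\sigma$ of length $n$ finishes the induction.

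Finally, I would conclude by noting that summing the homogeneous components $x_n(t)$ over $n \in \N$ yields the claimed formal series identity, by definition of $x(t)$ as an element of $\widehat{\mathcal{A}}(X)$, and by regrouping the sum according to the length of $\sigma \in I^*$.

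I do not expect any genuine obstacle: the argument is essentially a bookkeeping exercise turning the nested Duhamel recursion into an ordered iterated integral. The only mildly delicate point is making sure that the convention on the order of integration variables in \eqref{a.sigma} is consistent with the ordering produced by the induction, which is what fixes whether the indices accumulate on the right (as here, since the ODE is $\dot{x} = x a$) rather than on the left.
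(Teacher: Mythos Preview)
Your proposal is correct and follows exactly the approach indicated in the paper, which simply states that the expansion is a direct consequence of the iterated application of the recursion \eqref{eq:xn.xn1} together with the definition of $X_\sigma$, proved by induction on the length of $\sigma$. You have supplied the details the paper omits, and your care about the ordering convention (right accumulation because the ODE is $\dot{x} = x a$) is well placed.
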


\begin{proof}
 Expansion \eqref{eq:chen-fliess} is a direct consequence of the iterated application of \eqref{eq:xn.xn1} and of the definition of $X_\sigma$ in \cref{def:free.monoid} and can be proved by induction on the length of $\sigma$.
\end{proof}

\begin{remark}\label{rk:cf-drawbacks}
Despite its simplicity, the {Chen series} suffers from a major drawback: it involves non intrinsic quantities and is redundant. As an illustration, this has the following consequences:
\begin{itemize}
    \item The functionals $\int_0^t a_\sigma$ for $\sigma \in I^*$ are not algebraically independent. For example, for every solution to~\eqref{formalODE} and every $t \geq 0$, one has the identity
    \begin{equation}
     \langle x(t), X_1 X_2 \rangle + 
     \langle x(t), X_2 X_1 \rangle 
     - \langle x(t), X_1 \rangle \langle x(t), X_2 \rangle
     = 0
    \end{equation}
    \item In the context of nonlinear ordinary differential equations, the representation \eqref{eq:chen-fliess} can fail to converge for smooth vector fields despite strong structural assumptions (see \cref{sec:chen.nilpotent}).
    \item In the context of nonlinear ordinary differential equations, the representation \eqref{eq:chen-fliess} will not be invariant by diffeomorphism (see \cref{rk:chen.gb.fb}), which would be a desirable invariance.
\end{itemize}
This drawback motivates the search for more intrinsic representations of the solutions, which will turn out to involve Lie algebras. 
\end{remark}

{The Chen series give rise to \emph{Fliess operators} (stemming from \cite{MR613847,fliess1983realisation}) which can be defined, given some $c \in \widehat{\mathcal{A}}(X)$, as $a \mapsto \sum_{\sigma \in I^*} \langle c, X_\sigma \rangle \int_0^t a_\sigma$. Such operators are well-defined (converge) provided that the coefficients $\langle c, X_\sigma \rangle$ satisfy an appropriate asymptotic behavior.
Fliess operators can be used to model input-output systems and feedback groups.
For manipulations of such operators thanks to an underlying Hopf algebra structure, we refer to \cite{zbMATH05932536,zbMATH06372161,zbMATH07290581,gray2021additive}, which investigate the question of whether an interconnection of such operators remains a Fliess operator, and its convergence, both in scalar and multivariate settings.
See also \cite{gray2002fliess} for the investigation of global convergence issues, and realization of such formal operators on concrete systems.
}

\subsection{Logarithm of flows, coordinates of the first kind} \label{sec:formal.log.flows}

In the particular case where $a(t)$ is a constant element $a \in \mathcal{A}_1(X)$, evaluating the iterated integrals in \eqref{a.sigma} yields the elegant formula $x(t) = x^\star \exp (ta)$, with the notation of \eqref{def:exp}.
Of course, it is no longer valid for a time-varying dynamic (because the indeterminates do not commute), but one can wish to find an object of which the flow is the exponential, the \emph{logarithm of the flow}.

In this section, we recall and prove \cref{thm:formal.log}, which states that the logarithm of flows of formal linear differential equations is given by explicit Lie brackets. 
{The key argument is the structure result \cref{thm:log-lie}, which states that the logarithm of the flow is a Lie series, and of which we give an elementary proof based on the differential equation satisfied by the logarithm of the flow.}
We rely on well-known algebraic results, which we recall, for the sake of giving a self-contained presentation.

\subsubsection{A differential equation for the logarithm of the flow}

{
We start by deriving the formal differential equation \eqref{eq:EDO-Z} satisfied by the logarithm of the flow.
This equation is well-known (see e.g.\ \cite[Theorem~III]{MR0067873}, \cite[Theorem~4.1]{zbMATH03126609} or \cite[formula~(5.2)]{MR524203}).
We provide an elementary derivation (see \cref{rk:dz-joli}).
}

\begin{proposition} \label{prop:EDO-Z}
    The following statements hold.
    \begin{enumerate}
    \item Let $T>0$ and $z \in \CC^1([0,T]; {\widehat{\mathcal{A}}(X)})$. Then, {for every $t \in [0,T]$,}
    \begin{equation} \label{eq:dez-dt}
        \frac{\dd}{\dd t} \exp(z(t)) 
        = \exp(z(t)) \sum_{n=0}^{+\infty} \frac{(-1)^n}{(n+1)!} \ad^n_{z(t)} (\dot{z}(t)). 
    \end{equation}
    \item Let $a$ be given by (\ref{eq:a.ai}) and $x$ denote the solution to \eqref{formalODE} with initial data $x^\star = 1$.
    Then $z := \log x$ satisfies, for almost every $t \in \R_+$,
    \begin{gather}
        \label{eq:EDO-Z-reversed}
        \sum_{n=0}^{+\infty} \frac{(-1)^n}{(n+1)!} \ad^n_{z(t)} (\dot{z}(t)) = a(t), \\
        \label{eq:EDO-Z}
        \dot{z}(t) = \sum_{n=0}^{+\infty} \frac{(-1)^n B_n}{n!} \ad^n_{z(t)}(a(t)),
    \end{gather}
    where the Bernoulli numbers $(B_n)_{n\in\N}$ are defined in \eqref{eq:def:bernoulli}.
    \end{enumerate}
\end{proposition}

\begin{proof}
    We prove the two claims successively.
    \begin{enumerate}
    \item {The regularity assumption $z \in \CC^1([0,T]; \widehat{\mathcal{A}}(X))$ is to be understood component by component, i.e.\ means that for each $\sigma \in I^*$, $t \mapsto \langle z(t), X_\sigma \rangle$ belongs to $\CC^1([0,T];\K)$.}
    We have
    \begin{equation}
        \begin{split}
            \frac{\dd}{\dd t} \exp(z(t)) 
            & = \frac{\dd}{\dd t} \left( \sum_{k=0}^{+\infty} \frac{z^k(t)}{k!} \right) 
              = \sum_{k=0}^{+\infty} \frac{1}{(k+1)!} \sum_{j=0}^{k} z^j(t) \dot{z}(t) z^{k-j}(t)
            \\
            & = \exp(z(t)) \left( \sum_{l=0}^{+\infty} \frac{(-1)^l}{l!}z^l(t) \right)
            \left( \sum_{k=0}^{+\infty} \frac{1}{(k+1)!} \sum_{j=0}^{k} z^j(t) \dot{z}(t) z^{k-j}(t) \right).
        \end{split}
    \end{equation}
    Letting $n := k + l$ and $i := l + j$, we obtain that
    \begin{equation}
        \begin{split}
            \frac{\dd}{\dd t} \exp(z(t)) 
            & = \exp(z(t)) \sum_{n=0}^{+\infty} \frac{1}{(n+1)!} \sum_{i=0}^n z^{i}(t) \dot{z}(t) z^{n-i}(t)
            \sum_{l=0}^i (-1)^l \binom{n+1}{l} 
        \end{split}
    \end{equation}
    The following formulas, which can be proved by induction using Pascal's rule,
    \begin{align}
        \sum_{l=0}^i (-1)^l \binom{n+1}{l} & = (-1)^i \binom{n}{i}, \\
        \sum_{i=0}^n (-1)^i \binom{n}{i} z^i y z^{n-i} & = (-1)^n \ad^n_z(y) 
    \end{align}
    give the conclusion.
    {Of course, if $z \in W^{1,1}((0,T);\widehat{\mathcal{A}}(X))$ (i.e.\ absolutely continuous), equations \eqref{eq:dez-dt} remains true as an equality in $L^1((0,T);\widehat{\mathcal{A}}(X))$, i.e.\ holding for almost every $t \in (0,T)$.}
    
    \item {Since $z = \log x$ and $\dot{x} = x a$, \eqref{eq:EDO-Z-reversed} is an immediate consequence of \eqref{eq:dez-dt}, using the preceding comment since both $x$ and~$z$ have $W^{1,1}$ regularity in time when $a$ has $L^1$ regularity in time.
    
    Starting from \eqref{eq:EDO-Z-reversed} and applying $\sum_k (-1)^k B_k / k! \ad^k_{z(t)}$ to both sides yields \eqref{eq:EDO-Z} because
    \begin{equation}
        \sum_{k = 0}^{+\infty} \frac{(-1)^k B_k}{k!} \sum_{\ell = 0}^{+\infty} \frac{(-1)^\ell}{(\ell+1)!} \ad^{k+\ell}_{z(t)}(\dot{z}(t)) = \dot{z}(t).
    \end{equation}
    This follows from the change of index $n := k + \ell$ and the combinatorial relation \eqref{eq:bernoulli.2}.} \qedhere
    \end{enumerate}
\end{proof}

{
\begin{remark}
    \label{rk:dz-joli}
    The historical proofs of \cref{prop:EDO-Z} are written using the Poisson bracket notation $\{ \cdot, z^k \} := (-1)^k \ad_z^k(\cdot)$ which allows to write \eqref{eq:EDO-Z} as the nice equality
    \begin{equation}
        \dot{z} = \left\{ a, \frac{z}{e^z- 1} \right\},
    \end{equation}
    using the generating series \eqref{eq:def:bernoulli} of the Bernoulli numbers.
    This approach allows elegant computations, but requires some setup (see \cite[Section~III]{MR0067873} or \cite[Section~1]{zbMATH03126609}), which is why we prefer here the elementary computations used in the preceding proof.
\end{remark}}

\subsubsection{The logarithm of the flow is a Lie series}

{The fundamental result concerning the logarithm of the flow is that it is a Lie series.
We repeat here the proof given in \cite[Theorem~4.2]{zbMATH03126609} for the sake of completeness.
At least two other approaches can be used: one relying on shuffle relations and Ree's theorem (see \cref{sec:ree}) and another one relying on Friedrich's criterion (see \cref{sec:friedrichs}).

\begin{theorem} \label{thm:log-lie}
    Let $a$ be given by \eqref{eq:a.ai} and $x$ be the solution to \eqref{formalODE} with initial data $x^\star = 1$.
    Then, for every $t > 0$, $\log x(t) \in \widehat{\mathcal{L}}(X)$.
\end{theorem}

\begin{proof}
    The proof relies on an iterated integration of \eqref{eq:EDO-Z}, where $z = \log x$.
    More precisely, writing $z = \sum z_n$ where $z_n \in \mathcal{A}_n(X)$, we prove by induction on $n$ that, for every $t$, $z_n(t) \in \mathcal{L}(X)$.
    First, for every $t > 0$, since $x_0(t) = 1$, one has $z_0(t) = 0$ so $z_0(t) \in \mathcal{L}(X)$.
    Then, for every $n \geq 1$, by \eqref{eq:EDO-Z},
    \begin{equation} \label{eq:dzn/dt}
        \dot{z}_n(t) = \sum_{k = 0}^{n - 1} \frac{(-1)^k B_k}{k!} 
        \sum_{n_1 + \dotsb + n_k = n - 1} [z_{n_1}(t), [z_{n_2}(t), \dotsc [ z_{n_k}(t), a(t)]\dotsb]],
    \end{equation}
    where the sum ranges over indexes $n_i \geq 1$.
    Moreover, for every $T > 0$ and $y \in L^1((0,T);\mathcal{L}(X))$ one checks that, for every $t \in [0,T]$, $\int_0^t y \in \mathcal{L}(X)$.
    By the induction assumption, $z_{n_i}(t) \in \mathcal{L}(X)$ for each $n_i \leq n-1$ and every $t$.
    By the previous comment, this property is preserved by the time-integration of \eqref{eq:dzn/dt}, so $z_n(t) \in \mathcal{L}(X)$ for every $t$.
\end{proof}

}

\subsubsection{Notations}

We start with an abstract definition of the truncated logarithm of a time-dependent dynamic.

\begin{definition}
 For $m, r \in \N^*$, we define the set of ordered positive partitions of size $m$ of $r$, 
 \begin{equation} \label{def:Nmr}
  \N^m_r := \left\{ \mathbf{r}=(\mathbf{r}_1,\dotsc,\mathbf{r}_m) \in (\N^*)^m ; \enskip \mathbf{r}_1+\dotsb+\mathbf{r}_m=r \right\},
 \end{equation}
 where $\N^m_r = \emptyset$ when $r < m$. For each $\mathbf{r} \in \N^m_r$ and $t > 0$, we also define {the product of simplexes}
 \begin{equation} \label{def:Trpt}
    {\Delta^{\mathbf{r}}(t) :=
    \Delta^{\mathbf{r}_1}(t) \times \dotsb \times \Delta^{\mathbf{r}_m}(t)
    }.
 \end{equation}
\end{definition}

\begin{example}
    The sets {$\Delta^{\mathbf{r}}(t)$} will be used as integration domains.
    {As easy examples,} one has
\begin{align}
 {\Delta^{(3)}(t)} & = \{ \tau = (\tau_1, \tau_2,\tau_3) \in (0,t)^3 ; \enskip 0 < { \tau_1 < \tau_2 < \tau_3 } < t\},
 \\
 {\Delta^{(1,1,1)}(t)} & = \{ \tau = (\tau_1, \tau_2,\tau_3) \in (0,t)^3 \}.
\end{align}
A more complex example for $r = 4$, $m = 2$ and $\mathbf{r} = (2,2) \in \N^2_4$ is
\begin{equation}
  {\Delta^{(2,2)}(t)} = \{ \tau = (\tau_1, \tau_2,\tau_3,\tau_4) \in (0,t)^4 ; \enskip 0 < {\tau_1 < \tau_2} < t \enspace  \mathrm{and} \enspace  0 < {\tau_3 < \tau_4} < t \}.
\end{equation}
\end{example}

We now give a notation for the (truncated or complete) logarithm of a time-dependent dynamic. We will see in the sequel why this quantity indeed corresponds to a logarithm.

\begin{definition}[Abstract logarithm of a time-varying field] \label{def:LOGM}
 Let $M \in \N$ or $M = +\infty$, $t > 0$ and $F$ be a map from $[0,t]$ with values in some algebra. We introduce the notation
 \begin{equation} \label{eq:LOGM}
  \mathrm{Log}_M \{F\}(t) := \sum_{r=1}^M \frac{1}{r} \sum_{m=1}^r \frac{(-1)^{m-1}}{m}
  \sum_{\mathbf{r} \in \N^m_r} \int_{{\Delta^{\mathbf{r}}(t)}} 
  {[ \dotsb [ F(\tau_1), F(\tau_2) ], \dotsc  F(\tau_r) ]} \dd \tau.
 \end{equation}
\end{definition}

\begin{remark}
 In such an abstract setting, the right-hand side of \eqref{eq:LOGM} does not make sense since we are not able to define an integral over an abstract algebra (without topology on the algebra and without time-regularity on $F$). At this stage, we see \eqref{eq:LOGM} as an abstract formula or notation. We will check, each time we use it, that we can give a meaning to the integrals. 
\end{remark}

\subsubsection{A preliminary algebraic result}

{Since the monomials form a basis of $\mathcal{A}(X)$, one can define the following} linear map $\beta$ from $\mathcal{A}(X)$ to $\mathcal{L}(X)$ by setting its values on the monomials by $\beta(1) := 0$, $\beta(X_i) := X_i$ for $1 \leq i \leq q$, and, for $1 \leq i_1, \dotsc, i_k \leq q$ with $k \in \N^*$,
\begin{equation} \label{def:B}
 \beta(X_{i_1} X_{i_2} \dotsm X_{i_k}) := [ \dotsb [ X_{i_1}, X_{i_2} ], \dotsc, X_{i_k} ].
\end{equation}
This process defines a standard way, the ``left to right'' or ``left normed'' bracketing, to associate a Lie bracket to each monomial. The following important result, proved successively by Dynkin~\cite{MR0021940}, Specht~\cite{MR0028301} and Wever~\cite{MR0034397} states that, if a polynomial is a Lie element, then it is equal to its left normed bracketing.

\begin{lemma}[Dynkin's theorem]
 \label{thm:dynkin}
 For $a \in \mathcal{A}_n(X)$, 
 $a \in \mathcal{L}(X)$ if and only if $\beta(a) = n a$.
\end{lemma}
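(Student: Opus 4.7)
The backward implication is immediate: since every left-normed bracket lies in $\mathcal{L}(X)$, the map $\beta$ takes values in $\mathcal{L}(X)$ by construction, so $\beta(a) = na$ with $n \geq 1$ forces $a = \beta(a)/n \in \mathcal{L}_n(X)$. All the substance of \cref{thm:dynkin} lies in the forward implication, which I would prove by induction on $n$.

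The starting point is the elementary Leibniz-type identity $\beta(a X_j) = [\beta(a), X_j]$, valid for every polynomial $a$ of degree $\geq 1$ and every $X_j \in X$, which I would prove by a direct induction on the length of the monomial expansion of $a$. Iterating this identity and using linearity, to every polynomial $Z \in \mathcal{A}(X)$ one associates a well-defined linear operator $\Phi_Z$ on $\mathcal{A}(X)$, characterized by $\beta(Y Z) = \Phi_Z(\beta(Y))$ for any polynomial $Y$ of degree $\geq 1$. Explicitly, on a monomial $Z = X_{j_1} \cdots X_{j_q}$, $\Phi_Z(a) = [\dotsc[a, X_{j_1}], \dotsc, X_{j_q}]$ is iterated right-bracketing, and the composition rule $\Phi_{Z_1 Z_2} = \Phi_{Z_2} \circ \Phi_{Z_1}$ holds by construction.

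The key lemma, which I would establish by a separate induction on $\deg Z$, states that $\Phi_Z(Y) = [Y, Z]$ for every pair $(Y, Z) \in \mathcal{L}(X) \times \mathcal{L}(X)$. The base case $\deg Z = 1$ reduces by linearity to $\Phi_{X_j}(Y) = [Y, X_j]$, which holds by the definition of $\Phi$. For the inductive step, I would write $Z = [Z_1, Z_2]$ with $Z_1, Z_2 \in \mathcal{L}(X)$ of strictly smaller degree, and use the composition rule together with the inductive hypothesis applied first to $(Y, Z_1)$, which gives $\Phi_{Z_1}(Y) = [Y, Z_1] \in \mathcal{L}(X)$, and then to the pair $([Y, Z_1], Z_2)$, to obtain $\Phi_{Z_1 Z_2}(Y) = [[Y, Z_1], Z_2]$, and symmetrically $\Phi_{Z_2 Z_1}(Y) = [[Y, Z_2], Z_1]$; the Jacobi identity then collapses their difference to $[Y, [Z_1, Z_2]] = [Y, Z]$.

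Granted this lemma, the induction on $n$ closes cleanly: any $a \in \mathcal{L}_n(X)$ is a $\K$-linear combination of brackets $[b, c]$ with $b \in \mathcal{L}_p(X)$, $c \in \mathcal{L}_q(X)$, $p, q \geq 1$, $p + q = n$; the inductive hypothesis gives $\beta(b) = pb$ and $\beta(c) = qc$, and the lemma then yields $\beta(bc) = \Phi_c(pb) = p[b, c]$ and $\beta(cb) = \Phi_b(qc) = -q[b, c]$, whence $\beta([b, c]) = n[b, c]$. I expect the main technical obstacle to be the proof of the identity $\Phi_Z(Y) = [Y, Z]$: it crucially requires both $Y$ and $Z$ to be Lie elements (the analogous statement for general polynomials fails, as can be checked already on $Z = X_j X_k$), and it cannot be bypassed by a naive derivation argument since $\beta$ is not itself a derivation of $\mathcal{A}(X)$.
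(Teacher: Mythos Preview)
Your proof is correct and follows the classical Dynkin--Specht--Wever argument. The paper does not actually prove this lemma: its ``proof'' consists entirely of a citation to \cite[Theorem~1.4]{MR1231799}. So you have supplied what the paper outsourced.

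A small expository point: saying $\Phi_Z$ is ``characterized by $\beta(YZ) = \Phi_Z(\beta(Y))$'' is slightly misleading, since this relation only determines $\Phi_Z$ on the image of $\beta$, not on all of $\mathcal{A}(X)$. But you immediately resolve this by giving the explicit formula (iterated right-bracketing on monomials, extended linearly), so the operator is well defined and the relation then follows rather than defines. Also, your backward implication tacitly assumes $n \geq 1$; for $n = 0$ the equivalence is vacuous or degenerate since $\mathcal{L}_0(X) = \{0\}$ while $\beta$ vanishes on constants, but the paper's statement is clearly intended for $n \geq 1$ as well. Neither point affects the validity of your argument.
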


\begin{proof}
 This statement is contained in the equivalence between \emph{(i)} and \emph{(v)} of \cite[Theorem~1.4]{MR1231799}.
\end{proof}

\begin{example}
 The element $X_1 X_2$ does not belong to $\mathcal{L}(X)$. And indeed, $\beta(X_1 X_2) = X_1 X_2 - X_2 X_1 \neq 2 X_1 X_2$. On the contrary, the element $[X_1, X_2] = X_1 X_2 - X_2 X_1$ belongs to $\mathcal{L}(X)$. And indeed, $\beta([X_1, X_2]) = (X_1 X_2 - X_2 X_1) - (X_2 X_1 - X_1 X_2) = 2 [X_1, X_2]$.
\end{example}

\subsubsection{An explicit formula for the logarithm of the flow}

{
We now state an explicit expansion of the state as the exponential of the logarithm of the flow. 
It is the continuous analogue of the well-known CBHD formula (which we recall in \cref{sec:cbhd} as a corollary).
It was originally derived by Magnus in \cite[Theorem~III]{MR0067873} and is thus often referred to as the ``Magnus expansion''.
}

\begin{theorem} \label{thm:formal.log}
 For $t \in \R_+$ and $x^\star \in \widehat{\mathcal{A}}(X)$, the solution $x$ to \eqref{formalODE} satisfies
 \begin{equation} \label{eq:x.exp.log}
   x(t) = x^\star \exp \left( \mathrm{Log}_\infty\{a\}(t) \right),
 \end{equation}
 with the notation of \cref{def:LOGM}.
\end{theorem}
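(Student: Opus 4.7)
The plan is to reduce to the case $x^\star = 1$ and then combine three ingredients: Friedrichs' criterion to show that the logarithm of the flow is a Lie series, Dynkin's theorem to replace polynomials by left-normed brackets, and the Chen-Fliess expansion to make everything explicit.

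First, by the linearity of \eqref{formalODE} and the uniqueness of the solution built in \cref{def:formalODE}, one checks that $x(t) = x^\star Y(t)$, where $Y(t)$ denotes the solution associated with initial datum $1$. It therefore suffices to prove $Y(t) = \exp(\mathrm{Log}_\infty\{a\}(t))$, i.e.\ that $L(t) := \log Y(t)$ coincides with $\mathrm{Log}_\infty\{a\}(t)$. To establish that $L(t) \in \widehat{\mathcal{L}}(X)$, I would apply \cref{thm:friedrichs2} by verifying $\Delta(Y(t)) = Y(t) \otimes Y(t)$. Both sides take the value $1 \otimes 1$ at $t=0$ and satisfy the same formal linear ODE in $\widehat{\mathcal{A}(X)\otimes\mathcal{A}(X)}$: using that $\Delta$ is a homomorphism and that $\Delta(a(t)) = a(t) \otimes 1 + 1 \otimes a(t)$, a direct computation shows $\frac{d}{dt}\Delta(Y) = \Delta(Y)\Delta(a)$, while $\frac{d}{dt}(Y \otimes Y) = (Y \otimes Y)(a \otimes 1 + 1 \otimes a) = (Y\otimes Y)\Delta(a)$. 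An analogue of the uniqueness argument of \cref{def:formalODE} on the tensor algebra then yields the desired equality.

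Next, since each homogeneous component $L_r(t)$ belongs to $\mathcal{L}_r(X)$, \cref{thm:dynkin} gives
\begin{equation*}
L_r(t) = \frac{1}{r}\beta(L_r(t)) = \frac{1}{r}\sum_{m=1}^r \frac{(-1)^{m-1}}{m}\,\beta\bigl(((Y(t)-1)^m)_r\bigr).
\end{equation*}
Using the Chen-Fliess expansion \eqref{eq:chen-fliess}, $(Y(t)-1)_n$ equals $\int_{0<s_1<\dots<s_n<t} a(s_1)\cdots a(s_n)\,\mathrm{d}s$. Expanding $(Y-1)^m$ as a product of such integrals and collecting the terms of total degree $r$ yields a sum over partitions $\mathbf{r} \in \N^m_r$ of integrals over products of $m$ ordered simplices. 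Reindexing the integration variables block by block so that the simplices coincide with the blocks of $\mathcal{T}_\mathbf{r}(t)$ defined in \eqref{def:Trpt}, each integrand becomes an explicit monomial in $a(\tau_1),\dots,a(\tau_r)$; applying $\beta$ then converts this monomial into the left-normed bracket of the $a(\tau_i)$ taken in the same order, thanks to \eqref{def:B} and the linearity of $\beta$.

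It remains to match this with the canonical form of \eqref{eq:LOGM}, in which the integrand is always $[\dotsb[a(\tau_r),a(\tau_{r-1})],\dotsc,a(\tau_1)]$. For each $\mathbf{r} \in \N^m_r$, I would perform the permutation of integration labels that sends the block-by-block order of appearance to the reversed sequence $r, r-1, \dots, 1$; this brings the integrand into the canonical left-normed form and transforms the domain into $\mathcal{T}_{\check{\mathbf{r}}}(t)$, where $\check{\mathbf{r}} = (\mathbf{r}_m,\dots,\mathbf{r}_1)$ is the reversed composition. Since $\mathbf{r} \mapsto \check{\mathbf{r}}$ is an involution of $\N^m_r$, the sum over $\mathbf{r}$ is unchanged, and one recovers exactly the $r$-graded part of $\mathrm{Log}_\infty\{a\}(t)$. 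The main obstacle is this last bookkeeping step: one must carefully track how the relabelling of integration variables (forced by the requirement of putting the bracket in left-normed form) affects the simplex structure of the integration domain, and verify that the combined effect is precisely a reversal of $\mathbf{r}$. All other steps reduce to direct applications of the algebraic lemmas already established in \cref{sec:algebra.preliminary}.
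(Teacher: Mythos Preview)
Your proof is correct and follows essentially the same approach as the paper: reduce to $x^\star = 1$, use the ODE argument together with \cref{thm:friedrichs2} to show $\log x(t) \in \widehat{\mathcal{L}}(X)$, expand $\log x(t)$ explicitly via the iterated Duhamel formula, and apply Dynkin's theorem to convert products into left-normed brackets. The only cosmetic difference is that the paper writes the product expansion \eqref{eq:log.xta.before} directly without discussing the reversal $\mathbf{r} \mapsto \check{\mathbf{r}}$ you highlight, since the sum over all compositions in $\N^m_r$ absorbs this relabeling.
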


\begin{proof}
    First, by linearity, it suffices to prove \eqref{eq:x.exp.log} for $x^\star = 1$.
    Repeated integration of \eqref{eq:xn.xn1} yields {the following formula (which is a slightly different form of the Chen series of \cref{cf.formula})},
    \begin{equation}
     x(t) = 1 + \sum_{r \geq 1} \int_{{\Delta^r(t)}} {a(\tau_1) \dotsm a(\tau_r)} \dd \tau.
    \end{equation}
    Hence, recalling the definitions \eqref{def:Nmr} of $\N^m_r$ and \eqref{def:Trpt} of ${\Delta^{\mathbf{r}}(t)}$, one has
    \begin{equation} \label{eq:log.xta.before}
     \log (x(t)) = \sum_{r = 1}^{+\infty} \sum_{m=1}^r \sum_{\mathbf{r} \in \N^m_r} \frac{(-1)^{m-1}}{m} \int_{{\Delta^{\mathbf{r}}(t)}} {a(\tau_1) a(\tau_2) \dotsm a(\tau_r)} \dd\tau.
    \end{equation}
    {By \cref{thm:log-lie}, for each $t \geq 0$, $\log(x(t)) \in \widehat{\mathcal{L}}(X)$.
    Hence,} applying \cref{thm:dynkin} to each of its homogeneous components in $\mathcal{A}_r$ proves that
    \begin{equation} \label{eq:log.xta.after}
      \log (x(t)) 
      = \sum_{r = 1}^{+\infty} \frac{1}{r} \sum_{m=1}^r \sum_{\mathbf{r} \in \N^m_r} \frac{(-1)^{m-1}}{m} \int_{{\Delta^{\mathbf{r}}(t)}} {[ \dotsb [ a(\tau_1), a(\tau_2)], \dotsc a(\tau_r)]} \dd\tau.
    \end{equation}
    Recalling the notation \eqref{def:LOGM} and taking the exponential concludes the proof of~\eqref{eq:x.exp.log}.
\end{proof}

{Magnus expansions (also called BCH expansions) have been extended to more general structures than Lie algebras, for instance to pre-Lie (another name for ``chronological algebras'') and post-Lie algebras \cite{zbMATH07040544}, Rota-Baxter algebras \cite{al2021post,zbMATH06369956} and dendriform algebras \cite{zbMATH05577220,zbMATH06339545}.}

\subsubsection{Coordinates of the first kind}

Although the expansion \eqref{eq:log.xta.after} already has some interest by itself, it is not written on a basis of $\mathcal{L}(X)$, which has some drawbacks.
In this paragraph, we define canonical representations for this expansion, in appropriate bases of $\mathcal{L}(X)$.

\begin{definition}[Monomial basis] \label{Def:monomial_basis}
    Let $\basis \subset \mathcal{L}(X)$. We say that $\basis$ is a \emph{basis} of $\mathcal{L}(X)$ when each element $a \in \mathcal{L}(X)$ can be written as a unique finite linear combination of elements of $\basis$. 
    {
    We say that $\basis$ is a \emph{monomial} basis of $\mathcal{L}(X)$ when moreover $\basis \subset \eval(\Br(X))$.
    In particular, for such bases, if $b \in \basis$, one can define $|b|$, $n_i(b)$ for $i \in I$ and $n(b)$ as in \cref{s:iterated-brackets} by importing these notions from $\Br(X)$.}
    {In particular, for} $n\in\N^*$, we use the notations $\mathcal{B}_n:=\{b\in\mathcal{B};|b|=n\}$ and $\mathcal{B}_{\intset{1,n}}:=\{b\in\mathcal{B}; |b| \leq n\}$.
\end{definition}

\begin{proposition} \label{Prop:coord1.0_existence}
 Let $\mathcal{B}$ be a monomial basis of $\mathcal{L}(X)$.
 There exists a unique set of functionals $(\zeta_b)_{b \in \mathcal{B}}$, with $\zeta_b \in \CC^0\left(\R_+ \times L^1(\R_+;\K)^{|I|} ; \K\right)$, such that, for every $a_i \in L^1(\R_+;\K)$, $x^\star \in \widehat{\mathcal{A}}(X)$ and $t \geq 0$, the solution to \eqref{formalODE} satisfies
 \begin{equation} \label{eq:x.zeta.b}
  x(t) = x^\star \exp \left( \sum_{b \in \mathcal{B}} \zeta_b(t, a) b \right).
 \end{equation}
 Moreover, the functionals $\zeta_b$ are ``causal'' in the sense that, for every $t \geq 0$, $\zeta_b(t,a)$ only depends on the restrictions of the functions $a_i$ to $[0,t]$.
\end{proposition}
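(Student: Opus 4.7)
The plan is to reduce to the case $x^\star = 1$, invoke \cref{thm:formal.log} to obtain the existence of a Lie series whose exponential gives $x(t)$, and then decompose this Lie series homogeneously on the basis $\mathcal{B}$.

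First, since the formal ODE \eqref{formalODE} is right-linear in $x$, if $y$ denotes its solution with $y(0) = 1$, then $x = x^\star y$ also satisfies \eqref{formalODE} with $x(0) = x^\star$. Hence it suffices to construct $\zeta_b$ such that $y(t) = \exp\bigl(\sum_{b \in \mathcal{B}} \zeta_b(t,a)\, b\bigr)$, and the general formula \eqref{eq:x.zeta.b} follows. By \cref{thm:formal.log} applied with $x^\star = 1$, one has $y(t) = \exp(\mathrm{Log}_\infty\{a\}(t))$, and the formal series $\mathrm{Log}_\infty\{a\}(t)$ belongs to $\widehat{\mathcal{L}}(X)$.

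Next, I would decompose the homogeneous components of $\mathrm{Log}_\infty\{a\}(t)$ on the monomial basis. Since $\mathcal{B}$ is a monomial basis of $\mathcal{L}(X)$ and the algebra is graded, the subset $\mathcal{B}_n$ is a basis of the finite-dimensional vector space $\mathcal{L}_n(X)$ for each $n \in \N^*$. For each $n$, the degree-$n$ component $L_n(t,a)$ of $\mathrm{Log}_\infty\{a\}(t)$ is given by the explicit finite sum obtained from \eqref{eq:log.xta.after} by restricting to $r = n$, and so admits a unique decomposition $L_n(t,a) = \sum_{b \in \mathcal{B}_n} \zeta_b(t,a)\, b$. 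This defines $\zeta_b(t,a)$ for every $b \in \mathcal{B}$, and by construction \eqref{eq:x.zeta.b} holds.

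It remains to verify the regularity, causality, and uniqueness. Each $\zeta_b(t,a)$ is obtained by expressing a finite sum of iterated integrals of the form \eqref{a.sigma} (which appear when expanding the nested brackets in \eqref{eq:log.xta.after} into monomials and reading off coordinates in $\mathcal{B}_n$) as a linear combination of basis elements of $\mathcal{L}_n(X)$, so $\zeta_b(t,a)$ is itself a finite linear combination of quantities $\int_0^t a_\sigma$ with $|\sigma| = |b|$. The continuity of these iterated integrals as functions of $(t,a) \in \R_+ \times L^1(\R_+;\K)^{|I|}$ follows from dominated convergence, using the bound $|\int_0^t a_\sigma| \leq \prod_i \|a_{\sigma_i}\|_{L^1(0,t)}$; this yields $\zeta_b \in \CC^0$. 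Causality is immediate since all integrals are taken on $[0,t]$. Finally, uniqueness follows from \cref{lemma:log_is_unique}: if two families $(\zeta_b)$ and $(\zeta'_b)$ both satisfy \eqref{eq:x.zeta.b} (for $x^\star = 1$), then $\exp\bigl(\sum_b \zeta_b(t,a)\, b\bigr) = \exp\bigl(\sum_b \zeta'_b(t,a)\, b\bigr)$, hence $\sum_b \zeta_b(t,a)\, b = \sum_b \zeta'_b(t,a)\, b$ in $\widehat{\mathcal{L}}(X)$. Equating homogeneous components and using that $\mathcal{B}_n$ is a basis of $\mathcal{L}_n(X)$ gives $\zeta_b = \zeta'_b$ pointwise. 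The only mildly delicate point is the continuity claim, but it reduces to the elementary fact that the iterated integrals \eqref{a.sigma} depend continuously on $(t, a_1, \dotsc, a_{|I|})$ for the $\R_+ \times (L^1)^{|I|}$ topology.
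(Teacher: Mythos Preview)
Your proof is correct and follows essentially the same approach as the paper: invoke \cref{thm:formal.log} to write $x(t)=x^\star\exp(\mathrm{Log}_\infty\{a\}(t))$, then decompose each homogeneous component of $\mathrm{Log}_\infty\{a\}(t)$ on the finite basis $\mathcal{B}_n$ of $\mathcal{L}_n(X)$ to define the $\zeta_b$, and read off continuity and causality from the explicit iterated-integral expressions. Your treatment is in fact more thorough than the paper's (which is quite terse), particularly in spelling out the uniqueness argument via \cref{lemma:log_is_unique} and the reduction to $x^\star=1$.
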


\begin{proof}
 For each $b \in \mathcal{B}$, since $\mathcal{B}$ is monomial, only a finite number of summands of the right-hand side of \eqref{eq:log.xta.after} have a non vanishing component along $b$ (indeed, only terms sharing the same homogeneity can be involved). 
 Hence, it is clear that the functionals thereby defined are continuous on $\R_+ \times L^1(\R_+;\K)^q$, due to their explicit expression. 
 The sum in \eqref{eq:x.zeta.b} is understood in the sense of a well-defined formal power series. Indeed, for each word $\sigma \in I^*$, only a finite number of elements $b \in \mathcal{B}$ have a non-vanishing component $\langle b, X_\sigma \rangle$.
\end{proof}

\begin{definition}[Coordinates of the first kind]
 \label{def:coord1}
 The functionals $\zeta_b$ are usually called \emph{coordinates of the first kind} associated to the (monomial) basis $\mathcal{B}$ of $\mathcal{L}(X)$.
\end{definition}

{The terminology \emph{coordinates of the first kind} or \emph{first species} and the opposition with \emph{coordinates of the second kind} (see \cref{sec:coord-second}) is classical, see e.g.\ \cite[III.4.3]{zbMATH03499968}.
See also \cref{sec:coordinates1} for references concerning the computation of such coordinates in the context of control theory.}

\begin{remark} \label{rk.homogeneous}
 Thanks to the monomial nature of the basis, one does not need to specify the full basis in order to define a given functional. For example, if $\lambda \in \N^I$ is a given homogeneity, let 
 \begin{equation}
   \Br_\lambda(X) := \{ b \in \Br(X); \enskip \forall i \in I, n_i(b) = \lambda_i \}.
 \end{equation}
 Then the coordinates of the first kind $\zeta_b$ for $b \in \mathcal{B} \cap \Br_\lambda(X)$ only depend on $\mathcal{B} \cap \Br_\lambda(X)$.
\end{remark}

\begin{remark}
 An important particular case for applications to control theory is the case $X = \{X_0,X_1\}$, with $a_0(t) = 1$ and $a_1(t) = u(t)$. This corresponds to formal scalar-input control-affine systems $\dot x(t) = x(t) (X_0 + u(t) X_1)$. One often writes $\zeta_b(t,u)$ (omitting the dependency on $a_0 \equiv 1$) to denote the coordinates of the first kind in this particular context.
\end{remark}

\subsubsection{Campbell Baker Hausdorff Dynkin formula}
\label{sec:cbhd}

{As a corollary, we obtain the classical finite CBHD formula.}

\begin{corollary} \label{Cor:CBH_formel}
    Let $X$ be a finite set, $n \in \N^*$ and $y_1,\dotsc,y_n \in \widehat{\mathcal{L}}(X)$ without constant term. 
    There exists a unique $w \in \widehat{\mathcal{L}}(X)$ such that 
 \begin{equation} \label{CBHD_formel}
 e^{y_1} \dotsm e^{y_n} = e^w.
 \end{equation}
 We will use the notation $w=\CBHD_\infty(y_1,\dotsc,y_n)$.
 Moreover, for each monomial basis $\mathcal{B}$ of $\mathcal{L}(\{Y_1,\dotsc,Y_n\})$, there exists a unique sequence $(\alpha_b)_{b\in\mathcal{B}} \subset \K^{\mathcal{B}}$ such that, for every finite set $X$ and $y_1,\dotsc,y_n \in \widehat{\mathcal{L}}(X)$ 
 \begin{equation} \label{CBH_infini_coord}
  \CBHD_\infty(y_1,\dotsc,y_n)=\sum_{b\in\mathcal{B}} \alpha_b y_b 
  \end{equation}
 where $y_b:=\Lambda(b)$ and $\Lambda:\mathcal{L}(\{Y_1,\dotsc,Y_n\}) \rightarrow \widehat{\mathcal{L}}(X)$ is the homomorphism of Lie {algebras} such that $\Lambda(Y_j)=y_j$ for $j \in \intset{1,n}$.
\end{corollary}

\begin{proof}
    We prove that \eqref{CBHD_formel} holds with
    \begin{equation}
        w: = \mathrm{Log}_\infty \left\{ \sum_{j=1}^n y_j 1_{[j-1,j]} \right\}(n)
    \end{equation}
    in the sense of \cref{def:LOGM}.

\medskip

\noindent \emph{Step 1: Proof when $X=\{X_1,\dotsc,X_n\}$ and $y_j=X_j$ for $j \in \intset{1,n}$.} 
The solution to \eqref{formalODE} with $a(t)=\sum_{j=1}^n X_j 1_{[j-1,j]}(t)$ is $x(t)=x^\star e^{X_1} \dotsm e^{X_n}$. 
By \cref{thm:formal.log}, $w$ {satisfies} (\ref{CBHD_formel}). By injectivity of the exponential (see \cref{lemma:log_is_unique}), it is the unique solution. 
By \cref{Prop:coord1.0_existence}, the equality (\ref{CBH_infini_coord}) holds with  $\alpha_b:=\zeta_b(n,1_{[0,1]},\dotsc,1_{[n-1,n]})$.

\medskip

\noindent \emph{Step 2: Proof in the general case.} 
Let $X$ be a finite set, $n \in \N^*$, $y_1,\dotsc,y_n \in \widehat{\mathcal{L}}(X)$. 
Let $Y:=\{Y_1,\dotsc,Y_n\}$ be another set of indeterminates. 

The map $\Lambda: Y \rightarrow \widehat{\mathcal{L}}(X)$ defined by $\Lambda(Y_j)=y_j$ for $j \in \intset{1,n}$ extends into an homomorphism {of algebras} $\widehat{\mathcal{A}}(Y) \rightarrow \widehat{\mathcal{A}}(X)$,
which is also an homomorphism {of Lie algebras} $\widehat{\mathcal{L}}(Y) \rightarrow \widehat{\mathcal{L}}(X)$, that we still denote~$\Lambda$. 
Indeed \cref{Lem:identification_libre} ensures the extension as an homomorphism {of algebras}  $\mathcal{A}(Y) \rightarrow \widehat{\mathcal{A}}(X)$ (resp.\ an homomorphism {of Lie algebras} $\mathcal{L}(Y) \rightarrow \widehat{\mathcal{L}}(X)$). The extension can be done on $\widehat{\mathcal{A}}(Y)$ (resp.\ $\widehat{\mathcal{L}}(Y)$) because $y_1,\dots,y_n$ do not have constant terms and the target space $\widehat{\mathcal{A}}(X)$ (resp.\ $\widehat{\mathcal{L}}(X)$) is a space of formal power series.

Let $W:=\mathrm{Log}_\infty \{ \sum_{j=1}^n Y_j 1_{[j-1,j]} \}(n) \in \widehat{\mathcal{L}}(Y)$. 
Then $\Lambda(W)=w$.
By applying the homomorphism {of algebras} $\Lambda$ to the relation 
$e^{Y_1} \dotsm e^{Y_n}=e^{W}$
we get (\ref{CBHD_formel}). 
By applying the homomorphism {of Lie algebras} $\Lambda$ to the relation $W=\sum_{b\in\mathcal{B}} \alpha_b b$
we get (\ref{CBH_infini_coord}).
\end{proof}

Despite the fact that the product $e^{y_1} \dotsb e^{y_n}$ is of course non-commutative, there is some structure and symmetry inside its logarithm, which we highlight for future use in the following result.

\begin{proposition}
    There exists a family of {elements $F_{q,h}(Y_1, \dotsc, Y_q) \in \mathcal{L}(\{ Y_1, \dotsc, Y_q \})$ for $q\in\N^*$ and $h=(h_1,\dots,h_q)\in(\N^*)^q$,} such that 
    \begin{itemize}
        \item {for each $i \in \intset{1,q}$, $F_{q,h}(Y_1, \dotsc, Y_q)$ is of homogeneity $h_i$ with respect to $Y_i$},
        \item for every $n \geq 2$, $y_1, \dotsc, y_n \in \widehat{\mathcal{L}}(X)$ {with zero constant term},
        \begin{equation} \label{eq:CBH-Fqh}
            \CBHD_{\infty}(y_1,\dotsc,y_n)=\sum_{\substack{q\in \intset{1,n}, h \in (\N^*)^q\\ j_1 < \dotsb < j_q \in \intset{1,n}}} F_{q,h}(y_{j_1},\dots,y_{j_q}),
        \end{equation}
        {where $F_{q,h}(y_{j_1}, \dotsc, y_{j_q})$ denotes the image of $F_{q,h}(Y_1, \dotsc, Y_q)$ by the homomorphism of algebras from $\mathcal{L}(\{ Y_1, \dotsc, Y_q \})$ to $\widehat{\mathcal{L}}(X)$ which sends $Y_i$ to $y_{j_i}$ for each $i \in \intset{1,q}$.}
    \end{itemize}
    For $q = 1$, $F_{1,(1)}{(Y)}={Y_1}$ and $F_{1,(h_1)} = 0$ for $h_1 \geq 2$. For $q = 2$ and $h_1 + h_2 \leq 4$,
    \begin{equation} \label{eq:CBH-2-low}
        \begin{aligned}
        F_{2,(1,1)}{(Y)} &= \frac{1}{2}{[Y_1,Y_2] }
        &  F_{2, (2,2)}{(Y)} &= -\frac{1}{24}{[Y_2,[Y_1,[Y_1, Y_2]]]} \\
        F_{2,(2,1)}{(Y)} & =\frac{1}{12}{[Y_1,[Y_1,Y_2]]} 
        & \quad \quad F_{2, (3,1)}{(Y)} &= 0 \\
        F_{2,(1,2)}{(Y)} &= \frac{1}{12}{[Y_2, [Y_2,Y_1]]}
        & F_{2,(1,3)}{(Y)} &= 0.
        \end{aligned}
    \end{equation}
    For higher order terms, {we state below a recursive formula.}
\end{proposition}

\begin{proof}
    Using the same Lie algebra homomorphism arguments as in the proof of \cref{Cor:CBH_formel}, it is sufficient to consider the case where $y_i = Y_i$ is an indeterminate.

    For $n = 2$, the statement is merely a rewriting of \eqref{CBH_infini_coord} where the terms are grouped by their homogeneity with respect to $y_1$ and $y_2$. 
    This defines the elements $F_{1,(1)}(Y_1) = Y_1$ and $F_{1,(h)}(Y_1) = 0$ for $h \geq 2$ and $F_{2,h}(Y_1,Y_2)$ for $h \in (\N^*)^2$ according to the usual two-variables formula, of which the well-known low-order terms are recalled in \eqref{eq:CBH-2-low}.
    
    We define by induction on $n \geq 3$ the functions $F_{n,h}$ by the relations
    \begin{equation} \label{eq:fqh-rec}
        F_{n,h}(Y_1, \dotsc, Y_n)
        := \sum_{m | h_1, \dotsc, h_{n-1}}
        F_{2,(m,h_n)}\big(F_{n-1, (\frac{h_1}{m}, \dotsc, \frac{h_{n-1}}{m})}(Y_1, \dotsc, Y_{n-1}), Y_n\big).
    \end{equation}
    We now prove the result by induction on $n$.  Let $n \geq 3$. By associativity of the product, the formula for two indeterminates and the induction hypothesis at step $n-1$, we obtain
    \begin{equation} \label{eq:cbh-w-yfqh-1}
        \begin{split}
            & \CBHD_\infty (Y_1, \dotsc, Y_n) \\
        & = \CBHD_\infty(\CBHD_\infty(Y_1, \dotsc, Y_{n-1}), Y_n) \\
        & = \CBHD_\infty(Y_1, \dotsc, Y_{n-1}) + Y_n + \sum_{g \in (\N^*)^2} F_{2,g}(\CBHD_\infty(Y_1, \dotsc, Y_{n-1}), Y_n) \\
        & = Y_n + \sum_{\substack{q\in \llbracket 1 , n-1 \rrbracket, h'\in (\N^*)^q\\ j_1 < \dotsb < j_q \in \llbracket 1 , n-1 \rrbracket}} \left( F_{q,h'}(Y_{j_1},\dots,Y_{j_q}) + 
        \sum_{g \in (\N^*)^2} F_{2,g}(F_{q,h'}(Y_{j_1},\dots,Y_{j_q}), Y_n)
        \right).
        \end{split}
    \end{equation}
    We now check that the right-hand side of \eqref{eq:cbh-w-yfqh-1} is the same as the right-hand side of \eqref{eq:CBH-Fqh}. 
    Since we are working in the free Lie algebra over $Y_1, \dotsc Y_n$, we can proceed by homogeneity.
    \begin{itemize}
        \item The terms not involving $Y_n$ are equal, since they have the same expression.
        \item The term involving only $Y_n$ on both sides is $Y_n$ itself, so they are equal.
        \item Now, let $q \in \intset{1,n-1}$, $j_1 < \dotsb < j_q \in \intset{1,n-1}$ and $h \in (\N^*)^{q+1}$. We look for the term involving $h_i$ times $Y_{j_i}$ for $i \in \intset{1,q}$ and $h_{q+1}$ times $Y_n$, which is $F_{q+1,h}(Y_{j_1}, \dotsc, Y_{j_q}, Y_n)$ in~\eqref{eq:CBH-Fqh}.
        In \eqref{eq:cbh-w-yfqh-1}, it is
        \begin{equation}
            \sum_{\substack{h'\in (\N^*)^q}}
            \sum_{g \in (\N^*)^2} F_{2,g}(F_{q,h'}(Y_{j_1},\dots,Y_{j_q}), Y_n),
        \end{equation}
        where the sum is restricted to $g_1 h'_i = h_i$ and $g_2 = h_{q+1}$.
        Hence, both terms are equal thanks to the definition \eqref{eq:fqh-rec}.
    \end{itemize}
    This concludes the proof and gives a way to compute the {elements} $F_{q,h}$ iteratively. 
\end{proof}

\begin{remark}
    In particular, the component of $\CBHD_{\infty}(y_1,\dots,y_n)$ homogeneous with degree $h=(h_1,\dots,h_q)$ with respect to $(y_{j_1},\dots,y_{j_q})$ is $F_{q,h}(y_{j_1},\dots,y_{j_q})$. 
    It depends neither on the total number $n$ of arguments in the initial product, nor on the selection of indexes $(j_1,\dots,j_q)$. 
    This is the natural symmetry that we wish to highlight.
\end{remark}

{Algorithms to compute iteratively the terms in the CBHD formula are investigated for instance in \cite[Section~4.a]{zbMATH01343124} or in \cite{MR2031789,MR2510918} within Hall bases, or in \cite{arnal2021note} for an expansion on right-nested brackets, which uses fewer terms.}

\subsubsection{Computation of some coordinates of the first kind}
\label{sec:coordinates1}

In this paragraph, we focus on the case $X = \{ X_0, X_1 \}$. 
Computing the coordinates of the first kind is of paramount interest for applications (see e.g.\ \cite{kawski2000calculating} where the first 14 such coordinates are computed,
and \cite{MR2510918,MR2271214} for efficient algorithms and explicit formulas obtained by an approach relying on rooted binary labeled trees).

\newcommand{\Sone}{{\dot{{S}}_{1}}}
\newcommand{\Stwo}{{{S}_{2}^+}}

Here, we calculate as an illustration (and because they will be used later) all coordinates of the first kind on a basis of
\begin{equation} \label{def:S1}
    \Sone := \vect_{\K} \left\{ \eval(b) ; \enskip b \in \Br(X), n_1(b) = 1 \right\} \subset \mathcal{L}(X),
\end{equation}
{where this notation is chosen so that $S_1 = \K X_0 \oplus \Sone$ (see \cref{def:SM}).
We define moreover} 
\begin{equation} \label{def:S2}
 \Stwo := \vect_{\K} \left\{ \eval(b) ; \enskip b \in \Br(X), n_1(b) \geq 2 \right\} \subset \mathcal{L}(X),
\end{equation}
thanks to which we can write the direct sum decomposition $\mathcal{L}(X) = \K X_0 \oplus \Sone \oplus \Stwo$.

\begin{lemma} \label{lem:S1-basis}
 The family $({\ad}_{X_0}^k (X_1))_{k\in\N}$ is a (monomial) basis of $\Sone$.
\end{lemma}

\begin{proof}
    From \eqref{def:S1}, $\Sone$ is spanned by the evaluations in $\mathcal{L}(X)$ of the {iterated brackets} $b \in \Br(X)$ involving $X_1$ exactly once. 
    Let $b \in \Br(X)$ be such \emph{an iterated  bracket}. 
    We assume $\eval(b)\neq 0$ in $\mathcal{L}(X)$ and $b \neq X_1$.
    {Then $\eval(b)=[\eval(\lambda(b)),\eval(\mu(b))]$ thus $\eval(\lambda(b))$ and $\eval(\mu(b))$ are non null in $\mathcal{L}(X)$}. 
    Moreover, either {$\lambda(b)$ or $\mu(b)$} does not involve $X_1$ and is thus equal to $X_0$. 
    Therefore $\eval(b) = \pm [X_0, \eval(\bar{b})]$ where $\bar{b} \in \Br(X)$ involves $X_1$ exactly once and $\eval(\bar{b}) \neq 0$. Working by induction on the number $k$ of occurrences of $X_0$ in $b$, we obtain $\eval(b) = \pm \ad_{X_0}^{k}(X_1)$.

    The previous argument proves that the given family spans $\Sone$. 
    Moreover, this family is linearly independent in $\mathcal{L}(X)$ because two different elements have different lengths.
\end{proof}

{
We now compute the coordinates of the first kind associated with these elements.
Up to our knowledge, the following explicit expression is new.
}

\begin{proposition} \label{prop:S1.coord1}
 Let $\mathcal{B}$ a monomial basis of $\mathcal{L}(X)$ containing $X_0$ and the family $(\ad^k_{X_0}(X_1))_{k\in\N}$. The associated coordinates of the first kind satisfy, for each $t > 0$, $a_0, a_1 \in L^1((0,t);\K)$ and $k \in \N$, 
 \begin{equation} \label{Coord_1.0_S1}
    \zeta_{\ad^{k}_{X_0}(X_1)} (t,a_0,a_1) = 
    {
    (-1)^k \sum_{\ell=0}^{k} A_0(t)^{k-\ell} \frac{B_{k-\ell}}{(k-\ell)!} \int_{\Delta^{\ell+1}(t)} a_1(\tau_1) a_0(\tau_2) \dotsb a_0(\tau_{\ell+1}) \dd\tau,}
\end{equation}
 where $A_0(t):=\int_0^t a_0$ {and the Bernoulli numbers $(B_n)_{n \in \N}$ are defined in \eqref{eq:def:bernoulli}.}
\end{proposition}

\begin{proof}
 First, the considered coordinates are well-defined independently on the exact choice of $\mathcal{B}$ (see \cref{rk.homogeneous}). Let $x$ be the solution to \eqref{formalODE} starting from $x^\star = 1$. To simplify the notations in this proof, we write $x(t)$, $\zeta_k(t)$ and $Z(t)$ instead of $x(t,a)$, $\zeta_{\ad_{X_0}^k(X_1)}(t,a_0,a_1)$ and $\mathrm{Log}_\infty\{a\}(t)$. From \eqref{eq:x.zeta.b}, 
 \begin{equation}
    Z(t) 
    = \sum_{b \in \mathcal{B}} \zeta_b(t,a) b
    = \zeta_{X_0}(t,a) X_0 + Z_1(t) + Z_2(t),
 \end{equation}
 where $Z_2(t) \in \Stwo$ and
 \begin{equation}
    Z_1(t) := \sum_{k=0}^{+\infty} \zeta_{k}(t) \ad_{X_0}^k(X_1).
 \end{equation}
    First, a straightforward identification in $\eqref{eq:LOGM}$ yields $\zeta_{X_0} = A_0$ and {$\zeta_{X_1}(t) = \int_0^t a_1$}. 
    Let $k\in\N^*$.
 The proof consists in computing $\langle x(t) , X_1 X_0^k \rangle$ in two ways: first by the differential equation \eqref{formalODE}, then by the formula $x(t)=e^{Z(t)}$. By definition of the solution to \eqref{formalODE}, we have, for every word $\sigma \in I^*$ and $t>0$
 \begin{equation}
 \langle x(t) , X_\sigma X_0 \rangle = \int_0^t  \langle x(\tau) , X_\sigma \rangle a_0(\tau) \dd\tau.
 \end{equation}
 Taking into account that {$\langle x(t) , X_1 \rangle = \int_0^t a_1$}, we obtain
 \begin{equation} \label{int3}
  \langle x(t) , X_1 X_0^k \rangle =
    {\int_{\Delta^{k+1}(t)} a_1(\tau_1) a_0(\tau_2) \dotsm a_0(\tau_{k+1}) \dd\tau.}
 \end{equation}
  On the other hand, we deduce from the expansion of $x(t)=e^{Z(t)}$ that
 \begin{equation} \label{int1}
 \left\langle x(t) , X_1 X_0^k \right\rangle = \left\langle Z(t) , X_1 X_0^k \right\rangle + \sum_{\ell=2}^{k+1} \frac{1}{\ell !} \left\langle Z(t)^\ell , X_1 X_0^k \right\rangle 
 \end{equation}
 because, for $\ell \geq (k+2)$, $Z(t)^\ell$ is a sum of words with length at least $(k+2)$.
 For $\ell \in \llbracket 2 , k+1 \rrbracket$,
 \begin{equation}
  Z(t)^\ell = \sum_{j=0}^{\ell-1} (A_0(t)X_0)^j Z_1(t) (A_0(t)X_0)^{\ell-1-j} + Z_{2,\ell}(t), \quad \text{ where } \quad  Z_{2,\ell}(t) \in \Stwo.
 \end{equation}
 Thus
 \begin{equation} \label{int2}
 \left\langle Z(t)^\ell , X_1 X_0^k \right\rangle =  \left\langle Z_1(t) (A_0(t)X_0)^{\ell-1} , X_1 X_0^k \right\rangle = A_0(t)^{\ell-1} (-1)^{k-\ell+1} \zeta_{k-\ell+1}(t), 
 \end{equation}
 because the word $X_1 X_0^{k-\ell+1}$ appears in the decomposition of $\ad_{X_0}^n(X_1)$ iff $k-\ell+1=n$ and then it appears with coefficient $(-1)^n$. We deduce from \eqref{int1} and \eqref{int2} that
 \begin{equation}
  \left\langle x(t) , X_1 X_0^k \right\rangle= (-1)^k \zeta_k(t)+\sum_{\ell=2}^{k+1} \frac{(-1)^{k+1-\ell}}{\ell !} A_0(t)^{\ell-1} \zeta_{k+1-\ell}(t).
 \end{equation}
 Using \eqref{int3} and the index change $j = k+1-\ell \in \llbracket 0,k-1 \rrbracket$, we obtain
 \begin{equation} \label{Coord_1.0_S1_implicit}
   {\int_{\Delta^{k+1}(t)} a_1(\tau_1) a_0(\tau_2) \dotsm a_0(\tau_{k+1}) \dd\tau}
   = (-1)^k \zeta_k(t) + \sum_{j=0}^{k-1} \frac{(-1)^j A_0(t)^{k-j}}{(k+1-j)!} \zeta_j(t),
\end{equation} 
 When $A_0(t) = 0$, this formula yields \eqref{Coord_1.0_S1} immediately. When $A_0(t) \neq 0$, let, for $j\in\N$,
 \begin{equation} \label{alphabeta}
  \alpha_j := \frac{\langle x(t) , X_1 X_0^j \rangle}{A_0(t)^{j+1}}
  \qquad \text{and} \qquad \beta_j :=  \frac{(-1)^j\zeta_j(t)}{A_0(t)^{j+1}}
  \end{equation}
we deduce from (\ref{Coord_1.0_S1_implicit})  that
\begin{equation}
  \alpha_k = \sum_{j=0}^k \frac{\beta_j}{(k+1-j)!}.
\end{equation}
We have
\begin{equation}
  z \left( \sum_{k \geq 0} \alpha_k z^k \right) 
  = \sum_{k \geq 0} \sum_{j=0}^k \beta_j z^j \frac{z^{k+1-j}}{(k+1-j)!}
  = \left( \sum_{j \geq 0} \beta_j z^j \right) (e^z-1)
\end{equation}
or equivalently
\begin{equation} \sum_{j \geq 0} \beta_j z^j =\frac{z}{e^z -1}\left( \sum_{k \geq 0} \alpha_k z^k \right) = \sum_{n \geq 0} \sum_{k \geq 0} B_n \frac{z^n}{n!} \alpha_k z^k.
\end{equation}
Thus, for every $j \in \N^*$
\begin{equation} \beta_j= \sum_{k=0}^j \frac{B_{j-k}}{(j-k)!} \alpha_{k}.\end{equation}
Finally (\ref{alphabeta}) and (\ref{int3}) give \eqref{Coord_1.0_S1}.
\end{proof}

{
\begin{remark}
    Formula \eqref{Coord_1.0_S1} bears a strong similarity with the differential equation \eqref{eq:EDO-Z} satisfied by $z(t)$, which also involves the Bernoulli numbers.
    Unfortunately, we have not been able to obtain a shorter proof using this equation.
\end{remark}
}

In particular, {using \cref{prop:S1.coord1},} we recover the following very classical formula for the partial coefficients of the CBHD formula (see e.g.\ \cite[equation~(2)]{MR0156878} or \cite[Corollary~3.24]{MR1231799}).

\begin{corollary} \label{thm:cbh.Z1}
    {One has} $e^{X_1} e^{X_0} = e^{Z}$ where $Z=X_0 + Z_1 + Z_2$, $Z_2 \in \Stwo$ {(see \eqref{def:S2})} and
  \begin{equation} \label{eq:cbh.Z1}
    Z_1 
    := \sum_{n=0}^{+\infty} \frac{B_n}{n!} \ad_{X_0}^n(X_1) 
    = X_1 - \frac{1}{2}[X_0,X_1] +\sum_{n=1}^{+\infty} \frac{B_{2n}}{(2n)!} \ad_{X_0}^{2n}(X_1).  
  \end{equation}
\end{corollary}

\begin{proof}
 We apply the previous result to the controls $a_0(t)=\mathbf{1}_{(1,2)}(t)$ and $a_1(t)=\mathbf{1}_{(0,1)}(t)$, for which the solution to \eqref{formalODE} with $x^\star = 1$ satisfies $x(2)=e^{X_1} e^{X_0}$.
 For $\ell \in \N^*$ and $0<{\tau_1<\dotsb<\tau_{\ell+1}}<2$, the real number {$a_1(\tau_1) a_0(\tau_1) \dotsm a_0(\tau_{\ell+1})$} does not vanish iff {$0 < \tau_1 < 1$ and $1 < \tau_2 < \dotsb < \tau_{\ell+1} < 2$} and then it equals $1$. 
 Thus, for every $k \geq 2$, using \eqref{Coord_1.0_S1} and \eqref{eq:bernoulli.1},
 \begin{equation}
  (-1)^k \zeta_k(2)=\sum_{\ell=0}^k  \frac{B_{k-\ell}}{(k-\ell)!} \frac{1}{\ell!} = \sum_{j=0}^k  \frac{B_{j}}{j ! (k-j)!} = \frac{B_k}{k!} \end{equation}
 We conclude by noticing, thanks to \eqref{Coord_1.0_S1}, that $\zeta_0(2)=1=B_0$ and $\zeta_1(2)=-\frac{1}{2}=B_1$.
\end{proof}

\begin{example} \label{ex:zeta.adx0x1.1t}
    As an example and for later use in the sequel, we compute the coordinates of the first kind for the particular choice $a_0(t) := 1$ and $a_1(t) := t$. 
    Let $k \in \N$. 
    Using formula \eqref{Coord_1.0_S1} of \cref{prop:S1.coord1} and {the identity \eqref{eq:bernoulli.4}} we obtain
 \begin{equation} \label{eq:zeta.adx0x1.1t}
  \begin{split}
   \zeta_{\ad^k_{X_0}(X_1)}(t,a) 
   & = (-1)^k \sum_{\ell = {0}}^k t^{k-\ell} \frac{B_{k-\ell}}{(k-\ell)!} \frac{t^{\ell + 2}}{(\ell + 2)!} \\
  & = (-1)^k t^{k+2} \sum_{\ell=0}^k \frac{B_{k-\ell}}{(k-\ell)!(\ell+2)!} \\
  & = (-1)^{k+1} t^{k+2} \frac{B_{k+1}}{(k+1)!}.
  \end{split}
 \end{equation}
\end{example}

\subsection{Interaction picture, coordinates of the pseudo-first kind}
\label{sec:format.1.1}

In quantum mechanics, the \emph{interaction picture} is an intermediate representation between the \emph{Schrödinger picture} (in which the state vectors are time-dependent and the operators are time-independent) and the \emph{Heisenberg picture} (in which the state vectors are time-independent and the operators are time-dependent).
{The interaction picture} is particularly useful when the dynamics can be written as the sum of a time-independent part, which can be solved exactly, and a time-dependent perturbation. In this section, we introduce and study a formal counterpart of this situation, that can be useful for applications.

\subsubsection{A new formal expansion}

In this paragraph, we therefore consider $I = \intset{0,q}$ to isolate the role of $X_0$. For some given $a_i \in L^1(\R_+;\K)$ for $i \in \intset{1,q}$, we assume that $a$ takes the form
\begin{equation} \label{eq:a.inter}
  a(t) = X_0 + \sum_{i=1}^q a_i(t) X_i.
\end{equation}

\begin{theorem} \label{thm:Magnus1.0_formel}
 For $t \in \R_+$, $x^\star \in \widehat{\mathcal{A}}(X)$ and $a$ of the form \eqref{eq:a.inter}, the solution $x$ to \eqref{formalODE} satisfies
 \begin{equation} \label{eq:11formal}
     x(t) = x^\star \exp (t X_0) \exp \left( \mathcal{Z}_\infty(t,X,a) \right),
 \end{equation}
 where $\mathcal{Z}_\infty(t,X,a):=\mathrm{Log}_\infty \{ b_t \} (t)$
 with the notation of \cref{def:LOGM} and 
 \begin{equation} \label{eq:bts}
  b_t(s) 
  := e^{-(t-s)X_0} \left(\sum_{i=1}^q a_i(s) X_i\right) e^{(t-s) X_0} 
  = \sum_{i=1}^q \sum_{k=0}^{+\infty} \frac{(-1)^k}{k!} (t-s)^k a_i(s) \ad^k_{X_0}(X_i)
 \end{equation}
i.e.\
\begin{equation} \label{Loginfini(bt)}
\begin{aligned}
\mathcal{Z}_\infty(t,X,a) = \sum \frac{(-1)^{m-1}}{mr}
 \int_{{\Delta^{\mathbf{r}}(t)}} 
  \frac{(\tau_1-t)^{k_1}}{k_1!} & \dotsm \frac{(\tau_r-t)^{k_r}}{k_r!} 
  a_{i_1}(\tau_1) \dotsm a_{i_r}(\tau_r) \dd\tau 
  \\ & 
  {
  [ \dotsb [ \ad_{X_0}^{k_1}(X_{i_1}), \ad_{X_0}^{k_2}(X_{i_2}) ], \dotsc  \ad_{X_0}^{k_r}(X_{i_r}) ],}
\end{aligned}
\end{equation}
where the sum is taken over $r \in \llbracket 1, \infty \rrbracket$, $m \in \intset{1,r}$, $\mathbf{r} \in \N^m_r$, $k_1,\dotsc,k_r \in \N$ and $i_1,\dotsc,i_r \in \intset{1,q}$.
\end{theorem}

\begin{proof}
    {First, note that the second equality in \eqref{eq:bts} stems from the fact that both functions $g_1(\tau) := e^{-\tau X_0} X_i e^{\tau X_0}$ and $g_2(\tau) := \sum_{k=0}^{+\infty} \frac{(-\tau)^k}{k!} \ad_{X_0}^k(X_i)$ solve the Cauchy problem $\dot{g}(\tau)=[g(\tau),X_0]$ and $g(0)=X_i$, so they are equal.}

 Let $t > 0$.
 A key point is to remark that all the definitions and results from the previous paragraphs which are stated for a finite set $I$ of indeterminates are still valid if $I$ is an infinite set. For mathematicians with a background in analysis, all equalities can be understood ``in the weak sense'' as equalities holding along each monomial. Therefore, for a set of {indeterminates} $\{ Y_{k,i} \}_{k\in\N,i\in\intset{1,q}}$, the solution to
 \begin{equation}
   \dot{z}(s) = z(s) \gamma_t(s) 
   \quad \text{where} \quad 
   \gamma_t(s) := \sum_{k,i} \frac{(-1)^k}{k!} (t-s)^k a_i(s) Y_{k,i},
 \end{equation}
 with initial data $z(0) = 1$ satisfies, thanks to \cref{thm:formal.log}, 
 \begin{equation}
   z(t) = \exp \left( \mathrm{Log}_\infty \{ \gamma_t \}(t) \right).
 \end{equation}
 Let $\Theta$ be the unique homomorphism {of algebras} from $\widehat{\mathcal{A}}(\{ Y_{k,i} \}_{k\in\N,i\in\intset{1,q}})$ to $\widehat{\mathcal{A}}(X)$ defined by
 \begin{equation}
   \Theta(Y_{k,i}) := \ad^k_{X_0}(X_i).
 \end{equation}
 Then $z_\Theta(s) = \Theta(z(s))$ satisfies on the one hand $z_\Theta(0) = 1$ and $\dot{z}_\Theta(s) = z_\Theta(s) b_t(s)$, and on the other hand $z_\Theta(t) = \exp \left( \mathrm{Log}_\infty \{ b_t \}(t) \right)$.
 
 We introduce the change of {variables} $y(s) := x(s) e^{(t-s)X_0}$. Then,
 \begin{equation}
  \dot{y}(s) = \dot{x}(s) e^{(t-s)X_0} - x(s) X_0 e^{(t-s)X_0}
  = x(s) \left( \sum_{i=1}^q a_i(s) X_i \right) e^{(t-s)X_0}
  = y(s) b_t(s).
 \end{equation}
 Hence 
 \begin{equation}
   x(t) = y(t) = y(0) z_\Theta(t)
   = x^\star e^{tX_0} \exp \left( \mathrm{Log}_\infty \{ b_t \}(t) \right),
 \end{equation}
 which concludes the proof of \eqref{eq:11formal}.
\end{proof}

{\begin{remark}
    In the above proof, $\mathcal{Z}_\infty(t,X,a)$ is defined by the logarithm of the product of two flows: the one associated with $-X_0$ and the one associated with $a(t)$. 
    It is a particular case of the construction of the chronological logarithm of the product of two flows associated with two non-autonomous vector fields, see \cite[Section~2.2]{MR579930} or \cite[p.~92]{kawski2011chronological}.
\end{remark}}

\begin{remark}
    In expansion \eqref{eq:11formal}, the choice to write $\exp (tX_0)$ to the left of the formal logarithm is arbitrary. 
    One could obtain a similar formula with $\exp (tX_0)$ to the right. 
    Depending on the application one has in mind, both choices can be helpful. 
\end{remark}

\subsubsection{Coordinates of the pseudo-first kind}

\begin{proposition} \label{Prop:coord1.1_existence}
 Let $q \in \N^*$, 
 $X = \{ X_0, X_1, \dotsc, X_q \}$ and
 $\mathcal{B}$ be a monomial basis of $\mathcal{L}(X)$.
 There exists a unique set of functionals $(\eta_b)_{b \in \mathcal{B}}$, with $\eta_b \in \CC^0\left(\R_+ \times L^1(\R_+;\K)^q ; \K\right)$, such that, 
 for every $a_i \in L^1(\R_+;\K)$ and $t \geq 0$
 \begin{equation} \label{tilde_Z_infty_decomp_coord1.1}
     \mathcal{Z}_\infty (t,X,a) = \sum_{b \in \mathcal{B}} \eta_b(t, a) b \quad \text{ in } \quad \widehat{\mathcal{L}}(X).
 \end{equation}
 Moreover, $\eta_{X_0}=0$ and the functionals $\eta_b$ are ``causal'' in the sense that, for every $t \geq 0$, $\eta_b(t,a)$ only depends on the restrictions of the functions $a_i$ to $[0,t]$.
\end{proposition}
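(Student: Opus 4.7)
The proof will mirror closely the proof of \cref{Prop:coord1.0_existence}, the only substantive novelty being the control of the infinite sum over the exponents $k_1,\dotsc,k_r$ appearing in \eqref{Loginfini(bt)}.

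First, I would invoke \cref{thm:Magnus1.0_formel} to ensure that $\mathcal{Z}_\infty(t,X,a)$ belongs to $\widehat{\mathcal{L}}(X)$: each summand on the right-hand side of \eqref{Loginfini(bt)} is a nested Lie bracket of elements $\ad_{X_0}^{k_j}(X_{i_j})\in\mathcal{L}(X)$, so the partial sums lie in $\mathcal{L}(X)$ and the whole expression is a well-defined formal Lie series. Since $\mathcal{B}$ is a monomial basis of $\mathcal{L}(X)$, each homogeneous component of $\mathcal{Z}_\infty(t,X,a)$ admits a unique finite decomposition on $\mathcal{B}$, which defines $\eta_b(t,a)$ uniquely and proves the decomposition \eqref{tilde_Z_infty_decomp_coord1.1}.

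The key step is the continuity (and the implicit well-definedness of each $\eta_b$ as a scalar, rather than a formal sum). Fix $b\in\mathcal{B}$ with multi-homogeneity $(n_0(b),n_1(b),\dotsc,n_q(b))$. Because the bracket $[\dotsb[\ad_{X_0}^{k_r}(X_{i_r}),\dotsc],\ad_{X_0}^{k_1}(X_{i_1})]$ has exactly $k_1+\dotsb+k_r$ occurrences of $X_0$ and, for each $i\in\intset{1,q}$, exactly $\#\{j\,;\,i_j=i\}$ occurrences of $X_i$, its decomposition along $\mathcal{B}$ can only involve basis elements of the same multi-homogeneity. Consequently, the only terms of \eqref{Loginfini(bt)} that can contribute to $\eta_b$ are those with
\begin{equation}
r=n(b),\qquad k_1+\dotsb+k_r=n_0(b),\qquad \#\{j\,;\,i_j=i\}=n_i(b)\text{ for }i\in\intset{1,q},
\end{equation}
which leaves a \emph{finite} set of indices $(r,m,\mathbf{r},k_1,\dotsc,k_r,i_1,\dotsc,i_r)$. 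For each such term, the scalar coefficient is the integral over the compact domain $\mathcal{T}_\mathbf{r}(t)$ of a product of bounded polynomials in $(t-\tau_j)$ against $a_{i_1}(\tau_1)\dotsm a_{i_r}(\tau_r)$, which depends continuously on $(t,a)\in\R_+\times L^1(\R_+;\K)^q$ by standard continuity of parameter-dependent Lebesgue integrals. Summing finitely many such continuous functionals yields $\eta_b\in \CC^0(\R_+\times L^1(\R_+;\K)^q;\K)$.

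It remains to observe that $\eta_{X_0}=0$ and that the functionals are causal. For the former, every summand in \eqref{Loginfini(bt)} has at least one factor $X_{i_j}$ with $i_j\in\intset{1,q}$, hence its expansion on $\mathcal{B}$ only involves basis elements with strictly positive $(n_1+\dotsb+n_q)$-degree, so the coefficient along $X_0$ vanishes. Causality is immediate from $\mathcal{T}_\mathbf{r}(t)\subset[0,t]^r$, so each $\eta_b(t,a)$ only sees the restrictions $a_i|_{[0,t]}$. The main obstacle, as anticipated, is precisely the finiteness argument in step three: one must carefully check that the infinite sum over $k_1,\dotsc,k_r\in\N$ in \eqref{Loginfini(bt)} collapses to a finite sum once projected onto a fixed monomial basis element, which is where the multi-homogeneity of the free Lie algebra plays a crucial role.
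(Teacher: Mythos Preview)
Your proof is correct and takes essentially the same approach as the paper: both rely on the observation that the nested bracket in \eqref{Loginfini(bt)} is homogeneous of degree $k_1+\dotsb+k_r$ in $X_0$ and of degree $r$ in $\{X_1,\dotsc,X_q\}$, so only finitely many terms contribute to any fixed $b\in\mathcal{B}$. The paper packages this by defining the finite sums $\mathcal{Z}_\infty^{r,\nu}(t,X,a)$ (the part of \eqref{Loginfini(bt)} with $r$ fixed and $k_1+\dotsb+k_r=\nu$) and decomposing each on $\mathcal{B}_{r,\nu}:=\{b\in\mathcal{B}\,;\,n(b)=r,\ n_0(b)=\nu\}$, whereas you fix $b$ and use the slightly finer multi-homogeneity $(n_0(b),n_1(b),\dotsc,n_q(b))$; the difference is purely organizational.
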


\begin{proof}
For every $r\in \N^*$ and $\nu \in \N$ we introduce the finite sum of brackets \begin{equation} \label{def:Irnu}
\begin{aligned}
\mathcal{Z}_\infty^{r,\nu}(t,X,a)
=\sum \frac{(-1)^{m-1}}{mr}
 \int_{{\Delta^{\mathbf{r}}(t)}} 
  \frac{(\tau_1-t)^{k_1}}{k_1!} & \dotsm \frac{(\tau_r-t)^{k_r}}{k_r!} 
  a_{i_1}(\tau_1) \dotsm a_{i_r}(\tau_r) \dd\tau 
  \\ & 
  {
  [ \dotsb [ \ad_{X_0}^{k_1}(X_{i_1}), \ad_{X_0}^{k_2}(X_{i_2}) ], \dotsc  \ad_{X_0}^{k_r}(X_{i_r}) ],}
\end{aligned}
\end{equation}
where the sum is taken over $m \in \intset{1,r}$, $\mathbf{r} \in \N^m_r$, $k_1,\dotsc,k_r \in \N$ such that $k_1+\dotsb+k_r=\nu$ and $i_1,\dotsc,i_r \in \intset{1,q}$. 
For each term in this sum, the bracket 
 \begin{equation}
    {
 [ \dotsb [ \ad_{X_0}^{k_1}(X_{i_1}), \ad_{X_0}^{k_2}(X_{i_2}) ], \dotsc  \ad_{X_0}^{k_r}(X_{i_r}) ]}
 \end{equation} 
 has a unique expansion on the basis
$\mathcal{B}_{r,\nu}=\{b\in\mathcal{B}; \enskip n(b)=r \textrm{ and } n_0(b)=\nu \}$. By summing these expansions we obtain causal functions $(\eta_b)_{b\in\mathcal{B}_{r,\nu}}$ in $\CC^0\left(\R_+ \times L^1(\R_+;\K)^q ; \K\right)$ such that the following equality holds in  $\mathcal{L}(X)$
\begin{equation} \label{In=sum_coord_pseudo}
\mathcal{Z}_\infty^{r,\nu}(t,X,a)
=\sum_{b\in\mathcal{B}_{r,\nu}} \eta_b(t,a) b.
\end{equation}
By summing these relations, we get (\ref{tilde_Z_infty_decomp_coord1.1}).
\end{proof}

\begin{definition}[Coordinates of the pseudo-first kind]
 \label{def:coord1.1}
 We call the functionals $\eta_b$ \emph{coordinates of the pseudo-first kind} associated to the (monomial) basis $\mathcal{B}$ of $\mathcal{L}(X)$, by analogy with coordinates of the first kind.
\end{definition}

\subsubsection{Structure constants and estimates for the coordinates}
\label{sec:structure-constants}

At the formal level, series such as \eqref{tilde_Z_infty_decomp_coord1.1} make sense. 
However, in the sequel, we will need to give a meaning to such series where the indeterminates are replaced by true objects.
To make sure that the resulting series converge, it will be necessary to have estimates on the coordinates of the pseudo-first kind. 
In this paragraph, we suggest a criterion based on the structure constants of~$\mathcal{L}(X)$ relative to the underlying monomial basis to obtain such estimates.

\begin{definition}[Structure constants]
    Let $\mathcal{B}$ be a basis of $\mathcal{L}(X)$. For every $a, b \in \mathcal{B}$, since $[a, b] \in \mathcal{L}(X)$, it can be written as a finite linear combination of basis elements, say
    \begin{equation}
        [a, b] = \sum_{c \in \mathcal{B}} \gamma^c_{a,b} c,
    \end{equation}
    where the coefficients $\gamma^c_{a,b} \in \K$ and only a finite number of them are non-zero. The set of these coefficients are called the structure constants of $\mathcal{L}(X)$ relative to the basis $\mathcal{B}$. 
\end{definition}

\begin{definition}[Geometric growth]
    Let $X$ be a finite set and $\mathcal{B}$ be a monomial basis of $\mathcal{L}(X)$.
    We say that $\mathcal{B}$ has \emph{geometric growth} when there exists $\Gamma \geq 1$ such that, for every $b_1, b_2 \in \mathcal{B}$,
    \begin{equation}
        \sum_{c \in  \mathcal{B}} |\gamma_{b_1,b_2}^c| 
        \leq \Gamma^{|b_1|+|b_2|}.
    \end{equation}
\end{definition}

\begin{definition}[Asymmetric geometric growth]
    \label{def:asym-geom-growth}
    Let $q \in \N^*$, $X = \{ X_0, X_1, \dotsc, X_q \}$ and $\mathcal{B}$ be a monomial basis of $\mathcal{L}(X)$.
    We say that $\mathcal{B}$ has \emph{geometric growth with respect to $X_0$} when, for every $k \in \N$, there exists $\Gamma(k) \geq 1$ such that, for every $b_1, b_2 \in \mathcal{B}$ with $n(b_1) + n(b_2) \leq k$,
    \begin{equation} \label{eq:growth.asym}
        \sum_{c \in  \mathcal{B}} |\gamma_{b_1,b_2}^c| 
        \leq \Gamma(k)^{|b_1|+|b_2|}.
    \end{equation}
\end{definition}

Asymmetric geometric growth is a weaker notion than geometric growth (which can be seen as asymmetric geometric growth with a constant $\Gamma$ {independent of} $k$).
These definitions therefore lead to the following algebraic open problem:.

\begin{open}
    Which monomial bases $\mathcal{B}$ of $\mathcal{L}(X)$ have (asymmetric) geometric growth?
\end{open}

{
\begin{remark} \label{rk:A1}
    A family of examples of monomial bases of $\mathcal{L}(X)$ is given by Hall bases (see \cref{sub:lazard}, and in particular \cref{def:ghb}). 
    In the paper \cite{A1} dedicated to studying the growth of structure constants for Hall bases of $\mathcal{L}(X)$, we provide examples of Hall bases of $\mathcal{L}(X)$ that have geometric growth (in particular, the classical examples of length-compatible Hall bases and the Lyndon basis have geometric growth, see Theorem 1.5 and Theorem 1.6 in the cited work).
    More importantly, we show that every Hall basis has asymmetric geometric growth (see Theorem 1.9 in the cited work).
\end{remark}
}

For such bases, we can prove nice estimates for the coordinates of the pseudo-first kind. We start with an estimate concerning the decomposition of the Lie brackets involved in \eqref{def:Irnu}.

{
\begin{lemma} \label{thm:b1br.b}
    Let $q\in\N^*$, $X=\{X_0,X_1,\dotsc,X_q\}$, $\mathcal{B}$ be a monomial basis of $\mathcal{L}(X)$ with geometric growth with respect to $X_0$.
    For every $r \geq 1$, there exists $C(r) \geq 1$ such that, for every $\ell \geq 2$, $b_1,\dots,b_{\ell} \in \mathcal{B} \setminus\{X_0\}$ with $n(b_1)+\dots+n(b_{\ell})\leq r$ and $b \in \mathcal{B}$,
    \begin{equation}
        \left|
        \left\langle [ \dotsb [ b_1, b_2 ], \dotsc  b_{\ell} ] , b \right\rangle_{\mathcal{B}}
        \right| 
        \leq C(r)^{|b|},
    \end{equation}
    where the bra-ket denotes the component of the Lie bracket along $b$ in its decomposition on $\mathcal{B}$.
\end{lemma}
\begin{proof}
    Any $a \in \mathcal{L}(X)$ can be written as a linear combination of basis elements, say $a=\sum_{c\in\mathcal{B}} \alpha_c^a c$, where the coefficients $\alpha_c^a \in \mathbb{K}$ and only a finite number of them are non-zero. We endow $\mathcal{L}(X)$ with the norm $\|a\|_{\mathcal{B}}:=\sum_{c\in\mathcal{B}}|\alpha_c^a|$. Then, \cref{def:asym-geom-growth} gives, for every $b_1, b_2 \in \mathcal{B}$, $\|[b_1,b_2]\|_{\mathcal{B}} \leq \Gamma(n(b_1)+n(b_2))^{|b_1|+|b_2|}$.
    We prove by induction on $\ell \geq 2$ that, for every $b_1,\dots ,b_{\ell} \in \mathcal{B} \setminus\{X_0\}$, 
    \begin{equation}
        \| [ \dotsb [ b_1, b_2 ], \dotsc  b_{\ell} ] \|_{\mathcal{B}} \leq \Gamma\left(n(b_1)+\dots+n(b_{\ell})\right)^{(\ell-1)(|b_1|+\dots+|b_\ell|)}
    \end{equation}
    which implies \cref{thm:b1br.b} with $C(r)=\Gamma(r)^{r-1}$.
    The result for $\ell=2$ is already known. 
    Let $\ell \geq 2$ and $b_1,\dots,b_{\ell+1}\in\mathcal{B} \setminus\{X_0\}$. Then
    $[ \dotsb [ b_1, b_2 ], \dotsc  b_{\ell} ] = \sum_{d\in\mathcal{B}} \alpha_d d $ 
    where the sum is finite and $\sum_{d\in\mathcal{B}}|\alpha_d| \leq \Gamma(n(b_1)+\dots+n(b_{\ell}))^{(\ell-1)(|b_1|+\dots+|b_{\ell}|)}$. Then
    \begin{equation}
        [[ \dotsb [ b_1, b_2 ], \dotsc  b_{\ell}],b_{\ell+1}] = \sum_{d\in\mathcal{B}} \alpha_d [d,b_{\ell+1}]=\sum_{d\in\mathcal{B}} \alpha_d \sum_{c\in\mathcal{B}} \gamma_{d,b_{\ell+1}}^c c =\sum_{c\in\mathcal{B}} \left(  \sum_{d\in\mathcal{B}} \alpha_d\gamma_{d,b_{\ell+1}}^c\right) c,
    \end{equation}
    where the sums are finite and indexed by $d\in\mathcal{B}$ such that $n(d)+n(b_{\ell+1})=n(b_1)+\dots+n(b_{\ell+1})$ and $|d|+|b_{\ell+1}|=|b_1|+\dots+|b_{\ell+1}|$
    thus
    \begin{equation}
        \begin{split}
     \| [[ \dotsb & [ b_1, b_2 ], \dotsc b_{\ell}],b_{\ell+1}] \|_{\mathcal{B}} = \sum_{c\in\mathcal{B}} \left|  \sum_{d\in\mathcal{B}} \alpha_d\gamma_{d,b_{\ell+1}}^c\right|  \leq  \sum_{d\in\mathcal{B}} | \alpha_d | \sum_{c\in\mathcal{B}} |\gamma_{d,b_{\ell+1}}^c| \\ & \leq \Gamma(n(b_1)+\dots+n(b_{\ell}))^{(\ell-1)(|b_1|+\dots+|b_{\ell}|)} \Gamma(n(b_1)+\dots+n(b_{\ell+1}))^{|b_1|+\dots+|b_{\ell+1}|}
    \end{split}    
    \end{equation}
    which gives the conclusion.
\end{proof}
}

\begin{proposition} \label{thm:etab-geom}
    Let $q\in\N^*$, $X=\{X_0,X_1,\dotsc,X_q\}$, $\mathcal{B}$ be a monomial basis of $\mathcal{L}(X)$ with geometric growth with respect to $X_0$.
    Then, for every $M\in\N^*$, there exists $C_M>0$ such that, for every $T\geq 0$, $u\in L^1((0,T);\K^q)$, $b\in\mathcal{B}$ with $n(b)\leq M$ and $t \in [0,T]$,
    \begin{equation} \label{eta_b|b|!<geom}
        |\eta_b(t,u)| \leq \frac{C_M^{|b|}}{|b|!} t^{n_0(b)} \|u\|_{L^1(0,t)}^{n(b)}.
    \end{equation}
\end{proposition}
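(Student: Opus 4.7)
The plan is to return to the explicit construction of $\eta_b$ inside the proof of Proposition~\ref{Prop:coord1.1_existence}. Fix $b \in \mathcal{B}$ with $r := n(b) \leq M$ and $\nu := n_0(b)$, so $|b| = r + \nu$. By homogeneity (the basis is monomial), only the summand $\mathcal{Z}_\infty^{r,\nu}(t,X,u)$ of \eqref{tilde_Z_infty_decomp_coord1.1} contributes to $\eta_b(t,u)$, which is recovered by extracting the $b$-component in the basis expansion of each of the nested brackets
\begin{equation*}
    B_{k,i} := [\dotsb[\ad_{X_0}^{k_r}(X_{i_r}), \ad_{X_0}^{k_{r-1}}(X_{i_{r-1}})],\dotsc, \ad_{X_0}^{k_1}(X_{i_1})]
\end{equation*}
appearing in \eqref{def:Irnu}. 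Lemma~\ref{thm:b1br.b} immediately provides $|\langle B_{k,i}, b \rangle_{\mathcal{B}}| \leq C(r)^{|b|} \leq C(M)^{|b|}$.

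To bound the associated integral, I would first use $|\tau_l - t|^{k_l} \leq t^{k_l}$ and then exploit the product-of-simplices structure of $\mathcal{T}_{\mathbf{r}}(t)$. Swapping sum and integral over $i_1,\dotsc,i_r \in \intset{1,q}$ turns the integrand into $\prod_l g(\tau_l)$ with $g(\tau) := \sum_i |a_i(\tau)|$, whose $L^1$ norm is controlled by $\|u\|_{L^1(0,t)}$ up to a fixed constant. On each of the $m$ ordered simplex blocks of dimension $\mathbf{r}_j$, the classical symmetry argument yields a factor $\|g\|_{L^1(0,t)}^{\mathbf{r}_j}/\mathbf{r}_j!$; multiplying over $j$ produces $\|u\|_{L^1(0,t)}^r/\prod_j \mathbf{r}_j!$. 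Summing the time-polynomial factors over $(k_1,\dotsc,k_r)$ with $\sum k_l = \nu$ via the multinomial identity gives $(rt)^\nu/\nu!$, while summing over $m \in \intset{1,r}$ and $\mathbf{r} \in \N^m_r$ using the crude bounds $\frac{1}{mr\prod_j \mathbf{r}_j!} \leq 1$ and $|\N^m_r| = \binom{r-1}{m-1}$ produces an overall factor $\leq 2^{r-1} \leq 2^M$. The combined intermediate estimate reads
\begin{equation*}
    |\eta_b(t,u)| \leq K_1(M)\, C(M)^{|b|}\, \frac{r^\nu}{\nu!}\, t^\nu \|u\|_{L^1(0,t)}^r
\end{equation*}
for some $K_1(M) > 0$.

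The decisive final step, and the main obstacle, consists in recasting $r^\nu/\nu!$ as $C_M^{|b|}/|b|!$. Writing $|b|!/\nu! = r!\binom{|b|}{\nu}$ and invoking the elementary bounds $\binom{|b|}{\nu} \leq 2^{|b|}$, $r^\nu \leq M^\nu \leq M^{|b|}$ and $r! \leq M!$ (all valid precisely because $r \leq M$), the prefactor reorganizes as $K_2(M)\,(2MC(M))^{|b|}/|b|!$. Absorbing every $M$-dependent constant into one sufficiently large $C_M$ (e.g.\ $C_M := 2MC(M) \cdot K_1(M) K_2(M) M!$, possibly enlarged so that the tightest case $|b|=1$ is covered) delivers the claimed bound~\eqref{eta_b|b|!<geom}. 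The hypothesis $n(b) \leq M$ is essential here: without it, the factor $r^\nu$ would grow faster than any exponential $C^{|b|}$ uniform in $b$, and the target form with $|b|!$ in the denominator would be unreachable with a constant independent of $b$.
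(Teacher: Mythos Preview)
Your proposal is correct and follows essentially the same approach as the paper: start from the explicit formula~\eqref{def:Irnu}, fix $r=n(b)$ and $\nu=n_0(b)$ by homogeneity, invoke Lemma~\ref{thm:b1br.b} for the bracket coefficients, bound the time integrals, and finish with elementary combinatorics to reshape the result into $C_M^{|b|}/|b|!$. The only differences are organizational: the paper bounds each term individually (using $\int_{\mathcal{T}_{\mathbf r}(t)}|u_{j_r}\dotsm u_{j_1}|\leq\|u\|_{L^1}^r$ without the simplex factors) and then counts the number of terms, applying~\eqref{factorielle_multiple} to pass from $\prod_l k_l!$ to $|b|!$; you instead sum exactly over $(k_1,\dotsc,k_r)$ via the multinomial identity to get $r^\nu/\nu!$ and then convert via $\binom{|b|}{\nu}\leq 2^{|b|}$. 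Both routes are equivalent and yield a constant of the same shape.
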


\begin{proof}
    We may assume that $(C(r))_{r\in\N^*}$ given by \cref{thm:b1br.b} is non-decreasing. Then, for every $i_1, \dotsc, i_r \in \intset{1,q}$ and $k_1, \dotsc, k_r \in \N$, for every $b \in \mathcal{B}$,
    \begin{equation} \label{bra-ket:adX0kXi}
        \left|
        \left\langle {[ \dotsb [ \ad_{X_0}^{k_1}(X_{i_1}), \ad_{X_0}^{k_2}(X_{i_2}) ], \dotsc  \ad_{X_0}^{k_r}(X_{i_r}) ]} , b \right\rangle_{\mathcal{B}}
        \right| 
        \leq C(r)^{|b|}.
    \end{equation}
    {
    Indeed, for each $j \in \intset{1,r}$, there exists $b_j \in \mathcal{B}$ with $n(b_j) = 1$ and $|b_j| = k_j + 1$ such that $\ad_{X_0}^{k_j}(X_{i_j}) = \pm b_j$ in $\mathcal{L}(X)$. 
    Indeed, the homogeneous part of $\mathcal{L}(X)$ containing $k_j$ times $X_0$ and $X_{i_j}$ once is of dimension one. 
    }
    
Let $M\in\N^*$ and $b\in\mathcal{B}$ be such that $n(b)\leq M$.
We deduce from (\ref{Loginfini(bt)}) that
\begin{equation}
\begin{split} \label{sum_etab}
\eta_b(t,u)=
\sum \frac{(-1)^{m-1}}{mr}
 \int_{{\Delta^{\mathbf{r}}(t)}} &
  \frac{(\tau_1-t)^{k_1}}{k_1!} \dotsm \frac{(\tau_r-t)^{k_r}}{k_r!} u_{i_1}(\tau_1) \dotsm u_{i_r}(\tau_r) \dd\tau 
  \\ & 
  \left\langle {[ \dotsb [ \ad_{X_0}^{k_1}(X_{i_1}), \ad_{X_0}^{k_2}(X_{i_2}) ], \dotsc  \ad_{X_0}^{k_r}(X_{i_r}) ]} , b \right\rangle
\end{split}
\end{equation}
where the sum is taken over $r \in \intset{1, \infty}$, $m \in \intset{1,r}$, $\mathbf{r} \in \N^m_r$, $k_1,\dotsc,k_r \in \N$ and $i_1,\dotsc,i_r \in \intset{1,q}$. 
If the summand bra-ket in (\ref{sum_etab}) does not vanish, then $r=n(b)$ and $k_1+\dotsb+k_r=n_0(b)$. Thus the sum in (\ref{sum_etab}) is taken over the finite set $r=n(b)$, 
$m \in \llbracket 1, n(b) \rrbracket$, 
$k_1,\dotsc,k_r \in \N$ such that $k_1+\dotsb+k_r=n_0(b)$
and $i_1,\dotsc, i_r \in \intset{1,q}$,  whose cardinal is bounded by $M  2^{|b|} q^{M}$. Moreover, for every $r,m,k_1,\dotsc,k_r,i_1,\dotsc,i_r$ in this set, the associated term in 
(\ref{sum_etab}) is bounded, thanks to (\ref{bra-ket:adX0kXi}), by
 \begin{equation} \frac{t^{k_1}}{k_1!} \dotsm \frac{t^{k_r}}{k_r!} \|u\|_{L^1(0,t)}^r C(r)^{|b|} \leq t^{n_0(b)} \|u\|_{L^1}^r \left(2^{r} C(r)\right)^{|b|} \frac{n(b)!}{|b|!}\end{equation}
thanks to (\ref{factorielle_multiple}). Thus
 \begin{equation}|\eta_b(t,u)|  \leq \frac{1}{|b|!} M! M  q^{M} \left(2^{M+1} C(M)\right)^{|b|} t^{n_0(b)} \|u\|_{L^1}^{n(b)}\end{equation}
which gives the conclusion with, for instance,
$C_M := M! M  q^{M} 2^{M+1} C(M)$.
\end{proof}

\subsection{Infinite product, coordinates of the second kind}
\label{Sec:Lazard}

In this section, we present an expansion for the formal power series $x(t)$ solution to (\ref{formalODE})
as a product of exponentials of the members of a \GHB\ of $\mathcal{L}(X)$, multiplied by coefficients that have simple expressions as iterated integrals, called \emph{coordinates of the second kind}. 
This infinite product is an extension, suggested in \cite{edd33d15c4b947f39991805f8c1d726f}, of Sussmann's infinite product on length-compatible Hall bases \cite{MR935387} to all \GHBS\ {(understood in the generalized sense of Viennot \cite[Theorem~1.2]{MR516004} or Shirshov \cite[Definition~1]{zbMATH03234480})}.

\subsubsection{Lazard sets, Hall sets and \GHBS}\label{sub:lazard}

{We start by defining Lazard sets and Hall sets, which are two equivalent notions, as proved by Viennot in \cite[Corollary 1.1]{MR516004}.
They lead to the essential notion of Hall bases (see \cref{def:ghb}).} 

\begin{definition}[Lazard set] \label{Def:Laz}
    A \emph{Lazard set} is a subset $\bset$ of $\Br(X)$, totally ordered by a relation $<$ and such that, for every $M \in\N^*$, the set $\bset_{\intset{1,M}}$ of elements of $\bset$ with length at most $M$, {labeled as} $\bset_{\intset{1,M}}=\{b_1, \dotsc , b_{k+1}\}$ with $k\in\N$ and $b_1 < \dotsb < b_{k+1}$ satisfies
    \begin{equation} \label{Laz.1} 
    \left\lbrace\begin{aligned}
& b_1 \in Y_0:=X, \\
& b_2 \in Y_1:=\{ \ad_{b_1}^j(v) ; j \in \N, v \in Y_0\setminus \{b_1\} \}, \\
& \dots \\
& b_{k+1} \in Y_k:=\{ \ad_{b_k}^j(v) ; j \in \N, v \in Y_{k-1} \setminus\{b_k\} \}
    \end{aligned}\right.
    \end{equation}
    and
    \begin{equation} \label{Laz.2}
    \bset_{\intset{1,M}} \cap Y_k =\{b_{k+1}\},
    \end{equation}
    where condition (\ref{Laz.2}) can equivalently be written
    \begin{equation} \label{Laz.3}
    \bset_{\intset{1,M}} \cap Y_{k+1} =\emptyset,
    \end{equation}
    where $Y_{k+1}:=\{ \ad_{b_{k+1}}^j(v) ; j \in \N, v \in Y_{k} \setminus\{b_{k+1}\} \}$.
\end{definition}

The elements $\ad_{b_\ell}^j(v)$ for $\ell\in\{0,\dotsc,k+1\}$, $j\in\N$ and $v\in Y_{\ell-1}\setminus\{b_\ell\}$ are all different in $\Br(X)$ (identify their left and right factors iteratively) and all belong to $\bset$.

\begin{definition}[Hall set] \label{Def2:Laz}
    A \emph{Hall set} is a subset $\bset$ of $\Br(X)$, totally ordered by a relation $<$ and such that
\begin{itemize}
    \item $X \subset \bset$,
    \item for $b = {(b_1,b_2)} \in \Br(X)$, $b \in \bset$ iff $b_1, b_2 \in \bset$, $b_1 < b_2$ and either $b_2 \in X$ or $\lambda(b_2) \leq b_1$, 
    \item for every $b_1, b_2 \in \bset$ such that ${(b_1,b_2)} \in \bset$ then $b_1 < {(b_1,b_2)}$.
\end{itemize}
\end{definition}

When $b={(b_1,(b_3,b_4))} \in \bset$ then $b_1$ is ``sandwiched'' in between $b_3$ and $b$, since $b_3 \leq b_1 < b$.

\begin{remark}
    \label{rk:hall-construction}
    A Hall set can be built by induction on the length. One starts with the set $X$ as well as an order on it. To find all Hall monomials with length $n$ given those of smaller length, one adds first all ${(b_1, b_2)}$ with $b_1 \in \bset$, $|b_1|=n-1$, $b_2 \in X$ and $b_1 < b_2 $. Then for each bracket $b_2 = {(b_2', b_2'')} \in \bset$ with length $|b_2| < n$ one adds all the ${(b_1, b_2)}$ with $b_1 \in \bset$ with $|b_1|=n-|b_2|$ and $b_2' \leq b_1 < b_2$. Finally, one inserts the newly generated monomials of degree $n$ into an ordering, maintaining the condition that $b_1 < {(b_1, b_2)}$. 
\end{remark}

Viennot proves in \cite[Corollary 1.1]{MR516004} that a subset $\bset$ of $\Br(X)$ is a Lazard set iff it is a Hall set.
He also proves in \cite[Proposition 1.1 and Theorem 1.1]{MR516004} that properties (\ref{Laz.1}) and (\ref{Laz.2}) ensure that $\eval(\bset)$ is a linearly independent and generating family of $\mathcal{L}(X)$. 
{This leads to the following definition.}

\begin{definition}[Hall basis] \label{def:ghb}
    {Given $\bset$ a Hall set (or equivalently a Lazard set), $\basis := \eval(\bset)$ is a basis of $\mathcal{L}(X)$.
    Such bases are call \emph{Hall bases}.}
\end{definition}

\begin{remark}
    Historically, {such} bases where introduced by Marshall Hall in \cite{zbMATH03059664}, based on ideas of Philip Hall in \cite{zbMATH03010343}. 
    In his historical narrower definition, the third condition in \cref{Def2:Laz} was replaced by the stronger condition: for every $b_1, b_2 \in \bset$, $b_1 < b_2 \Rightarrow |b_1| \leq |b_2|$.
    To avoid confusion with the generalized definition, we name them \emph{length-compatible Hall bases} in the sequel.
\end{remark}

{Given a Hall set $\bset$, the evaluation mapping $\eval$ is one to one from $\bset$ to the associated Hall basis~$\basis$, so that the elements of the basis (belonging to $\mathcal{L}(X)$) can be identified with the bracket of $b \in \bset \subset \Br(X)$ of which they are the evaluation. 
We will use this identification in the sequel when no confusion is possible.}

Two famous families of \GHBS\ of $\mathcal{L}(X)$ are the Chen-Fox-Lyndon basis (see \cite[Chapter~1]{MR516004}) and the historical length-compatible Hall bases, for which $b_1<b_2 \Rightarrow|b_1|\leq|b_2|$.

\begin{example}
    For instance, with $X=\{X_1,X_2\}$, the elements with length at most $4$ of each Hall set $\bset$ of $\mathcal{L}(X)$ with a length-compatible order $<$ such that $X_1<X_2$ are: $X_1$, $X_2$, {$(X_1,X_2)$, $\ad_{X_1}^2(X_2)$, $(X_2,(X_1,X_2))$, $\ad_{X_1}^3(X_2)$ and $(X_2,\ad_{X_1}^2(X_2))$, $\ad_{X_2}^2((X_1,X_2))$}. Note that, however, {$(X_1,(X_2,(X_1,X_2)))$} does not belong to $\bset$ because ${\lambda((X_2,(X_1,X_2)))}=X_2$ is not smaller than $X_1$, and the following equality holds in $\mathcal{L}(X)$
    \begin{equation}
        [X_1,[X_2,[X_1,X_2]]] =
        [[X_1,X_2],[X_1,X_2]]+[X_2,[X_1,[X_1,X_2]]]
        =[X_2,\ad_{X_1}^2(X_2)]
    \end{equation}
    This illustrates how \Cref{Def2:Laz} prevents elements from $\Br(X)$, whose evaluations in $\mathcal{L}(X)$ are linked by Jacobi relations, to appear simultaneously in $\bset$.
\end{example}

{\begin{remark}
    Let $X := \{ X_0, X_1 \}$ and $B \subset \Br(X)$ be a Hall set with an order such that $X_0 < X_1$.
    The definition of a Hall set implies that, for every $k \in \N$, $\ad^k_{X_0}(X_1) \in B$.
    Moreover, these are all the elements of $B$ containing $X_1$ exactly once.
    Since $\eval(B)$ is a basis of $\mathcal{L}(X)$, $\eval(B) \cap \dot{S}_1$ is a basis of $\dot{S}_1$ and this provides an alternative proof of \cref{lem:S1-basis}.
\end{remark}}

\subsubsection{Infinite product on a \GHB}

\begin{definition}[Infinite product]\label{Def:infinite_prod}
Let $J$ be a totally ordered set and $(S^j)_{j\in J}$ be a family of $\widehat{\mathcal{A}}(X)$ such that
\begin{itemize}
    \item for every $j\in J$, $\langle S^j , 1 \rangle = 1$
    \item for every $\sigma \in I^*$ with $\sigma \neq \emptyset$, the set $\{ j \in J ; \langle S^j , X_\sigma \rangle \neq 0 \}$ is finite.
\end{itemize}
The infinite product $\underset{j\in J}{\overset{\leftarrow}{\Pi}} S^j$ is the element of $\widehat{\mathcal{A}}(X)$ defined by
\begin{equation}
    \underset{j\in J}{\overset{\leftarrow}{\Pi}} S^j = \sum_{\sigma \in I^*} P_\sigma X_\sigma,
\end{equation}
where $P_\emptyset = 1$ and $P_\sigma$ is the finite sum
\begin{equation} \label{def:P_sigma}
 P_\sigma :=
 \sum_{n=0}^{|\sigma|}
 \sum_{\substack{\sigma_1,\dotsc,\sigma_n \in I^*, \\
X_{\sigma_1} \dotsb X_{\sigma_n} = X_\sigma
}}
\sum_{\substack{j_1,\dotsc, j_n \in J, \\  j_1>\dotsb>j_n}}
\langle S^{j_1} , X_{\sigma_1} \rangle \dotsm \langle S^{j_n}, X_{\sigma_n} \rangle.
\end{equation}
\end{definition}

The following lemma is the key point to generalize rigorously Sussmann's infinite product on length-compatible Hall bases, to {all} \GHBS.

\begin{lemma} \label{Prop:inf_prod_Laz}
 Let $\mathcal{B}$ be a \GHB\ and $(\alpha_b)_{b\in\mathcal{B}}$ be a family of $\K$. The infinite product
 $\underset{b \in \mathcal{B}}{\overset{\leftarrow}{\Pi}} e^{\alpha_b b}$
is well defined in $\widehat{\mathcal{A}}(X)$. Moreover, for every $\sigma \in I^*$,
\begin{equation} \label{Inf_prod_coeff}
\left\langle \underset{b \in \mathcal{B}}{\overset{\leftarrow}{\Pi}} e^{\alpha_b b}, X_{\sigma} \right\rangle = \left\langle \underset{b \in \mathcal{B}_{\llbracket 1, |\sigma|\rrbracket}}{\overset{\leftarrow}{\Pi}} e^{\alpha_b b}, X_{\sigma} \right\rangle
\end{equation}
where $\mathcal{B}_{\llbracket 1, |\sigma|\rrbracket}$ is ordered by the induced order of $\mathcal{B}$.
\end{lemma}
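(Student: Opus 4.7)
My plan is to deduce the lemma directly from \cref{Def:infinite_prod} together with the grading of $\mathcal{A}(X)$. Neither the Hall nor the Lazard structure of $\mathcal{B}$ will actually be used; only the facts that each $b \in \mathcal{B}$ is a formal bracket (hence of well-defined length $|b| \geq 1$) and that $\mathcal{B}_{\intset{1,M}}$ is finite for every $M$ play a role. The latter holds because $\Br(X)$ contains only finitely many elements of length at most $M$ when $X$ is finite.

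First, to verify that $\overset{\leftarrow}{\Pi}_{b \in \mathcal{B}} e^{\alpha_b b}$ is well-defined, I would check the two hypotheses of \cref{Def:infinite_prod}. The first is immediate, as $\langle e^{\alpha_b b}, 1 \rangle = 1$ for every $b$. For the second, I would use the graded structure: since $b \in \mathcal{L}_{|b|}(X) \subset \mathcal{A}_{|b|}(X)$, one has $(\alpha_b b)^m \in \mathcal{A}_{m|b|}(X)$, so $\langle e^{\alpha_b b}, X_\sigma \rangle$ reduces to the (finite) sum of the contributions $\alpha_b^m \langle b^m, X_\sigma \rangle / m!$ over the $m \geq 0$ with $m|b| = |\sigma|$. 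In particular, it vanishes whenever $|b| > |\sigma|$, so $\{b \in \mathcal{B} : \langle e^{\alpha_b b}, X_\sigma \rangle \neq 0 \} \subset \mathcal{B}_{\intset{1,|\sigma|}}$, a finite set.

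Second, to obtain \eqref{Inf_prod_coeff}, I would apply formula \eqref{def:P_sigma} to $S^b := e^{\alpha_b b}$. The resulting coefficient is a finite sum indexed by integers $n \leq |\sigma|$, by decompositions $X_{\sigma_1} \dotsb X_{\sigma_n} = X_\sigma$ with each $\sigma_i$ nonempty, and by strictly decreasing tuples $b_1 > \dotsb > b_n$ in $\mathcal{B}$. In any nonvanishing term, each factor $\langle e^{\alpha_{b_i} b_i}, X_{\sigma_i} \rangle$ is nonzero, which by the argument above forces $|b_i| \leq |\sigma_i| \leq |\sigma|$, i.e.\ $b_i \in \mathcal{B}_{\intset{1,|\sigma|}}$. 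The exact same expression is therefore obtained when the infinite product over $\mathcal{B}$ is replaced by the finite product over $\mathcal{B}_{\intset{1,|\sigma|}}$ equipped with the induced order, which is precisely \eqref{Inf_prod_coeff}.

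The statement is essentially a bookkeeping exercise built on the degree filtration of $\mathcal{A}(X)$, and I do not anticipate any serious obstacle. The only mildly delicate point is to make explicit the convention in \eqref{def:P_sigma} under which the pieces $\sigma_i$ are required to be nonempty (otherwise an empty $\sigma_i$ would allow arbitrarily many choices of $b_i$, since $\langle e^{\alpha_{b_i} b_i}, 1 \rangle = 1$), so that the inner sum is genuinely finite as soon as the second hypothesis of \cref{Def:infinite_prod} holds.
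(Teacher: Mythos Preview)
Your proposal is correct and follows essentially the same approach as the paper's own proof: both verify the two hypotheses of \cref{Def:infinite_prod} by observing that $\langle e^{\alpha_b b}, X_\sigma\rangle \neq 0$ forces $|b|\leq |\sigma|$, and both conclude \eqref{Inf_prod_coeff} directly from the explicit formula \eqref{def:P_sigma}. Your remark that neither the Hall nor the Lazard structure is actually needed (only finiteness of $\mathcal{B}_{\intset{1,M}}$) and your observation about the nonemptiness convention for the $\sigma_i$ in \eqref{def:P_sigma} are both accurate and make explicit points that the paper leaves implicit.
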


\begin{proof}
$\mathcal{B}$ is a totally ordered set and, for every $b\in\mathcal{B}$, $\langle e^{\alpha_b b} , 1 \rangle =1$. Let  $\sigma \in I^*$ with $|\sigma| \geq 1$.
For $\alpha \in \K$ and $b\in\mathcal{B}$, the property $\langle e^{\alpha b}, X_\sigma \rangle \neq 0$ requires $|b| \leq |\sigma|$. Indeed
\begin{equation}e^{\alpha b}-1=\sum_{k=1}^{+\infty} \frac{\alpha^k}{k!} b^k\end{equation}
has non vanishing coefficients only on monomials $X_{\sigma'}$ with length $|\sigma'| \geq |b|$. 
Thus the set $\{ b \in \mathcal{B} , \langle e^{\alpha_b b} , X_\sigma \rangle \neq 0 \}$ is finite. This proves that the infinite product 
is well defined in $\widehat{A}(X)$ and, by (\ref{def:P_sigma}), the formula  (\ref{Inf_prod_coeff}) holds.
\end{proof}

\subsubsection{Coordinates of the second kind}
\label{sec:coord-second}

\begin{definition}\label{Def:Coord2}
    Let $\mathcal{B}$ be a \GHB\ of $\mathcal{L}(X)$.
    The coordinates of the second kind associated to $\mathcal{B}$ is the unique family $(\xi_b)_{b\in\mathcal{B}}$ of functionals $\R_+ \times L^1_{\mathrm{loc}}(\R_+;\K^I) \rightarrow \K$ defined by induction in the following way: for every $t>0$ and $a \in L^1_{\mathrm{loc}}(\R_+;\K^I)$
\begin{itemize}
\item $\xi_{X_i}(t,a) := \int_0^t a_i$, for $i \in I$, 
\item for $b \in \mathcal{B} \setminus X$, there exists  a unique {pair} $(b_1,b_2)$ of elements of $\mathcal{B}$ such that $b_1<b_2$ and a unique maximal integer $m \in\N^*$ such that $b=\ad_{b_1}^m (b_2)$ and then
\begin{equation} \label{eq:def:coord2}
\xi_{b}(t,a):=\frac{1}{m!} \int_0^t  \xi_{b_1}^m(\tau,a) \dot{\xi}_{b_2}(\tau,a) \dd\tau.\end{equation}
\end{itemize}
\end{definition}

Formula \eqref{eq:def:coord2} indeed defines continuous functionals on $L^1$ and the following estimates hold.

\begin{lemma} \label{lm:xib-dotxib}
    Let $a_i \in L^1_{\mathrm{loc}}(\R_+;\K)$ for $i \in I$. For every $b \in \mathcal{B}$ and $t \geq 0$,
    \begin{align}
    	\label{eq:dotxib-u}
    	| \dot{\xi}_b(t,a) | & \leq |b| |a(t)| \|a\|_{L^1(0,t)}^{|b|-1}, \\
    	\label{eq:xib-u}
    	| \xi_b(t,a) | & \leq \|a\|_{L^1(0,t)}^{|b|}.
    \end{align}
\end{lemma}

\begin{proof}
    Estimate \eqref{eq:dotxib-u} is valid for $b \in X$ because $\dot{\xi}_{X_i}(t) = a_i(t)$ for $i \in I$ and propagated by induction on~$b$ using the recursive definition \eqref{eq:def:coord2}. 
    Estimate \eqref{eq:xib-u} is obtained by time-integration of \eqref{eq:dotxib-u} for each $b$.
\end{proof}

\subsubsection{Infinite product expansion of the solution to the formal differential equation}

{The following result is due to Sussmann in \cite{MR935387}.
The proof below follows Sussmann's argument.
It is recalled for sake of giving a self consistent presentation and also to treat the extension from length-compatible Hall bases to all Hall bases (which were not included in Sussmann's original statement).
}

\begin{theorem}
Let $\mathcal{B}$ be a \GHB\ of $\mathcal{L}(X)$.
Let $T > 0$ and $a_i \in L^1((0,T);\K)$ for $i \in I$.
For every $x^\star \in \widehat{\mathcal{A}}(X)$, the solution to the formal diff (\ref{formalODE})
satisfies, for every $t \in [0,T]$,
\begin{equation}
  x(t)= x^\star \underset{b \in \mathcal{B}}{\overset{\leftarrow}{\prod}} e^{\xi_b(t,a)b}.  
\end{equation}
\end{theorem}

\begin{proof}
It is sufficient to prove the formula with $x^\star=1$. 
To simplify the notations in this proof, we write $\xi_b(t)$ instead of $\xi_b(t,a)$.
By \Cref{Prop:inf_prod_Laz} it is sufficient to prove that, for every $t\in [0,T]$ and $\sigma \in I^*$
\begin{equation}\label{Comp_x(t)_Xsigma}
\left\langle x(t) , X_\sigma \right\rangle = \left\langle \underset{b \in \mathcal{B}_{\llbracket 1, |\sigma|\rrbracket}}{\overset{\leftarrow}{\Pi}} e^{\xi_b(t) b} , X_\sigma \right\rangle.
\end{equation}
Let $\sigma \in I^*$, $M:=|\sigma|$, $k\in\N$ and $b_1,\dotsc,b_{k+1}$ and $Y_0,\dotsc,Y_{k+1}$ be as in (\ref{Laz.1}). The equality (\ref{Comp_x(t)_Xsigma}) can equivalently we written
\begin{equation}\label{Comp_x(t)_Xsigma_BIS}
\left\langle x(t) , X_\sigma \right\rangle = \left\langle  e^{\xi_{b_{k+1}}(t) b_{k+1}} \dotsb e^{\xi_{b_1}(t) b_1} , X_\sigma \right\rangle.
\end{equation}
We define $x_0(t) := x(t)$ and, for $j \in \intset{1,k+1}$,
\begin{equation}
    x_j(t):= x(t)e^{-\xi_{b_1}(t) b_1} \dotsb e^{-\xi_{b_j}(t) b_j}.
\end{equation}
We prove by induction on $j\in \intset{0,k+1}$ that 
\begin{equation}\label{aux_j}
    \dot{x}_j(t) = x_j(t) \left(\sum_{b \in Y_{j}} \dot{\xi}_{b}(t) b\right) 
    \quad \textrm{and} \quad 
    x_j(0)=1.
\end{equation}
It is clear for $j=0$ because $x_0(t)=x(t)$, $Y_0=X$ and $\dot{\xi}_{X_i}(t)=a_i(t)$ for $i \in I$. 
Let $j\in\intset{1,k+1}$. 
We assume (\ref{aux_j}) holds at step $j-1$.
We deduce from the definition of $x_{j}(t)$ that
\begin{equation}
x_{j}(t)=x_{j-1}(t)e^{-\xi_{b_{j}}(t) b_{j}}.
\end{equation}
Since $\xi_{b_{j}}(0) = 0$, $x_{j}(0) = 1$. 
Moreover,
\begin{equation}
    \begin{split}
        \dot{x}_j(t) & = x_{j-1}(t)\left(\sum_{b \in Y_{j-1} } \dot{\xi}_{b}(t) b \right)e^{-\xi_{b_j}(t) b_j} - x_{j-1}(t) \dot{\xi}_{b_j}(t) b_je^{-\xi_{b_j}(t) b_j}
\\ & = 
x_j(t) e^{\xi_{b_j}(t) b_j} \left( \sum_{b \in Y_{j-1}\setminus\{b_j\} } \dot{\xi}_{b}(t) b  \right)e^{-\xi_{b_j}(t) b_j}
\\ & = 
x_j(t) 
\sum_{m \in \N} \sum_{b \in Y_{j-1}\setminus\{b_j\} } \frac{\xi_{b_j}^m(t)}{m!} \dot{\xi}_{b}(t) \ad_{b_j}^m(b) 
    \end{split}
\end{equation}
which ends the proof by induction.

We deduce from (\ref{Laz.3}) and (\ref{aux_j}) for $j=(k+1)$  that $x_{k+1}(t)-1$ has non vanishing coefficients only on monomials $X_{\sigma'}$ with $|\sigma'|>|\sigma|$. Therefore, by (\ref{def:P_sigma}),
 \begin{equation}
 \left\langle x(t) , X_\sigma \right\rangle = \left\langle  x_{k+1}(t) e^{\xi_{b_{k+1}}(t) b_{k+1}} \dotsb e^{\xi_{b_1}(t) b_1}  , X_\sigma \right\rangle = \left\langle  e^{\xi_{b_{k+1}}(t) b_{k+1}} \dotsb e^{\xi_{b_1}(t) b_1} , X_\sigma \right\rangle,
 \end{equation}
 which concludes the proof.
\end{proof}

\section{Technical tools about functions and vector fields} \label{sec:technical}

In this section, we state classical definitions and technical results about functions and vector fields. For the sake of completeness, the proofs, although classical, are provided.

Throughout the whole article, $d \in \N^*$ denotes the dimension of the state space for the considered ordinary differential equations. We work locally, in neighborhoods of the origin $0 \in \K^d$. For $\delta > 0$, $B_\delta$ denotes the closed ball of center $0$ and radius $\delta$ in the state space $\K^d$.

\subsection{Functional spaces for finite or analytic regularity}

\subsubsection{Conventions for multi-indexes}

For $a \in \N^*$ and a multi-index $\alpha=(\alpha^1,\dotsc,\alpha^a) \in \N^a$, we use the notations $|\alpha|:=\alpha^1+\dotsb+\alpha^a$,  $\partial^\alpha:={\partial_1^{\alpha^1}\dotsb\partial_a^{\alpha^a}}$ and $\alpha!:=\alpha^1!\dotsb\alpha^a!$.

\begin{lemma}
    The following estimates hold:
    \begin{align}
        \label{eq:encadrement.n!}
        \forall n \in \N, & \quad n^n e^{-n} e \leq n! \leq (n+1)^{n+1} e^{-(n+1)} e, \\
        \label{factorielle_multiple}
        \forall a \in \N^*, \forall \alpha=(\alpha^1,\dotsc,\alpha^a) \in \N^a, & \quad
    2^{-(a-1)|\alpha|} |\alpha|! \leq  \alpha! \leq |\alpha|!
    \end{align}
\end{lemma}

\begin{proof}
    The first inequality can be proved using classical series-integral comparison and the second by iterating $p! q! \geq 2^{-(p+q)} (p+q)!$ for every $p,q\in\N$.
\end{proof}

\subsubsection{{Finite-regularity norms}}

\begin{definition}[Regular functions] \label{def:ckb}
    Let $a, b \in \N^*$ and $K$ a compact subset of $\K^a$.
    Let $k \in \N$. We endow $\CC^k(K;\K^b)$, the space of functions whose real-derivatives are well-defined and continuous up to order $k$ on an open neighborhood of $K$ to $\K^b$ with the norm
    \begin{equation} \label{eq:def:Ck}
        \|f\|_{\CC^k} := \sum_{j=1}^b \sum_{|\alpha|\leq k} \frac{1}{\alpha!} \| \partial^\alpha f_j \|_{L^\infty(K)},
    \end{equation}
    where the sum ranges over multi-indexes $\alpha \in \N^a$ whose sum is at most $k$ and $f_1, \dotsc, f_b$ are the components of the vector-valued function $f$. We denote by $\CC^\infty(K;\K^b)$ the intersection of these spaces over $k \in \N$.
\end{definition}

\newcommand{\nck}[1]{\left\| #1 \right\|_{\CC^k}}

{
\begin{lemma}[Submultiplicativity] \label{p:mul-ck}
    Let $a \in \N^*$, $K$ a compact subset of $\K^a$ and $k \in \N$.
    For every $f, g \in \CC^{k}(K;\K)$, one has
	\begin{equation} \label{normck(fg)}
		\nck{f g} \leq \nck{f} \nck{g}.
	\end{equation}
\end{lemma}

\begin{proof}
	Using the multivariate Leibniz formula, one has	
	\begin{equation}
	    \begin{split}
	        \nck{f g}
	& = \sum_{|\alpha| \leq k} \frac{1}{\alpha!} \| \partial^\alpha (fg) \|_{L^\infty(K)} \\
	& \leq \sum_{|\alpha| \leq k} \frac{1}{\alpha!} \sum_{\beta \leq \alpha} \binom{\alpha}{\beta} \| \partial^\beta f \|_{L^\infty(K)} \| \partial^{\alpha-\beta} g \|_{L^\infty(K)} 
	\leq \nck{f} \nck{g},
	    \end{split}
	\end{equation}
	where the sum ranges over all multi-indexes $\beta \in \N^a$ such that $\beta_i \leq \alpha_i$ for each $i \in \intset{1,a}$.
\end{proof}

\begin{lemma}[Control of gradients] \label{p:grad-ck}
    Let $a \in \N^*$, $K$ a compact subset of $\K^a$ and $k \in \N$.
	For every $f \in \CC^{k+1}(K;\K)$ and $j \in \intset{1,a}$,
	\begin{equation}
	    \nck{\partial_j f} \leq (k+1) \| f \|_{\CC^{k+1}}.
	\end{equation}
\end{lemma}

\begin{proof}
    By \eqref{eq:def:Ck}, 
	\begin{equation}
	\begin{split}
	    \nck{\partial_j f} =
	    \sum_{|\alpha|\leq k} \frac{1}{\alpha!} \|\partial^{\alpha + e_j} f\|_{L^\infty(K)}
		& = \sum_{|\alpha|\leq k} \frac{\alpha_j+1}{(\alpha+e_j)!} \|\partial^{\alpha + e_j} f\|_{L^\infty(K)} 
		\leq (k+1)  \| f \|_{\CC^{k+1}}
	\end{split}
	\end{equation}
	since $\alpha_j \leq |\alpha| \leq k$.
\end{proof}
}

\subsubsection{Analytic norms}

\begin{definition}[Analytic norms]
    Let $a, b \in \N^*$ and $K$ a compact subset of $\K^a$. We define $\CC^\omega(K;\K^b)$ the space of real-analytic functions defined on an open neighborhood of $K$ to $\K^b$, as the union for $r > 0$ of the spaces $\CC^{\omega,r}(K;\K^b)$, which are the subsets of $\CC^\infty(K;\K^b)$ for which the following norm is finite
    \begin{equation} \label{eq:def.analytic.r}
        \opnorm{f}_r :=
        \sum_{i=1}^b \sum_{\alpha \in \N^a} \frac{r^{|\alpha|}}{\alpha!} \| \partial^\alpha f_i \|_{L^\infty(K)}.
    \end{equation}
\end{definition}

\begin{lemma}[{Submultiplicativity}]
    Let $a \in \N^*$, $K$ a compact subset of $\K^a$, $r > 0$. Then, for every $f, g \in \CC^{\omega,r}(K;\K)$, one has
	\begin{equation} \label{opnorm(fg)r}
		\opnorm{f g}_r \leq \opnorm{f}_r \opnorm{g}_r.
	\end{equation}
\end{lemma}

\begin{proof}
	Using the multivariate Leibniz formula, one has	
	\begin{equation}
	    \begin{split}
	        \opnorm{f g}_r
	& = \sum_{\alpha \in \N^a} \frac{r^{|\alpha|}}{\alpha!} \| \partial^\alpha (fg) \|_{L^\infty(K)} \\
	& \leq \sum_{\alpha \in \N^a} \frac{r^{|\alpha|}}{\alpha!} \sum_{\beta \leq \alpha} \binom{\alpha}{\beta} \| \partial^\beta f \|_{L^\infty(K)} \| \partial^{\alpha-\beta} g \|_{L^\infty(K)} = \opnorm{f}_r \opnorm{g}_r,
	    \end{split}
	\end{equation}
	where the sum ranges over all multi-indexes $\beta \in \N^a$ such that $\beta_i \leq \alpha_i$ for each $i \in \intset{1,a}$.
\end{proof}

\begin{lemma}[Control of gradients] \label{lemma:gradient.estimate}
    Let $a \in \N^*$, $K$ a compact subset of $\K^a$. 
	For every $r_2 > r_1 > 0$, $f \in \CC^{\omega,r_2}(K;\K)$ and $j \in \intset{1,a}$,
	\begin{equation} \label{eq:djf.rr'}
	    \opnorm{\partial_j f}_{r_1} \leq \frac{1}{r_1} \left(e \ln \frac{r_2}{r_1}\right)^{-1} \opnorm{f}_{r_2}.
	\end{equation}
	In particular, if $r_2 \leq e r_1$,
	\begin{equation} \label{eq:djf.r.dr}
	    \opnorm{\partial_j f}_{r_1} \leq \frac{1}{r_2 - r_1} \opnorm{f}_{r_2}.
	\end{equation}
\end{lemma}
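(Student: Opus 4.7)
The plan is to unfold the definition of $\opnorm{\partial_j f}_{r_1}$ as a sum of sup-norms of derivatives and reindex so as to compare term-by-term with $\opnorm{f}_{r_2}$. Writing $e_j \in \N^a$ for the $j$-th standard multi-index, the change of variables $\beta := \alpha + e_j$ yields $\partial^\alpha \partial_j f = \partial^\beta f$, $|\alpha| = |\beta|-1$ and $\alpha! = \beta!/\beta_j$, so that
\begin{equation}
    \opnorm{\partial_j f}_{r_1}
    = \frac{1}{r_1} \sum_{\substack{\beta \in \N^a \\ \beta_j \geq 1}} \beta_j \left(\frac{r_1}{r_2}\right)^{|\beta|} \frac{r_2^{|\beta|}}{\beta!} \|\partial^\beta f\|_{L^\infty(K)}.
\end{equation}

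The next step is to bound the scalar factor $\beta_j (r_1/r_2)^{|\beta|}$ uniformly in $\beta$ by $1/(e \ln(r_2/r_1))$. Using $\beta_j \leq |\beta|$ and setting $\theta := r_1/r_2 \in (0,1)$, the claim reduces to $\sup_{x \geq 0} x \theta^x \leq 1/(e \ln(1/\theta))$, which is a one-variable calculus exercise: the maximum over $x \geq 0$ of $x e^{x \ln \theta}$ is attained at $x^\star = -1/\ln \theta$ and equals $(-e \ln \theta)^{-1} = (e \ln(r_2/r_1))^{-1}$. Plugging this bound into the previous display yields \eqref{eq:djf.rr'}.

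For the second inequality \eqref{eq:djf.r.dr}, it suffices to show that, when $1 < t \leq e$ (with $t = r_2/r_1$), one has $e \ln t \geq t - 1$, which then gives $r_1 e \ln(r_2/r_1) \geq r_2 - r_1$ and the desired bound. This is proved by studying $h(t) := e \ln t - (t-1)$: one has $h(1) = 0$ and $h'(t) = (e-t)/t \geq 0$ on $[1,e]$, so $h \geq 0$ there.

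The only delicate step is the scalar optimization of $x \theta^x$; the rest is a straightforward reindexing and the concluding comparison inequality is a one-line derivative argument. No real obstacle is expected.
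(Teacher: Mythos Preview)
Your proof is correct and follows essentially the same approach as the paper: the same reindexing $\beta=\alpha+e_j$, the same reduction to the scalar optimization of $n\mapsto n(r_1/r_2)^n$, and an equivalent elementary inequality (you use $e\ln t\geq t-1$ on $[1,e]$, the paper uses $\ln(1+\sigma)\geq\sigma/(e-1)$) to pass from \eqref{eq:djf.rr'} to \eqref{eq:djf.r.dr}. A cosmetic simplification in your version is that you optimize over real $x\geq 0$ directly, which spares the case distinction on whether the critical point lies in $[1,\infty)$ that the paper carries out.
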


\begin{proof}
	We start with the first estimate \eqref{eq:djf.rr'}. One has
	\begin{equation}
	\begin{split}
		\opnorm{\partial_j f}_{r_1}
		= \sum_{\alpha \in \N^a} \frac{r_1^{|\alpha|}}{\alpha!} \| \partial^{\alpha+e_j} f \|_{L^\infty(K)}
		& = \frac{1}{r_1} \sum_{\alpha \in \N^a} \frac{r_1^{|\alpha+e_j|}}{(\alpha+e_j)!} \frac{(\alpha+e_j)!}{\alpha!}\| \partial^{\alpha+e_j} f \|_{L^\infty(K)} \\
		& \leq \frac{1}{r_1} \opnorm{f}_{r_2} \sup_{\alpha \in \N^a} \left(\frac{r_1}{r_2}\right)^{|\alpha+e_j|} \frac{(\alpha+e_j)!}{\alpha!} \\
		& \leq \frac{1}{r_1} \opnorm{f}_{r_2} \sup_{n \geq 1} n \left(\frac{r_1}{r_2}\right)^n.
	\end{split}
	\end{equation}
	For $x \in (0,1)$, let $C(x) := \sup_{n \geq 1} n x^n = \sup_{n \geq 1} \exp (\ln n + n \ln x)$.
	{Considering that $x$ is a fixed parameter and} differentiating {the argument} with respect to $n \in [1,+\infty)$ yields
	\begin{equation}
		{\frac{\dd}{\dd n}} \left( \ln n + n \ln x \right) = \frac{1}{n} + \ln x.
	\end{equation}
	Since $x < 1$, {this} derivative is negative for $n$ large enough. 
	For $x \geq 1/e$, the global maximum is for $n= -1/\ln x$. So its value yields the bound
	\begin{equation} \label{eq:cx.elnx}
		C(x) \leq \left( - e \ln x \right)^{-1}.
	\end{equation}
	For $x \leq 1/e$, the supremum over $n$ is achieved for $n = 1$ and its value is $x$. Since $x \leq \left( - e \ln x \right)^{-1}$ for $x \in (0,1)$, the bound \eqref{eq:cx.elnx} is looser and valid for every $x \in (0,1)$. 
	
	\medskip
	
	The second inequality is a consequence of the estimate $\ln (1+\sigma) \geq \sigma / (e-1)$ for $\sigma \leq e-1$.	
\end{proof}

\begin{remark}
	The first estimate \eqref{eq:djf.rr'} is classical (see e.g.\ \cite{zbMATH06012919}). The second estimate~\eqref{eq:djf.rr'} is a simplified version, restricted to the case when the relative radius loss is small enough. This is the form under which we will use \cref{lemma:gradient.estimate} in the sequel since we consider small radius losses.
\end{remark}

\subsection{Estimates for differential operators and Lie brackets}

\subsubsection{Vector-valued functions, vector fields and differential operators}

{As is usual, we will identify each smooth vector-valued function with the associated first-order linear differential operator, and we will refer to both objects as a \emph{vector field}.
Let $K$ be a compact subset of $\K^d$.

\begin{definition}[Vector field]
    Given \emph{coordinates} $a_1, \dotsc, a_d \in \CC^\infty(K;\K)$, we define the associated \emph{vector field}~$f$ indifferently as the vector-valued function $f = (a_1, \dotsc, a_d)$ of $\CC^\infty(K;\K^d)$ (mapping each point of the state space to a vector of $\K^d$), or as the first-order linear differential operator $f = a_1 \partial_{1} + \dotsb + a_d \partial_{d}$ (acting on smooth functions $\phi \in \CC^\infty(K;\K)$).
\end{definition}

\begin{remark}[Composition of vector fields] \label{rk:no-nabla}
    Seen as linear operators, vector fields can be composed, yielding higher-order differential operators.
    Let $f, g \in \CC^\infty(K;\K^d)$ (denoting their coordinates by $a_1, \dotsc, a_d$ and $b_1, \dotsc b_d$) and $\phi \in \CC^\infty(K;\K)$.
    We will hence write
    \begin{equation}
        f \phi = \sum_{i=1}^{d} a_i \partial_{i} \phi
    \end{equation}
    and
    \begin{equation} \label{eq:fgphi}
        f g \phi = \sum_{i=1}^{d} \sum_{j=1}^d a_i \left(b_j \partial_{i} \partial_{j} \phi + (\partial_{i} b_j) (\partial_{j} \phi) \right).
    \end{equation}
    Similarly, for $k \in \N$, $f^k \phi$ will denote the composition of the linear differential operator $f$ with itself $k$ times, applied to $\phi$.
    Such formulas still make sense in a finite regularity setting, as long as all derivatives are well-defined.
\end{remark}}

\subsubsection{Estimates for products}

{
\begin{lemma} \label{p:prod-diff-ck}
    Let $k \in \N$, $n,b \in \N^*$, $\delta > 0$, $f_1, \dotsc f_n \in \CC^{k+n-1}(B_\delta;\K^d)$ and $\phi \in \CC^{k+n}(B_\delta;\K^b)$.
    With the notations of \cref{rk:no-nabla},
    \begin{equation} \label{eq:prod-diff-ck}
        \| f_n \dotsb f_1 \phi \|_{\CC^k} 
        \leq
        \frac{(k+n)!}{k!} \| f_1 \|_{\CC^{k+n-1}} \dotsb \| f_n \|_{\CC^{k+n-1}} \| \phi \|_{\CC^{k+n}}.
    \end{equation}
\end{lemma}

\begin{proof}
    For $n = 1$, it is a consequence of \eqref{eq:def:Ck}, \cref{p:mul-ck} and \cref{p:grad-ck}.
    For $n > 1$, the estimate follows by induction. 
\end{proof}
}

\begin{lemma} \label{thm:prod.analytic}
    Let $r_2 > 0$, $r_1 \in [r_2/e, r_2)$, $n{,b} \in \N^*$ and $\delta > 0$.
    {Let $f_1, \dotsc, f_n \in \CC^{\omega,r_2}(B_\delta;\K^d)$ and $\phi \in \CC^{\omega,r_2}(B_\delta;\K^{b})$.
    With the notations of \cref{rk:no-nabla},}
    \begin{equation} \label{eq:prod.analytic}
        \opnorm{{f_n \dotsb f_1 \phi}}_{r_1}
        \leq \frac{n!}{e} \left(\frac{e}{r_2-r_1}\right)^n \opnorm{f_n}_{r_2} \dotsb \opnorm{f_1}_{r_2} \opnorm{\phi}_{r_2}.
    \end{equation}
    In particular, under the same assumptions,
    \begin{equation} \label{eq:prod.analytic.C0}
        \| {f_n \dotsb f_1 \phi} \|_{\CC^0} 
        \leq n! \left(\frac{5}{r_2}\right)^n \opnorm{f_n}_{r_2} \dotsb \opnorm{f_1}_{r_2} \opnorm{\phi}_{r_2}.
    \end{equation}
\end{lemma}

\begin{proof}
	For $n = 1$, estimate \eqref{eq:prod.analytic} is a consequence of \eqref{eq:def.analytic.r}, \eqref{opnorm(fg)r} and \eqref{eq:djf.r.dr}. For $n > 1$, one applies the $n=1$ estimate $n$ times with a radius increment $(r_2-r_1) / n$ at each step. This yields more precisely
	\begin{equation}
		\begin{split}
		    \opnorm{{f_n \dotsb f_1 \phi}}_{r_1}
		    & \leq 
			\left(\frac{n}{r_2-r_1}\right) 
			\opnorm{f_n}_{r_1} \opnorm{{f_{n-1} \dotsb f_1 \phi}}_{r_1+\frac{r_2-r_1}{n}} \\
			& \leq 
			\left(\frac{n}{r_2-r_1}\right)^n \opnorm{\phi}_{r_2}
			\prod_{j=1}^n \opnorm{f_j}_{r_1+(n-j)\frac{r_2-r_1}{n}},
		\end{split}
	\end{equation}
	which concludes the proof because the norm \eqref{eq:def.analytic.r} is non-decreasing with respect to $r$, and we can bound $n^n$ using \eqref{eq:encadrement.n!}. Estimate \eqref{eq:prod.analytic.C0} is a direct consequence for the particular choice $r_1 = r_2/e$, because $e^2/(e-1) \leq 5$.
\end{proof}

\subsubsection{Lie brackets}

\begin{definition}[Lie bracket of vector fields]
    {We define the \emph{Lie bracket} of smooth vector fields $f$ and $g$ as the usual commutator of the associated linear differential operators: $[f,g] := fg - gf$ (with the notations of \cref{rk:no-nabla}).
    By \eqref{eq:fgphi} and Schwarz's theorem, one checks that $[f,g]$ is also a first-order differential operator, which, as a vector-valued function, can be computed as $[f,g] = (Dg) f - (Df) g$.}
\end{definition}

\begin{definition}[Evaluated Lie bracket] \label{Def:evaluated_Lie_bracket}
    Let $I$ be a finite set of indices,
    $X=\{X_i ; i\in I\}$ be indeterminates and
    $\{f_i ; i\in I\}$ be $\CC^\infty$ vector fields on a subset $\Omega$ of $\K^d$.
    For {an iterated bracket} $b \in \Br(X)$, we define $f_b:=\Lambda({\eval(b)})$, where $\Lambda:\mathcal{L}(X) \rightarrow \CC^\infty(\Omega;\K^d)$ is the unique homomorphism of Lie {algebras} such that $\Lambda(X_i)=f_i$ for every $i\in I$ (see {\cref{rk:eval} and} \cref{Lem:identification_libre}).
    
    The vector field $f_b$ is obtained by replacing the indeterminates $X_i$ with the corresponding vector fields $f_i$ in the {iterated bracket} $b$, for instance $f_{{(X_1,(X_2,X_3))}}=[f_1,[f_2,f_3]]$.
    
    The notation $f_b$ will sometimes denote the same vector field, {seen as a vector-valued function,} under weaker regularity assumptions, for instance $f_i \in \CC^{|b|-1}$ and then $f_b \in \CC^0$.
\end{definition}

\begin{lemma}[Finite regularity estimate]
    \label{thm:bracket.ck}
    Let $k \in \N$ and $\delta > 0$.
    Let $b \in \Br(X)$.
    For $i \in I$, let {$f_i \in \CC^{k+|b|-1}(B_\delta;\K^d)$}.
    Then,
    \begin{equation}
        \| f_b \|_{\CC^k} \leq 2^{|b|{-1}} \frac{(k+|b|-1)!}{k!} \prod_{i\in I} \| f_i \|_{\CC^{k+|b|-1}}^{n_i(b)}.
    \end{equation}
\end{lemma}

\begin{proof}
    {This follows from \eqref{eq:prod-diff-ck} because, as can be checked by induction on $|b|$, $f_b$ is a sum of at most $2^{|b|-1}$ terms of the form studied in \cref{p:prod-diff-ck}, where $\phi$ is one of the vector fields $f_i$.}
\end{proof}

\begin{lemma}[Analytic estimate] \label{thm:bracket.analytic}
    Let $r_2 > 0$, $r_1 \in [r_2/e, r_2)$ and $\delta > 0$.
    Let $b \in \Br(X)$.
    For $i \in I$, let {$f_i \in \CC^{\omega,r_2}(B_\delta;\K^d)$}.
    Then,
    \begin{equation} \label{eq:bracket.analytic}
        \opnorm{f_b}_{r_1} 
        \leq \frac{(|b|-1)!}{e} \left( \frac{2e}{r_2-r_1} \right)^{|b|-1}
        \prod_{i \in I} \opnorm{f_i}_{r_2}^{n_i(b)}.
    \end{equation}
    In particular, under the same assumptions,
    \begin{equation} \label{eq:bracket.analytic.C0}
        \| f_b \|_{\CC^0}
        \leq (|b|-1)! \left( \frac{9}{r_2} \right)^{|b|-1}
        \prod_{i \in I} \opnorm{f_i}_{r_2}^{n_i(b)}.
    \end{equation}
    \begin{equation} \label{eq:bracket.analytic.C1}
        \| f_b \|_{\CC^1}
        \leq \max \left\{ 1, \frac{1}{r_2} \right\} (|b|-1)! \left( \frac{9}{r_2} \right)^{|b|-1}
        \prod_{i \in I} \opnorm{f_i}_{r_2}^{n_i(b)}.
    \end{equation}
\end{lemma}

\begin{proof}
    Estimate \eqref{eq:bracket.analytic} stems from \eqref{eq:prod.analytic} because, as can be checked by induction on $|b|$, $f_b$ is a sum of at most $2^{|b|-1}$ terms of the form studied in \cref{thm:prod.analytic}, where $\phi$ is one of the vector fields $f_i$. Estimates \eqref{eq:bracket.analytic.C0} and \eqref{eq:bracket.analytic.C1} are direct consequences of \eqref{eq:bracket.analytic} for the particular choice $r_1 = r_2/e$ because $2e^2/(e-1) \leq 9$ and, for every $r_1 > 0$, $\| f_b \|_{\CC^1} \leq \max \{ 1, \frac{1}{r_1} \} \opnorm{f_b}_{r_1}$.
\end{proof}

\begin{remark} \label{rk:bracket.optimal}
    The fact that estimate  \eqref{eq:bracket.analytic} scales like the factorial of the length of the Lie bracket is optimal, as illustrated by the following vector fields.
    For $x \in \R^2$ with $|x| < 1$, define
    \begin{equation} \label{eq:def:f0f1.optimal}
     f_0(x) := e_1 \quad \textrm{and} \quad f_1(x) := \frac{1}{1-x_1} e_2.
    \end{equation}
    Using \eqref{eq:def.analytic.r}, one checks that these vector fields belong in particular to {$\CC^{\omega,r}(B_\delta;\R^2)$} for $r = \frac 1 4$ and $\delta = \frac 1 2$, with $\opnorm{f_0}_{r} = 1$ and $\opnorm{f_1}_{r} = 2$.
    For $k \in \N$, one has
    \begin{equation} \label{eq:adkf0f1.optimal}
      \ad^{k}_{f_0}(f_1)(x) = \partial_1^k \left( \frac{1}{1-x_1} \right) e_2 = \frac{k!}{(1-x_1)^{k+1}} e_2.
    \end{equation}
    Moreover, since $f_0$ is constant and $f_1$ depends only on $x_1$ but is {a multiple of} $e_2$, every Lie bracket involving $f_1$ at least twice vanishes identically.
    Since these analytic vector fields ``saturate'' the bounds and exhibit such a nice structure, we will use them repeatedly in our counter-examples.
\end{remark}

\subsection{Well-posedness of ordinary differential equations}
\label{Subsec:WP_ODE}

The nonlinear differential equations 
\begin{equation} \label{ODE:f}
    \dot{x}(t) = f(t,x(t))
    \quad \textrm{and} \quad
    x(0) = p
\end{equation}
will be studied in the following classical frameworks.

\begin{lemma} \label{Lem:WP_ODE:f}
    Let $\delta, T>0$ and {$f\in L^1((0,T);\CC^1(B_{2\delta};\K^d))$} such that $\|f\|_{L^1((0,T);\CC^0)}<\delta$.
    \begin{enumerate}
        \item For each $p \in B_\delta$, there exists a unique function $x(\cdot;f,p) \in \CC^0([0,T];B_{2\delta})$ such that
        \begin{equation} \label{ODE:f_faible}
            \forall t \in [0,T], \quad x(t;f,p)=p+\int_0^t f\left(\tau,x(\tau;f,p)\right) \dd\tau.
        \end{equation}
    
        \item If $f \in \CC^0([0,T]\times B_{2\delta};\K^d)$ then $x(\cdot;f,p) \in \CC^1([0,T];B_{2\delta})$ and satisfies (\ref{ODE:f}) pointwise.

        \item If $f \in \CC^\infty([0,T]\times B_{2\delta};\K^d)$, the map $p \in B_\delta \mapsto x(\cdot;f,p) \in \CC^0([0,T];B_{2\delta})$ is smooth.
        
        \item If $g$ satisfies the same assumptions as $f$, {then,} for each $p \in B_\delta$ and $t \in [0,T]$,
        \begin{equation}
        |x(t;f,p) - x(t;g,p)| \leq 
        \| f - g \|_{L^1((0,t);\CC^0)}
        \exp \left( \|f\|_{L^1((0,t);\CC^1)} \right).
        \end{equation}
    \end{enumerate}
\end{lemma}

\begin{proof}
    We proceed step by step. Let ${E} := \CC^0([0,T];B_{2\delta})$.  
\begin{enumerate}
    \item Define $\Theta: {E} \to {E}$ by  $\Theta(x)(t):=p+\int_0^t f(\tau,x(\tau))\dd\tau$ for $x \in {E}$. Thanks to the smallness assumption on $f$, $\Theta(x)(t) \in B_{2\delta}$.
    Let $n \in \N^*$ be such that $ \|f\|_{L^1((0,T);\CC^1)}^n / n! < 1$.
    By the Banach fixed-point theorem, $\Theta^n$ has a unique fixed point, which is also a fixed point of $\Theta$.
    \item If $f$ is continuous, then 
    $t \mapsto \Theta(x(t;f,p))$ belongs to $\CC^1([0,T];B_{2\delta})$ and its derivative at time~$t$  is $f(t,x(t;f,p))$. 
    \item If $f$ is smooth, let $\bar{p} \in B_\delta$, $\bar{x} := x(\cdot;f,\bar{p})$ and define
    $F: B_\delta \times {E} \to {E}$
    by
    \begin{equation}\forall t \in [0,T], \quad F(p,x)(t):=x(t)-p-\int_0^t f(\tau,x(\tau)) \dd\tau\end{equation}
    Then $F$ is of class $\CC^\infty$, vanishes at $(\bar{p},\bar{x})$ and ${\partial_x F}(\bar{p},\bar{x})$ is a bijection on ${E}$. 
    By the implicit function theorem, the map $p \mapsto x(\cdot;f,p)$ is $\CC^\infty$ on a neighborhood of $\bar{p}$.
    \item This follows from a standard Gr\"onwall's lemma argument. \qedhere
\end{enumerate}
\end{proof}

\begin{lemma} \label{Prop:Sol_analytic/control}
    Let $\delta,\delta_u > 0$, $q\in\N^*$ and $f \in \CC^\omega(B_{2\delta} \times B_{\K^q}(0,\delta_u) ; \K^d )$.
    Let $T:=\delta/\|f\|_{\CC^0}$.
    For each $p \in B_\delta$ and $u \in L^\infty((0,T);\K^q)$ with $\|u\|_{L^\infty} \leq \delta_u$, there exists a unique solution $x \in \CC^0([0,T];B_{2\delta})$ to
    \begin{equation}
        \left\{
        \begin{aligned}
            & \dot{x}(t) = f(x(t),u(t)), \\
            & x(0) = p,
        \end{aligned}
        \right.
    \end{equation}
    denoted $x(t;f,u,p)$. Moreover, the map $(u,p)  \mapsto x(\cdot;f,u,p) \in \CC^0([0,T];B_{2\delta})$ is real-analytic on $B_\delta \times B_{L^\infty(0,T)}(0,\delta_u)$.
\end{lemma}

\begin{proof}
    Existence stems from \cref{Lem:WP_ODE:f}. Analyticity is a consequence of the implicit function theorem, which yields the analyticity of the implicit function when the direct function is analytic (see e.g.\ \cite[Theorem~4.5.4]{zbMATH01889824}).
\end{proof}

\subsection{Flows, compositions and pushforwards}

{Here and in the sequel, when we manipulate \emph{flows} of vector fields, we always make sure that an appropriate smallness assumption ensures that the local flow is well-defined up to the time at which we evaluate it.}

\subsubsection{Definitions and approximations}

By applying \cref{Lem:WP_ODE:f} to a time-independent vector field we obtain the following object.

\begin{definition}[Flow of time-independent vector fields]
    Let $\delta > 0$. 
    Let {$f \in \CC^1(B_{2\delta};\K^d)$} such that $\| f \|_{\CC^0} < \delta$.
    We denote by $e^f$ the flow at time one of the vector field $f$,
    \begin{equation}
        e^f :
        \left\{
            \begin{aligned}
                B_\delta & \to B_{2\delta}, \\
                p & \mapsto x(1;f,p),
            \end{aligned}
        \right.
    \end{equation}
    with the notations of \cref{Subsec:WP_ODE}. 
    We write $e^f p$ instead of $e^f (p)$ to allow easier composition of flows.
    {When moreover $f \in \CC^\infty(B_{2\delta};\K^d)$}, $e^f$ can also be seen as the zero-order linear operator on $\CC^\infty(B_{2\delta};\K)$ defined by $e^f \phi : p \mapsto \phi(e^f p)$. 
\end{definition}

\begin{lemma} \label{lm:basic-flow}
    Let $\delta > 0$ and {$f \in \CC^1(B_{\delta};\K^d)$}.
    Assume that $\delta' := \delta - \| f \|_{{\CC^0(B_\delta)}} > 0$.
    For each $p \in B_{\delta'}$, $e^f p$ is well-defined and $e^f p \in B_\delta$.
    Moreover,
    \begin{equation} \label{eq:efp-p}
        |e^f p - p| \leq \|f\|_{{\CC^0(B_\delta)}},
    \end{equation}
    and
    \begin{equation} \label{eq:dpef}
        \| D (e^f) \|_{{\CC^0(B_{\delta'})}} \leq e^{\|D f\|_{{\CC^0(B_\delta)}}}
        \leq e^{\|f\|_{{\CC^1(B_\delta)}}}.
    \end{equation}
\end{lemma}

\begin{proof}
    The second estimate comes from the fact that $D (e^f)_{\rvert p} = R(1)$ where
    \begin{equation}
        \dot{R}(t) = Df ( e^{tf} p ) R(t) 
        \quad \textrm{and} \quad R(0)= \mathrm{Id}.
    \end{equation}
    Thus, by Gr\"onwall's lemma,
    \begin{equation} 
        \|R(1)\| \leq \|\mathrm{Id}\| e^{\int_0^1 \| D f ( e^{tf} p) \| \dd t} \leq
        e^{\|D f\|},
    \end{equation}
    which concludes the proof.
\end{proof}

The exponential notation is motivated by the possibility to approximate $e^f$ by partial sums of the exponential series of {the linear differential operator $f$}.
It is completely legitimate in the analytic setting, as underlined by the following result.

\begin{lemma}[Approximation of autonomous flows] \label{Lem:flow}
    Let $\delta > 0$, {$f \in \CC^1(B_{2\delta};\K^d)$} with  $\|f\|_{\CC^0} < \delta$.
    {Using the notations of \cref{rk:no-nabla}:}
    \begin{enumerate}
        \item For each $M \in \N$, if {$f \in \CC^M(B_{2\delta};\K^d)$} and $\phi \in \CC^{M+1}(B_{2\delta};\K)$, for each $p \in B_\delta$,
         \begin{equation} \label{estimate.ef.phi}
          \left| \left( e^f  - \sum_{k=0}^M \frac{{f^k}}{k!} \right) (\phi)(p) \right| 
          \leq
          \| f \|_{\CC^M}^{M+1} { \| \phi \|_{\CC^{M+1}}}.
         \end{equation}
        \item If {$f \in \CC^\omega(B_{2\delta};\K^d)$} and $\phi \in \CC^{\omega}(B_{2\delta};\K)$, for $t$ small enough, for each $p \in B_\delta$,
        \begin{equation} \label{estimate.ef.phi_analytic}
        e^{tf} (\phi)(p) =  \sum_{k=0}^{+\infty} \frac{t^k}{k!} {f^k} \phi(p)
        \end{equation}
        and the sum converges absolutely in the sense of analytic functions.
 \end{enumerate}
\end{lemma}

\begin{proof}
    \emph{First statement}. By the first point of \cref{Lem:WP_ODE:f}, $e^{tf}(p)$ is well defined for every $t \in [0,1]$ and takes values in $B_{2\delta}$. For $t\in[0,1]$ and $k \in \intset{0,M+1}$, we have
 \begin{equation}
   \frac{\dd^k}{\dd t^k} \left[ \phi( e^{t f}( p )) \right]
   = \big( {f^k} \phi \big) (e^{t f} (p)).
 \end{equation}
 Thus, the considered sum is the Taylor expansion of order $M$ of the map $t \mapsto \phi( e^{t f}(p) )$ at $t=0$ and
 \begin{equation} \label{efphi.1}
    \left( e^{f} - \sum_{k=0}^{M} \frac{{f^k}}{k!} \right)(\phi)(p)
     = \int_0^1 \frac{(1-s)^M}{M!} \big({f^{M+1}} \phi \big) ( e^{s f} (p) ) \dd s.  
 \end{equation}
 {This concludes the proof of \eqref{estimate.ef.phi} thanks to the integration in \eqref{efphi.1} and \cref{p:prod-diff-ck}.}
 
 \medskip
 
 \noindent \emph{Second statement}. 
 Let $r > 0$ be such that {$f \in \CC^{\omega,r}(B_{2\delta};\K^d)$} and $\phi \in \CC^{\omega,r}(B_{2\delta};\K)$. 
 Let $r' \in [r/e, r)$. By~\eqref{eq:prod.analytic}, for each $k \in \N$,
 \begin{equation} \label{eq:tkfkphi.analytic}
    \opnorm{\frac{t^k }{k!} {f^k} \phi}_{r'}
    \leq 
    \frac{|t|^k}{k!} \frac{k!}{e} \left(\frac{e}{r-r'}\right)^{k} \opnorm{f}_{r}^{k} \opnorm{\phi}_r,
 \end{equation}
 so that the sum converges absolutely in $\CC^{\omega,r'}$ when $|t| e \opnorm{f}_{r} < r-r'$. Moreover, by~\eqref{efphi.1} with $f \leftarrow tf$ and \eqref{eq:prod.analytic}, 
 \begin{equation}
    \left\| \left( e^{tf} - \sum_{k=0}^{M} \frac{t^k }{k!} {f^k} \right)(\phi) \right\|_{\CC^0}
    \leq 
    \frac{|t|^{M+1}}{(M+1)!} \| {f^{M+1}} \phi \|_{\CC^0},
 \end{equation}
 where, using \eqref{eq:tkfkphi.analytic}, the right-hand side tends to zero as $M \to +\infty$ under the same smallness condition; so that the sum converges towards $e^{tf} \phi$ in $\CC^{\omega,r'}$ when $|t| e \opnorm{f}_{r} < r-r'$.
\end{proof}

\subsubsection{Pushforwards of vector fields by diffeomorphisms}

\begin{definition}[Pushforward of a vector field by a diffeomorphism]
 Let $\Omega, \Omega'$ be open subsets of~$\K^d$.
 Let $\theta \in \CC^1(\Omega;\Omega')$ be a local diffeomorphism from $\Omega$ to $\Omega'$. 
 Let $f \in \CC^0(\Omega;\K^d)$ be a vector field. We define $\theta_* f \in \CC^0(\Omega';\K^d)$ \emph{the pushforward of $f$ by $\theta$} as
 \begin{equation}
   (\theta_* f)(q) := (D\theta)_{\rvert \theta^{-1}(q)} f (\theta^{-1}(q))
   = (D\theta^{-1})^{-1}_{\rvert q} f(\theta^{-1}(q)).
 \end{equation}
\end{definition}

\begin{lemma}[Chain rule for pushforwards] \label{thm:push.composition}
 Let $\Omega, \Omega', \Omega''$ be open subsets of~$\K^d$.
 Let $\theta \in \CC^1(\Omega;\Omega')$ be a local diffeomorphism from $\Omega$ to $\Omega'$ and $\theta' \in \CC^1(\Omega';\Omega'')$ be a local diffeomorphism from $\Omega'$ to~$\Omega''$. 
 Let $f \in \CC^0(\Omega;\K^d)$ be a vector field. Then, on $\Omega''$,
 \begin{equation}
   \theta'_* (\theta_* f) = (\theta' \circ \theta)_* f.
 \end{equation}
\end{lemma}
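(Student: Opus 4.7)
The plan is to unroll the definition of pushforward on both sides and apply the chain rule for the differential of a composition of maps. This is a direct pointwise computation with no analytic subtlety; the only care needed is bookkeeping the base points at which each differential is evaluated.

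Concretely, I would fix an arbitrary $q'' \in \Omega''$ and introduce the auxiliary points $q' := (\theta')^{-1}(q'') \in \Omega'$ and $p := \theta^{-1}(q') \in \Omega$. Since $\theta' \circ \theta$ is itself a local diffeomorphism from $\Omega$ onto $\Omega''$, one has $p = (\theta' \circ \theta)^{-1}(q'')$ and $\theta(p) = q'$. Applying the definition of pushforward to $f$ by $\theta$ gives
\begin{equation}
 (\theta_* f)(q') = (D\theta)_{\rvert p}\, f(p),
\end{equation}
and applying it a second time, to the vector field $\theta_* f$ by $\theta'$, yields
\begin{equation}
 \bigl(\theta'_*(\theta_* f)\bigr)(q'') = (D\theta')_{\rvert q'} (\theta_* f)(q') = (D\theta')_{\rvert \theta(p)}\,(D\theta)_{\rvert p}\, f(p).
\end{equation}

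The final step is to invoke the classical chain rule for differentials of $\CC^1$ maps, namely
\begin{equation}
 (D\theta')_{\rvert \theta(p)}\,(D\theta)_{\rvert p} = D(\theta' \circ \theta)_{\rvert p},
\end{equation}
which, together with the identity $p = (\theta' \circ \theta)^{-1}(q'')$, gives
\begin{equation}
 \bigl(\theta'_*(\theta_* f)\bigr)(q'') = D(\theta' \circ \theta)_{\rvert p}\, f(p) = \bigl((\theta' \circ \theta)_* f\bigr)(q'').
\end{equation}
Since $q''$ was arbitrary, this establishes the pointwise equality on $\Omega''$. There is no real obstacle: the statement is essentially a reformulation of the chain rule, and the $\CC^1$ hypothesis on $\theta$ and $\theta'$ (implicit in them being $\CC^1$ local diffeomorphisms) is exactly what is needed to apply it.
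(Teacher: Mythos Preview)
Your proof is correct and follows exactly the approach the paper indicates: the paper's own proof consists of a single sentence stating that the result is a consequence of the chain rule for differentiation (with a reference to a textbook), and your argument simply spells out that computation in detail.
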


\begin{proof}
 This is a consequence of the chain rule for differentiation, see e.g.\ \cite[Problem 12-10]{zbMATH06034615}.
\end{proof}

\begin{lemma}[Lie brackets of pushforwards] \label{thm:push.lie}
 Let $\Omega, \Omega'$ be open subsets of~$\K^d$.
 Let $\theta \in \CC^2(\Omega;\Omega')$ be a local diffeomorphism from $\Omega$ to $\Omega'$.
 Let $f,g \in \CC^1(\Omega;\K^d)$ be two vector fields. Then, on $\Omega'$,
 \begin{equation}
   [\theta_* f, \theta_* g] = \theta_* [f, g].
 \end{equation}
\end{lemma}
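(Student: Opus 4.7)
The plan is to reduce the vector-field identity to an identity between first-order differential operators acting on $\CC^\infty$ scalar test functions $\phi$ on $\Omega'$, using the intertwining relation
\begin{equation}
(\theta_* f \cdot \nabla)\phi = \bigl[(f \cdot \nabla)(\phi\circ\theta)\bigr]\circ\theta^{-1},
\end{equation}
which is immediate from the chain rule $\nabla(\phi\circ\theta)(p) = (D\theta_p)^T \nabla\phi(\theta(p))$ combined with the definition $(\theta_* f)(q) = (D\theta_{\theta^{-1}(q)}) f(\theta^{-1}(q))$. Indeed, evaluating $f(p)\cdot\nabla(\phi\circ\theta)(p) = (D\theta_p f(p))\cdot\nabla\phi(\theta(p))$ at $q = \theta(p)$ produces the right-hand side.

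Applying this intertwining relation twice, first with $f$ and then with $g$ (with the intermediate function $\psi := (f\cdot\nabla)(\phi\circ\theta)\circ\theta^{-1}$), yields
\begin{equation}
(\theta_* g\cdot\nabla)(\theta_* f\cdot\nabla)\phi = \bigl[(g\cdot\nabla)(f\cdot\nabla)(\phi\circ\theta)\bigr]\circ\theta^{-1}.
\end{equation}
Subtracting the analogous expression with $f$ and $g$ exchanged, and then using the intertwining relation one final time with the vector field $[f,g]$, one obtains
\begin{equation}
\bigl([\theta_*f,\theta_*g]\cdot\nabla\bigr)\phi = \bigl(\theta_*[f,g]\cdot\nabla\bigr)\phi.
\end{equation}
To pass from this equality of first-order operators to the desired equality of vector fields, I would specialize $\phi$ successively to the coordinate projections $\phi_j(q) := q_j$ for $j\in\intset{1,d}$, thereby reading off the equality of the $j$-th components of the two vector fields at each point of $\Omega'$.

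The argument is essentially formal and uses only the chain rule; there is no genuine obstacle. The one point deserving attention is regularity: the iterated expression $(g\cdot\nabla)(f\cdot\nabla)(\phi\circ\theta)$ involves two derivatives falling on $\theta$ and one derivative on $f$ (or $g$), which is precisely what the hypotheses $\theta\in\CC^2(\Omega;\Omega')$ and $f,g\in\CC^1(\Omega;\K^d)$ guarantee when $\phi$ is smooth. After antisymmetrization the second-order cross derivatives of $\phi$ cancel, so the surviving expression $(\theta_*[f,g]\cdot\nabla)\phi$ is continuous, matching the regularity of both sides of the claimed identity.
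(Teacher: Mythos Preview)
Your proof is correct. The paper itself does not give a proof but simply refers to the chain rule and cites a textbook (Lee, \emph{Introduction to Smooth Manifolds}, Corollary~8.31); your argument is exactly the standard one found there, carried out via the operator intertwining relation $(\theta_* f\cdot\nabla)\phi = [(f\cdot\nabla)(\phi\circ\theta)]\circ\theta^{-1}$ and specialization to coordinate functions.
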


\begin{proof}
 This is a consequence of the chain rule for differentiation, see e.g.\ \cite[Corollary 8.31]{zbMATH06034615}.
\end{proof}

\subsubsection{Composition of vector fields with flows}

\begin{lemma}\label{Lem:tool_serie_ad}
    Let $\delta > 0$, {$f_0 \in \CC^1(B_{2\delta};\K^d)$} and $t \in \R$ such that $|t| \| f_0 \|_{\CC^0} < \delta$.
    Denote by $\Phi_0(t,p) := e^{tf_0}(p)$ the associated flow for $p \in B_\delta$. 
\begin{enumerate}
    \item For each $M \in \N$, if {$f_0, f_1 \in \CC^{M+1}(B_{2\delta};\K^d)$}, then, for each $p \in B_\delta$,
    \begin{equation} \label{Lem:tool_serie_ad_formule1}
        \left| {\left(\partial_p \Phi_0 (t,p)\right)}^{-1} f_1\left( \Phi_0(t,p) \right) - \sum_{k=0}^{M-1} \frac{t^k}{k!} \ad_{f_0}^k(f_1)(p) \right| \leq  \frac{t^M}{M!} \left\| \ad_{f_0}^M(f_1) \right\|_{\CC^0}.
    \end{equation}
   \item For each $M \in \N$, if {$f_0, f_1 \in \CC^{M+1}(B_{2\delta};\K^d)$} and $\ad_{f_0}^M(f_1) \equiv 0$, then, for each $p \in B_\delta$,
   \begin{equation} \label{eq:push.nilpotent}
       (\Phi_0(-t)_* f_1) (p) 
        = {\left(\partial_p \Phi_0 (t,p)\right)}^{-1} f_1\left( \Phi_0(t,p) \right) 
        = \sum_{k=0}^{M-1} \frac{t^k}{k!} \ad_{f_0}^k(f_1)(p).
   \end{equation}
   This holds in particular when $\mathcal{L}(\{f_0,f_1\})$ is nilpotent with index {at most} $(M+1)$.
    \item If $r>0$, {$f_0, f_1 \in \CC^{\omega,r}(B_{2\delta};\K^d)$}, then, for $|t|<\frac{r}{9 \opnorm{f_0}_r}$, for each $p \in B_\delta$,
    \begin{equation}\label{tool_series}
        (\Phi_0(-t)_* f_1) (p) 
        = {\left(\partial_p \Phi_0 (t,p)\right)}^{-1} f_1\left( \Phi_0(t,p) \right) 
        = \sum_{k=0}^{+\infty} \frac{t^k}{k!} \ad_{f_0}^k(f_1)(p),
    \end{equation}
    where, for every $r' \in [r/e,r)$ the series converges in {$\CC^{\omega,r'}(B_{2\delta};\K^d)$} when $|t|<\frac{r-r'}{6 \opnorm{f_0}_r}$.
    \item Let $H_0, H_1 \in \mathcal{M}_d(\K^d)$ and $M \in \N^*$. Then
    \begin{equation}
        \left\| e^{H_0} H_1 e^{-H_0} - \sum_{k=0}^{M-1} \frac{1}{k!} \ad_{H_0}^k(H_1) \right\| \leq \frac{(2\|H_0\|)^M}{M!} \|H_1\|  e^{2\|H_0\|}
    \end{equation}
    and
    \begin{equation}
        e^{H_0} H_1 e^{-H_0}= \sum_{k=0}^{+\infty} \frac{1}{k!} \ad_{H_0}^k(H_1),
    \end{equation}
    where $\ad$ is the commutator of matrices $\ad_A(B):=[A,B]=AB-BA$ and $\|\cdot\|$ a sub-multiplicative norm on $\mathcal{M}_d(\K)$ such that $\|\mathrm{Id}_d\|=1$.
\end{enumerate}
\end{lemma}

\begin{proof}
    We proceed step by step.
\begin{enumerate}
    \item First, for each $\tau \in [0,t]$, $\Phi_0(\tau,p)$ is well-defined.
    Taking into account that
\begin{equation}
 \begin{split}
  \frac{\dd}{\dd \tau}\left[ {\left(\partial_p \Phi_0 (t,p)\right)}^{-1} \right] 
  & = - {\left(\partial_p \Phi_0 (t,p)\right)}^{-1} \frac{\dd}{\dd \tau}\left[ \partial_p \Phi_0 (\tau,p) \right] {\left(\partial_p \Phi_0 (t,p)\right)}^{-1} \\
  & = - {\left(\partial_p \Phi_0 (t,p)\right)}^{-1} {D f_0}_{\rvert \Phi_0(\tau,p)},
 \end{split}
\end{equation}
one obtains by induction on $k\in\intset{0,M+1}$ that
\begin{equation}
 \frac{\dd^k}{\dd \tau^k}\left[ {\left(\partial_p \Phi_0 (t,p)\right)}^{-1} f_1\left( \Phi_0(\tau,p) \right) \right] = {\left(\partial_p \Phi_0 (t,p)\right)}^{-1} \ad_{f_0}^k(f_1)\left( \Phi_0(\tau,p) \right).
\end{equation}
The Taylor formula
\begin{equation} \label{tool_serie_ad:Taylor}
 \begin{split}
   {\left(\partial_p \Phi_0 (t,p)\right)}^{-1} f_1\left( \Phi_0(t,p) \right) 
   & - \sum_{k=0}^{M-1} \frac{t^k}{k!} \ad_{f_0}^k(f_1)(p) \\
   & = \int_0^t \frac{(t-s)^{M-1}}{(M-1)!} {\left(\partial_p \Phi_0 (s,p)\right)}^{-1} \ad_{f_0}^{M}(f_1)\left( \Phi_0(s,p) \right) \dd s    
 \end{split}
\end{equation}
     proves the first statement.
     
    \item Equation \eqref{tool_serie_ad:Taylor} yields the conclusion.
    
    \item Let $r' \in [r/e,r)$. 
    Thanks to \eqref{eq:bracket.analytic},
    \begin{equation} \label{daadf0kf1(x)}
        \opnorm{\frac{t^k}{k!} \ad^k_{f_0}(f_1)}_{r'}
        \leq \frac{|t|^k}{k!} \frac{k!}{e} \left(\frac{2e}{r-r'}\right)^k \opnorm{f_0}_r^k \opnorm{f_1}_r,
    \end{equation}
    so the series converges absolutely in $\CC^{\omega,r'}$ when $2 e |t| \opnorm{f_0}_r < r - r'$, which is the case when $6 |t| \opnorm{f_0}_r < r - r'$ because $2e<6$. The weakest bound, for $r'=r/e$ is $2 e |t| \opnorm{f_0}_r < (1-1/e)r$ and it holds when $9 |t| \opnorm{f_0}_r < r$ because
$2e/(1-1/e) < 9$.
    
    Moreover, thanks to \eqref{tool_serie_ad:Taylor} and \eqref{daadf0kf1(x)},
    \begin{equation} \label{eq:thm.compo.proof.3}
        \begin{split}
            \left| (\Phi_0(-t)_* f_1)(p) -\sum_{k=0}^{M-1} \frac{t^k}{k!} \ad_{f_0}^k(f_1)(p) \right|
            & \leq \frac{|t|^{M}}{M!} \| \ad^M_{f_0}(f_1) \|_{\CC^0} \sup_{s\in [0,t]} \| (\partial_p \Phi_0(s,\cdot))^{-1} \|_{\CC^0} \\
            & \leq A_0 \opnorm{f_1}_r \left(\frac{2e|t|\opnorm{f_0}_{r}}{r-r'}\right)^M,
        \end{split}
    \end{equation}
    where $A_0$ denotes the supremum in the right-hand side of \eqref{eq:thm.compo.proof.3} which is finite. So the sum converges towards the pushforward under the same smallness assumption on time.

    \item The last statement is proved similarly, by considering the function $t \mapsto e^{tH_0} H_1 e^{-tH_0}$.  \qedhere
\end{enumerate}

\end{proof}

\subsubsection{Partial derivative of a flow with respect to a parameter}

In this paragraph, we compute the partial derivative of a flow with respect to a parameter on which the vector field depends, under a particular {nilpotency assumption}.

\begin{lemma} \label{Prop:d(flot)/d(param)}
    Let $J$ an interval of $\R$. Let $\delta > 0$ and $f \in \CC^\infty(J \times B_{4\delta} ; \K^d)$ such that $\| f \|_{\CC^0} < \delta$.
    Let $\lambda_0 \in J$, $M \in \N$ and assume that, for each $\lambda \in J$, $\ad^M_{f(\lambda_0)} (f(\lambda)) \equiv 0$. Then, for each $p \in B_\delta$,
    \begin{equation}
        {\frac{\dd}{\dd \lambda}} \left( e^{f(\lambda)} p \right)_{\rvert \lambda = \lambda_0}
        =
        \sum_{k=0}^{M-1} \frac{(-1)^{k}}{(k+1)!} \ad^k_{f(\lambda_0)}\left(\partial_\lambda f (\lambda_0) \right)\left( e^{f(\lambda_0)} p \right).
    \end{equation}
    This holds in particular when $\mathcal{L}(f(J))$ is nilpotent with index at most $M+1$.
\end{lemma}

\begin{proof}
    Let $\Theta \in \CC^\infty([0,1] \times J \times B_\delta)$ defined by $\Theta(t,\lambda,p) := e^{tf(\lambda)}(p)$. 
    Let $p_0 \in B_\delta$ and $\lambda_0 \in J$.
    Let $x_0(t) := e^{t f(\lambda_0)}(p_0)$ for $t \in [0,1]$. 
    Then, the desired derivative is $\partial_\lambda \Theta(1,\lambda_0,p_0) = z(1)$ where $z$ is the solution to $z(0) = 0$ and
    \begin{equation}
        \dot{z}(t) = \partial_x f(\lambda_0, x_0(t)) z(t) + \partial_\lambda f(\lambda_0, x_0(t)).
    \end{equation}
    Let $R:(t,s) \in [0,1]^2 \to \mathcal{M}_d(\K)$ be the resolvent associated with the linearized system at $p_0$, which is the solution to $R(s,s) = \mathrm{Id}$ and
    \begin{equation}
        \partial_t R(t,s) = \partial_x f(\lambda_0, x_0(t)) R(t,s),
    \end{equation}
    i.e.\ $R(t,s)= \partial_p \Theta (t-s,\lambda_0,x_0(s))$. 
    Then by Duhamel's principle
    \begin{equation}
        \begin{split}
            z(1) & = \int_0^1 R(\tau,1)^{-1} \partial_\lambda f(\lambda_0, x_0(\tau)) \dd \tau \\
            & = \int_0^1 {\left( \partial_p \Theta(\tau-1,\lambda_0, x_0(1))\right)}^{-1} \partial_\lambda f(\lambda_0, \Theta(\tau-1, \lambda_0, x_0(1))) \dd \tau.
        \end{split}
    \end{equation}
    By \eqref{eq:push.nilpotent} of \cref{Lem:tool_serie_ad} with $t \leftarrow \tau -1$, $f_0 \leftarrow f(\lambda_0, \cdot)$, $f_1 \leftarrow \partial_\lambda f(\lambda_0, \cdot)$ and $p \leftarrow x_0(1)$,
    \begin{equation}
        z(1)
        = \int_0^1 \sum_{k=0}^{M-1} \frac{(\tau-1)^k}{k!} \ad^k_{f(\lambda_0)}\left( \partial_\lambda f(\lambda_0) \right) ( x_0(1) ) \dd \tau,
    \end{equation}
    which gives the conclusion.
\end{proof}

\section{Error estimates in time for nonlinear vector fields}
\label{sec:estimates}

Using a classical linearization {principle (see \cref{subsec:linear_trick})}, we show that the formal expansions for linear equations of \cref{sec:formal} can yield approximate formulas in the context of nonlinear ordinary differential equations. 
We derive rigorous error bounds at every fixed order with respect to time, involving finite sums or products.

\subsection{Linearization principle for nonlinear vector fields}
\label{subsec:linear_trick}

We explain how, by {seeing} vector fields {as} first-order differential operators and points on the manifold {as} the operator of evaluation at this point, one {classically} recasts a nonlinear ODE driven by smooth vector fields to a linear equation set on a larger space of operators on smooth functions.
{This approach is notably used in \cite{MR524203, MR579930} (replacing nonlinear objects by infinite-dimensional linear ones is the foundation of the ``chronological calculus'') and in \cite{MR886816}.
More generally, the idea of replacing the study of a space by the study of the ring of functions on that space is reminiscent of the representation results of \cite{zbMATH03097323}.
For readers with a background in PDE analysis, this linearization principle can be seen as a ``reversed method of characteristics'': it transforms a nonlinear ODE into a linear transport PDE, considered at the level of evolution operators.
}

\subsubsection{Definition of an operator acting on smooth functions}

When $T>0$ and $f \in \CC^\infty_c([0,T] \times \K^d{;\K^d})$ satisfies $\|f\|_{{L^1((0,T);\CC^0)}} < 1$, we take the nonlinear ODE~\eqref{ODE:f} back to a linear framework by considering, for every $t \in [0,T]$ the linear operator $L(t)$ on $\CC^\infty_c(\K^d;\K)$ defined, for $\varphi \in \CC^\infty_c(\K^d;\K)$, by
\begin{equation}
    L(t) \varphi : p \mapsto \varphi\left( x(t;f,p) \right).
\end{equation}
$L(t)\varphi$ is of class $\CC^\infty$ as a composition of $\CC^\infty$ functions, by the third statement of \cref{Lem:WP_ODE:f}. $L(t)\varphi$ is compactly supported in $\K^d$ because $\varphi$ is and $|x(t;f,p) - p| \leq 1$ for every $p\in\K^d$, by the first statement of \cref{Lem:WP_ODE:f} (which is of course invariant by translation of the origin).
We don't specify the dependence of $L(t)$ with respect to $f$ to simplify the notations.

For every $p \in \K^d$, the map $t \in [0,T] \mapsto \big(L(t)\varphi\big)(p)$ belongs to $\CC^1([0,T];\K)$ and satisfies, for every $t\in [0,T]$, {using the notations of \cref{rk:no-nabla},}
 \begin{equation}
  \frac{\dd}{\dd t} \big( L(t)\varphi \big)(p) 
  = D\varphi\Big( x(t;f,p) \Big)  f \Big( t, x(t;f,p) \Big) 
  = \big( L(t) {f(t)} \varphi \big)(p).    
 \end{equation}
 Thus, $L$ {satisfies} the following linear equation
 \begin{equation} \label{L_equation}
   \frac{\dd}{\dd t} L(t) =  L(t) {f(t)}
 \end{equation}
 in the weak sense explicited above. For every fixed $t \in [0,T]$,
 \begin{equation}
  \forall \varphi \in \CC^\infty_c(\K^d;\K), 
  \forall p\in\K^d, 
  \quad
  \big( L(t)\varphi \big)(p) 
  = \varphi(p) + \int_0^t \big( L(\tau) {f(\tau)} \varphi \big)(p) \dd\tau, 
 \end{equation} 
 where the symbol $\int_0^t$ is the Lebesgue integral on $L^1((0,t);\K)$. We will use the following notation to refer to this property:
 \begin{equation} \label{L_integrale}
   L(t) = \text{Id} + \int_0^t L(\tau) {f(\tau)} \dd\tau.
 \end{equation}
 In the sequel, all integral equalities between operators on $\CC^\infty_c(\K^d;\K)$ should be understood in this weak sense (after evaluation on a test function and at a point). The right-hand side refers to the composition of two operators on $\CC^\infty_c(\K^d;\K)$: {$L(\tau)$ and $f(\tau)$, seen as} a first-order differential operator on smooth functions. 

Equation \eqref{L_equation} is now a linear differential equation satisfied by the object $L(t)$ (in a much larger space), so one can hope to apply the linear results of the previous sections.

\subsubsection{Approximating sequence}

In order to approximate the operator $L(t)$, it is natural to introduce the sequence $(L_j)_{j \in \N}$ of time-dependent operators on $\CC^\infty_c(\K^d;\K)$ defined, for every $t \in [0,T]$, by $L_0(t) := \mathrm{Id}$ and, for $j \in \N$,
 \begin{equation} \label{eq:def-Lj}
    L_{j+1}(t) := \int_0^t L_j(\tau) {f(\tau)} \dd\tau,
 \end{equation}
 where this definition should be understood in the weak sense.
 Hence, {recalling \cref{rk:no-nabla},}
 \begin{equation} \label{Lj_t}
    L_j(t) = \int_{{\Delta^j(t)}} 
    {f(\tau_1) \dotsb f(\tau_j)} \dd\tau,
\end{equation}
 where the integration domain is {the ordered simplex of} \cref{def:simplex}.
 Then, for every $j\in\N$, $L_j$ is ``of order $j$ with respect to $f$'', and a differential operator of order at most $j$ (with respect to~$x$) on $\CC^\infty_c(\K^d;\K)$. And this sequence indeed allows to approximate $L(t)$ {in the sense exposed in \cref{Prop:error_CF} below, in a finite regularity setting.}
 
\subsection{Chen-Fliess expansion}
\label{subsec:error_ID}

The approximating sequence for the operator $L(t)$ yields the following straight-forward estimate for the Chen-Fliess expansion of the state, {which can also be found in \cite[Proposition 2.3]{MR524203}.}

\begin{proposition} \label{Prop:error_CF}
    For every $M \in \N$, $\delta > 0$, $T > 0$, {$f \in L^1((0,T);\CC^{\max(M,1)}(B_{2\delta};\K^d))$}, with $\|f\|_{{L^1((0,T);\CC^0)}} < \delta$ and $\varphi \in \CC^{M+1}(B_{2\delta};\K)$, for each $t \in [0,T]$, {with the notations of \cref{rk:no-nabla},}
    \begin{equation} \label{error_CF_phi_x}
        \left| \varphi\left(x(t;f,p)\right) - \sum_{j=0}^{M} \int_{{\Delta^j(t)}} \big(
    {f(\tau_1) \dotsb f(\tau_j) \varphi} \big) (p) \dd\tau  \right| \leq (M+1)! \|f\|_{{L^1((0,t);\CC^{M})}}^{M+1} \| \varphi\|_{\CC^{M+1}}.
    \end{equation}
    In particular, for each $p \in B_\delta$,
    \begin{equation} \label{error_CF_x}
        \left| x(t;f,p) - \sum_{j=0}^{M} \int_{{\Delta^j(t)}} \big( { f(\tau_1)  \dotsb f(\tau_j) \mathrm{Id}_d } \big) (p) \dd\tau  \right|
     \leq 
        (M+1)! \|f\|_{{L^1((0,t);\CC^{M})}}^{M+1}.
    \end{equation}
    Hence, if $f \in L^\infty((0,T);\CC^M)$, both estimates correspond to a bound scaling like $t^{M+1}$.
\end{proposition}

\begin{proof}
    Let $p \in B_\delta$. Thanks to \cref{Lem:WP_ODE:f}, $x(\tau;f,p)$ is well-defined for $\tau \in [0,T]$ and $x(\cdot;f,p) \in \CC^1([0,T];\K^d)$. Thus, for each $\tau \in [0,T]$,
    \begin{equation}
        \varphi(x(\tau;f,p)) = \varphi(p) + \int_0^\tau \big( {f(\tau_1)} \varphi \big)(x(\tau_1;f,p)) \dd \tau_1.
    \end{equation}
    By iterating this formula, we obtain for $t \in [0,T]$,
    \begin{equation} \label{CF:reste_expl}
    \begin{aligned} 
    \varphi\left(x(t;f,p)\right) & - \varphi(p) - \sum_{j=1}^{M} \int_{{\Delta^j(t)}} \big(
    {f(\tau_1) \dotsb f(\tau_j) \varphi} \big) (p) \dd\tau 
    \\ & =
    \int_{{\Delta^{M+1}(t)}} \big( { f(\tau_1) \dotsb f(\tau_{M+1}) \varphi} \big) \left(x(\tau_{M+1};f,p)\right) \dd\tau,
    \end{aligned}
    \end{equation}
    which concludes the proof {of \eqref{error_CF_phi_x} using \cref{p:prod-diff-ck}.}
    {Then \eqref{error_CF_x} follows by applying \eqref{error_CF_phi_x} to coordinate functions.}
\end{proof}

\subsection{Magnus expansion in the usual setting}

In \cref{subsec:standarderror}, we state a precise estimate of the difference between the exact flow and the exponential of its truncated logarithm.
In \cref{sec:estimate-cbhd}, we show that this estimate implies a similar estimate for the CBHD formula.
\cref{subsec:products<-brackets} is devoted to a technical result used in the proof, which transposes to vector fields a formal integral identity.

\subsubsection{Standard error estimate in time}
\label{subsec:standarderror}

The following estimate can be viewed as a refined version of classical time-focused estimates (see e.g.\ \cite[Proposition 4.3]{MR793239}). It bears a lot of similarity with \cite[Theorem 1.32]{driver2018truncated}, but is both easier to state and to prove in our flat setting since \cite{driver2018truncated} is concerned with the truncated logarithm of flows in general Riemannian manifolds. We propose a proof for sake of completeness, and because this precise estimate is the founding principle of the new estimate, proved in the next section. {It relies on the usual arguments, used for instance in \cite{MR886816} and \cite[Proposition 4.1]{MR524203} (which states a slightly tighter estimate).}

\begin{proposition} \label{Prop:Magnus_1}
 For every $M\in\N$, there exists $\delta_M, C_M>0$ such that, 
 for every $\delta>0$, $T>0$, {$f \in L^1((0,T);\CC^{\max(M^2,1)}(B_{2\delta};\K^d))$} with $\|f\|_{{L^1((0,T);\CC^{M^2})}} \leq \delta_M \min \{1;\delta\}$, $p\in B_\delta$ and $t\in[0,T]$,
 \begin{equation} \label{Estim_Magnus}
  \left| x(t;f,p) - e^{Z_M(t,f)} p  \right| \leq C_M \|f\|^{M+1}_{{L^1((0,t);\CC^{M^2})}},
 \end{equation}
 where $Z_M(t,f):= \mathrm{Log}_M\{f\}(t)$ is the vector field introduced in \cref{def:LOGM}.
 
 Hence, if $f \in L^\infty((0,T);{\CC^{M^2}(B_{2\delta};\K^d)})$ this estimate corresponds to a bound scaling like $t^{M+1}$.
 
    Moreover, if $f(t,x)=\sum_{i \in I} u_i(t) f_i(x)$ with $u_i \in L^1((0,T);\K)$ and $f_i \in {\CC^{M^2}(B_{2\delta};\K^d)}$, then, for each monomial basis $\mathcal{B}$ of $\mathcal{L}(X)$,
\begin{equation} \label{ZM(f)=coord1.0}
Z_M(t,f)=\sum_{b\in\mathcal{B}_{\llbracket 1,M \rrbracket}} \zeta_b(t,u) f_b
\end{equation}
where the functionals $\zeta_b$ are the associated coordinates of the first kind and $f_b$ are the evaluated Lie brackets (see Definitions \ref{Def:monomial_basis}, \ref{def:coord1} and \ref{Def:evaluated_Lie_bracket}).
\end{proposition}

\begin{proof}
    For $M=0$, $Z_0(t,f)=0$ thus (\ref{Estim_Magnus}) holds with $C_0=1$ because $|x(t;f,p)-p| \leq \|f\|_{L^1(\CC^0)}$. 
    From now on $M\in\N^*$ is fixed. 
    By \cref{def:LOGM}, there exists $C_M'>0$ such that, for every $\delta>0$, $T>0$, $f\in L^1((0,T);{\CC^{M-1}})$ with $\|f\|_{L^1(\CC^{M-1})} \leq 1$ and $t\in [0,T]$,
    \begin{equation}
    \|\mathrm{Log}_M \{f\}(t)\|_{{\CC^0}} 
    \leq 
    C_M' \|f\|_{{L^1(\CC^{M-1})}}.
    \end{equation}
    In particular, for every $\delta>0$, $T>0$, $f\in L^1((0,T);{\CC^{M-1}})$ with $\|f\|_{L^1(\CC^{M-1})} \leq \min\{1;\delta;\delta/C_M'\}$, for every $p \in B_\delta$ and $t \in [0,T]$,
    \begin{itemize}
        \item $x(t;f,p)$ is well defined and belongs to $B_{2\delta}$,
        \item for every $s\in [0,1]$, $e^{s \mathrm{Log}_M \{f\}(t)} p$ is well defined belongs to $B_{2\delta}$.
    \end{itemize}
    This happens, in particular, when $\|f\|_{L^1(\CC^{M-1})} \leq \delta_M \min\{1;\delta\}$ with $\delta_M:=\min\{1;1/C_M'\}$. 

    \bigskip

    From now on, we fix $\delta, T>0$ and $f\in L^1((0,T);{\CC^{M^2}})$ with $\|f\|_{L^1(\CC^{M^2})} \leq \delta_M \min \{1;\delta\}$.

\medskip

In order to use the operators $L(t)$ defined in \cref{subsec:linear_trick}, we assume that $f \in \CC^\infty_c([0,T] \times \K^d{;\K^d})$. This is not restrictive because this space is dense in $L^1((0,T);\CC^{M^2}{(B_{2\delta};\K^d)})$ and both sides of~\eqref{Estim_Magnus} are continuous for the $L^1((0,T);\CC^{M^2})$ topology on $f$ {(see the fourth item of \cref{Lem:WP_ODE:f}).
Moreover, this regularization procedure is merely an heuristical convenience, since all the computations performed below make perfect sense even in our finite regularity setting.}

\medskip
 
 \textbf{Step 1:  Construction of the formal logarithm.} We introduce $Z_M(t,f)$ the finite sum of terms ``of order at most $M$ with respect to $f$'' in the following formal power series (recall the formal power series for $\log (1+x)$):
 \begin{equation} \label{logLT}
  \mathrm{log~} L(t) = \sum_{m\in\N^*} \frac{(-1)^{m-1}}{m} \left( \sum_{j\in\N^*} L_j(t) \right)^m,
 \end{equation}
 {with the notation of \eqref{eq:def-Lj}}, 
 i.e.\ we define
 \begin{equation} \label{ZM_produit}
  Z_M(t,f) := \sum_{r=1}^M \sum_{m=1}^{M} \frac{(-1)^{m-1}}{m} \sum_{\mathbf{r} \in \N^m_r} L_{\mathbf{r}_m}(t) \dotsb L_{\mathbf{r}_1}(t),
 \end{equation}
 where $\N^m_r$ is defined in \eqref{def:Nmr}. For instance,
 \begin{equation}
  Z_3 = L_1 
  + \left( L_2-\frac{1}{2} L_1^2 \right) 
  + \left( L_3-\frac{1}{2} \left( L_1 L_2 + L_2 L_1 \right) + \frac{1}{3} L_1^3 \right).
 \end{equation}
 Then, by \eqref{Lj_t},
 \begin{equation}
  Z_M(t,f) = 
  \sum_{r=1}^M 
  \sum_{m=1}^{M} \frac{(-1)^{m-1}}{m}
  \sum_{{\mathbf{r}} \in \N^m_r} 
  \int_{{\Delta^{\mathbf{r}}(t)}}
  {
  f(\tau_1) \dotsb f(\tau_r)} \dd\tau, 
 \end{equation}
 \emph{A priori}, $Z_M(t,f)$ is thus an inhomogeneous differential operator on $\CC^\infty_c(\K^d;\K)$, of order at most~$M$. Using \cref{thm:logarithm-vf} (see below in the next paragraph) and \cref{def:LOGM}, $Z_M(t,f) = \mathrm{Log}_M \{f\}(t)$ and satisfies (\ref{ZM(f)=coord1.0}). 
 Thus $Z_M(t,f)$ is {a smooth vector field, i.e.\ both a vector-valued function and a first-order differential operator}.

 \bigskip

\textbf{Step 2: Strategy for the proof of the estimate.}
The key observation is that it is sufficient to prove that there exists $C_M > 0$ (independent of $\delta, T, f$) such that, for every $p\in B_\delta$, $t\in [0,T]$ and $\varphi \in \CC^\infty_c(\K^d;\K)$,
\begin{equation} \label{Estimee_phi}
 \left|  \Big(L(t) - e^{Z_M(t,f)}\Big)(\varphi)(p) \right| \leq C_M  \|f\|_{L^1(\CC^{M^2})}^{M+1} \| \varphi \|_{\CC^{M^2+1}}.
\end{equation}
Then, the conclusion follows by considering an appropriate $\CC^\infty_c$ truncation of the coordinate functions $\varphi_j : x \in \K^d \mapsto x_j \in \K$. To prove \eqref{Estimee_phi}, we will decompose the difference in three terms
\begin{equation}
L-e^{Z_M} = \left(  L-\sum_{j=0}^{M} L_j \right)
+ \left( \sum_{j=0}^{M} L_j - \sum_{k=0}^{M} \frac{Z_M^k}{k!}\right) + \left( \sum_{k=0}^{M} \frac{Z_M^k}{k!} - e^{Z_M} \right),
\end{equation}
{with the notation of \eqref{eq:def-Lj}}.
The first term is estimated in \cref{Prop:error_CF}. 

\bigskip

\textbf{Step 3: Bound for $\sum L_j - \sum \frac{Z_M^k}{k!}$.} By \eqref{ZM_produit}, this operator is a (finite) linear combination of terms of the form $L_{j_1}(t) \dotsb L_{j_p}(t)$ where $p\in\N^*$, $j_1,\dotsc,j_p \in \intset{1,M}$ and $M+1 \leq j_1+ \dotsc +j_p \leq M^2$. Indeed, $Z_M(t,f)$ is also the finite sum of terms ``of order at most $M$ with respect to $f$'' in the formal power series~\eqref{logLT}. Thus, there exists $C_M''>0$ (independent of $\delta, T, f$) such that, for every $p\in B_\delta$, $t\in [0,T]$ and $\varphi \in \CC^\infty_c(\K^d;\K)$,
\begin{equation} \label{Estimee_phi.2}
 \left| \left(\sum_{j=0}^{M} L_j(t) - \sum_{k=0}^{M} \frac{Z_M(t,f)^k}{k!}\right) (\varphi)(p) \right|
 \leq C_M'' 
 \|f\|_{L^1(\CC^{M^2-1})}^{M+1} \| \varphi \|_{\CC^{M^2}}.  
\end{equation}

\bigskip

\textbf{Step 4: Bound for $\sum\frac{Z_M^k}{k!} - e^{Z_M}$.} Using \cref{Lem:flow} for the time-independent vector field $Z_M(t,f)$ (where $t \in [0,T]$ has been fixed), estimate \eqref{estimate.ef.phi} yields for every $p\in B_\delta$, $t\in [0,T]$ and $\varphi \in \CC^\infty_c(\K^d;\K)$,
\begin{equation}
 \left| \left( e^{Z_M(t,f)} - \sum_{k=0}^{M} \frac{Z_M(t,f)^k}{k!} \right) (\varphi)(p) \right|
 \leq \| Z_M(t,f) \|_{\CC^M}^{M+1} {\| \varphi \|_{\CC^{M+1}}}.
\end{equation}
We deduce from \eqref{eq:LOGM} the existence of $C_M'''>0$ (independent of $\delta,T, f$) such that for every $t\in[0,T]$
\begin{equation}
 \|Z_M(t,f)\|_{\CC^M} \leq C_M''' 
 \|f\|_{L^1((0,t);\CC^{2M-1})}.
\end{equation}
Hence, for every $p\in B_\delta$, $t\in [0,T]$ and $\varphi \in \CC^\infty_c(\K^d;\K)$
\begin{equation} \label{Estimee_phi.3}
 \left| \left( e^{Z_M(t,f)} - \sum_{k=0}^{M} \frac{Z_M(t,f)^k}{k!} \right) (\varphi)(p) \right|
 \leq (C_M''')^{M+1}
 \|f\|_{L^1(\CC^{2M-1})}^{M+1} {\| \varphi \|_{\CC^{M+1}}}.
\end{equation}
Gathering \eqref{error_CF_phi_x}, \eqref{Estimee_phi.2} and \eqref{Estimee_phi.3} concludes the proof of \eqref{Estim_Magnus}.
\end{proof}

\subsubsection{Campbell Baker Hausdorff Dynkin formula} 
\label{sec:estimate-cbhd}

We deduce from \cref{Prop:Magnus_1} the following estimate for the classical CBHD formula with $q$ time-independent vector fields.

\begin{corollary} \label{Prop:CBH}
    For every $M \in \N^*$, there exists $\delta_M, C_M>0$ such that, for every $\delta>0$, $q\in\N^*$, $f_1,\dotsc,f_q \in {\CC^{M^2}(B_{2\delta};\K^d)}$ with $\sum_{1\leq j\leq q} \|f_j\|_{\CC^{M^2}} \leq \delta_M\min\{1;\delta\}$,
 \begin{equation} \label{CBH_estimee_M}
  \left\| e^{f_q} \dotsb e^{f_1} - e^{\CBHD_M(f_1,\dotsc,f_q)} \right\|_{{\CC^0}} \leq C_{M}  \|f\|^{M+1}   
 \end{equation}
 where $\CBHD_M(f_1,\dotsc,f_q)=\mathrm{Log}_M \{f\}(q)$, where the time-dependent vector field $f$ is defined by
 $f:(t,x) \in [0,q] \times B_{2\delta} \mapsto \sum_{j=1}^q 1_{[j-1,j]}(t) f_j(x)$ and $\|f\| := \|f\|_{L^1(\CC^{M^2})}=\sum_{1\leq j\leq q} \|f_j\|_{\CC^{M^2}}$.
 
 Moreover, for each monomial basis $\mathcal{B}$ of $\mathcal{L}(\{X_1,\dotsc,X_q\})$
 \begin{equation} \label{eq:cbhdm_f}
 \CBHD_M(f_1,\dotsc,f_q) = \sum_{b \in \mathcal{B}_{\intset{1,M}}} \alpha_b f_b\end{equation}
 where $(\alpha_b)_{b\in\mathcal{B}} \subset \K^{\mathcal{B}}$ is given by \cref{Cor:CBH_formel}.
\end{corollary}

\begin{proof}
Because of the particular form of $f$, we have $x(t;f,p)=e^{f_q} \dotsm e^{f_1} p$. Thus the estimate (\ref{CBH_estimee_M}) is an application of \cref{Prop:Magnus_1}. Let $\Lambda:\mathcal{L}(\{X_1,\dotsc,X_q\}) \rightarrow \mathcal{L}(\{f_1,\dotsc,f_q\})$ be the homomorphism of Lie algebras such that $\Lambda(X_j)=f_j$. The map $\CBHD_M$ is defined by a finite sum  of Lie brackets, thus it commutes with $\Lambda$
\begin{equation}
\CBHD_M(f_1,\dotsc,f_q)=
\Lambda(\CBHD_M(X_1,\dotsc,X_q))=
\Lambda \left( \sum_{b \in \mathcal{B}_{\intset{1,M}}} \alpha_b b \right)=
\sum_{b \in \mathcal{B}_{\intset{1,M}}} \alpha_b \Lambda(b),
\end{equation}
which proves \eqref{eq:cbhdm_f}.
\end{proof}

\subsubsection{Replacing products with brackets in logarithm integrals}
\label{subsec:products<-brackets}

The goal of this section is to prove \cref{thm:logarithm-vf}, which is a key point in the proof of \cref{Prop:Magnus_1}, as it allows to replace products of differential operators with Lie brackets in the integrals involved in the computation of the logarithm of the flow.

We first state and prove a corollary of \cref{thm:formal.log} in algebras. Indeed, \cref{thm:formal.log} is a statement about formal differential equations, but it has consequences for concrete realizations, e.g.\ for systems governed by vector fields or matrices (this will be used in \cref{subsubsec:Magnus_CV_matrix}).

\begin{corollary} \label{thm:a.brackets}
 Let $A$ be a unital associative algebra over $\K$ and $A_1$ be a finite dimensional linear subspace of $A$. Then, for every $r \in \N^*$, $t > 0$ and $\mathrm{a} \in L^1((0,t); A_1)$, one has
 \begin{equation} \label{eq:a.brackets}
  \begin{split}
 \sum_{m=1}^{r} \sum_{\mathbf{r} \in \N^m_r}
\frac{(-1)^{m-1}}{m} & \int_{{\Delta^{\mathbf{r}}(t)}} { \mathrm{a}(\tau_1) \mathrm{a}(\tau_2) \dotsm \mathrm{a}(\tau_r)} \dd\tau
= \\
& \frac{1}{r} \sum_{m=1}^{r} \sum_{\mathbf{r} \in \N^m_r}
\frac{(-1)^{m-1}}{m} \int_{{\Delta^{\mathbf{r}}(t)}} {[\dotsb [\mathrm{a}(\tau_1) , \mathrm{a}(\tau_2)], \dotsc \mathrm{a}(\tau_r)]} \dd\tau,
  \end{split}
 \end{equation}
 where the equality should be seen as an equality between elements of a finite dimensional linear subspace of $A$ (generated by monomials of terms in $A_1$ of degree $r$), so that one can give a meaning to the integrals without introducing any topology on $A$.
 
 Moreover,  if $\mathrm{a}(\tau) = \sum_{i\in I} \alpha_i(\tau) \mathrm{y}_i$ with $\alpha_i \in L^1((0,t);\K)$ and $\mathrm{y}_i \in \mathrm{A}$ then, for each monomial basis $\mathcal{B}_r$ of $\mathcal{L}_r(X)$,
 \begin{equation} \label{Zrhom=Coord1.0_algebre}
  \frac{1}{r} \sum_{m=1}^{r} \sum_{\mathbf{r} \in \N^m_r}
\frac{(-1)^{m-1}}{m} \int_{{\Delta^{\mathbf{r}}(t)}} {[\dotsb [\mathrm{a}(\tau_1) , \mathrm{a}(\tau_2)], \dotsc \mathrm{a}(\tau_r)]} \dd \tau
= \sum_{b \in \mathcal{B}_r} \zeta_b(t,\alpha) \mathrm{y}_b,
 \end{equation}
 where the functionals $\zeta_b$ are the associated coordinates of the first kind and $\mathrm{y}_b=\Upsilon(b)$ where $\Upsilon: \mathcal{A}(X) \rightarrow A$ is the homomorphism of {algebras} such that $\Upsilon(X_i)=\mathrm{y}_i$ (see \cref{def:coord1} and \Cref{Lem:identification_libre}).
\end{corollary}

\begin{proof}
 Let $q \in \N^*$ be the dimension of $A_1$ (as a linear subspace) and $\mathrm{y}_1, \dotsc \mathrm{y}_q$ be a linear basis of~$A_1$. 
 Let $\alpha_i \in L^1((0,t);\K)$ denote the components of $\mathrm{a}(\cdot)$ in the basis $\mathrm{y}_1, \dotsc \mathrm{y}_q$, i.e.\ $\mathrm{a}(\tau) = \alpha_1(\tau) \mathrm{y}_1 + \dotsc + \alpha_q(\tau) \mathrm{y}_q$ for almost every $\tau \in [0,t]$.
 Then $\mathrm{a}(t) = \Upsilon(a(t))$ where $a(\tau) := \alpha_1(\tau) X_1 + \dotsc + \alpha_q(\tau) X_q \in \mathcal{A}_1(X)$.
 From \eqref{eq:log.xta.before} and \eqref{eq:log.xta.after}, one obtains that \eqref{eq:a.brackets} holds for $a(\cdot)$. Applying the homomorphism {$\Upsilon$ of algebras} to both sides proves~\eqref{eq:a.brackets} for $\mathrm{a}(\cdot)$. 
 The same strategy proves~\eqref{Zrhom=Coord1.0_algebre}.
\end{proof}

\begin{lemma} \label{thm:logarithm-vf}
 For every $r \in \N^*$, $t>0$ and {$f \in \CC^\infty_c([0,t]\times\K^d;\K^d)$}, 
 \begin{equation} \label{eq:f.brackets}
  \begin{split}
 \sum_{m=1}^{r} \sum_{\mathbf{r} \in \N^m_r}
\frac{(-1)^{m-1}}{m} & \int_{{\Delta^{\mathbf{r}}(t)}} { f(\tau_1) f(\tau_2) \dotsm f(\tau_r)} \dd\tau
= \\
& \frac{1}{r} \sum_{m=1}^{r} \sum_{\mathbf{r} \in \N^m_r}
\frac{(-1)^{m-1}}{m} \int_{{\Delta^{\mathbf{r}}(t)}} { [\dotsb [f(\tau_1), f(\tau_2)], \dotsc f(\tau_r)]} \dd\tau,
  \end{split}
 \end{equation}
 which should be seen as an equality between linear operators on $\CC^\infty_c(\K^d;\K)$, hence only valid after evaluation at a function $\varphi$ at a point $p$, so that the integrals are integrals of {scalar-valued functions}.
 
 Moreover,  if $f(\tau,x) = \sum_{i \in I} u_i(\tau) f_i(x)$ with $u_i \in L^1((0,t);\K)$ and $f_i \in \CC^\infty_c(\K^d;\K^d)$ then
  \begin{equation} \label{eq:f.brackets.univ}
    \frac{1}{r} \sum_{m=1}^{r} \sum_{\mathbf{r} \in \N^m_r}
\frac{(-1)^{m-1}}{m} \int_{{\Delta^{\mathbf{r}}(t)}} {[\dotsb [f(\tau_1), f(\tau_2)], \dotsc f(\tau_r)]} \dd\tau 
     = \sum_{b \in \mathcal{B}_r} \zeta_b(t,u)
   {f_b},
 \end{equation}
 where $\mathcal{B}_r$ is a monomial basis of $\mathcal{L}_r(X)$, the functionals $\zeta_b$ are the associated coordinates of the first kind and $f_b$ are the evaluated Lie brackets  (see Definitions \ref{def:free.lie}, \ref{def:coord1} and \ref{Def:evaluated_Lie_bracket}).
\end{lemma}

\begin{proof}
    Let $(f_n)_{n\in\N^*}$ be a sequence of functions in {$\CC^\infty_c([0,t]\times\K^d;\K^d)$} such that $f_n$ takes values in an at-most $n$-dimensional vector subspace $E_n$ of $\CC^\infty_c(\K^d;\K^d)$ and $\|f_n-f\|_{L^1((0,t);\CC^{r})} \rightarrow 0$ when $n\rightarrow \infty$. For example, one can choose an $n$-points trapezoidal approximation of $f$. For each fixed $n$, applying \cref{thm:a.brackets} with $A=\mathrm{Op}(\CC^\infty_c(\K^d;\K))$ and {$A_1 = E_n$} proves \eqref{eq:f.brackets} for $f_n$. 
    Let $\varphi \in \CC^\infty_c(\K^d;\K)$ and $p \in \K^d$. For each $n \in \N^*$, we deduce that
 \begin{equation}
  \begin{split}
    \sum_{m=1}^{r} \sum_{\mathbf{r} \in \N^m_r} &
\frac{(-1)^{m-1}}{m} \int_{{\Delta^{\mathbf{r}}(t)}} {\left( f_n(\tau_1) \dotsm f_n(\tau_r) \varphi \right)} ( p) \dd\tau = \\
    & \frac 1 r \sum_{m=1}^{r} \sum_{\mathbf{r} \in \N^m_r}
\frac{(-1)^{m-1}}{m} \int_{{\Delta^{\mathbf{r}}(t)}} {\left( [\dotsb [f_n(\tau_1), f_n(\tau_2)], \dotsc f_n(\tau_r)] \varphi \right)} (p) \dd\tau.
  \end{split}
 \end{equation}
 For each fixed $\varphi$ and $p$, both sides converge as $n \to +\infty$ towards the same quantities for $f$. This proves that \eqref{eq:f.brackets} holds as an equality between linear operators. Applying (\ref{Zrhom=Coord1.0_algebre}) gives (\ref{eq:f.brackets.univ}).
\end{proof}

\begin{remark}
 Although most algebraic results of \cref{sec:formal} remain valid for infinite alphabets (sets of indeterminates), there is a difficulty when one wishes to ``evaluate'' equalities in the free algebra over an infinite alphabet towards some target algebra (one must somehow introduce compatible topologies on both sides).
 Our approach to prove \cref{thm:logarithm-vf}, where $f$ is allowed to take values in the infinite-dimensional space $\CC^\infty_c$, therefore relies on a discretization scheme to return to a finite alphabet, and the convergence of the involved integrals in a weak sense. 
 Another approach, followed in \cite{MR1852645, MR2010037}, consists in introducing definitions allowing an infinite (continuous) number of generators and proving analogous algebraic results in such a setting.
\end{remark}

\subsection{Magnus expansion in the interaction picture}
\label{Sec:Magnus2}

\newcommand{\fp}{{f_\sharp}}

In this section, we consider the nonlinear ordinary differential equation
\begin{equation} \label{eq:EDO.interaction}
    \dot{x}(t)=f_0(x)+\fp(t,x)
\end{equation}
We show how the formal expansion introduced in \cref{sec:format.1.1} allows to obtain error bounds at every order in the size of the time-varying perturbation $\fp$, provided that the flow of $f_0$ is known.
Such estimates can be useful for example to design splitting methods in the case of a small perturbation (see e.g.\ \cite[Section 3.6]{zbMATH06057121} or \cite[Section 2]{zbMATH05831260}). 

{The results of this section can be seen as quite natural, but we are not aware of references containing the same statements.
We adopt the notations specified in the following definition.

\begin{definition} \label{Def:Phi0_gt_ZM}
    Let $M \in \mathbb{N}$, $\delta>0$, $T>0$ and $f_0 \in \CC^{M^2+1}(B_{5\delta};\K^d)$ such that $T \|f_0\|_{\CC^0}<\delta$.
    Let $\fp \in L^1((0,T);\CC^{M^2}(B_{5\delta};\K^d))$ and $t \in [0,T]$. We consider
    \begin{itemize}
        \item $\Phi_0 \in \CC^{M^2+1}([0,T]\times B_{4\delta};B_{5\delta})$ the flow associated with $f_0$ i.e.\ $\Phi_0(\tau;p)=e^{\tau f_0}(p)$,
        \item $g_t \in L^1((0,T);\CC^{M^2}(B_{4\delta};\K^d))$ defined by
\begin{equation} \label{def:gt_star} 
 g_t(\tau,y)
 := (\Phi_0(t-\tau)_* \fp(\tau)) (y) 
 = \left(\partial_p \Phi_0 (\tau-t,y)\right)^{-1} \fp \big( \tau , \Phi_0(\tau-t,y)\big),
\end{equation}
        \item $\mathcal{Z}_M(t,f_0,\fp) := \mathrm{Log}_M\{g_t\}(t) \in \CC^{M^2-M+1}(B_{4\delta};\K^d)$ in the sense of \cref{def:LOGM}.
    \end{itemize}
\end{definition}}

\subsubsection{Error bound}

\begin{proposition} \label{Prop:Magnus_2}
    Let $M, \delta, T, f_0, \fp$ as in \cref{Def:Phi0_gt_ZM}.
There exists $\gamma=\gamma(M,\delta,\|f_0\|_{\CC^{M^2+1}})>0$ such that, if
\begin{equation} \label{Magnus1.1:hyp_f1}
\|\fp\|_{{L^1((0,T);\CC^{M^2})}} \leq \gamma
\end{equation}
then, for every $p\in B_\delta$ and $t\in[0,\gamma]$,
\begin{equation} \label{Magnus_2}
\left| x(t;f_0+\fp,p)- e^{\mathcal{Z}_M(t,f_0,\fp)} e^{t f_0} p \right| \leq  C_M  \|g_t\|^{M+1}_{{L^1((0,t);\CC^{M^2})}}
\end{equation}
where $C_M>0$ is the constant of \cref{Prop:Magnus_1}.

Hence, if $\fp \in L^\infty((0,T);{\CC^{M^2}(B_{5\delta};\K^d)})$, estimate (\ref{Magnus_2}) scales like $t^{M+1}$.
\end{proposition}

\begin{proof}
    Let $\eta_M, C_M>0$ be as in \cref{Prop:Magnus_1}. 
   There exists $T^*=T^*(\delta,\|f_0\|_{\CC^{M^2+1}}) \in (0,T]$ such that, for every $\fp \in L^1((0,T);{\CC^{M^2}(B_{5\delta};\K^d)})$ and $t \in [0,T^*]$
        \begin{equation}\label{gt<f1}
        \|g_t\|_{{L^1((0,t),\CC^{M^2})}} \leq 2 \|\fp\|_{{L^1((0,t),\CC^{M^2})}}.
        \end{equation}
  Let $\gamma:=\min\{T^*,\delta,\frac{\delta_M}{2}\min\{1,\delta\}\}$.
  Let $\fp \in L^1((0,T);{\CC^{M^2}(B_{5\delta};\K^d)})$ with $\|\fp\|_{L^1(\CC^{M^2})} < \gamma$.
Then, for every $p\in B_\delta$ and $\tau \in[0,T]$, $x(\tau;f_0+\fp,p)$ is well defined and belongs to $B_{3\delta}$.
To simplify the notations in this proof, we write $x(\tau)$ instead of $x(\tau;f_0+\fp,p)$. Let $t \in [0,\gamma]$. The function $y:[0,t] \rightarrow \K^d$ defined by
\begin{equation}\label{x/y}
y(\tau):=\Phi_0 \big( t-\tau ; x(\tau) \big)
\end{equation}
takes values in $B_{4\delta}$ and satisfies, for every $\tau \in [0,t]$,
\begin{equation}
    \dot{y}(\tau)= g_t \left( \tau,y(\tau) \right).
\end{equation}
By (\ref{gt<f1}), $\|g_t\|_{{L^1(\CC^{M^2)}}} < 2 \eta \leq \delta_M \min\{1,\delta\}$ thus, by  \cref{Prop:Magnus_1}
\begin{equation}
| y(t)-e^{\mathcal{Z}_M(t,f_0,\fp)} y(0) | \leq  C_M  \| g_t \|_{L^1((0,t);\CC^{M^2})}^{M+1}
\end{equation}
which is exactly (\ref{Magnus_2}) because $y(t)=x(t)$ and $y(0)=e^{t f_0} p$.
\end{proof}

\subsubsection{Expansions of $\mathcal{Z}_M$}

\begin{proposition} \label{Prop:expansion_ZM_1.1_analytic}
Let $M, \delta, T, f_0, \fp$ as in \cref{Def:Phi0_gt_ZM}.
Let $r>0$.
If $f_0 \in {\CC^{\omega,r}(B_{5\delta};\K^d)}$ and $\fp\in \CC^0([0,T];{\CC^{\omega,r}(B_{5\delta};\K^d)})$ then, for $0 \leq \tau \leq t \leq \min\{ T ; \frac{r}{9 \opnorm{f_0}_r} \}$
\begin{equation} \label{gt_series}
    g_t(\tau,\cdot) = e^{(\tau-t) \ad_{f_0}}(\fp(\tau)) =  \sum_{k=0}^{+\infty} \frac{(\tau-t)^k}{k!}\ad_{f_0}^k(\fp(\tau))
\end{equation}
and
\begin{equation} \label{expansion_ZMtilde}
    \begin{aligned}
    \mathcal{Z}_M(t,f_0,\fp) =  \sum 
    \frac{(-1)^{m-1}}{r m}
    & \int_{{\Delta^{\mathbf{r}}(t)}} 
    \frac{(\tau_1 -t)^{k_1}}{k_1!} \dotsm \frac{(\tau_r -t)^{k_r}}{k_r!}
    \\ & 
    {
    \left[ \dotsb \left[ \ad_{f_0}^{k_1}(\fp(\tau_1)) ,\ad_{f_0}^{k_2}(\fp(\tau_2))\right],\dotsc,\ad_{f_0}^{k_r}(\fp(\tau_r))\right]} \dd\tau,
    \end{aligned}
\end{equation}
where the sum is taken over $r \in \intset{1,M}$, 
$m\in \intset{1,r}$, $\mathbf{r} \in \N^m_r$,
and $k_1,\dotsc,k_r\in\N$. Moreover, for every $r' \in [r/e,r)$ and $0 \leq \tau \leq t \leq \min\{T; \frac{r-r'}{6 \opnorm{f_0}_{r}}\}$, the series (\ref{gt_series}) and (\ref{expansion_ZMtilde}) converge absolutely in ${\CC^{\omega,r'}(B_{5\delta};\K^d)}$.
\end{proposition}

\begin{proof}
We apply the third statement of \cref{Lem:tool_serie_ad} to $f_0$ and $\fp(\tau)$ to get (\ref{gt_series}). The absolute convergence in this series allows to interchange the sums and the integrals.
\end{proof}

When the perturbation $\fp(t,x)$ is affine, i.e.\ of the form $\sum_{i=1}^q u_i(t) f_i(x)$, by analogy with \cref{thm:Magnus1.0_formel}, we use the notation
$\mathcal{Z}_M(t,f,u)$ instead of $\mathcal{Z}_M \left(t,f_0,\sum_{i=1}^q u_i f_i\right)$, with $f=(f_0,f_1,\dotsc,f_q)$ and $u=(u_1,\dotsc,u_q)$. In this context, we have the following result, that emphasizes that $\mathcal{Z}_M$ is a truncated version of $\mathcal{Z}_\infty$.

\begin{proposition} \label{Prop_Magnus2_Coord1.1_paquet}
    Let $M, \delta, T, f_0, \fp$ as in \cref{Def:Phi0_gt_ZM}.
    Let $r>0$. 
    If $f_0 \in {\CC^{\omega,r}(B_{5\delta};\K^d)}$ and $\fp(t,x)=\sum_{i=1}^q u_i(t) f_i(x)$ where $u_i \in L^1(0,T)$ and $f_i \in {\CC^{\omega,r}(B_{5\delta};\K^d)}$. 
    Then
    \begin{equation} \label{chp_coord1.1_paquets}
    \mathcal{Z}_M(t,f,u) = \lim_{N \rightarrow \infty}
    \sum_{\substack{b \in \mathcal{B} \\ n(b) \leq M \\ n_0(b) \leq N}}
    \eta_b(t,u) f_b  
    \end{equation}
    where, for every $r' \in [r/e,r]$ the limit holds in ${\CC^{\omega,r'}(B_{5\delta};\K^d)}$ when $0\leq t \leq \min\{T; \frac{r-r'}{6 \opnorm{f_0}_{r}}\}$.
\end{proposition}

\begin{proof}
    Let $X = \{ X_0, X_1, \dotsc X_q \}$ and $\Lambda:\mathcal{L}(X) \rightarrow {\CC^{\omega,r}(B_{5\delta};\K^d)}$ be the homomorphism of Lie {algebras} such that $\Lambda(X_i)=f_i$ for $i \in \intset{0,q}$ (see \Cref{Lem:identification_libre}). By applying $\Lambda$ to each term in the equality (\ref{In=sum_coord_pseudo}) (where $\mathcal{Z}_\infty^{r,\nu}(t,X,a)$ is the finite sum defined in (\ref{def:Irnu})), we obtain for every $r\in\N^*$ and $\nu \in \N$
    \begin{equation}
    \mathcal{Z}_\infty^{r,\nu}(t,f,u)= \sum_{b\in\mathcal{B}_{r,\nu}} \eta_b(t,u) f_b.
    \end{equation}
    By \Cref{Prop:expansion_ZM_1.1_analytic}
    \begin{equation}
    \mathcal{Z}_M(t,f,u)
    = \lim_{N \rightarrow \infty} \sum_{\nu=0}^N  \sum_{r=1}^M 
    \mathcal{Z}_\infty^{r,\nu}(t,f,u) 
    \end{equation}
    where for every $r' \in [r/e,r]$ the limit holds in ${\CC^{\omega,r'}(B_{5\delta};\K^d)}$ when $0\leq t \leq \min\{T; \frac{r-r'}{6 \opnorm{f_0}_{r}}\}$.
    This proves~(\ref{chp_coord1.1_paquets}).
\end{proof}

\begin{remark}
    Although the family $\eta_b(t,u) f_b$ for {$b \in \mathcal{B} \cap S_M = \{ b \in \mathcal{B} ; n(b) \leq M \}$ (using \cref{def:SM})} is not proved to be absolutely summable, equality (\ref{chp_coord1.1_paquets}) gives a sense to the expression
    \begin{equation}
    \mathcal{Z}_M(t,f,u)
    = \sum_{b\in\mathcal{B} \cap S_M} \eta_b(t,u) f_b.
    \end{equation}
    Indeed, the proof above justifies the absolute summability of appropriate packages $\mathcal{Z}_\infty^{r,\nu}(t,f,u)$ for $r \in \intset{1,M}$ and $\nu \in \N$ of this family.
    The full absolute summability over $\mathcal{B} \cap S_M$ is investigated in the next subsection.
\end{remark}

\subsubsection{Absolute convergence for coordinates of the pseudo-first kind}

Continuing the discussion started in \cref{sec:structure-constants} we state a criterion on the  basis $\mathcal{B}$ which entails the absolute summability for analytic vector fields of the family $\eta_b(t,u) f_b$ for {$b \in \mathcal{B} \cap S_M = \{ b \in \mathcal{B} ; n(b) \leq M \}$ (using \cref{def:SM})}.

\begin{proposition}
     Let $q\in\N^*$, $X = \{ X_0, X_1, \dotsc, X_q \}$ and $\mathcal{B}$ a {Hall basis of $\mathcal{L}(X)$; or more generally a monomial basis of $\mathcal{L}(X)$ with geometric growth with respect to $X_0$ (see \cref{def:asym-geom-growth} and \cref{rk:A1})}. 
     
    Let $M, \delta, T, f_0, \fp$ as in \cref{Def:Phi0_gt_ZM}.
    Let $r>0$. 
    We assume $f_0 \in {\CC^{\omega,r}(B_{5\delta};\K^d)}$ and $\fp(t,x)=\sum_{i=1}^q u_i(t) f_i(x)$ where $u_i \in L^1(0,T)$ and $f_i \in {\CC^{\omega,r}(B_{5\delta};\K^d)}$.

    Let $r' \in [r/e,r)$.
    There exists $T^*=T^*(M,{q,}r,r'{,\opnorm{f_0}_r}) >0$ such that, for every $t \in (0,T^*)$ and $u\in L^1((0,t),\mathbb{K}^q)$ 
    \begin{equation} \label{Z_M=sum(coord1.1)_chpvect}
    \mathcal{Z}_M\left(t,f,u\right)=\sum_{b \in \mathcal{B} \cap S_M} \eta_b(t,u) f_b
    \end{equation}
    where the series converges absolutely in ${\CC^{\omega,r'}(B_{\delta};\K^d)}$.
\end{proposition}

\begin{proof}
    By (\ref{eta_b|b|!<geom}) of \cref{thm:etab-geom} and (\ref{eq:bracket.analytic}), for every $b\in\mathcal{B} \cap S_M$ and $t \in [0,T]$
\begin{equation}
|\eta_b(t,u)| \opnorm{f_b}_{r'} \leq 
\frac{r-r'}{2e^2} \left(\frac{2eC_M t{\opnorm{f_0}_r}}{r-r'}   \right)^{n_0(b)} \left(\frac{2eC_M}{r-r'}  \|u\|_{L^1(0,t)}\opnorm{f}_r \right)^{n(b)}
\end{equation}
where $\opnorm{f}_r := \max\{ \opnorm{f_j}_{r} ; j \in \intset{0,q} \}$. In particular, if $|t|<T^*(M,r,r'):=\frac{r-r'}{4(q+1)eC_M{\opnorm{f_0}_r}} $ then the series $\sum \eta_b(t,a) f_b$ converges absolutely in $\CC^{\omega,r'}$ because
 \begin{equation}\sum_{b \in \mathcal{B} \cap S_M} (2(q+1))^{-n_0(b)} \leq \sum_{n=1}^M \sum_{n_0=0}^{+\infty} (q+1)^{n_0+n} (2(q+1))^{-n_0} \leq M (q+1)^M.
 \end{equation}
\end{proof}

\subsection{Sussmann's infinite product expansion}

Let $T > 0$. In this section, we consider affine systems of the form
\begin{equation} \label{affine_syst_q}
\dot{x}(t) = \sum_{i \in I} u_i(t) f_i(x(t))
\quad \textrm{and} \quad 
x(0) = p,
\end{equation}
where, for $i \in I$, $f_i$ is a vector field and $u_i \in L^1((0,T);\K)$.
When well-defined, its solution is denoted $x(t;f,u,p)$.
For every norm $\| \cdot \|$ on vector fields, $\|f\|$ denotes $\sum_{i \in I} \|f_i\|$.

\begin{proposition} \label{Prop:Error_Sussman}
    Let $\mathcal{B}$ be a \GHB\ of $\mathcal{L}(X)$ and
$(\xi_b)_{b\in\mathcal{B}}$ be the associated coordinates of the second kind.
	For every $M \in \N^*$, there exist $C_M, \eta_M>0$ such that the following property holds.
	Let $T, \delta > 0$, $f_i \in {\CC^{2M}(B_{3\delta};\K^d)}$ and $u_i \in L^1((0,T);\K)$ for $i \in I$.
	Assume that
	\begin{equation} \label{eq:sussman.small1}
	    \| u \|_{{L^1(0,T)}} \| f \|_{\CC^M} \leq \eta_M \min \{ 1, \delta \}. 
	\end{equation}
	Then, for each $t \in [0,T]$ and $p \in B_\delta$,
    \begin{equation} \label{Prod_Suss_fini_erreur}
    \left| x(t;f,u,p) - \underset{b \in \mathcal{B}_{\intset{1,M} }}{\overset{\rightarrow}{\Pi}} e^{\xi_b(t,u) f_b } p \right| \leq C_{M} \|u\|_{{L^1(0,t)}}^{M+1} \|f\|_{\CC^{2M}}^{M+1} \left(1+\|f\|_{\CC^{2M}}^{M-1}\right),
    \end{equation}
    where the arrow above the product symbol designates the order for the product, i.e.\ with the notations of \cref{Def:Laz}
    \begin{equation}
     \underset{b \in \mathcal{B}_{\intset{1,M}}}{\overset{\rightarrow}{\Pi}} e^{\xi_b(t,u) f_b } = e^{\xi_{b_1}(t,u)f_{b_1}} \dotsm e^{\xi_{b_{k+1}}(t,u) f_{b_{k+1}}}.
     \end{equation}
\end{proposition}

\begin{proof}
	Let $M \in \N^*$. 
	We adopt the notations $b_1,\dotsc,b_{k+1}$ and $Y_0, \dotsc, Y_{k+1}$ of \cref{Def:Laz}. 
	For $j \in \intset{1,k+1}$, we denote by $\Phi_j$ the flow associated with $f_{b_j}$, i.e.\ $\Phi_j(t,p):=e^{t f_{b_j}}(p)$.
	To simplify the notations in this proof, we write $x(t)$ and $\xi_b(t)$ instead of $x(t;f,u,p)$ and $\xi_b(t,u)$. 
	Let $\eta_M := 1 / (4 |I| M!)$. 
	For brevity, we use the shorthand notation $F := \| f \|_{\CC^{2M-1}}$.

\medskip

\noindent \emph{Step 1: Well-definition of the flows.}
Using \eqref{eq:sussman.small1},
\begin{equation}
    \left\| \sum_{i\in I} u_i f_i \right\|_{{L^1((0,T);\CC^0)}} 
    \leq \eta_M \min \{ 1, \delta \} \leq \delta.
\end{equation}
Thus, for $t \in [0,T]$, $x(t)$ is well-defined and $x(t) \in B_{2\delta}$.
For $b \in \mathcal{B}$, using \eqref{eq:xib-u} and \cref{thm:bracket.ck}, we obtain, for each $t \in [0,T]$,
{
\begin{equation}
    \left\| \xi_b(t) f_b \right\|_{\CC^1} \leq \ell! 2^{\ell-1} \|u\|_{L^1(0,t)}^\ell \|f\|_{\CC^\ell}^\ell,
\end{equation}
where $\ell := |b|$.
Hence, using}
the crude estimate $|\mathcal{B}_\ell| \leq |I|^{\ell}$, we obtain, for each $t \in [0,T]$,{
\begin{equation} \label{borne_xib*fb_C1}
    \begin{split}
        \sum_{b \in \mathcal{B}_{\intset{1,M}}} \left\| \xi_b(t) f_b \right\|_{\CC^1}
        & \leq \sum_{\ell = 1}^M |\mathcal{B}_\ell| \ell! 2^{\ell-1} \|u\|_{L^1}^\ell \|f\|_{\CC^\ell}^\ell \\
        & \leq M! \sum_{\ell = 1}^{+\infty} \left(2 |I| \|u\|_{L^1} \|f\|_{\CC^\ell}\right)^\ell \\
        & \leq \frac{2 M! |I| \|u\|_{L^1} \|f\|_{\CC^M}}{1-2|I|\|u\|_{L^1}\|f\|_{\CC^M}} \leq \min \{ 1, \delta \}.
    \end{split}
 \end{equation}}Thus, for every $j \in \intset{1,k+1}$, 
\begin{equation}
x_j(t):=e^{-\xi_{b_j}(t)f_{b_j}} \dotsb e^{-\xi_{b_1}(t) f_{b_1}} (x(t))
\end{equation}
is well-defined and belongs to $B_{3\delta}$.

\medskip \noindent
\emph{Step 2: Estimates along a Lazard elimination.}
We prove by induction on $j \in \intset{0,k+1}$ the existence of a numerical constant $C_j>0$ such that
\begin{equation} \label{EDO_Lazard_j}
    (\mathcal{H}_j): 
    \begin{cases}
        \dot{x}_j(t) = \sum_{b \in \mathcal{B}_{\intset{1,M}} \cap Y_j} \dot{\xi}_b(t) f_b( x_j(t) ) + \varepsilon_j(t), \\
        x_j(0) = p,
    \end{cases}
\end{equation}
where
\begin{equation} \label{epsj<CjtM}
|\varepsilon_j(t)| \leq C_j |u(t)| \|u\|_{L^1}^M F^{M+1} (1+F^{M-1}).
\end{equation}
First, letting $x_0(t) := x(t)$ by convention, $(\mathcal{H}_0)$ holds with $\varepsilon_0=0$, $C_0=0$ because $\dot{\xi}_{X_i}(t)=u_i(t)$ for $i \in I$.
Let $j \in \intset{1,k+1}$ and assume that $(\mathcal{H}_{j-1})$ holds.
We deduce from the definition of $x_{j}(t)$ that
 \begin{equation}x_{j}(t)= e^{-\xi_{b_j}(t)f_{b_j}} (x_{j-1}(t)) = \Phi_j \left( -\xi_{b_j}(t),x_{j-1}(t) \right) \end{equation}
and, using $(\mathcal{H}_{j-1})$, that
\begin{equation}
        \dot{x}_{j}(t)
        = -\dot{\xi}_{b_j}(t) f_{b_j} \left( x_{j}(t) \right)  +
\sum_{b \in \mathcal{B}_{\intset{1,M}} \cap Y_{j-1}} \dot{\xi}_b(t)
\partial_p \Phi_j \left(-\xi_{b_j}(t),x_{j-1}(t)\right) 
 f_b( x_{j-1}(t) ) + \widetilde{\varepsilon}_j(t) 
\end{equation}
where
$\widetilde{\varepsilon}_{j-1}(t) := \partial_p \Phi_j\left(-\xi_{b_j}(t),x_{j-1}(t)\right) \varepsilon_{j-1}(t)$.
By (\ref{borne_xib*fb_C1}), $\| \xi_{b_j}(t) f_{b_j} \|_{\CC^1} \leq 1$, so, using \eqref{eq:dpef},
\begin{equation} \label{borne_varepsilon_tilde}
    |\widetilde{\varepsilon}_{j-1}(t)| \leq e |\varepsilon_{j-1}(t)|.
\end{equation}
Moreover, for each $b \in \mathcal{B}$
\begin{equation}
\partial_p \Phi_j \left(-\xi_{b_j}(t),x_{j-1}(t)\right) 
 f_b( x_{j-1}(t) ) =
 \left( \Phi_j \left(-\xi_{b_j}(t) \right)_*  f_b \right)(x_{j}(t)),
\end{equation}
thus,
\begin{equation}
        \dot{x}_{j}(t) = \sum_{b \in \mathcal{B}_{\intset{1,M}} \cap Y_{j-1} \setminus\{b_j\}} \dot{\xi}_b(t)
 \left( \Phi_j \left(-\xi_{b_j}(t) \right)_*  f_b \right)(x_{j}(t)) + \widetilde{\varepsilon}_j(t).
\end{equation}
For $b \in \mathcal{B}_{\intset{1,M}} \cap Y_j \setminus \{b_j\}$, we introduce the maximal integer $h(b)\in\N^*$ such that
\begin{equation} \label{def:mu(b)_longueur}
|b|+(h(b)-1)|b_j| \leq M.
\end{equation}
Then, by the first statement of \cref{Lem:tool_serie_ad} and  \cref{Def:Coord2} 
\begin{equation}
\begin{split}
\dot{\xi}_b(t) \left( \Phi_j \left(-\xi_{b_j}(t) \right)_*  f_b \right)(x_{j}(t))
&
= \sum_{m=1}^{h(b)-1} \frac{\xi^m_{b_j}(t)}{m!} \dot{\xi}_b(t) f_{\ad_{b_j}^m(b)}(x_{j}(t)) + \overline{\varepsilon}^j_b(t)
\\ & =
\sum_{m=1}^{h(b)-1} \dot{\xi}_{\ad_{b_j}^m(b)}
(t) f_{\ad_{b_j}^m(b)}(x_j(t)) + \overline{\varepsilon}^j_b(t)
\end{split}
\end{equation}
  where
  \begin{equation}
  	|\overline{\varepsilon}^j_b(t)| \leq 
  	|\dot{\xi}_b(t)| \frac{|\xi_{b_j}(t)|^{h(b)}}{h(b)!}  
  	\| f_{\ad_{b_j}^{h(b)}(b)} \|_{\CC^0}.
  \end{equation}
  By definition of $h(b)$ we have $M+1 \leq |b|+h(b)|b_j|\leq M + |b_j| \leq 2M$. Thus, using \cref{thm:bracket.ck}, \eqref{eq:dotxib-u} and \eqref{eq:xib-u}, we get
  \begin{equation}
  	\begin{split}
  	| \overline{\varepsilon}^j_b(t) | 
  	& \leq |u(t)| \| u \|_{L^1}^{|b|+h(b)|b_j|-1} \frac{|b|}{h(b)!} 2^{2M} (2M-1)! F^{M+1}(1+F^{M-1}) \\
  	& \leq |u(t)| \| u \|_{L^1}^{M} M 2^{2M} (2M-1)! F^{M+1}(1+F^{M-1}).
  	\end{split}
  \end{equation}
  By definition of $Y_{j}$ in \cref{Def:Laz}, we obtain $(\mathcal{H}_{j})$ with 
  \begin{equation} \label{def:varepsilon(j)}
  \varepsilon_{j}(t):= \widetilde{\varepsilon}_{j-1}(t) + \sum_{b \in \mathcal{B}_{\intset{1,M}} \cap Y_{j-1} \setminus\{b_j\}} \overline{\varepsilon}^j_b(t).
  \end{equation}
  that satisfies (\ref{epsj<CjtM}) with, for instance $C_{j+1}:=e C_j + |I|^{M+1} M 2^{2M} (2M-1)!$.
 
\medskip \noindent  
  \emph{Step 3: Conclusion.} Taking into account that $\mathcal{B}_{\intset{1,M}} \cap Y_{k+1}=\{0\}$, we get $\dot{x}_{k+1}(t)=\varepsilon_{k+1}(t)$ thus $|x_{k+1}(t)-p| \leq C_{k+1} \|u\|_{L^1}^{M+1} F^{M+1}(1+F^{M-1})$, i.e.\ 
   \begin{equation}  \left| \underset{b \in \mathcal{B}_{\intset{1,M}}}{\overset{\leftarrow}{\Pi}} e^{-\xi_b(t,u) f_b } x(t) -p \right| \leq C_{k+1} \|u\|_{L^1}^{M+1} F^{M+1}(1+F^{M-1}). \end{equation}
   Applying the locally Lipschitz map $e^{\xi_{b_1}(t,u)f_{b_1}} \dotsm e^{\xi_{b_{k+1}}(t,u) f_{b_{k+1}}}$ to the two terms in the left-hand side, we get another constant $C_M>0$ such that (\ref{Prod_Suss_fini_erreur}) holds. Note that (\ref{borne_xib*fb_C1}) and \eqref{eq:dpef} ensure that $C_M \leq e C_{k+1}$, so that $C_M$ depends indeed only on $M$.
\end{proof}

\section{Convergence results and issues}
\label{sec:convergence}

The formal expansions of \cref{sec:formal} generally exhibit poor convergence properties for smooth vector fields without any additional assumption. Nevertheless, one can hope to obtain convergence results in the following particular contexts:
\begin{itemize}
    \item \textbf{Nilpotent Lie algebras}. Here, one assumes that the Lie algebra generated by the set of smooth vector fields $\{ f(t, \cdot); \enskip t \in [0,T] \}$ is nilpotent (see \Cref{Def:Nilpotent Lie algebra}). This structural assumption turns most of the involved infinite expansions into finite ones, and it is thus reasonable to expect convergence properties.
    \item \textbf{Banach algebras}. Here, one assumes that the vector fields are actually linear in the space variable, e.g. that $f(t,x) = A(t) x$ for some $A(t) \in \mathcal{M}_d(\K)$. This assumption yields better estimates for Lie brackets (since products of matrices behave more nicely than differentiation of nonlinear vector fields) and it is thus reasonable to expect convergence properties. In this section, we give statements for matrices for consistence, but similar results can be obtained for linear operators in a Banach algebra.
    \item \textbf{Analytic vector fields}. Here, one assumes that the vector fields are locally real-analytic, i.e.\ than their $k$-th derivative grows roughly no more that $k!$. This bound is compatible with the $1/k!$ factors which come out of the corresponding time integrals, and it is thus reasonable to expect convergence properties.
\end{itemize}
In the following paragraphs, we investigate the convergence properties of each expansion in each of these three reasonable contexts and encounter some surprises.
We summarize the results in the following table.

\begin{center}
\begin{tabular}{ |c|c|c|c|c| }
    \hline
    Expansion & Lie-Nilpotent & Banach & Analytic \\
 \hline
 \makecell{Chen-Fliess} 
 & \makecell{No \\ (\cref{sec:chen.nilpotent})}
 & \makecell{Global \\ (\cref{sec:conv-cf-matrix})}
 & \makecell{Yes \\ (\cref{sec:conv-cf-analytic})}
 \\ 
 \hline
 \makecell{Magnus in the \\ usual setting}
 & \makecell{Yes for $\CC^\infty$ \\ (\cref{sec:conv-magnus-nilpotent})}
 & \makecell{Small time \\ (\cref{subsubsec:Magnus_CV_matrix})}
 & \makecell{No \\ (\cref{sec:conv-magnus-analytic})}
 \\
 \hline
 \makecell{Magnus in the \\ interaction picture}
 & \makecell{Yes for $\CC^\omega$ \\ (\cref{sec:conv-interaction-nilpotent})}
 & \makecell{Small perturbation \\ (\cref{sec:conv-interaction-linear})}
 & \makecell{No \\ (\cref{sec:conv-interaction-analytic})}
 \\
 \hline
 \makecell{Sussmann's \\ infinite product}
 & \makecell{Yes for $\CC^\infty$ \\ (\cref{sec:conv-sussmann-nilpotent})}
 & \makecell{Small time \\ (\cref{sec:conv-sussmann-banach})}
 & \makecell{Open problem \\ (\cref{subsec:algo})}
 \\
 \hline
\end{tabular}
\end{center}

\subsection{Chen-Fliess expansion} \label{sec:conv-chen}

\subsubsection{Counter-example for nilpotent vector fields}
\label{sec:chen.nilpotent}

As already discussed in \cref{rk:cf-drawbacks}, the Chen-Fliess expansion is not an intrinsic representation of the flow and involves quantities which are not Lie brackets of the dynamics. Therefore, this expansion is not expected to converge under a Lie-{nilpotency assumption}. The following counter-example (where the dynamic does not depend on time, thereby obviously generating a nilpotent Lie algebra of order 2) proves that this expansion indeed relies on quantities which are not Lie brackets.

\begin{proposition} \label{prop:cf.counter.nilp}
 There exists $f_0 \in \CC^\infty(\R;\R)$ such that, for every $t \in (0,1]$, the solution $x(t;f,0)$ to~\eqref{ODE:f} with $f(t,x) := f_0(x)$ satisfies{, with the notations of \cref{rk:no-nabla},}
 \begin{equation} \label{eq:prop.cf.counter.nilp}
  \lim_{N \to + \infty} \left| x(t;f,0) - \sum_{n=0}^N \frac{t^n}{n!}
  \left( {f_0^n} \mathrm{Id}_1 \right) (0) \right| = + \infty.      
 \end{equation}
\end{proposition}

\begin{proof}
  For every sequence $(\alpha_n)_{n\in\N} \in \R^\N$, there exists $f_\alpha \in \CC^\infty(\R;\R) \cap L^\infty(\R;\R)$ with $f_\alpha(0) = 1$ such that
  \begin{equation} \label{eq:borel.alpha}
   \forall n \geq 2, \quad \left( {f_\alpha^n} \mathrm{Id}_1 \right) (0) = \alpha_n.
  \end{equation}
  This is an easy consequence of Borel's lemma. Indeed, for $n \geq 2$ and $f_\alpha(0) = 1$,
  \begin{equation}
    \left( {f_\alpha^n} \mathrm{Id}_1 \right) (0) = f_\alpha^{(n-1)}(0) + P_n\left(f_\alpha(0), \dotsc, f_\alpha^{(n-2)}(0)\right),
  \end{equation}
  for some polynomial $P_n$. Thus, given a sequence $(\alpha_n)_{n\in\N}$, one can prescribe an appropriate value for $f_\alpha^{(n-1)}$ and recursively ensure \eqref{eq:borel.alpha}. Let $f_0$ be a vector field constructed following this process for $\alpha_n := n!^2$. On the one hand, since $f_0 \in L^\infty(\R;\R)$, $x(t;f,0)$ is bounded for $t \in [0,1]$. On the other hand, thanks to \eqref{eq:borel.alpha}, for each $t > 0$
  \begin{equation}
   \sum_{n=0}^N \frac{t^n}{n!}
  \left( {f_0^n} \mathrm{Id}_1 \right) (0)
  = \sum_{n=0}^N n! t^n \to +\infty,
  \end{equation}
  which proves \eqref{eq:prop.cf.counter.nilp}.
\end{proof}

\begin{remark}
    In this counter-example, the local change of coordinates which transforms $f_0(x) e_1$ into the constant vector field $e_1$ allows to transform the ODE on $x$ to a new ODE for which the Chen-Fliess expansion is finite (and thus convergent). 
    It would be even more interesting to construct a counter-example, probably in dimension $d \geq 2$, for which no local change of coordinates can restore the convergence of the Chen-Fliess expansion.
\end{remark}

\subsubsection{Global convergence for matrices}
\label{sec:conv-cf-matrix}

Let $T > 0$. In this paragraph, we study linear systems of the form
\begin{equation} \label{EDO_lineaire}
    \dot{x}(t) = A(t) x(t) 
    \quad \textrm{and} \quad
    x(0) = p,
\end{equation}
where $A \in L^1((0,T);\mathcal{M}_d(\K))$. The solution is denoted $x(t;A,p)$.

\begin{proposition} \label{Prop:CV_CF_global_matrix}
 Let $T>0$ and $A \in L^1((0,T); \mathcal{M}_d(\K))$. For each $t \in [0,T]$ and $p \in \K^d$,
 \begin{equation} \label{CF_macro_value_matrices}
   x(t;A,p) = p + \sum_{j=1}^{+\infty} 
   \int_{{\Delta^j(t)}} 
   A(\tau_j) \dotsb A(\tau_1) p \dd\tau,
 \end{equation}
 where the series converges absolutely.
\end{proposition}

\begin{proof}
To simplify the notations, we write $x(t)$ instead of $x(t;A,p)$.
By Gr\"onwall's lemma, we have $|x(\tau)| \leq |p| e^{\|A\|_{L^1(0,\tau)}}$ for every $\tau \in [0,T]$.
By iterating the formula
 \begin{equation}x(\tau)=p+\int_0^{\tau} A(\tau')x(\tau') \dd\tau'\end{equation}
we obtain, for every $M \in\N^*$
\begin{equation}
    \begin{split}
        & \left| x(t) - p - \sum_{j=1}^{M-1}  \int_{{\Delta^j(t)}}  A(\tau_j) \dotsb A(\tau_1) p \dd\tau \right|
   = \left|  \int_{{\Delta^M(t)}}  A(\tau_M) \dotsb A(\tau_1) x(\tau) \dd\tau \right|
   \\  \leq & \int_{{\Delta^M(t)}}  \|A(\tau_M)\| \dotsb \|A(\tau_1)\|  \dd\tau |p| e^{\|A\|_{L^1(0,t)}}
   = \frac{\|A\|_{L^1(0,t)}^M}{M!} |p| e^{\|A\|_{L^1(0,t)}},
    \end{split}
\end{equation}
which proves the convergence. Similar estimates prove the absolute convergence.
\end{proof}

\subsubsection{Local convergence for analytic vector fields}
\label{sec:conv-cf-analytic}

For analytic vector fields, it is well known that the Chen-Fliess series {(also called ``(right) chronological exponential'' in \cite[Section 2.1]{MR524203})} converges locally in time (see e.g.\ {\cite[Proposition 2.1]{MR524203}, or} \cite[Proposition 4.3]{MR710995} for slightly different assumptions).
{The analyticity assumption is necessary, as highlighted by the counter-example of \cref{sec:chen.nilpotent}.}

\begin{proposition} \label{Prop:CV_CF_analytiq}
    Let $T, \delta, r > 0$. 
    There exists $\eta > 0$ such that, for $f \in L^1([0,T];{\CC^{\omega,r}(B_{2\delta};\K^d)})$ with $\|f\|_{L^1(\CC^{\omega,r})} \leq \eta$, $\varphi \in {\CC^{\omega,r}(B_{2\delta};\K^d)}$, $t \in [0,T]$ and $p \in B_\delta$,
 \begin{equation} \label{CF_macro}
   \varphi\left(x(t;f,p)\right) = \varphi(p) + \sum_{j=1}^{+\infty}  \int_{{\Delta^j(t)}} { \left(  f(\tau_1) \dotsb f(\tau_j)\right)} ( \varphi ) (p) 
\dd\tau,
 \end{equation}
 where the sum converges absolutely. In particular,
 \begin{equation} \label{CF_macro_value}
   x(t;f,p) = p + \sum_{j=1}^{+\infty}  \int_{{\Delta^j(t)}} { \left(  f(\tau_1) \dotsb f(\tau_j)\right)} ( \mathrm{Id}_d ) (p)
\dd\tau.
 \end{equation}
\end{proposition}

\begin{proof}
 Let $\eta := \min \{ \delta/2, r/10 \}$.
 By \cref{Lem:WP_ODE:f}, $x(t;f,p)$ is well defined for $t \in [0,T]$, $p \in B_\delta$ and belongs to $B_{2\delta}$.
 Moreover, by \cref{thm:prod.analytic}, we have, for every $j\in\N^*$
 \begin{equation}
   \int_{{\Delta^j(t)}} \left| {\left(  f(\tau_1) \dotsb f(\tau_j)\right)} ( \varphi ) (p) \right| \dd\tau
   \leq j! \left(\frac{5}{r}\right)^j \cdot \frac{\|f\|^j}{j!}
   \opnorm{\varphi}_r,
 \end{equation}
 where $\|f\|:=\|f\|_{{L^1((0,t);\CC^{\omega,r})}}$, which proves the absolute convergence because the right-hand side is bounded by $2^{-j} \opnorm{\varphi}_r$. Eventually, we deduce from (\ref{CF:reste_expl}) and \cref{thm:prod.analytic} that
 \begin{equation}
    \left| \varphi\left(x(t;f,p)\right) - \sum_{j=0}^{M-1} \int_{{\Delta^j(t)}}  {\left( f(\tau_1)  \dotsb f(\tau_j) \right)} ( \varphi ) (p) \dd\tau \right|
 \leq 2^{-M} \opnorm{\varphi}_r,
 \end{equation}
 which proves (\ref{CF_macro}).
\end{proof}

\subsection{Magnus expansion in the usual setting}
\label{sec:conv-magnus}

\subsubsection{Equality for nilpotent systems}
\label{sec:conv-magnus-nilpotent}

The goal of this section is to prove that the Magnus expansion is an exact expansion for regular vector fields generating a nilpotent Lie algebra (see \cref{thm:magnus1.nilpotent}).

If the vector fields are analytic in space, a simple proof can be given (see e.g.\ \cite[Remark A.1]{zbMATH06323245} for the case of the CBHD formula), with the following steps. First, by density, one can assume that the dynamic depends analytically on time. Then, the maps $t \mapsto x(t)$ and $t \mapsto e^{Z_M}(t)$ are analytic. Because of the {nilpotency assumption}, $Z_M = Z_{M'}$ for every $M' \geq M$ and estimate \eqref{Estim_Magnus} proves that both functions have the same Taylor expansion at $t = 0$, and are thus equal.

For non-analytic vector fields, the proof is much more intricate. A sketch of proof is briefly suggested in \cite[Proposition 2.4]{MR579930}. In this paragraph, we write the proof completely. The difficulty is to formulate the question in the nilpotent Lie algebra generated by the vector fields, in order to conclude with the universal property of free nilpotent Lie algebras (\cref{Lem:identification_libre}). 

To that end, we start with the following {statement}.

\begin{lemma} \label{Lem:Magnus_nilp_libre}
    Let $a$ be given by (\ref{eq:a.ai}), $M \in \N^*$, $Z_M(t):=\mathrm{Log}_M\{a\}(t)$ with the notation of \cref{def:LOGM}. Then for every $t\in\R$, the following equality holds in $\mathcal{N}_{M+1}(X)$
    \begin{equation} \label{d/dt_exp(z(t))_N_M+1(X)}
    \sum_{n=0}^{M-1} \frac{(-1)^n}{(n+1)!} \ad^n_{Z_M(t)} (\dot{Z}_M(t)) = a(t),
    \end{equation}
    where $Z_M(t)$ belongs to the space $\underset{r \in \intset{1,M}}{\oplus}  \mathcal{L}(X)^r$ which is {identified with} $\mathcal{N}_{M+1}(X)$ as a vector space.
\end{lemma}

\begin{proof}
    The canonical surjection $\sigma_{M+1}:\mathcal{L}(X) \rightarrow \mathcal{N}_{M+1} (X)$ is an homomorphism {of Lie algebras}. 
    {Applying this homomorphism to \eqref{eq:EDO-Z-reversed}, where $z(t) = \mathrm{Log}_\infty\{a\}(t)$ thanks to \cref{thm:formal.log}, proves~\eqref{d/dt_exp(z(t))_N_M+1(X)}}.
\end{proof}

\begin{proposition} \label{thm:magnus1.nilpotent}
    Let $M \in \N^*$. 
    There exists $\eta_M>0$ such that, for every $T,\delta>0$ and {every time-varying vector field} $f : [0,T] \to {\CC^\infty(B_{4\delta};\K^d)}$ such that $\mathcal{L}(f([0,T]))$ is nilpotent with index at most $M+1$ and $f \in L^1((0,T);{\CC^{M}(B_{4\delta};\K^d)})$ with
    \begin{equation} \label{eq:magnus1.nilp.size}
        \|f\|_{{L^1((0,T);\CC^M)}}+\|f\|_{{L^1((0,T);\CC^M)}}^{M} \leq \eta_M \delta,    
    \end{equation}
    then, for each $p\in B_\delta$ and $t \in [0,T]$, one has $x(t;f,p)= e^{Z_M(t,f)}(p)$ where $Z_M(t,f):= \mathrm{Log}_M \{ f \}(t)$ is the vector field defined in \cref{def:LOGM}.
\end{proposition}

\begin{proof}
    Let $M\in\N^*$. 
    By \cref{def:LOGM} {and \cref{thm:bracket.ck}}, there exists $\eta_M>0$ such that,
    \begin{equation} \label{borne_ZM_t}
        \|Z_M(t,f)\|_{{\CC^1}} \leq \frac{1}{\eta_M} \left( \|f\|_{L^1(\CC^M)} + \|f\|_{L^1(\CC^M)}^M \right).
    \end{equation}
    {In particular, for every $p \in B_\delta$, $e^{Z_M(t,f)}(p)$ is well-defined thanks to \cref{eq:magnus1.nilp.size}.}
    
    \medskip \noindent \emph{Step 1: Proof for $f(t,x)=\sum_{j=1}^q a_j(t) f_j(x)$ with $q \in \N^*$, $a_j \in {\CC^\infty([0,T];\K)}$ and $f_j \in \CC^\infty_c(\K^d;\K^d)$.} 
    By uniqueness in the Cauchy-Lipschitz theorem, it is sufficient to prove that for every $t \in [0,T]$ and $p\in B_\delta$,
    \begin{equation}\frac{\dd}{\dd t}( e^{Z_M(t,f)}(p)) = f \left( t , e^{Z_M(t,f)}(p) \right).
    \end{equation}
    By \cref{def:LOGM}, the map $(t,p) \mapsto Z_M(t,f)(p)$ belongs to $\CC^\infty([0,T] \times B_{4\delta};\K^d)$.
    Thanks to the {nilpotency assumption}, $\ad_{Z_M(t,f)}^M( Z_M(\tau,f))=0$ on $B_{4\delta}$ for every $t,\tau \in [0,T]$.
    Thus \cref{Prop:d(flot)/d(param)} yields
    \begin{equation} \frac{\dd}{\dd t}\left( e^{Z_M(t,f)} (p) \right) = \sum_{k=0}^{M} \frac{(-1)^{k}}{(k+1)!} \ad^k_{Z_M(t,f)}\left( \dot{Z}_M(t,f) \right)\left( e^{Z_M(t,f)} (p) \right).
    \end{equation}
    Let $\Lambda:\mathcal{N}_{M+1}(X) \rightarrow \mathcal{L}(f_1,\dotsc,f_q)$ be the homomorphism of nilpotent Lie {algebras} such that $\Lambda(X_j)=f_j$ for $j=1,\dotsc,q$. By applying $\Lambda$ to the equality (\ref{d/dt_exp(z(t))_N_M+1(X)}), we obtain that the right-hand side of the above equality is $f(t, e^{Z_M(t,f)}(p))$.

    \medskip \noindent \emph{Step 2: Proof for a general time-dependent vector field $f$.} 
    We apply Step 1 to a sequence $f_n$ of simple functions, taking values in $f([0,T])$, {converging towards $f$ in $L^1((0,T);\CC^{M}(B_{4\delta};\K^d))$}.
    We get the conclusion by passing to the limit in both sides{, using the fourth item of \cref{Lem:WP_ODE:f}.}
\end{proof}

\subsubsection{Convergence for linear systems}
\label{subsubsec:Magnus_CV_matrix}

In this paragraph, we consider linear systems of the form (\ref{EDO_lineaire}).
Since the Magnus expansion was designed for linear systems, its convergence in this context has received much attention.
Depending on the exact convergence notion that one considers and on the way one groups terms, different sufficient conditions for the convergence can be derived. 
In \cite{MR886816}, $T \| A \|_{L^\infty(0,T)} \leq 1$ is shown to be a sufficient condition for convergence on $[0,T]$ thanks to a careful estimate of the combinatorial terms.
In \cite{MR2413145}, $\|A\|_{L^1(0,T)} < \pi$ is shown to be a sufficient condition for convergence using complex analysis.

We give below a short elementary proof with a sub-optimal constant, for the sake of completeness and because it will be {used in the sequel}.
Let $\|\cdot\|$ be a sub-multiplicative norm on $\mathcal{M}_d(\K)$.

\begin{proposition} \label{Prop:CV_Magnus1.0_matrix}
    Let $T > 0$ and $A \in L^1((0,T);\mathcal{M}_d(\K))$ such that
 $\|A\|_{{L^1(0,T)}} < \frac{1}{4}$. For each $t \in [0,T]$,
   \begin{equation} \label{Zinfty_matrix}
   Z_\infty(t)
   := \sum_{r=1}^{+\infty} \frac{1}{r} \sum_{m=1}^r \frac{(-1)^{m-1}}{m}
  \sum_{\mathbf{r} \in \N^m_r} \int_{{\Delta^{\mathbf{r}}(t)}} {
  [ \dotsb [ A(\tau_1), A(\tau_2) ], \dotsc  A(\tau_r) ]} \dd \tau  
   \end{equation}
is well defined in $\mathcal{M}_d(\K)$ and, for every $p\in\K^d$, $x(t;A,p)=e^{-Z_\infty(t)} p$,
where the brackets refer to commutators of matrices, i.e.\ $[A,B] = AB-BA$.
\end{proposition}

\begin{proof}
\emph{Step 1: Absolute convergence of $Z_\infty(t)$.}
Let $r \in \N^*$. For every $m \in \intset{1,r}$ and $\mathbf{r} \in \N^m_r$, 
\begin{equation}
    \begin{split}
        \int_{{\Delta^{\mathbf{r}}(t)}} & 
  \left\| {[ \dotsb [ A(\tau_1), A(\tau_2) ], \dotsc  A(\tau_r) ]} \right\| \dd \tau
 \\ \leq & \int_{{\Delta^{\mathbf{r}}(t)}} 2^r {\|A(\tau_1)\| \dotsc \|A(\tau_r)\|} \dd\tau
  \leq 2^r 
  \left( \int_0^t \|A(\tau)\| \dd\tau \right)^r.
    \end{split}
\end{equation}
Moreover, recalling the definition of \eqref{def:Nmr}, $|\N^m_r| = \binom{r-1}{m-1}$ and
$\sum_{m=1}^r \binom{r-1}{m-1} = 2^{r-1}$.
Thus,
 \begin{equation} \sum_{r=1}^{+\infty} \frac{1}{r} \sum_{m=1}^r \frac{1}{m}  \sum_{\mathbf{r} \in \N^m_r} \int_{{\Delta^{\mathbf{r}}(t)}} \| { [ \dotsb [ A(\tau_1), A(\tau_2) ], \dotsc  A(\tau_r) ]} \| \dd \tau  \leq \sum_{r=1}^{+\infty} \left( 4 \|A\|_{L^1} \right)^r < \infty.\end{equation}
 
\medskip 

\noindent \emph{Step 2: Formula for the solution $L \in \CC^1([0,t];\mathcal{M}_d(\K))$ of}
\begin{equation}
    \begin{cases}
        L'(\tau)= L(\tau) A(\tau)\\
        L(0) = \mathrm{Id}_d.
    \end{cases}
\end{equation}
By working as in the proof of \cref{Prop:CV_CF_global_matrix}, we obtain
\begin{equation}
    L(t)=\mathrm{Id}_d + \sum_{r=1}^{+\infty} \int_{{\Delta^r(t)}} {A(\tau_1)\dotsm A(\tau_r)} \dd\tau
\end{equation}
where the series converges absolutely. Moreover,
we have
 \begin{equation} \left\| \sum_{r=1}^{+\infty} \int_{{\Delta^r(t)}} {A(\tau_1)\dotsm A(\tau_r)} \dd\tau \right\|  \leq  \sum_{r=1}^{+\infty} \frac{\|A\|_{L^1}^r}{r!} < e^{\frac{1}{4}}-1< 1.
 \end{equation}
Thus
 \begin{equation}\log \left( L(t)\right)  =  \sum_{m=1}^{+\infty} \frac{(-1)^m}{m} \left(\sum_{r=1}^{+\infty} \int_{{\Delta^r(t)}} {A(\tau_1)\dotsm A(\tau_r)} \dd\tau \right)^m\end{equation}
is well defined in $\mathcal{M}_d(\K)$ and $L(t)=e^{\log(L(t))}$. By applying \Cref{thm:a.brackets} with $A=A_1=\mathcal{M}_d(\K)$, we get $\log(L(t))=Z_\infty(t)$.

\medskip

\noindent \emph{Step 3: Conclusion.} The resolvent $R(\tau)$ associated to the linear system $\dot{x}=A(\tau) x$ with initial condition at $\tau=0$ is $R(\tau)=L(\tau)^{-1}$. Thus $x(t)=R(t)p=e^{-Z_\infty(t)}p$.
\end{proof}

\begin{remark}
    For $X, Y \in \mathcal{M}_d(\K)$ such that $\|X\|+\|Y\|<\frac{1}{8}$, the previous statement implies the convergence of the CBHD formula, yielding  a matrix $Z_\infty$ such that $e^X e^Y = e^{Z_\infty}$. 
    {Some authors have investigated the} optimal convergence domain in different contexts for the CBHD formula.
    Such a domain sometimes depends on the summation process (i.e.\  the way terms are grouped together) and the exact question one asks (existence of a logarithm, absolute summability of the series, convergence of the remainder, etc.).
    Better sufficient conditions than ours can be found for instance in \cite{MR2031789}, for instance, $\|X\|+\|Y\|< \frac{\ln 2}{2}$.
    We refer to \cite{biagi2018baker} for a nice survey of the convergence questions regarding the CBHD formula.
\end{remark}

\begin{remark}
    The smallness assumption (on time or on the matrices) is in general necessary, both for the CBHD formula (see \cite[Example~2.3]{biagi2018baker} or \cite[Section~II]{MR0156878}) and for the Magnus expansion (see \cite{MR2413145}, where the authors also prove that, although the condition $\|A\|_{L^1(0,T)} < \pi$ is not necessary for convergence, there exists $A$ with $\|A\|_{L^1(0,T)}=\pi$ for which the Magnus series at time $\pi$ does not converge).
\end{remark}

\subsubsection{Divergence for arbitrarily small analytic vector fields} \label{sec:conv-magnus-analytic}

The convergence of Magnus expansions is deeply linked with the convergence of the CBHD series. For analytic vector fields, it is expected that both series diverge (see e.g.\ \cite[p.1671]{MR579930} or \cite[p.335]{MR886816} for statements without examples). Some authors nevertheless suggested that, despite the divergence of the series, the flows could converge for analytic vector fields (see \cite[p.335]{MR886816} and \cite[p.241]{MR1972790}). 

In this paragraph, we give explicit counter-examples to the convergence, even in the weak sense of the flows, for arbitrarily small analytic vector fields, of both the CBHD series and the Magnus expansion. Similarly to counter-examples concerning the convergence of the CBHD series for large matrices (see e.g.\ \cite[Theorem~2.5]{biagi2018baker}), our construction relies on the choice of generators for which many brackets vanish thanks to their particular structure, and the remaining non-vanishing brackets are associated with coordinates of the first kind involving Bernoulli numbers.

\begin{proposition} \label{prop:cbh.c-ex}
    There exists $\delta > 0$ and $f_0, f_1 \in {\CC^{\omega}(B_{\delta};\R^2)}$ such that,
    \begin{equation} \label{eq:cbh.c-ex.1}
      \forall M \in \N,
      \exists C_M, \varepsilon_M > 0,
      \forall \varepsilon \in [0,\varepsilon_M],
      \quad
      \left| e^{\varepsilon f_0} e^{\varepsilon f_1} (0) - e^{\CBHD_M(\varepsilon f_1, \varepsilon f_0)} (0) \right|
      \leq C_M \varepsilon^{M+1},
    \end{equation}
    where $\CBHD_M(\varepsilon f_1, \varepsilon f_0)$ is defined in \cref{Prop:CBH}, but, simultaneously, for every $\varepsilon > 0$,
    \begin{equation} \label{eq:cbh.c-ex.2}
      \lim_{M \to + \infty} | \CBHD_M(\varepsilon f_1, \varepsilon f_0)(0) | = + \infty  
    \end{equation}
    and
    \begin{equation} \label{eq:cbh.c-ex.3}
      \lim_{M \to + \infty} \left| e^{\varepsilon f_0} e^{\varepsilon f_1} (0) - e^{\CBHD_M(\varepsilon f_1, \varepsilon f_0)}(0) \right| = + \infty.
    \end{equation}
\end{proposition}
    
\begin{proof}
    Let $f_0, f_1$ as in \cref{rk:bracket.optimal}.
    For these vector fields, estimate \eqref{eq:cbh.c-ex.1} comes from \cref{Prop:CBH}.
    Due to their structure, the only non vanishing brackets are those containing $f_1$ at most once.
    Therefore, formula \eqref{eq:cbh.Z1} of \cref{thm:cbh.Z1} yields, for $M \geq 1$,
 \begin{equation}
  \CBHD_M(\varepsilon f_1, \varepsilon f_0) = \varepsilon f_0 + \sum_{k = 0}^{M-1} \frac{B_k}{k!} \varepsilon^{k+1} \ad^k_{f_0}(f_1).
 \end{equation}
 Hence, using \eqref{eq:adkf0f1.optimal},
 \begin{equation}
  \CBHD_M(\varepsilon f_1, \varepsilon f_0)(x) = \varepsilon e_1 + \varepsilon \Theta^\varepsilon_{M} (x_1) e_2,
 \end{equation}
 where we introduce, for $q \in \R$,
 \begin{equation}
   \Theta^\varepsilon_M(q) := 
   \sum_{k=0}^{M-1} B_k \varepsilon^{k} (1-q)^{-k-1}.
 \end{equation}
 In particular,
 \begin{equation} \label{eq:cbh.zm.f1f0.0}
   \left| \CBHD_M(\varepsilon f_1, \varepsilon f_0)(0) \right| \geq  \left| \varepsilon \Theta^\varepsilon_M(0) \right|.
 \end{equation}
 Since the odd Bernoulli numbers except $B_1$ are zero, when $M = 2M'+2$ with $M' \geq 1$, $\Theta^\varepsilon_{2M'+2} = \Theta^{\varepsilon}_{2M'+1}$. Then,
 \begin{equation} \label{eq:ser-theta-div1}
   \Theta^\varepsilon_{2M'+1}(q) = \frac{1}{1-q}
   - \frac{\varepsilon}{2(1-q)^2}
   + \sum_{k=1}^{M'} B_{2k} \varepsilon^{2k} (1-q)^{-2k-1}.
 \end{equation}
 In particular, using \eqref{eq:bernoulli.3},
 \begin{equation} \label{eq:ser-theta-div2}
   \Theta^\varepsilon_{2M'+1}(0) 
   = 1 - \frac{\varepsilon}{2} + \sum_{k=1}^{M'} B_{2k} \varepsilon^{2k}
   = 1 - \frac{\varepsilon}{2} + \sum_{k=1}^{M'} (-1)^{k+1} \frac{2 (2k)!}{(2\pi)^{2k}} \zeta(2k) \varepsilon^{2k}.
 \end{equation}
 Thus, for every fixed $\varepsilon > 0$, $|\Theta^\varepsilon_{M}(0)| \to + \infty$ when $M \rightarrow +\infty$, because it involves a sum of the form $\sum_{k=1}^{M'} a_k$ where $|a_{k+1}|/|a_k| \rightarrow +\infty$ when $k \rightarrow +\infty$. Using \eqref{eq:cbh.zm.f1f0.0}, this proves~\eqref{eq:cbh.c-ex.2}.
 
 For $p \in \R^2$ close enough to the origin, one can also compute the flow $e^{\CBHD_M(\varepsilon f_1, \varepsilon f_0)}(p)$, which is $y(1)$ where $y$ is the solution to the ODE $y(0) = p$ and
 \begin{equation}
   \dot{y}_1(s) = \varepsilon 
   \quad \textrm{and} \quad 
   \dot{y}_2(s) = \varepsilon \Theta^\varepsilon_M \left( y_1(s) \right).
 \end{equation}
 Solving successively for $y_1$ then $y_2$ yields $y_1(s) = p_1 + s \varepsilon$ and 
 \begin{equation}
   y_2(s) = p_2 + \int_{y_1(0)}^{y_1(s)} \Theta^\varepsilon_M (h) \dd h.
 \end{equation}
 Thus,
 \begin{equation}
   e^{\CBHD_M(\varepsilon f_1, \varepsilon f_0)}(p)
   = (p_1 + \varepsilon) e_1 
   + \left( p_2 + \int_{p_1}^{p_1+\varepsilon}
   \Theta^\varepsilon_M (h) \dd h \right) e_2.
 \end{equation}
 In particular,
 \begin{equation}
   e^{\CBHD_M(\varepsilon f_1, \varepsilon f_0)}(0)
   = \varepsilon e_1 
   + \left( \int_{0}^{\varepsilon}
   \Theta^\varepsilon_M (h) \dd h \right) e_2 .
 \end{equation}
 When $M=2M'+2$ with $M'\geq 1$, using (\ref{eq:ser-theta-div1}), we get
 \begin{equation} \label{CBHM_infty}
   e^{\CBHD_M(\varepsilon f_1, \varepsilon f_0)}(0)
   = \varepsilon e_1 
   + \left(
   - \ln (1-\varepsilon)
   - \frac{\varepsilon}{2} \left( \frac{1}{1-\varepsilon} - 1 \right)
   + \sum_{k=1}^{M'} \frac{B_{2k}}{2k} \varepsilon^{2k} \left( \frac{1}{(1-\varepsilon)^{2k}} - 1 \right)
   \right) e_2.
 \end{equation}
 Hence, for the same reason as above, the flow satisfies $|e^{\CBHD_M(\varepsilon f_1, \varepsilon f_0)}(0)| \to +\infty$ when $M\rightarrow +\infty$, which proves \eqref{eq:cbh.c-ex.3}.
\end{proof}    

\begin{remark}
 If one sees $(x_1,x_2)$ as $(q,p)$ in an Hamiltonian setting, one checks that the vector fields defined in \eqref{eq:def:f0f1.optimal} and used in this counter-example are associated with the Hamiltonians $\mathcal{H}_0(q,p) := p$ and $\mathcal{H}_1(q,p) := \ln (1 - q)$. Therefore, assuming an Hamiltonian structure on the considered vector fields does not provide enough structure to yield convergence.
\end{remark}

One could wonder if assuming even more structure on the dynamics, for example assuming that it is time-reversible, prevents the construction of such counter-examples.

\begin{open}
    {Do there} exist Hamiltonians $\mathcal{H}_0$ and $\mathcal{H}_1$ on $\R^{2d}$, which are time-reversible (i.e.\ satisfy $\mathcal{H}_i(q,p) = \mathcal{H}_i(q,-p)$ for every $q, p \in \R^d$), locally real-analytic near zero and for which the convergence of the CBHD series fails as in \cref{prop:cbh.c-ex}?
\end{open}

The counter-example of \cref{prop:cbh.c-ex} for the convergence of the CBHD series allows to build counter-examples to the convergence of the Magnus expansion which blow up instantly, despite analytic regularity in both time and space.

\begin{proposition}
    There exist $T, \delta > 0$ and $f \in \CC^{\omega}([0,T]\times B_{\delta}{;\R^2})$ such that, for every $\varepsilon > 0$ and $t \in (0,T]$, 
    \begin{equation}
        \lim_{M\to +\infty} | Z_M(t,\varepsilon f)(0) | = +\infty
    \end{equation}
    and
    \begin{equation}
        \lim_{M\to +\infty} \left| x(t;\varepsilon f,0) - e^{Z_M(t,\varepsilon f)}(0) \right| = + \infty,
    \end{equation}
    where $x$ is the solution to $\dot{x}(t) = \varepsilon f(t,x(t))$ with $x(0) = 0$ and $Z_M(t,\varepsilon f) = \mathrm{Log}_M \{ \varepsilon f \}(t)$.
\end{proposition}

\begin{proof}
    Let $T = 1$. 
    We define $f(t,x) := f_0(x) + t f_1(x)$, where $f_0$ and $f_1$ are defined in \cref{rk:bracket.optimal}.
    Similarly as for the previous construction, only Lie brackets involving $f_1$ at most once are non-vanishing.
    Moreover, the coordinates of the first kind associated with the controls $a_0(t) = 1$ and $a_1(t) = t$ have been computed in \cref{ex:zeta.adx0x1.1t}.
    Hence, recalling \eqref{eq:adkf0f1.optimal}, we have
    \begin{equation}
        Z_M(t,\varepsilon f) = \varepsilon t e_1 + \sum_{k=0}^{M-1} \varepsilon^{k+1} \frac{(-1)^{k+1} t^{k+2}}{(k+1)!} B_{k+1} \frac{k!}{(1-x_1)^{k+1}} e_2.
    \end{equation}
    Proceeding along the same lines as in the proof of \cref{prop:cbh.c-ex} allows to conclude that both $Z_M(t,\varepsilon f)(0)$ and $e^{Z_M(t,\varepsilon)}(0)$ diverge when $M\rightarrow +\infty$.
\end{proof}

\subsection{Magnus expansion in the interaction picture}

\subsubsection{Nilpotent systems}
\label{sec:conv-interaction-nilpotent}

For ODEs of the form \eqref{eq:EDO.interaction}, the starting point of the interaction picture is to factorize the flow of $f_0$. 
Hence, the roles of $f_0$ and $f_1$ are asymmetric.
One can expect that, under the assumption that Lie brackets of $f_0$ and $f_1$ containing at least $M+1$ times $f_1$ identically vanish, the Magnus expansion in the interaction picture should yield an equality of the form
\begin{equation} \label{eq:equal.magnus2.nilpotent}
   x(t;f_0+f_1,p) = e^{\mathcal{Z}_{M}(t,f_0,f_1)} e^{tf_0} p,
\end{equation}
 where $\mathcal{Z}_{M}(t,f_0,f_1)$ is defined in \cref{Prop:Magnus_2}. 
 We prove in this paragraph that it is indeed the case, when $f_0$ and $f_1$ are analytic.
 However, contrary to the case of the usual Magnus expansion (see \cref{sec:conv-magnus-nilpotent}), we give examples highlighting the fact that the {analyticity} assumption cannot be removed, which is quite surprising but stems from the mixing induced by pushforwards.
 
 We therefore start with the following definition.
 
\begin{definition}[Semi-nilpotent family of vector fields]
 Let $\Omega$ be an open subset of $\K^d$. Let $\mathcal{F} \subset \CC^\infty(\Omega;\K^d)$, $f_0 \in \CC^\infty(\Omega;\K^d)$ and $M \in \N^*$. We say that the family of vector fields $\mathcal{F}$ is \emph{semi-nilpotent of index $M$ with respect to $f_0$} if every bracket of elements of $\mathcal{F}\cup\{f_0\}$ involving $M$ elements of $\mathcal{F}$ vanishes identically on $\Omega$ and $M$ is the smallest positive integer for which this property holds.
\end{definition}

{\begin{remark}
    Some authors (see e.g.\ \cite[Section 3]{hermes1976local}) refer to this situation by saying that $\mathcal{L}(\mathcal{S}_1)$ is nilpotent of index $M$, where $\mathcal{S}_1 := \{ \ad^k_{f_0}(f) ; f \in \mathcal{F}, k \in \N \}$.
    Both definitions are equivalent, thanks to the Jacobi identity for Lie brackets.
\end{remark}}

\begin{proposition} \label{thm:magnus2.flow-nilpotent}
    Let $T, \delta > 0$. Let $M \in \N$.
    Let $f_0 \in {\CC^\infty(B_{4\delta};\K^d)}$ with $T \| f_0 \|_{\CC^0} \leq \delta$.
 There exists $\eta > 0$ such that, for every $f_1 : [0,T] \to {\CC^\infty(B_{4\delta};\K^d)}$ with $f_1 \in L^1([0,T];{\CC^{M+1}(B_{4\delta};\K^d)})$ and $\| f_1 \|_{L^1(\CC^{M})} \leq \eta$, the following family is well-defined
 \begin{equation} \label{eq:def.G.family.f1}
   \mathcal{G} := \{ \Phi_0(-t)_* f_1(t) ; \enskip t \in [0,T] \} \subset {\CC^\infty(B_{\delta};\K^d)}.
 \end{equation}
 and, assuming moreover that $\mathcal{G}$ is nilpotent of index $M + 1$, then, for each $t \in [0,T]$ and $p \in B_\delta$, the solution to \eqref{eq:EDO.interaction} satisfies \eqref{eq:equal.magnus2.nilpotent}.
\end{proposition}

\begin{proof}
 Let $t > 0$.
 As in the proof of \cref{Prop:Magnus_2}, we introduce the new {variable} $y(s) := \Phi_0(t-s,x(s))$. 
 Then $\dot{y}(s) = g_t(s,y(s))$, where $g_t$ is defined in \eqref{def:gt_star}. 
 Thanks to \cref{thm:push.composition}, $g_t(s) = \Phi_0(t)_* \Phi_0(-s)_* f_1(s)$. 
 Thanks to the assumption and to \cref{thm:push.lie}, the family $\{ g_t(s); \enskip s \in [0,t] \}$ is nilpotent of index $M+1$.
 Thus, by \cref{thm:magnus1.nilpotent}, $y(t) = e^{\mathcal{Z}_M(t,f_0,f_1)} y(0)$.
 Since $x(t) = y(t)$ and $y(0) = \Phi_0(t,p)$, this concludes the proof of \eqref{eq:equal.magnus2.nilpotent}.
\end{proof}

\begin{lemma} \label{thm:analytic+brackets-nilpotent=>flow-nilpotent}
 Let $T, \delta > 0$, $\mathcal{F} \subset {\CC^\infty(B_{4\delta};\K^d)}$, $f_0 \in {\CC^\infty(B_{4\delta};\K^d)}$ such that $T \| f_0 \|_{\CC^0} \leq \delta$. 
 The following family is well-defined
 \begin{equation} \label{eq:def.G.family}
   \mathcal{G} := \{ \Phi_0(-t)_* f ; \enskip t \in [0,T], f \in \mathcal{F} \} \subset {\CC^\infty(B_{\delta};\K^d)}.
 \end{equation}
 Assume that the family $\mathcal{F}$ is semi-nilpotent of index $M$ with respect to $f_0$ and that there exists $r > 0$ such that $\mathcal{F} \cup \{f_0\} \subset {\CC^{\omega,r}(B_\delta;\K^d)}$. 
 Then $\mathcal{G}$ is nilpotent of index $M$.
\end{lemma}

\begin{proof}
 For $t \in [0,T]$ and $f \in \mathcal{F}$, equation \eqref{tool_series} of \cref{Lem:tool_serie_ad} implies that
 \begin{equation}
   \Phi_0(-t)_* f = \sum_{k=0}^{+\infty} \frac{t^k}{k!} \ad^k_{f_0}(f)
 \end{equation}
 and that the series converges absolutely in ${\CC^M(B_{\delta};\K^d)}$ (in particular). Hence, if $t_1, \dotsc, t_M \in [0,T]$ and $f_1, \dotsc f_M \in \mathcal{F}$, the bracket 
 \begin{equation}
  \begin{split}
     [ \Phi_0(-t_M)_* f_M, & [ \dotsb [ \Phi_0(-t_2)_* f_2 , \Phi_0(-t_1)_* f_1 ] \dotsb ] ] \\ 
      & = \sum_{k_1, \dotsc k_M \in \N} \frac{t_1^{k_1}\dotsb t_M^{k_M}}{k_1! \dotsb k_M!}
      [ \ad^{k_M}_{f_0}(f_M), [ \dotsb [ \ad^{k_2}_{f_0}(f_2), \ad^{k_1}_{f_0}(f_1)] \dotsb]]
  \end{split}
 \end{equation}
 vanishes thanks to the assumption and the absolute convergence of the sums. The same is true for every other bracket structure, which proves that $\mathcal{G}$ is nilpotent of index $M$.
\end{proof}

\begin{corollary} \label{thm:analytic+brackets-nilpotent=>magnus11-equality}
    Let $T, \delta, r > 0$.
    Let $f_0 \in {\CC^{\omega,r}(B_{4\delta};\K^d)}$ such that $T \| f_0 \|_{\CC^0} \leq \delta$ and $f_1 \in L^1([0,T];{\CC^{\omega,r}(B_{4\delta};\K^d)})$.
 Assume moreover that $\mathcal{F} := \{ f_1(t,\cdot); \enskip t \in [0,T] \}$ is semi-nilpotent of index $M + 1$ with respect to $f_0$. Then, for each $t \in [0,T]$ and $p \in B_\delta$, the solution to \eqref{eq:EDO.interaction} satisfies \eqref{eq:equal.magnus2.nilpotent}, where $\mathcal{Z}_{M}(t,f_0,f_1)$ is defined in \cref{Prop:Magnus_2}.
\end{corollary}

\begin{proof}
 This corollary is a direct consequence of \cref{thm:magnus2.flow-nilpotent} and \cref{thm:analytic+brackets-nilpotent=>flow-nilpotent}.
\end{proof}

The analyticity assumption in  \cref{thm:analytic+brackets-nilpotent=>flow-nilpotent} is necessary, as illustrated by the following counter-example for smooth functions.

\begin{example}
 We consider smooth vector fields on $\R^3$.
 Let $\chi \in \CC^\infty(\R;\R)$ with $\chi \equiv 0$ on $\R_-$ and $\chi(x) > 0$ for $x > 0$.
 Let $f_0$ and $\mathcal{F} := \{ f_1, f_2 \}$ where 
 \begin{align}
    f_0(x) & := e_2, \\
    f_1(x) & := \chi(x_2) x_1 e_3, \\
    f_2(x) & := \chi(-x_2) e_1.
 \end{align}
 Heuristically, $f_1$ and $f_2$ commute because they have disjoint (touching) supports, but the flow of $f_0$ involved in \eqref{eq:def.G.family} mixes these supports for every positive time. This is possible only because $\chi$ is not analytic.
 
 First, we check that $\mathcal{F}$ is semi-nilpotent of order $2$ with respect to $f_0$. Indeed, for every $j \in \N$,
 \begin{align}
    \ad^j_{f_0}(f_1)(x) & = \chi^{(j)}(x_2) x_1 e_3, \\
    \ad^j_{f_0}(f_2)(x) & = (-1)^j \chi^{(j)}(-x_2) e_1.
 \end{align}
 Thus, for $j, k \in \N$, $[\ad^j_{f_0}(f_1), \ad^{k}_{f_0}(f_1)]$ (resp.\ $[\ad^j_{f_0}(f_2), \ad^k_{f_0}(f_2)]$) vanishes because both vector fields are {multiples of} $e_3$ but {independent of} $x_3$ (resp.\ {multiples of} $e_1$ but {independent of} $x_1$). Moreover,
 \begin{equation}
   [\ad^j_{f_0}(f_1), \ad^{k}_{f_0}(f_2)](x) = - (-1)^k \chi^{(k)}(-x_2) \chi^{(j)}(x_2) e_3 = 0,
 \end{equation}
 because the supports of $\chi(\cdot)$ and $\chi(- \cdot)$ only touch at $x_2 = 0$ where all derivatives vanish.
 
 Second, let us check however that the family $\mathcal{G}$ defined in \eqref{eq:def.G.family} is not nilpotent of index $2$. Indeed, for $t \geq 0$ and $x \in \R^3$, $\Phi_0(t)(x) = x + t e_2$. Thus, for $f \in \CC^\infty(\R^3;\R^3)$, $(\Phi_0(-t)_*f)(x) = f(x+te_2)$. 
 Therefore, for every $T > 0$, $\mathcal{G}$ is well-defined on $\R^3$. Moreover,
 \begin{equation}
   [ f_2, (\Phi_0(-t)_*f_1) ](x)
   = \chi(-x_2) \chi(x_2+t) e_3.
 \end{equation}
 In particular, for every $\varepsilon > 0$, $[ f_2, (\Phi_0(-2\varepsilon)_*f_1) ](-\varepsilon e_2) = \chi(\varepsilon)^2 e_3 \neq 0$, which prevents the family $\mathcal{G}$ from being nilpotent of index $2$ (even locally in time and space).
\end{example}

The analyticity assumption in \cref{thm:analytic+brackets-nilpotent=>magnus11-equality} is also necessary, as illustrated by the following counter-example for smooth functions, inspired by the previous one.

\begin{example}
 We consider smooth vector fields on $\R^3$. 
 Let $\chi \in \CC^\infty(\R;\R)$ with $\chi \equiv 0$ on $\R_-$ and $\chi(x) > 0$ for $x > 0$.
 Let $f_0(x) := e_2$ and $f_1(t,x) := f_1(x)$ ({independent of} time) with
\begin{equation}
  f_1(x) := 2 {\chi^{(1)}}(x_2) x_1 e_3 + {\chi^{(1)}} (-x_2) e_1.
\end{equation}
For $j \in \N$, one has
\begin{equation}
  \ad^j_{f_0}(f_1) (x) = \partial_2^j f_1(x)
  = 2 \chi^{(j+1)}(x_2) x_1 e_3 + (-1)^j \chi^{(j+1)}(-x_2) e_1.
\end{equation}
Thus, for every $j_1, j_2 \in \N$,
\begin{equation}
 \begin{split}
   [\ad^{j_1}_{f_0}(f_1), \ad^{j_2}_{f_0}(f_1)] (x) & = 
   2 (-1)^{j_1} \chi^{(j_1+1)}(-x_2) \chi^{(j_2+1)}(x_2) e_3
   \\ & \quad - 2 (-1)^{j_2} \chi^{(j_2+1)}(-x_2) \chi^{(j_1+1)}(x_2) e_3 = 0
 \end{split}
\end{equation}
because the supports of $\chi(\cdot)$ and $\chi(- \cdot)$ only touch at $x_2 = 0$, where all derivatives vanish. 
Hence each bracket of $f_0$ and $f_1$ involving $f_1$ at least twice vanishes identically on $\R^3$.
Thus, for every $T > 0$, the family $\mathcal{F} := \{ f_1(t,\cdot); \enskip t \in [0,T] \} = \{ f_1 \}$ is semi-nilpotent of index $2$ with respect to $f_0$.
Let us prove that, despite this property, equality \eqref{eq:equal.magnus2.nilpotent} with $M = 1$ fails.

\bigskip

\noindent \textbf{Computation of the state.} We solve $\dot{x} = f_0(x) + f_1(x)$ for some initial data $p$. Solving the ODE successively for $x_2$, $x_1$ and $x_3$, we obtain 
\begin{align}
    x_1(t) & = p_1 + \chi(-p_2) - \chi(-p_2-t), \\
    x_2(t) & = p_2 + t, \\
    x_3(t) & = p_3 + 2 \left( \chi(p_2+t) - \chi(p_2) \right) (p_1 + \chi(-p_2)).
\end{align}
In particular, with $t=2\varepsilon$ and $p=-\varepsilon e_2$, $x(2\varepsilon;f_0+f_1,-\varepsilon e_2)=(\chi(\varepsilon),\varepsilon,2 \chi(\varepsilon)^2)$.

\bigskip

\noindent \textbf{Computation of the flow.} We compute $e^{\mathcal{Z}_1(t,f_0,f_1)} e^{t f_0}(p)$ for some initial data $p$. One has $\Phi_0(\tau,q) = q + \tau e_2$. Hence, in particular $(\Phi_0(\tau)_* f_1)(q) = f_1(q - \tau e_2)$.
Moreover $\mathcal{Z}_1(t,f_0,f_1)(q) = \int_0^t g_t(s,q) \dd s$ where $g_t(s,q) = (\Phi_0(t-s)_* f_1)(q)$. Hence $g_t(s,q) = f_1(q-(t-s)e_2)$ and
\begin{equation}
 \begin{split}
   \mathcal{Z}_1(t,f_0,f_1)(q) & = \int_0^t f_1(q+(s-t)e_2) \dd s \\
 & = 2 q_1 (\chi(q_2)-\chi(q_2-t)) e_3 + (\chi(-q_2+t) - \chi(-q_2))e_1.
 \end{split}
\end{equation}
Then $e^{\mathcal{Z}_1(t,f_0,f_1)} e^{tf_0} p = e^{\mathcal{Z}_1(t,f_0,f_1)}  (p+te_2)$ is $y(1)$ where $y$ is the solution to $y(0) = p+te_2$ and $\dot{y}(s) = \mathcal{Z}_1(t,f_0,f_1)(y(s))$.
Solving the ODE successively for $y_2$, $y_1$ and $y_3$, we obtain 
\begin{align}
    y_1(s) & = p_1 + s (\chi(-p_2) - \chi(-p_2-t)), \\
    y_2(s) & = p_2 + t, \\
    y_3(s) & = p_3 +  (\chi(p_2+t)-\chi(p_2))\left( 2 p_1 s + s^2 (\chi(-p_2) - \chi(-p_2-t)) \right).
\end{align} 
In particular, with with $t=2\varepsilon$ and $p=-\varepsilon e_2$, $e^{\mathcal{Z}_1(2\varepsilon,f_0,f_1)} e^{2\varepsilon f_0} (-\varepsilon e_2) = (\chi(\varepsilon) , \varepsilon,\chi(\varepsilon)^2 )$.
Thus, for every $\varepsilon > 0$,
\begin{equation}
 \left| x(2\varepsilon,f_0+f_1,-\varepsilon e_2) -
 e^{\mathcal{Z}_1(2\varepsilon,f_0,f_1)} e^{2\varepsilon f_0} (-\varepsilon e_2) \right|
 = \chi^2(\varepsilon) > 0.
\end{equation}
\end{example}

\subsubsection{Convergence for linear systems}
\label{sec:conv-interaction-linear}

Let $T > 0$. In this paragraph, we study linear systems of the form
\begin{equation} \label{eq:xH0H1}
    \dot{x}(t)= \left( H_0 + H_1(t) \right) x(t) 
    \quad \textrm{and} \quad x(0)=p,
\end{equation}
where $H_0 \in \mathcal{M}_d(\K)$ and $H_1 \in L^1((0,T);\mathcal{M}_d(\K))$.
Let $\|\cdot\|$ be a sub-multiplicative norm on $\mathcal{M}_d(\K)$.

\begin{proposition} \label{Prop:CV_Magnus1.1_matrix}
    Let $T>0$, $H_0 \in \mathcal{M}_d(\K)$ and $H_1 \in L^1((0,T);\mathcal{M}_d(\K))$ such that $\|H_1\|_{{L^1(0,T)}} < \frac{e^{-2T\|H_0\|}}{8}$. 
    Then, for each $t \in [0,T]$ and $p\in\K^d$ the solution to \eqref{eq:xH0H1} satisfies $x(t)=e^{-Z_\infty(t)} e^{t H_0} p$ where
$Z_\infty(t)$ is defined by (\ref{Zinfty_matrix}) with
 \begin{equation}A_t(\tau)= e^{(t-\tau)H_0} H_1(\tau) e^{(\tau-t)H_0}=\sum_{k=0}^{+\infty} \frac{(t-\tau)^k}{k!} \ad_{H_0}^k(H_1).\end{equation}
\end{proposition}

\begin{proof}
The function $y:\tau \in [0,t] \mapsto e^{(t-\tau)H_0} x(\tau)$ {satisfies} $y'(\tau)=A(\tau)$, $y(0)=e^{tH_0} p$. Thus, by \cref{Prop:CV_Magnus1.0_matrix},
$y(t)=e^{-Z_\infty(t)} e^{t H_0} p$, which gives the conclusion because $y(t)=x(t)$.
\end{proof}

\begin{remark}
    The Magnus expansion in the usual setting (\cref{Prop:CV_Magnus1.0_matrix}), when applied directly to $A(t)=H_0+H_1(t)$ requires a smallness assumption on $T \|H_0\|$ (through the condition $\|A\|_{{L^1(0,T)}} < \frac 1 8$), even for small perturbations $H_1$. 
    On the contrary, the Magnus expansion in the interaction picture (\cref{Prop:CV_Magnus1.1_matrix}) holds even when $T \| H_0 \|$ is large, provided that the perturbation $H_1$ is small enough.
\end{remark}

{\begin{remark}
    More generally, in \cite[p.~1671]{MR579930}, the authors consider the formal power series expressing the chronological logarithm of two flows, associated to two non-autonomous vector fields. They explain that, when the vector fields take values in a Banach algebra, and one of them is small enough, then this series converges. \cref{Prop:CV_Magnus1.1_matrix} is an illustration.
\end{remark}}

\subsubsection{Divergence for arbitrary small analytic vector fields}
\label{sec:conv-interaction-analytic}

Generally speaking, since, as illustrated in \cref{sec:conv-magnus-analytic}, the Magnus expansion does not converge for analytic vector fields, one cannot expect that the Magnus expansion in interaction picture converges for analytic vector fields.

For instance, if $f_0 = 0$, or if, for some $a \in \intset{1,d}$, $f_0(x)$ {is a linear combination of $e_1, \dotsc, e_a$ with coefficients depending only on $x_1, \dotsc, x_a$} and $f_1(t,x)$ {is a linear combination of $e_{a+1}, \dotsc e_d$, with coefficients depending only on $x_{a+1}, \dotsc x_{d}$}, then the vector field $g_t(\tau) = \Phi_0(t-\tau)_* f_1(\tau)$ defined in~\eqref{def:gt_star} and involved in the Magnus in the interaction picture formula satisfies $g_t(\tau) = f_1(\tau)$.

Hence, each counter-example to the convergence of the usual Magnus expansion also yields counter-examples to the convergence of the Magnus expansion in the interaction picture.

{More generally, in \cite[p.~1671]{MR579930}, the authors consider the formal power series expressing the chronological logarithm of two flows, associated to two non-autonomous vector fields. They claim that, even for analytic vector fields, this series does not converge in general. The counter examples of the present article illustrate this assertion.}

\subsection{Sussmann's infinite product expansion}

\subsubsection{Equality for nilpotent systems}
\label{sec:conv-sussmann-nilpotent}

In this section, we study affine systems of the form (\ref{affine_syst_q}).

\begin{proposition} \label{Prop:CV_Sussman_nilpontent}
    Let $\mathcal{B}$ be a \GHB\ of $\mathcal{L}(X)$ and
$(\xi_b)_{b\in\mathcal{B}}$ be the associated coordinates of the second kind.
	For every $M \in \N^*$, there exist $\eta_M>0$ such that the following property holds.
	Let $T, \delta > 0$, $f_i \in {\CC^{\infty}(B_{3\delta};\K^d)}$ and $u_i \in L^1((0,T);\K)$ for $i \in I$.
	Assume that the Lie algebra generated by the $f_i$ for $i \in I$ is nilpotent of index at most $M+1$. 
	Then, under the smallness assumption~\eqref{eq:sussman.small1}, for each $t \in [0,T]$ and $p \in B_\delta$,
\begin{equation} 
x(t;f,u,p) = \underset{b \in \mathcal{B}_{\intset{1,M}}}{\overset{\rightarrow}{\Pi}} e^{\xi_b(t,u) f_b } p.
\end{equation}
\end{proposition}

\begin{proof}
    The proof strategy is the same as for \cref{Prop:Error_Sussman}.
    We apply the second statement of \cref{Lem:tool_serie_ad} instead of the first one, which gives $\varepsilon_j=0$ for each $j \in \intset{0,k+1}$.
    The smallness assumption guarantees that all flows are well-defined.
\end{proof}

\subsubsection{{Bilinear systems}}
\label{sec:conv-sussmann-banach}

Let $T > 0$. 
In this paragraph, we study {the convergence of Sussmann's infinite product expansion for bilinear systems} of the form
\begin{equation} \label{affine_lin_syst_q}
\dot{x}(t)= \left(\sum_{i \in I} u_i(t) A_i\right) x(t)
\quad \textrm{and} \quad
x(0) = p
\end{equation}
where $A_i \in \mathcal{M}_d(\K)$ {are time-invariant} and $u_i \in L^1((0,T);\K)$. When well-defined, its solution is denoted $x(t;A,u,p)$.
{Local convergence is proved in \cref{Prop:CV_Prod_Matrix} while an example illustrating the lack of global convergence is proposed in \cref{Prop:contre-ex-matrix_CV_Prod}.}

\paragraph{{Local convergence}.}

The main goal of this paragraph is to prove \cref{Prop:CV_Prod_Matrix} which asserts that Sussmann's infinite product expansion for system \eqref{affine_lin_syst_q} converges locally (i.e.\ for small matrices, small controls or small time).

Before proving this result, we need a definition for an ordered infinite product (given in \cref{def:conv.lazard.prod.matrices} below) and a sufficient condition for its convergence (given in \cref{Lem:CS-CV-prod-matrix} below).

\bigskip

Defining the ordered product of a family of matrices indexed by a length-compatible Hall basis is straightforward, because there exists an indexation of the family by $\N$ which is compatible with the order induced by the Hall basis (since it does not involve infinite segments). Hence, one is brought back to the classical case of a sequence of products and usual definitions and convergence criteria can be used.

For {arbitrary }\GHBS~{(in the generalized sense of \cref{Def2:Laz})}, the situation is more intricate, due to the potential infinite segments which can prevent the order of the basis from being compatible with the order of natural integers. 
{For example, in the Lyndon basis of $\mathcal{L}(X)$ for $X = \{ X_0, X_1 \}$ with $X_0 < X_1$, $\ad^k_{X_0}(X_1) < (X_0,X_1)$ for all $k \geq 2$, so there is an infinite segment before $(X_0,X_1)$.}
This problem already appears for a product which would be indexed by $\N^2$ with the lexicographic order
\begin{equation}
	(0,0)<(0,1)<(0,2)<\dotsb<(1,0)<(1,1)<(1,2)<\dotsb <(2,0)<\dotsb
\end{equation}

We therefore propose a natural definition and a basic sufficient condition for convergence based on absolute convergence. In what follows, $\|\cdot\|$ is a submultiplicative norm on $\mathcal{M}_d(\K)$ such that $\|\mathrm{Id}\|=1$, for instance a subordinated norm.

\begin{definition} \label{def:conv.lazard.prod.matrices}
	Let $J$ be a totally ordered set and $(A_j)_{j \in J}$ matrices of $\mathcal{M}_d(\K)$. We say that the ordered product of the $e^{A_j}$ over $J$ \emph{converges} when there exists $M \in \mathcal{M}_d(\K)$ such that, for every $\varepsilon > 0$, there exists a finite subset $J_0$ of $J$ such that, for every finite subset $J_1$ of $J$ containing $J_0$, one has
	\begin{equation} \label{eq.def:M.j1}
	 \left\| M - \underset{j\in J_1}{\overset{\leftarrow}{\Pi}} e^{A_j} \right\| \leq \varepsilon.
	\end{equation}
	When such an $M$ exists, it is unique and we write
	\begin{equation}
		M = \underset{j\in J}{\overset{\leftarrow}{\Pi}} e^{A_j}.
	\end{equation}
\end{definition}

{The following natural convergence criteria also appears in \cite{karasev1976infinite}.}

\begin{lemma} \label{Lem:CS-CV-prod-matrix}
Let $J$ be a totally ordered set and $(A_j)_{j \in J}$ matrices of $\mathcal{M}_d(\K)$ such that
	\begin{equation} \label{sum.aj.a}
		\sum_{j \in J} \| A_j \| < + \infty.
	\end{equation}
	Then the ordered product of the $e^{A_j}$ over $J$ converges in the sense of \cref{def:conv.lazard.prod.matrices}.
\end{lemma}

\begin{proof}
Let $\alpha$ be the left-hand side of (\ref{sum.aj.a}).

\medskip

\noindent	\emph{Step 1: Basic claims.} We start with straightforward claims. First, for every $j \in J$, one has
	\begin{equation}
		\| e^{A_j} - \mathrm{Id} \| \leq e^{\|A_j\|} - 1 \leq \| A_j \| e^{\| A_j \|}
		\leq \| A_j \| e^\alpha.
	\end{equation}
	Second, for every finite part $J' \subset J$, one has
	\begin{equation}
	 	\left\| \underset{j\in J'}{\overset{\leftarrow}{\Pi}} e^{A_j} \right\|
	 	\leq \underset{j\in J'}{\Pi} e^{\|A_j\|}
	 	\leq e^\alpha.
	\end{equation}
	Third, for every finite parts $J_0 \subset J_1 \subset J$, one has
	\begin{equation} \label{eq:gap.j0.j1}
		\left\| \underset{j\in J_1}{\overset{\leftarrow}{\Pi}} e^{A_j}
		- \underset{j\in J_0}{\overset{\leftarrow}{\Pi}} e^{A_j} \right\|
		\leq e^{3\alpha} \sum_{j \in J_1 \setminus J_0} \| A_j \|.
	\end{equation}
	Indeed, writing $J_1 \setminus J_0 = \{ j_1 > \dotsb > j_n \}$, we have the following telescopic decomposition
	\begin{equation}
	  \underset{j\in J_1}{\overset{\leftarrow}{\Pi}} e^{A_j}
	  - \underset{j\in J_0}{\overset{\leftarrow}{\Pi}} e^{A_j} = 
	  \sum_{k=1}^n \underset{\substack{j\in J_0 \\ j > j_k}}{\overset{\leftarrow}{\Pi}} e^{A_j}
	  \left( e^{A_{j_k}} - \mathrm{Id} \right)
	  \underset{\substack{j \in J_1 \\ j < j_k}}{\overset{\leftarrow}{\Pi}} e^{A_j},
	\end{equation}
	which, together with the two first claims, proves estimate~\eqref{eq:gap.j0.j1}.
	
	\medskip
	
	\noindent \emph{Step 2: Construction of a limit.} We construct a possible limit. For each $n \geq 2$, let
	\begin{equation}
		J_n := \left\{ j \in J, \enskip \| A_j \| > \frac{1}{n} \right\}.
	\end{equation}
	Thanks to assumption \eqref{sum.aj.a}, the sets $J_n$ are finite and, moreover,
	\begin{equation}
		\varepsilon_n := \sum_{j \in J \setminus J_n} \|A_j\| \to 0.
	\end{equation}
	Now, for each $n \geq 2$, we define the matrix
	\begin{equation}
		M_n := \underset{j\in J_n}{\overset{\leftarrow}{\Pi}} e^{A_j}.
	\end{equation}
	This defines a Cauchy sequence in the complete space $\mathcal{M}_d(\K)$. Indeed, for every $n < p$, thanks to estimate~\eqref{eq:gap.j0.j1}, one has
	\begin{equation}
	 \| M_n - M_p \| \leq e^{3\alpha} \varepsilon_n.
	\end{equation}
	Hence, there exists $M \in \mathcal{M}_d(\K)$ towards which the sequence $(M_n)_{n\geq 2}$ converges.
	By letting $[p\rightarrow \infty]$ in the previous inequality we obtain, for every $n\geq 2$
	\begin{equation} \label{Mn-M}
	 \| M_n - M \| \leq e^{3\alpha} \varepsilon_n.
	\end{equation}
	
	\medskip
	
	\noindent \emph{Step 3: Proof of convergence.} We now prove that the ordered product of the $e^{A_j}$ over $J$ converges to~$M$ in the sense of \cref{def:conv.lazard.prod.matrices}. Let $\varepsilon > 0$. Let $n \geq 2$ large enough such that $e^{3\alpha} \varepsilon_n < \varepsilon/2$. For every finite set $J_1$ containing $J_n$, condition \eqref{eq.def:M.j1} holds thanks to \eqref{Mn-M} and \eqref{eq:gap.j0.j1}.
\end{proof}

\begin{proposition} \label{Prop:CV_Prod_Matrix}
    Let $\mathcal{B}$ be a \GHB\ of $\mathcal{L}(X)$, $(\xi_b)_{b\in\mathcal{B}}$ be the coordinates of the second kind associated to $\mathcal{B}$.
    There exists $\eta > 0$ such that the following property holds.
    Let $A_i \in \mathcal{M}_d(\K)$ for $i \in I$.
    For $b\in\mathcal{B}$, we define the matrix $A_b:=\Lambda(b)$ where $\Lambda:\mathcal{L}(X) \rightarrow \mathcal{M}_n(\K)$ is the homomorphism of Lie {algebras} such that $\Lambda(X_i)=A_i$ for $i \in I$ (see \Cref{Lem:identification_libre}).
    Let $T > 0$ and $u_i \in L^1((0,T);\K)$ for $i \in I$. 
    Assume that
    \begin{equation} \label{eq:smallness.sussmann.matrix}
        \|u\|_{{L^1(0,T)}} \| A \| \leq \eta.
    \end{equation}
    Then, for each $t \in [0,T]$ and $p \in \K^d$, the ordered product of the $e^{\xi_b(t,u) A_b}$ over $b\in\mathcal{B}$ converges. Moreover, for every $p\in\K^d$, 
    \begin{equation} \label{Suss_prod_matrix}
    x(t;A,u,p) = \underset{b \in \mathcal{B}}{\overset{\rightarrow}{\Pi}} e^{\xi_b(t,u) A_b } p. 
    \end{equation}
\end{proposition}

\begin{proof}
    Let $\eta := 1 / (8|I|^2)$.
    Let $T > 0$. Below, the variable $t$ implicitly belongs to $[0,T]$. To simplify the notations we write $\xi_b(t)$ instead of $\xi_b(t,u)$.

\medskip

\noindent \emph{Step 1: Convergence of the ordered product of the $e^{\xi_b(t) A_b}$ over $b\in\mathcal{B}$.}
One obtains, by induction on $|b|$, that for every $b \in \mathcal{B}$,
$\|A_b\| \leq (2 \|A\|)^{|b|}$.
Thus, recalling \eqref{eq:xib-u},
\begin{equation} \label{xi_b*A_b<}
    \| \xi_b(t) A_b \| \leq \left(2\|A\| \|u\|_{L^1(0,t)}\right)^{|b|}.
\end{equation}
Taking into account that $|\mathcal{B}_\ell| \leq |I|^{\ell}$, we obtain, using \eqref{eq:smallness.sussmann.matrix},
\begin{equation} \label{sum_xibAb}
\sum_{b\in\mathcal{B}} \| \xi_b(t) A_b\| 
\leq
\sum_{\ell=1}^{+\infty} \left(2|I|\|A\| \|u\|_{L^1(0,t)}\right)^\ell
\leq 1
 \end{equation}
and \cref{Lem:CS-CV-prod-matrix} gives the conclusion.

\medskip

\noindent \emph{Step 2: Estimates along a Lazard elimination in $\mathcal{B}_{\intset{1,M}}$.} Let $M \in \N^*$. We adopt the notations $b_1,\dotsc, b_{k+1}$ and $Y_0,\dotsc,Y_{k+1}$ of \cref{Def:Laz} and we define $x_0(t) := x(t)$ and, for $j \in \intset{1,k+1}$ 
 \begin{equation}
 x_j(t):=e^{-\xi_{b_j}(t,u) A_{b_j}} \dotsb e^{-\xi_{b_1}(t,u) A_{b_1}} x(t).
 \end{equation}
We prove by induction on $j \in \intset{0,k+1}$ that
\begin{equation} \label{Rec_Laz_matrix}
(\mathcal{H}_j): \begin{cases}
\dot{x}_j(t) = \left( \sum_{b \in \mathcal{B}_{\intset{1,M}} \cap Y_j} \dot{\xi}_{b}(t) A_b + \varepsilon_j(t) \right) x_j(t), \\
x_j(0)=p,
\end{cases}
\end{equation}
where $\varepsilon_0=0$ and
\begin{equation} \label{varepsj<}
\|\varepsilon_j(t)\| \leq \left( M |I| \|A\| |u(t)| (4|I|\|A\|\|u\|_{{L^1(0,t)}})^M
+ \|\varepsilon_{j-1}(t)\| \right) e^{2 \| \xi_{b_j}(t) A_{b_j}\|}.
\end{equation}

First, $(\mathcal{H}_0)$ holds with $\varepsilon_0=0$ because $x_0(t)=x(t)$ and $\dot{\xi}_{X_i}(t)=u_i(t)$ for $i \in I$. Let $j\in\intset{1,k+1}$ and assume that $(\mathcal{H}_{j-1})$ holds. We deduce from the definition of $x_j$ that
 \begin{equation}x_j(t)=e^{-\xi_{b_j}(t)A_{b_j}} x_{j-1}(t) \end{equation}
and from $(\mathcal{H}_{j-1})$ that
\begin{equation}
    \begin{split}
        \dot{x}_j(t)
& = -\dot{\xi}_{b_j}(t) A_{b_j} x_j(t) + e^{-\xi_{b_j}(t)A_{b_j}}
\left( \sum_{b \in \mathcal{B}_{\intset{1,M}} \cap Y_{j-1}} \dot{\xi}_{b}(t) A_b + \varepsilon_{j-1}(t) \right) e^{\xi_{b_j}(t)A_{b_j}} x_{j}(t)
\\ & =
\left( \sum_{b \in \mathcal{B}_{\intset{1,M}} \cap Y_{j-1} \setminus \{b_j\}} \dot{\xi}_{b}(t) e^{-\xi_{b_j}(t)A_{b_j}} A_b e^{\xi_{b_j}(t)A_{b_j}} + \widetilde{\varepsilon}_{j-1}(t) \right)  x_{j}(t)
    \end{split}
\end{equation}
where $\widetilde{\varepsilon}_{j-1}(t):=e^{-\xi_{b_j}(t)A_{b_j}} \varepsilon_{j-1}(t)e^{\xi_{b_j}(t)A_{b_j}}$ satisfies,
\begin{equation} \label{tildepsj_matrix}
\|\widetilde{\varepsilon}_{j-1}(t) \| \leq \|\varepsilon_{j-1}(t)\| e^{2 \| \xi_{b_j}(t) A_{b_j} \|}.
\end{equation}
For $b \in \mathcal{B}_{\intset{1,M}} \cap Y_{j-1} \setminus \{b_j\}$, let $h(b) \in \N^*$ be the maximal integer such that (\ref{def:mu(b)_longueur}) holds
and 
 \begin{equation}
    \overline{\varepsilon}_b^j(t):= \dot{\xi}_{b}(t) e^{-\xi_{b_j}(t)A_{b_j}} A_b e^{\xi_{b_j}(t)A_{b_j}} - \sum_{m=0}^{h(b)-1} \dot{\xi}_{b}(t) \frac{\xi_{b_j}^m(t)}{m!} A_{\ad_{b_j}^m(b)}.\end{equation}
Then, by definition of $Y_j$, $(\mathcal{H}_j)$ holds with $\varepsilon_j$ defined by (\ref{def:varepsilon(j)}).
Using the fourth statement of \cref{Lem:tool_serie_ad}, \eqref{eq:dotxib-u}, \eqref{xi_b*A_b<}, we obtain
\begin{equation} \label{epsilonb<_matrix}
    \begin{split}
        \|\overline{\varepsilon}_b^j(t)\| 
        & \leq |\dot{\xi}_{b}(t)| \frac{(2\|\xi_{b_j}(t)A_{b_j}\|)^{h(b)}}{h(b)!} \|A_b\| e^{2 \| \xi_{b_j}(t)A_{b_j} \|} 
        \\ & \leq 
        |b| |u(t)| \|u\|_{L^1}^{|b|-1}
        \cdot \frac{(2\|A\|\|u\|_{L^1})^{h(b)|b_j|}2^{h(b)}}{h(b)!} \cdot (2\|A\|)^{|b|}
        \\  & \leq 
        M |u(t)| (4\|A\|\|u\|_{L^1})^{M} \|A\|,
    \end{split}
\end{equation}
taking into account $M+1 \leq |b|+h(b)|b_j| \leq 2M$ and $\|A\|\|u\|_{L^1} \leq 1$.

We deduce from (\ref{def:varepsilon(j)}), (\ref{tildepsj_matrix}), (\ref{epsilonb<_matrix}) and the relation $|\mathcal{B}_{\intset{1,M}}| \leq |I|^{M+1}$ that (\ref{varepsj<}) holds.

\medskip

\emph{Step 4: Proof of an estimate on the ordered product of the $e^{\xi_b(t)A_b}$ over $\mathcal{B}_{\intset{1,M}}$.} We deduce from (\ref{varepsj<}),
(\ref{sum_xibAb}) and the relation $k+1 = |\mathcal{B}_{\intset{1,M}}| \leq |I|^{M+1}$ that
\begin{equation}
    \| \varepsilon_{k+1}(t) \| 
    \leq
    e M \|A\| |I|^2 |u(t)| (4 |I|^2 \|A\|\|u\|_{{L^1(0,t)}})^M.
\end{equation}
Hence, using \eqref{eq:smallness.sussmann.matrix},
\begin{equation}
    \| \varepsilon_{k+1} \|_{{L^1(0,t)}}
    \leq \frac{e}{4} (4 |I|^2 \|A\| \|u\|_{{L^1(0,t)}})^{M+1}
    \leq 2^{-M}.
\end{equation}
We deduce from $(\mathcal{H}_{k+1})$, (\ref{Laz.3}) and Gr\"onwall's lemma that
 \begin{equation}\left|\underset{b \in \mathcal{B}_{\intset{1,M}}}{\overset{\leftarrow}{\Pi}} e^{-\xi_b(t,u) A_b }x(t)-p\right| = \left| x_{k+1}(t)-p \right| \leq \int_0^t |\varepsilon_{k+1}(\tau) x_{k+1}(\tau)| d\tau \leq 2^{-M} e |p| \end{equation}
Multiplying both sides by the finite product $\underset{b \in \mathcal{B}_{\intset{1,M}}}{\overset{\rightarrow}{\Pi}} e^{\xi_b(t,u) A_b }$ gives
 \begin{equation}\left|x(t)-\underset{b \in \mathcal{B}_{\intset{1,M}}}{\overset{\rightarrow}{\Pi}} e^{\xi_b(t,u) A_b }p\right| \leq e^2 2^{-M} |p| \end{equation}
Passing to the limit $[M\rightarrow \infty]$ in the previous estimate gives (\ref{Suss_prod_matrix}).
\end{proof}

\paragraph{{Lack of global convergence}.} 

The goal of this paragraph is to illustrate that the smallness assumption \eqref{eq:smallness.sussmann.matrix} in \cref{Prop:CV_Prod_Matrix} is necessary because the equality does not hold globally.

\begin{proposition} \label{Prop:contre-ex-matrix_CV_Prod}
{Consider the constant control} $\overline{u}: t \in \R_+ \mapsto (1,1)\in\R^2$.
\begin{enumerate}
    \item  There exist a Hall basis $\mathcal{B}$ of $\mathcal{L}(\{X_1,X_2\})$ and a subsequence  $(b_k)_{k\in\N}$ of $\mathcal{B}$ such that
\begin{equation} \label{ss_sussm}
\exists \gamma>0, \forall k\in\N, t\geq 0, \quad \xi_{b_k}(t,\overline{u}) \geq \left( \frac{t}{\gamma} \right)^{|b_k|}
\end{equation}
\item There exists $A_1, A_2 \in \mathcal{M}_3(\C)$ and $t>\gamma$ such that 
$(e^{\xi_{b_k}(t,\overline{u}) A_{b_k}})_{k\in\N}$ does not converge to $\mathrm{Id}_3$ in $\mathcal{M}_3(\C)$. Thus, the ordered product of the $e^{\xi_b(t,\overline{u}) A_b}$ over $\mathcal{B}$ does not converge in $\mathcal{M}_3(\C)$.
\end{enumerate}
\end{proposition}

\begin{proof}
For the first point we adapt an argument due to Sussmann in \cite[pages 333-335]{MR935387}.
We define by induction two sequences $(b_k^1)_{k\in\N}$ and $(b_k^2)_{k\in\N}$ of $\Br(\{X_1,X_2\})$ by
\begin{equation}
b_0^1=X_1,\qquad  b_0^2=X_2, \qquad b_{k+1}^1:=[b_k^2,[b_k^1,b_k^2]], \qquad b_{k+1}^2:=[b_k^1,[b_k^1,b_k^2]].
\end{equation}
There exists a Hall basis of $\mathcal{L}(\{X_1,X_2\})$, whose order, denoted $<$, is compatible with length and such that, for every $k\in\N$, $b_k^1, b_k^2 \in \mathcal{B}$ and $b_k^1<b_k^2$. It suffices to choose, on the brackets with length $3^k$, some order such that $b_k^1 < b_k^2$. Then, automatically, $[b_k^1,b_k^2] \in \mathcal{B}$ and thus $b_{k+1}^1, b_{k+1}^2 \in \mathcal{B}$.
Such a process indeed allows to construct a Hall basis (see \cref{rk:hall-construction}), provided that one chooses an arbitrary length-compatible order on all other brackets.
\bigskip

To lighten the notations, we write $\xi_b(t)$, instead of $\xi_b(t,\overline{u})$.
We have $\xi_{X_1}(t)=\xi_{X_2}(t)=t$. An easy induction shows that, for every $b\in\mathcal{B}$, $\dot{\xi}_b(t)=t^{|b|-1} / \alpha_b$, where $\alpha_b \in \N^*$. 
The constants $\alpha_b$ can be computed recursively: $\alpha_{X_1} = \alpha_{X_2} = 1$ and, if $b=\ad_{b_1}^m(b_2)$ with $m\in\N^*$, $b_1< b_2$ and $\lambda(b_2) < b_1$ then $\alpha_b = \alpha_{b_1}^m |b_1|^m m! \alpha_{b_2}$. 
In particular, for every $k\in\N$,
 \begin{equation}\alpha_{b_{k+1}^1}=  \alpha_{b_k^1} \alpha_{b_k^2}^2 |b_k^2| |b_k^1| =  \alpha_{b_k^1} \alpha_{b_k^2}^2 3^{2k} , \qquad \alpha_{b_{k+1}^2}= 2 \alpha_{b_k^1}^2 \alpha_{b_k^2} |b_k^1|^2  =   2 \alpha_{b_k^1}^2 \alpha_{b_k^2} 3^{2k}.\end{equation}
Let $\beta_k=\max\{\alpha_{b_k^1},\alpha_{b_k^2}\}$. Then, $\beta_0 = 1$ and, by the previous relations,
\begin{equation}
\beta_{k+1} \leq 3^{2k+1} \beta_k^3.
\end{equation}
Thus $\theta_k:=3^{-k} \ln(\beta_k)$ satisfies $\theta_0 = 0$ and
\begin{equation}
\theta_{k+1} \leq \theta_k + (2k+1) 3^{-(k+1)} \ln(3),
\end{equation}
which leads to $\theta_k \leq \eta:=\sum_{j=1}^{+\infty}(2j+1) 3^{-(j+1)} \ln(3)$ i.e.\ $\beta_k \leq (\gamma')^{3^k}$ where $\gamma' =e^\eta$. 
Therefore, for every $k \in \N$ and $j \in \{1,2\}$ we have
 \begin{equation}\left| \xi_{b_k^j}(t) \right| \geq \frac{1}{|b_k^j|} \left(\frac{t}{\gamma'}\right)^{|b_k^j|}. \end{equation}
Let $\gamma > \gamma'$ be such that, for every $k \in \N$,
$\frac{1}{3^k} \left( \frac{\gamma}{\gamma'} \right)^{3^k} \geq 1$. Then (\ref{ss_sussm}) holds, for instance with $b_k = b_k^1$.

\bigskip

For the second point, let, for $j\in\{1,2,3\}$, $F_j \in \mathcal{M}_3(\R)$ be the matrix of the linear map $x\in\R^3 \mapsto e_j \wedge x$. Then $[F_1,F_2]=F_3$, $[F_2,F_3]=F_1$ and $[F_3,F_1]=F_2$. In particular
\begin{equation}
[F_2,[F_1,F_2]]=F_1, \quad [F_1,[F_1,F_2]]=-F_2.
\end{equation}
We consider $A_1=e^{i\frac{\pi}{6}} F_1$ and $A_2=e^{i\frac{\pi}{6}} F_2$ in $\mathcal{M}_3(\C)$. One easily proves by induction on $k\in\N^*$ that
$A_{b_k^1}=(-1)^{k+1} i F_1$ and $A_{b_k^2}= - i F_2$.
We have, for every $k\in\N$ and $t\in\R$
\begin{equation} e^{\xi_{b_k}(t) A_{b_k}} = \begin{pmatrix}
1 & 0 & 0 \\ 0 & \cosh(\xi_{b_k}(t)) & i(-1)^k \sinh(\xi_{b_k}(t)) \\ 0 & i(-1)^{k+1} \sinh(\xi_{b_k}(t)) & \cosh(\xi_{b_k}(t)) \end{pmatrix} \end{equation}
By (\ref{ss_sussm}), this sequence of matrices diverges for every $t>\gamma$.
\end{proof}

\subsubsection{Investigation for analytic vector fields} \label{subsec:algo}

In this paragraph, we study affine systems (\ref{affine_syst_q}). Our goal is to explain the difficulty of the convergence question for Sussmann's infinite product for arbitrary analytic vector fields. 
First, we state a definition (\cref{def:conv.lazard.prod.champs}) and a sufficient condition for the convergence (\cref{Lem_CV_prod_chpvect/CVA}), in the same spirit as for matrices. 
Then we show that they do not provide convergence for general analytic vector fields and we formulate an open problem.

\begin{definition} \label{def:conv.lazard.prod.champs}
	Let $J$ be a totally ordered set, $\delta>0$ and $(f_j)_{j \in J}$ a family of ${\CC^1(B_{2\delta};\K^d)}$.
	We say that the ordered product of the $e^{f_j}$ over $J$ converges uniformly on $B_\delta$
	if there exists $g \in {\CC^0(B_\delta;\K^d)}$ such that, for every $\varepsilon > 0$, there exists a finite subset $J_0$ of $J$ such that, for every finite subset $J_1$ of $J$ containing $J_0$, and for every $p\in B_\delta$ one has
	\begin{equation} \label{def:prod_CV_chpvect}
	 \left\| g(p) - \underset{j\in J_1}{\overset{\leftarrow}{\Pi}} e^{f_j} p \right\| \leq \varepsilon.
	\end{equation}
	When such a $g$ exists, it is unique and we write
	\begin{equation}
		g = \underset{j\in J}{\overset{\leftarrow}{\Pi}} e^{f_j}.
	\end{equation}
\end{definition}

\begin{lemma} \label{Lem_CV_prod_chpvect/CVA}
Let $J$ be a totally ordered set, $\delta>0$ and $(f_j)_{j \in J}$ a family of ${\CC^1(B_{2\delta};\K^d)}$ such that
\begin{equation} \label{sum.fj.a}
\sum_{j\in J} \|f_j\|_{\CC^0} < \delta \quad \text{ and } \quad \alpha := \sum_{j\in J} \|f_j\|_{\CC^1} < \infty.
\end{equation}
Then the ordered product of the $e^{f_j}$ over $J$ converges uniformly on $B_\delta$ and is $e^\alpha$-Lipschitz.
\end{lemma}

\begin{proof}
    We proceed as in the proof of \cref{Lem:CS-CV-prod-matrix}.
    \medskip

\noindent	\emph{Step 1: Basic claims.} First, for every finite subset $J' \subset J$ and $p \in \K^d$ with $|p| \leq 2\delta-\sum_{j\in J'} \|f_j\|_{\CC^0}$, then
	\begin{equation} \label{dp(produit)}
	\underset{j\in J'}{\overset{\leftarrow}{\Pi}} e^{f_j} p \in B_{2\delta}
	\qquad \text{ and } \qquad
	 	\left\| \partial_p \left[ \underset{j\in J'}{\overset{\leftarrow}{\Pi}} e^{f_j} p \right] \right\|
	 	\leq \underset{j\in J'}{\Pi} e^{\|f_j'\|_{\CC^0}}
	 	\leq e^\alpha
	\end{equation}
	because of \cref{lm:basic-flow} and the chain rule.
	
	Second, for every finite parts $J_0 \subset J_1 \subset J$
	and $p \in \K^d$ with $|p| \leq 2\delta-\sum_{j \in J_1} \|f_j\|_{\CC^0}$
	one has
	\begin{equation} \label{eq:gap.j0.j1_chvect}
		\left\| \underset{j\in J_1}{\overset{\leftarrow}{\Pi}} e^{f_j} p
		- \underset{j\in J_0}{\overset{\leftarrow}{\Pi}} e^{f_j} p \right\|
		\leq e^{\alpha} \sum_{j \in J_1 \setminus J_0} \| f_j \|_{\CC^0}.
	\end{equation}
	Indeed, writing $J_1 \setminus J_0 = \{ j_1 > \dotsb > j_n \}$, we have the following telescopic decomposition
	\begin{equation} \label{Prod_chpvect_telescop}
	  \underset{j\in J_1}{\overset{\leftarrow}{\Pi}} e^{f_j} p
	  - \underset{j\in J_0}{\overset{\leftarrow}{\Pi}} e^{f_j} p = 
	  \sum_{k=1}^n \left\{
	  \left( \underset{\substack{j\in J_0 \\ j > j_k}}{\overset{\leftarrow}{\Pi}} e^{f_j} \right) e^{f_{j_k}}
	  \left( \underset{\substack{j \in J_1 \\ j < j_k}}{\overset{\leftarrow}{\Pi}} e^{f_j} \right) p
	  -
	  \left( \underset{\substack{j\in J_0 \\ j > j_k}}{\overset{\leftarrow}{\Pi}} e^{f_j} \right)
\left(\underset{\substack{j \in J_1 \\ j < j_k}}{\overset{\leftarrow}{\Pi}} e^{f_j}\right) p \right\}.
	\end{equation}
	For $k \in \intset{1,n}$, let $x_k:= \underset{\substack{j \in J_1 \\ j < j_k}}{\overset{\leftarrow}{\Pi}} e^{f_j} p$ which is a point in $B_{2\delta-\|f_{j_k}\|_{\CC^0}}$. By (\ref{dp(produit)}) and (\ref{eq:efp-p}),
		the term with index $k$ in the previous sum is bounded by
	 \begin{equation}\left| \left( \underset{\substack{j\in J_0 \\ j > j_k}}{\overset{\leftarrow}{\Pi}} e^{f_j} \right) e^{f_{j_k}}  	x_k 	-\left( \underset{\substack{j\in J_0 \\ j > j_k}}{\overset{\leftarrow}{\Pi}} e^{f_j} \right) x_k \right| 	\leq e^\alpha \left|e^{f_{j_k}}  	x_k - x_k \right| \leq e^\alpha \|f_{j_k}\|_{\CC^0}.	\end{equation}
	which, together with (\ref{Prod_chpvect_telescop}) proves (\ref{eq:gap.j0.j1_chvect}).
	
	\medskip
	
	\noindent \emph{Step 2: Construction of a limit.} We construct a possible limit. For each $n \geq 2$, let
	\begin{equation}
		J_n := \left\{ j \in J, \enskip \| f_j \|_{\CC^1} > \frac{1}{n} \right\}.
	\end{equation}
	Thanks to assumption \eqref{sum.fj.a}, the sets $J_n$ are finite and, moreover,
	\begin{equation}
		\varepsilon_n := \sum_{j \in J \setminus J_n} \|f_j\|_{\CC^1} \to 0.
	\end{equation}
	Now, for each $n \geq 2$, we define $g_n \in {\CC^0(B_\delta;\K^d)}$ by
	\begin{equation}
		g_n(p) := \underset{j\in J_n}{\overset{\leftarrow}{\Pi}} e^{f_j} p.
	\end{equation}
	This defines a Cauchy sequence in the complete space ${\CC^0(B_{\delta};\K^d)}$. Indeed, for every $n < n'$ and $p\in B_\delta$, thanks to estimate~\eqref{eq:gap.j0.j1_chvect}, one has
	\begin{equation}
	 | g_n(p) - g_{n'}(p) | \leq e^{\alpha} \varepsilon_n.
	\end{equation}
	Hence, there exists $g \in {\CC^0(B_{\delta};\K^d)}$ towards which the sequence $(g_n)_{n\geq 2}$ uniformly converges on $B_\delta$. By (\ref{dp(produit)}), $g_n$ is $e^\alpha$-Lipschitz on $B_\delta$ for every $n\in\N$, thus so is $g$.
	By letting $[n' \rightarrow \infty]$ in the previous inequality we obtain, for every $n\geq 2$ and $p \in B_\delta$
	\begin{equation} \label{xn-z}
	 | g_n(p) - g(p) | \leq e^{\alpha} \varepsilon_n.
	\end{equation}
	
	\medskip
	
	\noindent \emph{Step 3: Proof of convergence.} We now prove that the ordered product of the $e^{f_j}$ over $J$ converges uniformly to~$g$ on $B_\delta$ in the sense of \cref{def:conv.lazard.prod.champs}. Let $\varepsilon > 0$. Let $n \geq 2$ large enough such that $e^{\alpha} \varepsilon_n < \varepsilon/2$. For every finite set $J_1$ containing $J_n$, condition \eqref{def:prod_CV_chpvect} holds thanks to \eqref{xn-z} and \eqref{eq:gap.j0.j1_chvect}. 
\end{proof}

\bigskip

Now, let us emphasize that, by using estimates on $\xi_b(t,u)$ and $f_b$ depending only on the length of the Lie bracket $b$, it is not possible to prove the convergence of $\sum |\xi_b(t,u)|  \|f_b\|_{\CC^1}$, where the sum ranges over $b \in \mathcal{B}$, an arbitrary \GHB\ of $\mathcal{L}(X)$.

On the one hand, one easily proves by induction on $|b|$ that, for every $b\in\mathcal{B}$ and $u \in L^\infty$ with $\|u\|_{L^\infty} \leq 1$, there holds $|\xi_b(t,u)| \leq t^{|b|}$. However, by the first statement of \cref{Prop:contre-ex-matrix_CV_Prod}, when $X$ contains at least two indeterminates, there are Hall bases (even compatible with length) for which one may not expect an upper bound, function of $|b|$ alone, that behaves better than geometrically. Hence, we should consider the $t^{|b|}$ bound to be sharp, when one restricts to bounds depending only on $|b|$.

On the other hand, if the vector fields are locally analytic, there exists $r, \delta > 0$ such that $f_i \in {\CC^{\omega,r}(B_\delta;\K^d)}$ for $i \in I$. By \eqref{eq:bracket.analytic.C1} with $r_1 \leftarrow r$ and $r_2 \leftarrow r/e$ for every $b \in \mathcal{B}$,
\begin{equation}
    \| f_b \|_{\CC^1} \leq \left(1+\frac{e}{r}\right) (|b|-1)! \left(\frac{9}{r}\right)^{|b|-1} F^{|b|},
\end{equation}
where $F := \max_{i\in I} \opnorm{f_i}_r$. However, by \cref{rk:bracket.optimal}, the dependence in $(|b|-1)!$ is optimal (again, if one restricts to bounds involving only $|b|$).

We deduce from the previous estimates that there exists $C > 0$ such that
\begin{equation}
    |\xi_b(t,u)| \|f_b\|_{\CC^1} \leq (Ct)^{|b|} |b|!.
\end{equation}
This bound does not provide the convergence of the considered series. 
Indeed, for every $t > 0$, $(Ct)^{|b|}|b|! \to +\infty$ as $|b| \to +\infty$, so an argument depending on $|b|$ alone doesn't even prove that the general term tends to zero.

To prove the convergence of Sussmann's infinite product expansion, one therefore either needs a better sufficient condition than \cref{Lem_CV_prod_chpvect/CVA} or one needs to prove estimates on $\xi_b$ and $f_b$ that take into account the structure of the bracket $b$, and not only its length.

\begin{open} \label{open:sussmann}
    Does Sussmann's infinite product converge for analytic vector fields?
\end{open}

In \cref{Subsec:error/u_Prod}, we prove the convergence (for analytic vector fields) of some infinite subproducts, by applying \cref{Lem_CV_prod_chpvect/CVA} with estimates on $\xi_b$ that depend on the structure of $b$.

\section{Error estimates for control systems}
\label{sec:estimates-control}

In this section, we consider control-affine systems with drift, i.e.\ of the form
\begin{equation} \label{eq:affine}
    \dot{x}(t) = f_0(x(t)) + \sum_{i=1}^q u_i(t) f_i(x(t))
    \quad \text{and} \quad
    x(0) = p,
\end{equation}
where $f_0, \dotsc, f_q$ are vector fields and $u=(u_1, \dotsc, u_q)\in L^1(\R;\K^q)$. When well-defined, the solution is denoted $x(t;f,u,p)$ where $f=(f_0,\dotsc,f_q)$ and $u=(u_1,\dotsc,u_q)$.

We prove error formulas at every order in $\|u\|_{L^1}$ for the Chen-Fliess expansion, the Magnus expansion in the interaction picture and for Sussmann's infinite product expansion. In each case, the error formula involves an infinite sum or an infinite product which turns out to be well-defined. We also propose a counter-example for the validity of such error estimates for the usual Magnus expansion, for which the infinite sum involved is not well-defined.

\subsection{Chen-Fliess expansion}

The convergence of the Chen-Fliess series, for control affine systems (\ref{eq:affine}) with analytic vector fields, under a smallness assumption on $t$ and a uniform bound on $u$, is a classical result, see for instance
\cite[Proposition~3.37]{zbMATH05150528} or \cite[Proposition~4.3]{MR710995}.
In this section we prove the convergence of the Chen-Fliess expansion,  (\cref{Prop:CF_CV_affine}) under a smallness assumption on $\|u\|_{L^1}$. We also generalize the Chen-Fliess expansion to nonlinear systems (not necessarily affine) with scalar input (\cref{Prop:CF_CV_NL_scalar}), because this fact will be used in \cref{subsec:CF_new}. 

\medskip

In the following statement $q\in\N^*$, $I=\intset{0,q}$. For a word $\sigma=\sigma_1  \dotsm \sigma_{\ell} \in I^*$, with $\ell \in \N^*$, $\sigma_1,\dotsc,\sigma_\ell \in I$, and vector fields $f_0,f_1,\dotsc, f_q$, we denote by $f_{\sigma}$ the differential operator {
$f_{\sigma_1} \dotsm f_{\sigma_\ell}$ (with the notations of \cref{rk:no-nabla})}.
For $t>0$ and $u=(u_1,\dotsc,u_q)\in L^1(0,t)$, the quantity
$\int_0^t u_\sigma$
is defined in~\eqref{a.sigma}, with $u_0 =1$.

\begin{proposition} \label{Prop:CF_CV_affine}
    Let $\delta, r>0$ and $f_0, f_1, \dotsc,f_q \in {\CC^{\omega,r}(B_{2\delta};\K^d)}$. There exists $\eta>0$ such that, for every 
    $\varphi \in \CC^{\omega,r}(B_{2\delta};\K)$,
    $t \in [0,\eta]$ and 
    $u \in L^1((0,t);\K^q)$ such that $\|u\|_{L^1} \leq \eta$
    and $p\in B_\delta$,
    then
    \begin{equation} \label{CF_affine_serie}
    \varphi(x(t;f,u,p))=\sum_{\sigma \in I^*} \left(\int_0^t u_\sigma\right) \left(f_{\sigma} \varphi\right)(p) 
    \end{equation}
    where the sum converges absolutely, uniformly with respect to $(t, u, p)$. Moreover, for every $\varphi \in \CC^{\omega,r}(B_{2\delta};\K)$, there exists $C>0$ such that, for every $M\in\N$,
    $p\in B_\delta$,
    $t\in [0,\eta]$ and 
    $u \in L^1((0,t);\K^q)$ such that $\|u\|_{L^1} \leq \eta$,
    then
    \begin{equation} \label{CF_affine_errorM}
    \left| \varphi(x(t;f,u,p)) - \sum_{n(\sigma)\leq M} \left(\int_0^t u_\sigma\right) \left(f_{\sigma} \varphi\right)(p) \right| \leq \left( C\|u\|_{L^1} \right)^{M+1},
    \end{equation}
    where the sum ranges over words $\sigma \in I^*$ such that the number of non-zero letters is at most $M$.
\end{proposition}

\begin{proof}
For $\sigma=\sigma_1\dotsm\sigma_\ell \in I^*$, let $n(\sigma)$ be the number of non zero letters in $\sigma$, i.e.\ $n(\sigma)=|\{i\in\intset{1,\ell} ; \sigma_i \neq 0 \}|$ and $n_0(\sigma)$  be the number of occurrences of the letter zero in $\sigma$, i.e.\ $n_0(\sigma)=|\{i\in\intset{1,\ell} ; \sigma_i=0\}|$. Then $\ell=n(\sigma)+n_0(\sigma)$. One proves by induction on the length $\ell$ of $\sigma \in I^*$ the following estimate, for every $t>0$ and $u\in L^1((0,t);\K^q)$,
\begin{equation} \label{majo_int_u_sigma}
\left| \left(\int_0^t u_\sigma\right) \right| \leq  \frac{\|u\|_{L^1(0,t)}^{n(\sigma)}}{n(\sigma)!} \frac{t^{n_0(\sigma)}}{n_0(\sigma)!}.
\end{equation}

Let $\|f\| =\sum_{i=0}^q \opnorm{f_i}_r$,
$\eta = r/(10 \|f\|)$,
$\varphi \in \CC^{\omega,r}(B_{2\delta};\K)$,
$t \in [0,\eta]$ and 
$u \in L^1((0,t);\K^q)$ such that $\|u\|_{L^1(0,t)}=\sum_{i=1}^{q}\|u_i\|_{L^1(0,t)}\leq\eta$
and $p\in B_\delta$.
Using (\ref{majo_int_u_sigma}) and (\ref{eq:prod.analytic.C0}), we get
\begin{equation}
\left| \left(\int_0^t u_\sigma\right) \left( f_{\sigma} \varphi \right)(p) \right| \leq \|u\|_{L^1(0,t)}^{n(\sigma)}  t^{n_0(\sigma)}  \left( \frac{10}{r} \|f\| \right)^\ell \opnorm{\varphi}_r
\end{equation}
which proves the absolute convergence of the sum in (\ref{CF_affine_serie}), uniformly with respect to $(t,u,p)$

The proof of the equality in (\ref{CF_affine_serie}) consists in applying (\ref{CF_macro}) to $f(t,x) = f_0(x)+\sum_{i=1}^q u_i(t) f_i(x)$.
In particular the sum involved in (\ref{CF_affine_errorM}) is the Taylor expansion of order $M$ of $u \mapsto \varphi(x(t;f,u,p))$ at $u=0$.
By adapting \cref{Prop:Sol_analytic/control} to affine systems with $L^1$ controls, we get the real-analyticity of the map
    $u  \mapsto \varphi(x(t;f,u,p))$ on $B_{L^1(0,t)}(0,\eta)$ uniformly with respect to $(t,p)\in [0,\eta] \times B_\delta$
    which ends the proof of (\ref{CF_affine_errorM}).
\end{proof}

\medskip

The last statement of this section focuses on nonlinear control systems with scalar input
\begin{equation}
    \dot{x} = f(x,u)
\end{equation}
where $f:\K^d \times \K \rightarrow \K^d$,  When well-defined, the solution of this ODE, with initial condition $x(0)=p$ is denoted $x(t;f,u,p)$. We introduce the notation
\begin{equation} \label{eq:def-intuk}
    \int_0^t u^k := \int_{{\Delta^n(t)}} 
u(\tau_1)^{k_1} \dotsm u(\tau_n)^{k_n} 
 \dd \tau
\end{equation}
for every $t>0$, $u\in L^1((0,t);\K)$, and every multi-index $k=(k_1,\dotsc,k_n)\in \N^n$ with $n\in\N^*$.

\begin{proposition} \label{Prop:CF_CV_NL_scalar}
Let $r, \delta, \delta_u>0$, $f \in \CC^{\omega,r}( B_{2\delta} \times [-\delta_u,\delta_u];\K^d)$ and
$f_k :=\frac{1}{k!} \partial_u^k f(\cdot,0)$ for every $k \in \N$.
There exists $T^*,\eta>0$ such that, for every 
$\varphi \in \CC^{\omega,r}(B_{2\delta};\K)$,
$t\in[0,T^*]$, 
$u\in L^\infty((0,t);\K)$ with $\|u\|_{L^\infty} \leq \eta$ and 
$p\in B_\delta${, with the notations of \cref{rk:no-nabla},}
\begin{equation} \label{CF_expanded}
\varphi\left(x(t;f,u,p)\right) = 
\sum_{\substack{n \in \N \\ k \in \N^n}}
\left(\int_0^t u^k\right)
    {\left( f_{k_1}  \dotsm f_{k_n} \right)}( \varphi)(p),
\end{equation}
where the sum converges absolutely, uniformly with respect to $(t, u, p)$. Moreover, for every $\varphi \in \CC^{\omega,r}(B_{2\delta};\K)$,
there exists $C>0$ such that, for every $M\in\N$, 
$t\in[0,T^*]$, $u\in L^\infty((0,t);\K)$ with $\|u\|_{L^\infty} \leq \eta$ and $p\in B_\delta$
\begin{equation} \label{Error_CF_x'=f(x,u)}
  \left| \varphi\left(x(t;f,u,p)\right) - 
\sum_{\substack{n\in\N  \\ k\in \N^n, |k| \leq M}}
\left(\int_0^t u^k\right)
 {\left( f_{k_1}  \dotsm f_{k_n} \right)}(\varphi)(p)\right|
    \leq \left( C \|u\|_{L^\infty} \right)^{M+1}
\end{equation}
where the sum is taken over $n\in\N$ and $k=(k_1,\dotsc,k_n) \in \N^n$ such that $k_1 + \dotsb + k_n \leq M$.
\end{proposition}

\begin{proof}
We define $r'=r/e$,
\begin{equation}
    T^*:=\min\left\{ \frac{r'}{10 \opnorm{f}_{r}} , \frac{\delta}{\|f\|_{\CC^0}}\right\}, \qquad 
\eta:=\min\left\{ \delta_u , \frac{r}{10} \right\}.
\end{equation}
Let $\varphi \in \CC^{\omega,r}(B_{2\delta};\K)$,
$t\in[0,T^*]$, 
$u\in L^\infty((0,t);\K^q)$ with $\|u\|_{L^\infty} \leq \eta$ and 
$p\in B_\delta$.
Then $x(t;f,u,p) \in B_{2\delta}$.

\medskip

\noindent \emph{Step 1: Uniform absolute convergence of the sum in (\ref{CF_expanded}).}
Using the iterated version of (\ref{eq:djf.r.dr}) and \eqref{eq:encadrement.n!}, we get, for every $k\in\N$,
\begin{equation} \label{opnorm_fk}
\opnorm{f_{k}}_{r'} \leq \frac{1}{k!} \left( \frac{k}{r-r'}\right)^k\opnorm{f}_{r} \leq 
\left( \frac{e}{r-r'}\right)^k\opnorm{f}_{r} \leq
\left( \frac{5}{r}\right)^k\opnorm{f}_{r}.
\end{equation}
For every $n\in\N^*$ and $k_1,\dotsc,k_n \in \N$, we have, using (\ref{eq:prod.analytic.C0}) and (\ref{opnorm_fk})
\begin{equation}
    \begin{split}
        \left| {\left( f_{k_1}  \dotsm f_{k_n} \right)}(\varphi)(p) \right|
& \leq n! \left( \frac{5}{r'} \right)^n \opnorm{f_{k_n}}_{r'} \dotsm \opnorm{f_{k_1}}_{r'}     \opnorm{\varphi}_{r'}
\\ & \leq  n! \left( \frac{5}{r'} \right)^n
\left( \frac{5}{r}\right)^{k_1+\dotsb+k_n} \opnorm{f}_{r}^n 
 \opnorm{\varphi}_{r'}
    \end{split}
\end{equation}
and
\begin{equation} \label{CH_u_majo_fonct}
\left|\int_0^t u^k\right| =
\left|\int_{{\Delta^n(t)}} 
u(\tau_1)^{k_1} \dotsm u(\tau_n)^{k_n} 
 \dd \tau\right| \leq \frac{t^n}{n!} \|u\|_{L^\infty}^{k_1+\dotsb+k_n}. 
\end{equation}
By definition of $T^*$ and $\eta$ we have
$\frac{5t}{r'} \opnorm{f}_r \leq \frac{1}{2}$ and $\frac{5}{r} \|u\|_{L^\infty} \leq \frac{1}{2}$, which gives the conclusion.

\medskip

\noindent \emph{Step 2: Equality in (\ref{CF_expanded}) and error formula (\ref{Error_CF_x'=f(x,u)}).} 
We have $f(\cdot,u)=\sum_{j=0}^{+\infty} u^j f_j$ with convergence in ${\CC^{\omega,r'}(B_{2\delta};\K^d)}$ uniformly with respect to $u \in B_{\K^q}(0,\eta)$.
 Thus, the equality (\ref{CF_expanded}) is a consequence of Fubini theorem and (\ref{CF_macro}) applied to $(t,x) \mapsto f(x,u(t))$. In particular the finite sum involved in (\ref{Error_CF_x'=f(x,u)}) is the Talyor expansion of order $M$ of $u \mapsto \varphi(x(t;f,u,p))$ at $u=0$.
By \cref{Prop:Sol_analytic/control}
    $u  \mapsto \varphi(x(t;f,u,p))$ is analytic on $B_{L^\infty(0,T^*)}(0,\eta) $ uniformly with respect to $(t,p)\in [0,T^*] \times B_\delta$,
    which ends the proof of (\ref{Error_CF_x'=f(x,u)}).
\end{proof}

\subsection{Magnus expansion in the usual setting: a counter-example}

Contrary to other expansions, the usual Magnus expansion does not yield, in general, error estimates involving the size of the control. Indeed, the infinite segments which would need to be summed do not converge, even for analytic vector fields, arbitrarily small times and even when the drift vector field vanishes at the origin. The following statement illustrates that even the series defining the terms which are linear with respect to the control does not converge.

\begin{proposition}
  Let $d := 2$. There exists $T, \delta > 0$, $f_0, f_1 \in {\CC^{\omega,\delta}(B_\delta;\K^d)}$ with $f_0(0) = 0$ and a control $u \in \CC^\infty([0,T])$, such that, if one defines, for $t \in (0,T)$, the sequence of vector fields
  \begin{equation}
    F_n(t) := \sum_{k = 0}^n \zeta_{\ad^k_{X_0}(X_1)}(t,u) \ad^k_{f_0}(f_1),
  \end{equation}
  then, for each $\delta^* \in (0,\delta)$ and $t\in(0,T)$, $F_n(t) \in {\CC^\infty(B_{\delta};\K^d)}$ does not converge in ${\CC^0(B_{\delta^*};\K^d)}$.
\end{proposition}

\begin{proof}
 We define the following vector fields for $x \in \R^2$ with $|x| < 1$,
 \begin{equation}
   f_0(x) := x_2 e_1
   \quad \textrm{and} \quad
   f_1(x) := \frac{1}{1-x_1} e_2.
 \end{equation}
 Then,
 \begin{equation}
   \ad^k_{f_0}(f_1)(x) = x_2^k \partial_1^k \left( \frac{1}{1-x_1} \right) e_2
   = \frac{k! x_2^k}{(1-x_1)^{k+1}} e_2.
 \end{equation}
 We now choose the particular control $u(t) := t$ for $t \in (0,T)$ with $T = 1$ (the simpler choice, $u(t) := 1$, would not produce a diverging counter-example).
 Using the expression \eqref{eq:zeta.adx0x1.1t} from \cref{ex:zeta.adx0x1.1t} for the coordinates of the first kind along the brackets $\ad^k_{X_0}(X_1)$ for this particular control, we obtain, for $t \in (0,T)$,
 \begin{equation}
   F_n(t)(x) = \sum_{k=0}^n (-1)^{k+1} t^{k+2} \frac{B_{k+1}}{k+1} \frac{x_2^k}{(1-x_1)^{k+1}}.
 \end{equation}
 Thus, for each $t,\delta^* > 0$, the sequence of vector fields $F_n(t)$ does not converge in ${\CC^0(B_{\delta^*};\K^d)}$, since for every $x_2 \neq 0$, the general term of the series does not tend to zero because of the asymptotic \eqref{eq:bernoulli.3} for Bernoulli numbers.
\end{proof}

\subsection{Magnus expansion in the interaction picture}

The following statement is an immediate consequence of \Cref{Prop:Magnus_2}.
{It illustrates that, contrary to the classical Magnus expansion, our ``Magnus in the interaction picture'' expansion allows to obtain error estimates involving the size of the control, at any order.}

\begin{proposition} \label{thm:Magnus_2_u}
Let $M \in \N$, $\delta>0$, $T>0$, $f_0 \in {\CC^{M^2+1}(B_{5\delta};\K^d)}$ with $T\|f_0\|_{\CC^0}\leq\delta$ and $f_1, \dotsc, f_q \in {\CC^{M^2}(B_{5\delta};\K^d)}$.
There exists $\gamma,C>0$ such that,
for every 
$u=(u_1,\dotsc, u_q) \in L^1((0,T);\K)$ with 
\begin{equation} \label{Magnus1.1:hyp_u}
\|u\|_{L^1} \leq \gamma
\end{equation}$
p\in B_\delta$ and $t\in[0,\gamma]$ then
\begin{equation} \label{Magnus_2_u}
\left| x(t;f,u,p)- e^{\mathcal{Z}_M(t,f,u)} e^{t f_0} p \right| \leq C \|u\|^{M+1}_{L^1(0,t)}.
\end{equation}
\end{proposition}

{In \eqref{Magnus_2_u}, $\mathcal{Z}_M(t,f,u)$ (where implicitly $f=(f_0,f_1,\dotsc,f_q)$ and $u=(u_1,\dotsc,u_q)$) is a notation for the vector field $\mathcal{Z}_M \left(t,f_0,\sum_{i=1}^q u_i f_i\right)$, defined in \cref{Def:Phi0_gt_ZM} for the affine perturbation $\fp(t,x)= \sum_{i=1}^q u_i(t) f_i(x)$.
This notation is chosen by analogy with  \cref{thm:Magnus1.0_formel}.}

\subsection{Sussmann's infinite product expansion}
\label{Subsec:error/u_Prod}

The goal of this section is to prove \cref{Prop:error_u_Prod} which states that, despite the difficulties mentioned in \cref{subsec:algo} concerning the full convergence of Sussmann's infinite product expansion, some (infinite) subproducts of it do converge and yield error estimates at every order in the size of the control for control-affine systems with drift of the form \eqref{eq:affine}.

We start with an elementary remark (\cref{lm:malabar}) on the structure of brackets of a {Hall set} which allows to prove nice asymmetric estimates on the associated coordinates of the second kind (see \cref{Lem:Coord2.0:borne_Sk}).
{The following result proves that, when one tries to factorize the lateral $X_0$ factors outside of a bracket of a Hall set $\bset \subset \Br(X)$ with $X_0 \in X$, these $X_0$ factors cannot alternate sides more than once.}

\begin{lemma} \label{lm:malabar}
    Let $q \in \N^*$, $X = \{ X_0, X_1, \dotsc, X_q \}$ and {$\bset \subset \Br(X)$ be a Hall set}. 
    For each $b \in \bset$, there exist $m, \overline{m} \in \N$ such that
    \begin{equation} \label{eq:b-malabar}
        b = \ad^m_{X_0} \dad_{X_0}^{\overline m} (b^*),
    \end{equation}
    where $\dad_{X_0}^{\overline m}$ denotes the iterated right bracketing ${\overline m}$ times by $X_0$ and $b^* \in \bset$ is such that either $b^* \in X$ or {$b = (b_1, b_2)$} with $b_1 \neq X_0$ and $b_2 \neq X_0$.
\end{lemma}

\begin{proof}
    The key point is that, by the third condition in \cref{Def2:Laz}, for each $b \in \bset \setminus X$, $\lambda(b) < b$. Let $b \in \bset$. 
    We disjunct cases.
    \begin{itemize}
        \item If $b \in X$ or ($\lambda(b) \neq X_0$ and $\mu(b) \neq X_0$), then \eqref{eq:b-malabar} holds with $m = \overline{m} = 0$ and $b^* = b$.
        \item If $\lambda(b) = X_0$, there exists a unique $m \in \N^*$ and $\tilde{b}\in \mathcal{B}$ such that $b = \ad^m_{X_0}(\tilde{b})$ where $\tilde{b} \in X$ or $\lambda(\tilde{b}) \neq X_0$.
        \begin{itemize}
            \item If $\tilde{b} \in X$ or $\mu(\tilde{b}) \neq X_0$, \eqref{eq:b-malabar} holds with $\overline{m} = 0$ and $b^* = \tilde{b}$.
            \item Otherwise, there exists a unique $\overline{m} \in \N^*$ and $b^* \in \bset$ such that $\tilde{b} = \dad^{\overline m}_{X_0}(b^*)$ where $b^* \in X$ or $\mu(b^*) \neq X_0$.
            \begin{itemize}
                \item If $b^* \in X$, \eqref{eq:b-malabar} holds.
                \item Else $\mu(b^*) \neq X_0$. one has $\lambda(b^*) < b^*$ as recalled. Moreover, since $\overline{m} \geq 1$, ${(b^*, X_0) \in \bset}$ so $b^* < X_0$ (by the second point of \cref{Def2:Laz}). 
                Hence $\lambda(b^*) < X_0$. 
                So we also have $\lambda(b^*) \neq X_0$ and \eqref{eq:b-malabar} holds.
            \end{itemize}
        \end{itemize}
        \item If $\mu(b) = X_0$, there exists a unique $\overline{m} \in \N^*$ and $\tilde{b} \in \bset$ such that $b = \dad^{\overline m}_{X_0}(\tilde b)$ where $\tilde b \in X$ or $\mu(\tilde b) \neq X_0$.
            \begin{itemize}
                \item If $\tilde{b} \in X$, \eqref{eq:b-malabar} holds with $m = 0$ and $b^* = \tilde{b}$.
                \item Else $\mu(\tilde b) \neq X_0$. Since $\overline{m} \geq 1$, ${(\tilde b, X_0) \in \bset}$, so $\tilde b < X_0$. 
                Since $\lambda(\tilde b) < \tilde b$, this proves $\lambda(\tilde{b}) \neq X_0$. 
                So \eqref{eq:b-malabar} holds with $m = 0$ and $b^* = \tilde b$.
            \end{itemize}
    \end{itemize}
    Hence, the decomposition \eqref{eq:b-malabar} always holds.
\end{proof}

We now turn to asymmetric estimates for the coordinates of the second kind, which, contrary to \cref{lm:xib-dotxib}, isolate the role of $X_0$ associated with the implicit control $u_0 = 1$. 

\begin{lemma} \label{Lem:Coord2.0:borne_Sk}
    Let $q\in\N^*$, $X=\{X_0,X_1,\dotsc, X_q\}$, {$\bset \subset \Br(X)$ a Hall set} and $(\xi_b)_{b\in\bset}$ the associated coordinates of the second kind. 
    For every $k \in \N^*$, there exists $c_k \geq 1$ such that, for each $b\in\bset$ with $n(b)=k$, $T > 0$, $u \in L^1((0,T);\K^q)$ and $t \in [0,T]$, 
\begin{equation} \label{Coord2.0:borne_Sk_xi}
     \left| \xi_b(t,1,u) \right| \leq  \|u\|_{{L^1(0,t)}}^{k}
     \frac{(c_k t)^{n_0(b)}}{n_0(b)!}
\end{equation}
    and
  \begin{equation} \label{Coord2.0:borne_Sk_xi'}
    | \dot{\xi}_b(t;1,u) | \leq 
    \begin{cases}
        k |u(t)| \|u\|_{{L^1(0,t)}}^{k-1} & \mathrm{when~} n_0(b) = 0, \\
        \|u\|_{{L^1(0,t)}}^{k-1} \Big(k t|u(t)| + n_0(b) \|u\|_{L^1(0,t)} \Big) \frac{c_k (c_k t)^{n_0(b)-1}}{n_0(b)!} & \mathrm{when~} n_0(b) > 0.
    \end{cases}
    \end{equation}  
\end{lemma}

\begin{proof}
    In this proof, we write $\xi_b(t)$ instead of $\xi_b(t,1,u)$ by concision for the value at time $t \in [0,T]$ of the coordinate of the second kind associated with the control $u_0 = 1$ and $u_i$ for $i \in \intset{1,q}$.
    First, when \eqref{Coord2.0:borne_Sk_xi'} holds on $[0,T]$, then so does \eqref{Coord2.0:borne_Sk_xi} by time-integration (with the same constant).
    Hence, we only need to prove the bound on the time derivative of the coordinates.
    
    \medskip

\noindent \emph{Step 1: Persistence of the estimates by right bracketing with $X_0$.}
Let $k\in\N^*$ and $b\in\bset$ such that $n(b)=k$. We assume that 
\eqref{Coord2.0:borne_Sk_xi} holds and we prove that $\tilde{b} := {(b,X_0)}$ satisfies both estimates with the same constant. Since $\dot{\xi}_{X_0}(t) = 1$, we have
\begin{equation} \label{xi'_X0}
    |\dot{\xi}_{\tilde{b}}(t)| = |\xi_b(t) \dot{\xi}_{X_0}(t) | \leq  \|u\|_{L^1(0,t)}^{k} \frac{(c_k t)^{n_0(b)}}{n_0(b)!}.
\end{equation}
Hence $\tilde{b}$ satisfies \eqref{Coord2.0:borne_Sk_xi'} (and \eqref{Coord2.0:borne_Sk_xi} by integration) because $c_k \geq 1$  and $n_0(\tilde b) > 0$.

\medskip
    
\noindent \emph{Step 2: Persistence of the estimates by arbitrary long left bracketing with $X_0$, up to $c_k \leftarrow 2 c_k$.} Let $k\in\N^*$ and $b\in\bset$ with $n(b)=k$. We assume that \eqref{Coord2.0:borne_Sk_xi'} holds and we prove that, for every $m \in \N^*$, $\tilde{b}:=\ad_{X_0}^m(b)$ satisfies both estimates with a constant $c_k \leftarrow 2 c_k$. If $n_0(b) = 0$, it is straightforward to check that $\tilde{b}$ satisfies \eqref{Coord2.0:borne_Sk_xi'} with $c_k \leftarrow 1$. If $n_0(b) = 1$, we have
\begin{equation} \label{xibtile'}
\begin{split}
    |\dot{\xi}_{\tilde{b}}(t)|
& =\frac{1}{m!} | \xi^m_{X_0}(t) \dot{\xi}_{b}(t) | 
\\ & \leq \frac{t^m}{m!}  \|u\|_{L^1}^{k-1} \Big(kt|u(t)| + n_0(b) \|u\|_{L^1} \Big) \frac{c_k (c_k t)^{n_0(b)-1}}{n_0(b)!}
\\ & \leq \|u\|_{L^1}^{k-1} \Big(kt|u(t)| + (m+n_0(b))\|u\|_{L^1} \Big)2^{m+n_0(b)} c_k^{n_0(b)}  \frac{t^{m+n_0(b)-1}}{(m+n_0(b))!}
\\ & \leq \|u\|_{L^1}^{k-1} \Big(kt|u(t)| + n_0(\tilde{b}) \|u\|_{L^1} \Big)(2c_k)^{n_0(\tilde{b})} \frac{t^{n_0(\tilde{b})-1}}{n_0(\tilde{b})!} 
\end{split}
\end{equation}
because $n_0(\tilde{b})=m+n_0(b)$ and $c_k \geq 1$. So $\tilde{b}$ satisfies \eqref{Coord2.0:borne_Sk_xi'} with a constant $c_k \leftarrow 2 c_k$.

\medskip

\noindent \emph{Step 3: Proof of the estimates by induction on $k \in \N^*$.}

\medskip

\emph{Initialization for $k=1$.} For $i \in \intset{1,q}$, $\dot{\xi}_{X_i}(t) = u_i(t)$ so both estimates are satisfied with constant $1$ when $b \in \{ X_1, \dotsc, X_q \}$. 
By \cref{lm:malabar}, Step 1 and Step 2, we deduce that  (\ref{Coord2.0:borne_Sk_xi}) and (\ref{Coord2.0:borne_Sk_xi'}) hold for $k=1$ with $c_1=2$.

\medskip

\emph{Induction $(k-1)\rightarrow k$.} Let $k \geq 2$ and let us assume that the estimates are proved for every $b \in \mathcal{B}$ with $n(b)\leq (k-1)$. Let $b \in \mathcal{B}$ with $n(b)=k$. By \cref{lm:malabar}, Step 1 and Step 2, we can assume that $b = \ad^m_{b_1}(b_2)$ with $b_1, b_2 \in \mathcal{B}$, $b_1 \neq X_0$ and ($b_2 \in X$ or $\lambda(b_2) < b_1$) and ($b_2 \neq X_0$ or $m > 1$).
Assume that $b_2 \neq X_0$.
Then the induction assumption applies to both $b_1$ and $b_2$. 
Let $k_1:=n(b_1)$ and $k_2:=n(b_2)$. Then $k=mk_1+k_2$, $n_0(b)= m n_0(b_1)+n_0(b_2) \geq n_0(b_2)$. Using the induction assumption and
(\ref{factorielle_multiple}) with $a\leftarrow (m+1)$, we obtain, when $n_0(b_2) > 0$,
\begin{equation}
    \begin{split}
    \left| \dot{\xi}_b(t) \right|
    & = \left| \frac{1}{m!} \xi_{b_1}^m(t) \dot{\xi}_{b_2}(t) \right| 
    \\ & \leq  \frac{1}{m!} \left( 
    \|u\|_{L^1}^{k_1} \frac{(c_{k_1} t)^{n_0(b_1)}}{n_0(b_1)!}
    \right)^{m} \|u\|_{L^1}^{k_2-1}
    \Big( k_2 t|u(t)| + n_0(b_2)\|u\|_{L^1} \Big)
    \frac{c_{k_2}(c_{k_2}t)^{n_0(b_2)-1}}{n_0(b_2)!} 
    \\ & \leq 
    \|u\|_{L^1}^{k-1} \Big(k t|u(t)| + n_0(b) \|u\|_{L^1} \Big)
    2^{m n_0(b)} c_{k_1}^{mn_0(b_1)} c_{k_2}^{n_0(b_2)}
\frac{t^{n_0(b)-1}}{n_0(b)!}.
    \end{split}
\end{equation}
Since $m \leq k$, we have the two desired estimates with $c_k:=2 \cdot 2^{k} \max\{ c_j ; j \in \llbracket 1,k-1 \rrbracket \}$, where the first factor $2$ comes from Step 2. When $n_0(b_2) = 0$, the proof is similar and easier.
When $b_2 = X_0$, the induction hypothesis applies because $m > 1$ so $n(b_1) < n(b)$ and the proof is straightforward.
\end{proof}

{
\begin{remark}
    The ``persistence'' of the estimates with respect to left or right bracketing by $X_0$, as mentioned and derived in Steps 1 and 2 of the proof of \cref{Lem:Coord2.0:borne_Sk} might be linked with sufficient conditions for small-time local controllability which ``ignore'' the number of leading (or trailing) $X_0$ factors (see \cite{bianchini1986sufficient},    \cite[Theorem~6]{hermes1987local} or \cite[Theorem~3.7]{kawski2017high}).
\end{remark}
}

These estimates allow to prove the main result of this section.

\begin{proposition} \label{Prop:error_u_Prod}
    Let $q\in\N^*$, $X=\{X_0,X_1,\dotsc, X_q\}$, $\mathcal{B}$ a \GHB\ of $\mathcal{L}(X)$ and $(\xi_b)_{b\in\mathcal{B}}$ the associated coordinates of the second kind. 
    Let $M \in \N$, $r,\delta>0$, $f_0,\dotsc,f_q \in {\CC^{\omega,r}(B_{4\delta};\K^d)}$.
    There exists $\eta, C_M > 0$ such that, for every $u \in L^1((0,T);\K^q)$ with $T \leq \eta$ and $\|u\|_{L^1(0,T)} \leq \eta$, the ordered product of the $e^{\xi_b(t,1,u) f_b}$ over the infinite set {$\mathcal{B} \cap S_M = \{ b \in \mathcal{B} ; n(b) \leq M \}$ (using \cref{def:SM})} converges uniformly on $B_\delta$ and, for each $t \in [0,T]$ and $p\in B_\delta$,
 	\begin{equation} \label{eq:Prop:error_u_Prod}
 		\left| x(t;f,u,p) - \underset{b \in \mathcal{B} \cap S_M}{\overset{\rightarrow}{\Pi}} e^{\xi_b(t,1,u) f_b } p \right| \leq C_M \|u\|_{L^1(0,t)}^{M+1}. 
 	\end{equation}
\end{proposition}

\begin{proof}
	In this proof, to simplify the notations, we write $x(t)$, $\xi_b(t)$ and $\|u\|$ instead of $x(t;f,u,p)$, $\xi_b(t,1,u)$ and $\|u\|_{L^1(0,t)}$.
	Let $(c_k)_{k\in\N^*}$ be the increasing sequence of constants of \cref{Lem:Coord2.0:borne_Sk}. We define
	\begin{align}
		\label{eq:def-c*}
		C_* & := \frac{18 \opnorm{f}_r}{r} \max_{k \in \intset{1,2M}} c_k, \\
		\label{eq:def-eta_ell}
		\eta & := \min \left\{ \frac{\delta}{2\|f\|_{\CC^1}},
		\frac{\min \{1, \delta\}}{2 C_* (q+1) M!(1+r)}
		 \right\} \\
		C_M & := e^{2\delta} (1+r) (2M)! (q+1)^{M+1} C_*^{M+1}.
	\end{align}
	For $t \in [0,T]$ and $u \in L^1((0,T);\K^q)$ with $T \leq \eta$ and $\|u\| \leq \eta$, using \eqref{eq:def-eta_ell},
	\begin{equation}
	   t \|f_0\|_{\CC^0} + \sum_{i=1}^q \|u_i\|_{L^1(0,t)} \|f_i\|_{\CC^0} \leq \eta \|f\|_{\CC^0} \leq \delta.
	\end{equation}
	Hence, for each $p \in B_{\delta}$, $x(t;f,u,p) \in B_{2\delta}$.

\bigskip

\noindent \emph{Strategy.} Since the product involved in \eqref{eq:Prop:error_u_Prod} is indexed by the infinite set $\mathcal{B} \cap S_M$, the proof strategy consists in considering the sequence of finite products $\mathcal{B}_{\intset{1,L}} \cap S_M$ for $L \in \N^*$ and let $L \to +\infty$. 
The error between the true solution and the finite product contains both a term scaling like $\|u\|^{M+1}$ which will persist in the limit and a transitory error term which vanishes as $L \to +\infty$.
Each bracket in $b \in \mathcal{B}$ is either, not involved at all in the process, involved in the final error, involved in the transitory error term, or involved in the finite product, depending on $L, M, n(b)$ and $n_0(b)$ as pictured in \cref{fig:prod_u_sussmann}.
In Steps 2, 3 and 4, $L \geq M+1$ is fixed. In Step 5, we take the limit $L \to +\infty$.

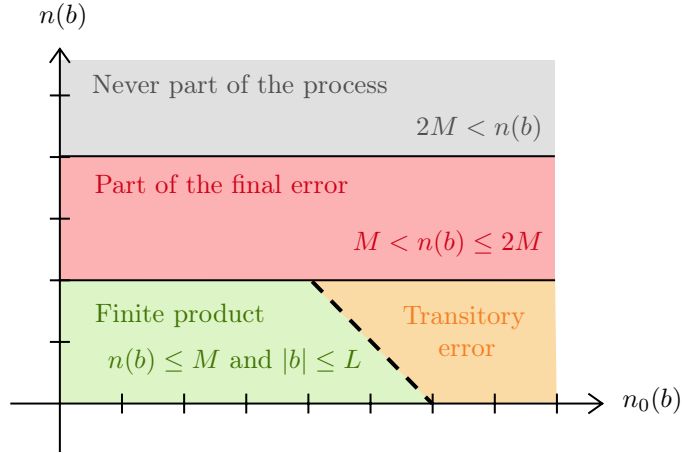
\begin{figure}[ht!]
	\centering
\tikzset{every picture/.style={line width=0.75pt}} 
\begin{tikzpicture}[x=0.75pt,y=0.75pt,yscale=-1,xscale=1]

\draw  [draw opacity=0][fill={rgb, 255:red, 245; green, 166; blue, 35 }  ,fill opacity=0.41 ] (288,146.8) -- (288.8,209) -- (226.6,209.6) -- (164.5,147.1) -- cycle ;
\draw  [draw opacity=0][fill={rgb, 255:red, 184; green, 233; blue, 134 }  ,fill opacity=0.47 ] (164.6,147.2) -- (226.6,209.6) -- (40.92,209.62) -- (41.2,147.6) -- cycle ;
\draw  [draw opacity=0][fill={rgb, 255:red, 248; green, 28; blue, 28 }  ,fill opacity=0.34 ][dash pattern={on 0.84pt off 2.51pt}] (41.2,85.6) -- (287.9,85.6) -- (287.9,147.6) -- (41.2,147.6) -- cycle ;
\draw  [draw opacity=0][fill={rgb, 255:red, 175; green, 175; blue, 175 }  ,fill opacity=0.38 ][dash pattern={on 0.84pt off 2.51pt}] (41.2,37) -- (287.9,37) -- (287.9,85.2) -- (41.2,85.2) -- cycle ;
\draw [color={rgb, 255:red, 0; green, 0; blue, 0 }  ,draw opacity=1 ][line width=0.75]  (16.2,209.62) -- (311.6,209.62)(40.92,31.2) -- (40.92,234) (304.6,204.62) -- (311.6,209.62) -- (304.6,214.62) (35.92,38.2) -- (40.92,31.2) -- (45.92,38.2) (71.92,204.62) -- (71.92,214.62)(102.92,204.62) -- (102.92,214.62)(133.92,204.62) -- (133.92,214.62)(164.92,204.62) -- (164.92,214.62)(195.92,204.62) -- (195.92,214.62)(226.92,204.62) -- (226.92,214.62)(257.92,204.62) -- (257.92,214.62)(288.92,204.62) -- (288.92,214.62)(35.92,178.62) -- (45.92,178.62)(35.92,147.62) -- (45.92,147.62)(35.92,116.62) -- (45.92,116.62)(35.92,85.62) -- (45.92,85.62)(35.92,54.62) -- (45.92,54.62) ;
\draw   ;
\draw [color={rgb, 255:red, 0; green, 0; blue, 0 }  ,draw opacity=1 ][line width=1.5]  [dash pattern={on 5.63pt off 4.5pt}]  (166.8,148.2) -- (226.6,209.6) ;
\draw [color={rgb, 255:red, 0; green, 0; blue, 0 }  ,draw opacity=1 ][line width=0.75]    (41.2,147.6) -- (288,147.6) ;
\draw [color={rgb, 255:red, 0; green, 0; blue, 0 }  ,draw opacity=1 ][line width=0.75]    (41.2,85.2) -- (287.9,85.2) ;

\draw (336.6,208.2) node  [color={rgb, 255:red, 0; green, 0; blue, 0 }  ,opacity=1 ]  {$n_{0}( b)$};
\draw (42.6,15) node  [color={rgb, 255:red, 0; green, 0; blue, 0 }  ,opacity=1 ]  {$n( b)$};
\draw (234.6,129.2) node  [color={rgb, 255:red, 208; green, 2; blue, 27 }  ,opacity=1 ]  {$M < n( b) \leq 2M $};
\draw (250.2,71.2) node  [color={rgb, 255:red, 80; green, 80; blue, 80 }  ,opacity=1 ]  {$2M < n( b)$};
\draw (128.2,188.4) node  [color={rgb, 255:red, 65; green, 117; blue, 5 }  ,opacity=1 ]  {$n(b) \leq M \textrm{ and } |b|\leq L$};
\draw (55.6,42.6) node [anchor=north west][inner sep=0.75pt]  [color={rgb, 255:red, 80; green, 80; blue, 80 }  ,opacity=1 ] [align=left] {Never part of the process};
\draw (56.8,93) node [anchor=north west][inner sep=0.75pt]  [color={rgb, 255:red, 208; green, 2; blue, 27 }  ,opacity=1 ] [align=left] {Part of the final error};
\draw (57.07,157.73) node [anchor=north west][inner sep=0.75pt]  [color={rgb, 255:red, 65; green, 117; blue, 5 }  ,opacity=1 ] [align=left] {Finite product};
\draw (210.47,158.8) node [anchor=north west][inner sep=0.75pt]  [color={rgb, 255:red, 245; green, 127; blue, 35 }  ,opacity=1 ] [align=left] {Transitory \\ \ \ \ \ error};

\end{tikzpicture}
	\caption{Decomposition of $\mathcal{B}$ along the Lazard elimination process for the product on $\mathcal{B} \cap S_M$.}
	\label{fig:prod_u_sussmann}
\end{figure}

\bigskip

\noindent \emph{Step 0: Preliminary estimates}. First, using estimate \eqref{Coord2.0:borne_Sk_xi'} from \cref{Lem:Coord2.0:borne_Sk}, for each $b \in \mathcal{B}$ with $n(b) = k$, one has in particular
\begin{equation}
	\| \dot{\xi}_b \|_{L^1} \leq \|u\|^k \frac{(c_k t)^{n_0(b)}}{n_0(b)!}.
\end{equation}
Taking into account that for every $m\in\N^*$, $|\mathcal{B}_m| \leq (q+1)^{m}$ and using the analytic estimate \eqref{eq:bracket.analytic.C1}, we obtain the following estimate for the terms which can be part of the final error
\begin{equation} \label{eq:final-error}
\begin{split}
\sum_{b \in \mathcal{B} \cap (S_{2M} \setminus S_M)} &
\|\dot{\xi}_b\|_{L^1} \|f_b\|_{\CC^1} \\
& \leq 
\sum_{k=M+1}^{2M} \sum_{n_0=0}^{+\infty} | \mathcal{B}_{k+n_0} |
\|u\|^k \frac{(c_k t)^{n_0}}{n_0!}
(1+r) \left(\frac{9\opnorm{f}_r}{r}\right)^{k+n_0}(k+n_0-1)! \\
& \leq 
(1+r) (2M-1)!
\sum_{k=M+1}^{2M} ((q+1)C_*\|u\|)^k \sum_{n_0=0}^{+\infty} ((q+1)C_* T)^{n_0} \\
& \leq (1+r) (2M)! (q+1)^{M+1} C_*^{M+1} \|u\|^{M+1},
\end{split}
\end{equation}
because $\|u\| \leq \eta$, $T\leq\eta$ and $(q+1)C_*\eta \leq \frac{1}{2}$. 
For the terms which can be part of the finite product or of the transitory error, there holds similarly
\begin{equation} \label{eq:prod-transit}
\begin{split}
\sum_{b \in \mathcal{B} \cap S_M} &
\|\dot{\xi}_b\|_{L^1} \|f_b\|_{\CC^1} \\
& \leq T \|f_0\|_{\CC^1} +
\sum_{k=1}^{M} \sum_{n_0=0}^{+\infty} | \mathcal{B}_{k+n_0} |
\|u\|^k \frac{(c_k t)^{n_0}}{n_0!}
(1+r) \left(\frac{9\opnorm{f}_r}{r}\right)^{k+n_0}(k+n_0-1)! \\
& \leq T \|f_0\|_{\CC^1} +
(1+r) (M-1)!
\sum_{k=1}^{M} ((q+1)C_*\|u\|)^k \sum_{n_0=0}^{+\infty} ((q+1)C_* T)^{n_0} \\
& \leq T \|f_0\|_{\CC^1} + (1+r) M! (q+1) C_* \|u\| 
\leq \delta.
\end{split}
\end{equation}

\bigskip

\noindent \emph{Step 1: Convergence of the ordered product of the $e^{\xi_b(t) f_b}$ over $\mathcal{B} \cap S_M$, uniformly on $B_\delta$, towards a Lipschitz map.} Thanks to \eqref{eq:prod-transit}, we have
\begin{equation} \label{rester_dans_la_boule}
    \sum_{b \in \mathcal{B} \cap S_M}
\left| \xi_b(t) \right|  \|f_b\|_{\CC^1}  
\leq 
\sum_{b \in \mathcal{B} \cap S_M}
\|\dot{\xi}_b\|_{L^1} \|f_b\|_{\CC^1}  
 \leq 
\delta
\end{equation}
and \cref{Lem_CV_prod_chpvect/CVA} gives the conclusion of Step 1.

\bigskip

\noindent \emph{Step 2: Lazard structure on $\mathcal{B}_{\intset{1,L}} \cap S_M$.} We use the notations of \cref{Def:Laz} to describe $\mathcal{B}_{\intset{1,L}}$. There exists $m\in\N$ and an extraction $\phi$ such that
 \begin{equation}\mathcal{B}_{\intset{1,L}} \cap S_M  = \{ b_{\phi(1)} < \dotsb < b_{\phi(m+1)} \}.\end{equation}
Let $i \in \llbracket 1, m+1 \rrbracket$ and $n=\phi(i)$. By \cref{Def:Laz}, there exists a unique factorization 
 \begin{equation}b_{\phi(i)} = b_{n} = \ad_{b_{n-1}}^{j_{n-1}} \dotsm \ad_{b_1}^{j_1}(b_0) \end{equation}
where $b_0 \in X$, $j_1,\dotsc,j_{n-1} \in \N$ (one just identifies left and right factors in $\Br(X)$).
For every $j \in \llbracket 1, n-1 \rrbracket \setminus \phi(\llbracket 1, i-1 \rrbracket )$, $b_j$ contains at least $(L+1)$ occurrences of the variables $X_1,\dotsc,X_q$, thus it cannot be involved in the factorization of $b_n$. This proves that
\begin{equation} \label{Laz.1_B_S} 
    \left\lbrace\begin{aligned}
& b_{\phi(1)} \in \widetilde{Y}_0:=X, \\
& b_{\phi(2)} \in \widetilde{Y}_1:=\{ \ad_{b_{\phi(1)}}^j(v) ; j \in \N, v \in \widetilde{Y}_0\setminus \{b_{\phi(1)}\} \}, \\
& \dots \\
& b_{\phi(m+1)} \in \widetilde{Y}_m:=\{ \ad_{b_{\phi(m)}}^j(v) ; j \in \N, v \in \widetilde{Y}_{m-1} \setminus\{b_{\phi(m)}\} \},        
    \end{aligned}\right.
    \end{equation}
    \begin{equation} \label{Laz.3_Sl}
    \mathcal{B}_{\intset{1,L}} \cap S_M \cap \widetilde{Y}_{m+1} =\emptyset,
    \end{equation}
    where $\widetilde{Y}_{m+1}:=\{ \ad_{b_{\phi(m+1)}}^j(v) ; j \in \N, v \in \widetilde{Y}_{m} \setminus\{b_{\phi(m+1)}\} \}$.

\bigskip

\noindent \emph{Step 3: Proof of estimates along the Lazard elimination on $\mathcal{B}_{\intset{1,L}} \cap S_M$.} To simplify the notations, from now on, we write $\mathcal{B}_{\intset{1,L}} \cap S_M = \{ b_1 < \dotsb < b_{m+1} \}$ and we use (\ref{Laz.1_B_S}) and (\ref{Laz.3_Sl}) with $\phi=\mathrm{Id}$.
Let $x_0(t) := x(t)$. By (\ref{rester_dans_la_boule}), for every $j \in \intset{1,m+1}$, 
\begin{equation}
x_j(t):=e^{-\xi_{b_j}(t)f_{b_j}} \dotsm e^{-\xi_{b_1}(t) f_{b_1}} x(t)
\end{equation}
is well-defined and belongs to $B_{3\delta}$.
The goal of Step 3 is to prove by induction on $j \in \intset{0,m+1}$ that
\begin{equation}  \label{eq:hj-sussmann_u}
(\mathcal{H}_j): \begin{cases}
\dot{x}_j(t)=\sum_{b \in \mathcal{B}_{\intset{1,L}} \cap S_M  \cap \widetilde{Y}_j} \dot{\xi}_b(t) f_b( x_j(t) ) + \varepsilon_j(t), \\
x_j(0)=p,
\end{cases}
\end{equation}
where
\begin{equation} \label{eq:epsj-xifb}
	\| \varepsilon_j \|_{L^1} \leq 
	e^{|\xi_{b_j}(t)| \|f_{b_j}\|_{\CC^1}} \|\varepsilon_{j-1}\|_{L^1}
	+ \sum_{\tilde{b} \in Z_j} \| \dot{\xi}_{\tilde{b}} \|_{L^1} \| f_{\tilde{b}} \|_{\CC^0},
\end{equation}
where $Z_j \subset (\mathcal{B} \cap S_{2M}) \setminus (\mathcal{B}_{\intset{1,L}} \cap S_M)$ is defined in \eqref{eq:def-zj-b}.

First $(\mathcal{H}_0)$ holds with $\varepsilon_0=0$ because $\dot{\xi}_{X_0}(t)=1$ and $\dot{\xi}_{X_i}(t)=u_i(t)$ for $i \in \intset{1,q}$.
Now, let $j \in \intset{1,m+1}$ and assume that $(\mathcal{H}_{j-1})$ holds.
We deduce from the definition of $x_j$ that
\begin{equation}
	x_{j}(t)= e^{-\xi_{b_j}(t)f_{b_j}} (x_{j-1}(t)) = \Phi_j \left( -\xi_{b_j}(t),x_{j-1}(t) \right)
\end{equation}
and thus that
\begin{equation}
	\dot{x}_{j}(t) =
 \sum_{b \in \mathcal{B}_{\intset{1,L}} \cap S_M \cap \widetilde{Y}_{j-1} \setminus\{b_j\}} \dot{\xi}_b(t)
 \left( \Phi_j \left( - \xi_{b_j}(t) \right)_* f_b \right)(x_{j}(t))
 + \widetilde{\varepsilon}_{j-1}(t),
\end{equation}
where
$\widetilde{\varepsilon}_{j-1}(t)= \partial_p \Phi_j\left(-\xi_{b_j}(t),x_{j-1}(t)\right) \varepsilon_{j-1}(t)$.
We get $(\mathcal{H}_{j})$ with
 \begin{equation} \label{def:epsilonj_rec}
 \varepsilon_j(t) := \sum_{b \in \mathcal{B}_{\intset{1,L}} \cap S_M \cap \widetilde{Y}_{j-1} \setminus\{b_j\}} \overline{\varepsilon}_b^j(t) + \widetilde{\varepsilon}_{j-1}(t)
 \end{equation}
 where, for every $b \in \mathcal{B}_{\intset{1,L}} \cap S_M \cap \widetilde{Y}_{j-1} \setminus\{b_j\}$,
 \begin{equation} \label{eq:sussmann-epsilonbarb-j-h}
 \overline{\varepsilon}_b^j(t):= \dot{\xi}_b(t) 
\left( \Phi_j \left(-\xi_{b_j}(t)\right)_* f_b\right)(x_j(t))
 - \sum_{k=0}^{h(b)-1} \dot{\xi}_b(t) \frac{\xi_{b_j}^k(t)}{k!} f_{\ad_{b_j}^k(b)}(x_{j}(t) ) 
 \end{equation}
where $h(b) \in \N^*$ is the maximal integer such that
\begin{equation} \label{def:m(b,bj)}
n(b)+\left(h(b)-1\right)n(b_j) \leq M \qquad \text{ and } \qquad |b|+(h(b)-1)|b_j| \leq L.
\end{equation}
By (\ref{Lem:tool_serie_ad_formule1}),
 \begin{equation} |\overline{\varepsilon}_b^j(t)|  \leq |\dot{\xi}_b(t)| \frac{|\xi_{b_j}(t)|^{ h(b) }}{h(b)!} \| f_{\ad_{b_j}^{h(b)}(b)} \|_{\CC^0} 
 = |\dot{\xi}_{\tilde{b}}(t)| \|f_{\tilde b}\|_{\CC^0},
\end{equation}
for $\tilde{b} := \ad_{b_j}^{h(b)}(b)$.
Hence, \eqref{eq:epsj-xifb} holds with
\begin{equation} \label{eq:def-zj-b}
	Z_j := \{ \ad_{b_j}^{h(b)}(b) ; \enskip b \in \mathcal{B}_{\intset{1,L} } \cap S_M \cap \widetilde{Y}_{j-1} \setminus \{b_j\} \}.
\end{equation}
This yields $Z_j \subset (\mathcal{B} \cap S_{2M}) \setminus (\mathcal{B}_{\intset{1,L}} \cap S_M)$ thanks to \eqref{def:m(b,bj)}.

 \bigskip
 
 \noindent \emph{Step 4: Proof of an estimate on the finite product over $\mathcal{B}_{\intset{1,L}} \cap S_M$.}
 By \eqref{eq:epsj-xifb}, \eqref{rester_dans_la_boule} and \eqref{eq:final-error}, we have
 \begin{equation}
 	\begin{split}
 	\| \varepsilon_{m+1} \|_{L^1}
 	& \leq e^{\delta} \sum_{b \in \mathcal{B} \cap (S_{2M} \setminus S_M)} \| \dot{\xi}_{b} \|_{L^1} \| f_{b} \|_{\CC^0}
 	+ e^{\delta} \sum_{b \in (\mathcal{B} \cap S_M) \setminus \mathcal{B}_{\intset{1,L}}} \| \dot{\xi}_{b} \|_{L^1} \| f_{b} \|_{\CC^0} \\
 	& \leq e^{-\delta} C_M \|u\|^{M+1} + o_{L \to + \infty}(1),
 \end{split}
\end{equation}
because the series in \eqref{eq:prod-transit} converges.
We deduce from (\ref{eq:hj-sussmann_u}) and (\ref{Laz.3_Sl}) that
\begin{equation} \label{Step3_ccl}
\left| \underset{b \in \mathcal{B}_{\intset{1,L}} \cap S_M}{\overset{\leftarrow}{\Pi}} e^{-\xi_b(t,1,u) f_b } x(t) -  p \right| 
= |x_{m+1}(t)-p| \leq e^{-\delta} C_M \|u\|^{M+1} + o_{L \to + \infty}(1)
\end{equation}
By (\ref{rester_dans_la_boule}), the map $\underset{b \in \mathcal{B}_{\intset{1,L}} \cap S_M}{\overset{\rightarrow}{\Pi}} e^{-\xi_b(t,u) f_b }$ is $e^{\delta}$ Lipschitz on $B_{3\delta}$. Then, by (\ref{Step3_ccl}),
\begin{equation} \label{Step3_ccl_2}
\left| x(t) -  \underset{b \in \mathcal{B}_{\intset{1,L}} \cap S_M}{\overset{\rightarrow}{\Pi}} e^{-\xi_b(t,1,u) f_b } p \right| 
 \leq 
C_M \|u\|^{M+1} + o_{L \to + \infty}(1)
\end{equation}

\bigskip
\noindent \emph{Step 5: Infinite subproduct limit}.
By Step 1, the infinite product over $\mathcal{B} \cap S_M$ is well-defined.
By letting $L \rightarrow +\infty$ in estimate \eqref{Step3_ccl_2}, we obtain the conclusion of \cref{Prop:error_u_Prod}.
\end{proof}

\section{Refined error estimates for scalar-input affine systems}
\label{sec:u1}

In this section, we consider scalar-input affine systems with drift, i.e.\ of the form
\begin{equation} \label{Scalar_affine_syst}
    \dot{x}(t) = f_0(x(t)) + u(t) f_1(x(t))
    \quad \text{and} \quad 
    x(0) = p,
\end{equation}
where $f_0, f_1$ are vector fields on $\K^d$ and $u\in L^1((0,T);\K)$.
When well-defined, its solution is denoted $x(t;f,u,p)$.
Such systems have been extensively studied in control theory, as toy models for more complex situations.

The goal of this section is to improve, in this particular framework, the error estimates of the previous section: the new bound is not expressed in terms of $\|u\|_{L^1}$ but in terms of the $L^\infty$ norm of the time-primitive of the input, which heuristically corresponds to the $W^{-1,\infty}$ norm of $u$. 

This refined estimate is somehow optimal in the scale of Sobolev spaces (as shown by the one dimensional system $\dot{x}(t)=u(t)$) and specific to the scalar-input case (see \cref{sec:u1.multi}).

Lowering the Sobolev regularity required on the input is of paramount interest for applications in control theory (see e.g.\ \cite{MR3741402}) and might also be useful for applications to stochastic differential equations where the input is a noise with low regularity (see e.g.\ \cite{MR981567}).

\begin{definition}[Integrated input]
    Let $T > 0$ and $u \in L^1((0,T);\K)$. 
    In this section, $U$ always denotes the time-primitive of $u$ vanishing at zero, i.e.\ defined by $U(t) := \int_0^t u(s) \dd s$ for $t \in [0,T]$.
\end{definition}

\subsection{Auxiliary system trick} \label{sec:trick-x1}

Enhancing the estimates relies on the following trick which factorizes the dependence of the input and introduces an auxiliary system involving the time-primitive $U$ of the input (and not $u$ itself).

\begin{proposition} \label{Prop:trick}
Let $\delta>0$, $f_0, f_1 \in {\CC^\omega(B_{3\delta};\K^d)}$ and $\eta^*>0$ small enough so that the two following maps are well defined and (globally) analytic
\begin{equation}
    \Phi_1 :
    \begin{cases}
         [-\eta^*,\eta^*] \times B_{2\delta} & \rightarrow B_{3\delta} \\
        (\tau ,  q) & \mapsto e^{\tau f_1}(q)
    \end{cases}
    \quad \textrm{and} \quad 
    F : 
    \begin{cases}
        B_{2\delta} \times [-\eta^*,\eta^*] & \rightarrow \K^d \\
        (q,\tau) & \mapsto (\Phi_1(-\tau)_* f_0) (q).
    \end{cases}
\end{equation}
Let $T>0$ be such that $T \|F\|_{\CC^0} \leq \delta$.
\begin{enumerate}
    \item For every $p\in B_{\delta}$ and $U \in \CC^0([0,T];\K)$ with $\|U\|_{L^\infty}\leq\eta^*$, there exists a unique solution
    $x_1 \in \CC^1([0,T];\K^d)$ to
    \begin{equation} \label{eq:def-x1}
        \begin{cases}
             \dot{x}_1(t) = F(x_1(t), U(t)), \\
            x_1(0) = p,
        \end{cases}
    \end{equation}
    denoted $x_1(t;F,U,p)$. It takes values in $B_{2\delta}$. 
    Moreover, the map $(p,U) \mapsto x_1(\cdot;F,U,p)$ is analytic from $B_\delta \times B_{\CC^0[0,T]}(0,\eta^*)$ to $\CC^1([0,T];\K^d)$.
    \item For every $p\in B_{\delta}$, $t\in [0,T]$ and $u \in L^1((0,T);\K)$ such that $\|U\|_{L^\infty}\leq\eta^*$,
     \begin{equation} \label{def:x1_auxiliaire}
    x(t;f,u,p)=\Phi_1\big( U(t) ; x_1(t;F,U,p) \big).
    \end{equation}
    \end{enumerate}
\end{proposition}

\begin{proof}
The existence of $\eta^*$ such that $\Phi_1$ and $F$ are well defined and globally analytic results from the third statement of \cref{Lem:tool_serie_ad}.
The analytic dependence of $x_1$ with respect to $(p,U)$ is given by \cref{Prop:Sol_analytic/control}. By definition of $x_1$, the right-hand side of (\ref{def:x1_auxiliaire}) {satisfies} the same Cauchy problem as $x$ thus the two functions are equal.
\end{proof}

\subsection{A new formulation of the Chen-Fliess expansion}\label{subsec:CF_new}

The goal of this section is to derive of a new formulation of the Chen-Fliess expansion for scalar-input affine systems (\ref{Scalar_affine_syst}).

\begin{proposition}
    Let $\delta,r>0$ and $f_0,f_1 \in {\CC^{\omega,r}(B_{3\delta};\K^d)}$.
There exists $\eta>0$ such that for every 
$\varphi \in \CC^{\omega,r}(B_{3\delta};\K)$, 
$t \in [0,\eta]$,
$u \in L^1((0,t);\K)$ such that $\|U\|_{L^\infty}\leq\eta$ and 
$p\in B_\delta$, {with the notations of \cref{rk:no-nabla},}
\begin{equation} \label{CF_scalaire_U_serie}
\varphi( x(t;f,u,p) ) =
 \sum_{\substack{\ell\in\N, n\in\N\\  k \in\N^n }}
 \frac{U(t)^\ell}{\ell! k!}  \left(\int_0^t U^k\right)
 {\Big( f_1^\ell (\ad_{f_1}^{k_1}(f_0)) \dotsm (\ad_{f_1}^{k_n}(f_0)) \Big)} (\varphi) (p)
\end{equation}
with the notation \eqref{eq:def-intuk},
where the sum converges absolutely, uniformly with respect to $(t,u,p)$. Moreover, for every $\varphi \in \CC^{\omega,r}(B_{3\delta};\K)$, there exists $C>0$ such that, for every $M\in\N^*$, $t \in [0,\eta]$,
$u \in L^1((0,t);\K)$ such that $\|U\|_{L^\infty}\leq\eta$ and $p\in B_\delta$,
\begin{equation} \label{CF_scalaire_U_errorM}
\begin{split}
  &  \left|
    \varphi( x(t;f,u,p) ) -
 \sum_{\substack{\ell\in\N, n\in\N\\  \ell+|k| \leq M}}
 \frac{U(t)^\ell}{\ell! k!}  \left(\int_0^t U^k\right)
 {\Big( f_1^\ell (\ad_{f_1}^{k_1}(f_0)) \dotsm (\ad_{f_1}^{k_n}(f_0)) \Big)} (\varphi) (p)
    \right|
\\  & \leq C^{M+1} \left( |U(t)|^{M+1} + \int_0^t |U|^{M+1}
\right) 
\end{split} 
\end{equation}
where the sum is taken over $\ell \in \N$, $n \in \N$ and $k=(k_1,\dotsc,k_n) \in \N^n$ such that $\ell + k_1+\dotsb+k_n \leq M$.
\end{proposition}

\begin{proof}
Let $\eta^*, T, x_1$ be as in \cref{Prop:trick}, $\|f\|:=\opnorm{f_0}_r+\opnorm{f_1}_r$ and
\begin{equation} \label{CF_scalaire_def:eta}
\eta := \min \left\{ T , \eta^* ,  \frac{\delta}{\|f\|} , \frac{r}{28 \|f\|}  \right\}.
\end{equation}
Let 
$\varphi \in \CC^{\omega,r}(B_{3\delta};\K)$, 
$t \in [0,\eta]$,
$u \in L^1((0,t);\K)$ such that $\|U\|_{L^\infty} \leq \eta$ and 
$p\in B_\delta$.
Then $x_1(t;F,U,p) \in B_{2\delta}$ and, by (\ref{def:x1_auxiliaire}) and \eqref{CF_scalaire_def:eta}, $x(t;f,u,p) \in B_{3 \delta}$. 

\medskip

\noindent \emph{Step 1: Proof of the absolute convergence in (\ref{CF_scalaire_U_serie}) uniformly with respect to $p\in B_\delta$.}
Let $r' := r/e$. Then, by \cref{thm:bracket.analytic}, for every $k\in \N$, $\ad_{f_1}^k(f_0) \in {\CC^{\omega,r'}(B_{3\delta};\K^d)}$ and
\begin{equation}
    \opnorm{\ad_{f_1}^k(f_0)}_{r'} \leq 
\frac{k!}{e} \left(\frac{9}{r}\right)^k \|f\|^{k+1}.
\end{equation}
Thus, by (\ref{eq:prod.analytic.C0}),
\begin{equation} \label{ing1}
    \begin{split}
        \Big|{\Big( f_1^\ell(\ad_{f_1}^{k_1}(f_0))} & {\dotsm (\ad_{f_1}^{k_n}(f_0)) \Big)} (\varphi) (p)\Big|
\\ \leq & (n+\ell)! \left(\frac{5}{r'}\right)^{n+\ell} \opnorm{f_1}_{r'}^\ell \opnorm{\ad_{f_1}^{k_1}(f_0)}_{r'} \dotsm \opnorm{\ad_{f_1}^{k_n}(f_0)}_{r'} 
\\ \leq & (n+\ell)! \left(\frac{14}{r}\right)^{n+\ell} \|f\|^\ell
\frac{k_1!}{e} \left(\frac{9}{r}\right)^{k_1} \|f\|^{k_1+1}
\dots
\frac{k_n!}{e} \left(\frac{9}{r}\right)^{k_n} \|f\|^{k_n+1}
\\ \leq & e^{-n} (n+\ell)! k_1! \dotsm k_n!  \left(\frac{14 \|f\|}{r}\right)^{n + \ell + k_1+\dotsb + k_n}.
    \end{split}
\end{equation}
Moreover, recalling notation \eqref{eq:def-intuk},
\begin{equation}
\left| \frac{U(t)^\ell}{\ell! k!}   \int_0^t U^k \right| =
\left| \frac{U(t)^\ell}{\ell!} 
\int_{{\Delta^n(t)}} 
\frac{U(\tau_1)^{k_1} \dotsm U(\tau_n)^{k_n}}{k_1! \dotsm k_n!} 
 \dd \tau  \right|
\leq  \|U\|_{L^\infty}^{\ell+k_1+\dotsb+k_n} \frac{t^n}{n!} \frac{1}{\ell! k_1! \dotsm k_n!}.    
\end{equation}
Thus it is sufficient to prove the summability over $\ell\in\N, n\in\N^*, k_1,\dotsc,k_n\in\N$ of the following quantity
\begin{equation}
    \begin{split}
       \left(\frac{t}{e}\right)^{n}  \frac{(n+\ell)!}{n! \ell !} &  \left(\frac{14 \|f\|}{r}\right)^{n + \ell + k_1+\dotsb+ k_n}
\|U\|_{L^\infty}^{\ell+k_1+\dotsb+k_n}  
\\ \leq & \left(\frac{t}{e}\right)^{n} 2^{n+\ell}   \left(\frac{14 \|f\|}{r}\right)^{n + \ell + k_1+\dotsb+k_n}
\|U\|_{L^\infty}^{\ell+k_1+\dotsb+k_n}
\\ \leq &
\left(\frac{28 t \|f\|}{e r}\right)^{n}
\left( \frac{28  \|f\|}{r} \|U\|_{L^\infty} \right)^{\ell+k_1+\dotsb+k_n} 
    \end{split}
\end{equation}
which is ensured by (\ref{CF_scalaire_def:eta}).

\medskip

\noindent \emph{Step 2: Proof of (\ref{CF_scalaire_U_serie}) and (\ref{CF_scalaire_U_errorM}).}
Applying \cref{Lem:flow} and \cref{Prop:CF_CV_NL_scalar} we get
\begin{equation}
\begin{split}
\varphi & ( x(t;f,u,p) ) 
= \varphi \left( e^{U(t) f_1} x_1(t;F,U,p) \right) \\
 & = \sum_{\ell=0}^{+\infty} \frac{U(t)^\ell}{\ell!} {f_1^\ell} \varphi ( x_1(t;F,U,p) ) 
 \\ & = \sum_{\ell=0}^{+\infty} \frac{U(t)^\ell}{\ell!} {f_1^\ell}
\sum_{\substack{n\in\N \\ k \in \N^n}}
\frac{1}{k!} \left( \int_0^t U^k\right)
 \Big({(\ad_{f_1}^{k_1}(f_0)) \dotsm (\ad_{f_1}^{k_n}(f_0))}\Big) \Big( \varphi \Big)(p)
\end{split}
 \end{equation}
The bound proved in Step 1 allows to exchange the differential operator ${f_1^\ell}$ and the second sum, which proves (\ref{CF_scalaire_U_serie}). 
To prove (\ref{CF_scalaire_U_errorM}), one bounds 
the queue of the series thanks to (\ref{ing1}) and the following consequence of H\"older's inequality, valid when $\ell+|k| \geq (M+1)$
\begin{equation}
    \left| U(t)^\ell 
\int_{{\Delta^n(t)}} 
U(\tau_1)^{k_1} \dotsm U(\tau_n)^{k_n} 
 \dd \tau  \right|
 \leq
 C(\eta) \left(|U(t)|^{M+1} + \int_0^t |U|^{M+1} \right).
\end{equation}
\end{proof}

{One of the ingredients of the above proof is the Chen-Fliess expansion of the auxiliary system $x_1(t;F,U,p)$, which appears in \cite[Section 3]{MR524203} under the denomination ``representation of the perturbation flow''.}

\begin{remark}
    The bound \eqref{CF_scalaire_U_errorM}  between the exact solution and the truncated Chen-Fliess series (in its' original formulation) is used by Stefani in \cite[Lemma~3.1 and Corollary~3.1]{MR935375}. 
    Our proof is both different and shorter. 
\end{remark}

\begin{remark}
    Equality \eqref{CF_scalaire_U_serie} where the sum converges absolutely proves that appropriate packages of the Chen-Fliess expansion are absolutely summable under a smallness assumption on $\|U\|_{L^\infty}$, which is weaker than the smallness assumption on $\|u\|_{L^1}$ which is used in \cref{Prop:CF_CV_affine} for multi-input systems.
\end{remark}

\subsection{Magnus expansion in the interaction picture}

In this section, we prove the following enhanced error estimate for the magnus expansion in the interaction picture with scalar input. 
Our proof relies on an appropriate approximation for the auxiliary system $x_1$ introduced in \cref{sec:trick-x1}.

\begin{proposition} \label{Magnus3_Cor1}
    Let $\delta > 0$ and $f_0, f_1 \in {\CC^{\omega}(B_{3\delta};\K^d)}$. 
    For every $M\in \N$, there exist $\eta_M, C_M>0$ such that, for every $T \in [0,\eta_M]$, $u\in L^1((0,T);\K)$ such that $\|U\|_{L^\infty}\leq\eta_M$,
    $t \in [0,T]$ and $p\in B_\delta$,
	\begin{equation} \label{Magnus_2_Cor}
	\left| x(t;f,u,p) -   e^{\mathcal{Z}_M(t,f,u)}  e^{t f_0} p  \right|
	\leq C_M \left( |U(t)|^{M+1} + \int_0^t |U|^{M+1} \right).
	\end{equation}
\end{proposition}

\begin{proof}
	In \cref{subsec:ym}, we introduce a vector field $\mathcal{Y}_M(t,f,U)$ such that $e^{\mathcal{Y}_M(t,f,U)} e^{tf_0}(p)$ is a good approximation of the auxiliary state $x_1$ defined in \eqref{eq:def-x1}.
	Since, by \eqref{def:x1_auxiliaire}, $x(t) = e^{U(t)f_1}(x_1(t))$, the desired estimate then relies on the following decomposition
	\begin{equation}
		\begin{split}
			x(t;f,u,p) - e^{\mathcal{Z}_M(t,f,u)}  e^{t f_0} p & =
			x(t;f,u,p) - e^{U(t) f_1} e^{\mathcal{Y}_M(t,f,U)}  e^{t f_0} p 
			\\ & \quad
			+ e^{U(t) f_1} e^{\mathcal{Y}_M(t,f,U)}  e^{t f_0} p 
			- e^{\mathcal{Z}_M(t,f,u)}  e^{t f_0} p.
		\end{split}
	\end{equation}
	Using \cref{Prop:Magnus_3} and \cref{thm:uym-zm-cbh-f} (see further) for the first and second lines, we get
	\begin{equation}
			\Big|x(t;f,u,p) -   e^{\mathcal{Z}_M(t,f,u)}  e^{t f_0} p\Big| \leq  C_M \left(
			\|U\|_{L^1}^{M+1} + |U(t)|^{M+1} +  \|U\|_{L^{M+1}}^{M+1}
			\right)
	\end{equation}
	which gives the conclusion since $\|U\|_{{L^1(0,t)}} \leq t^{\frac{M}{M+1}} \|U\|_{{L^{M+1}(0,t)}}$.
\end{proof}

In \cref{subsec:ym}, we define $\mathcal{Y}_M(t,f,U)$ and prove in \cref{Prop:Magnus_3} that it indeed provides a good approximation of the auxiliary state.
In \cref{subsec:identification}, we explain the link between $e^{U(t)X_1} e^{\mathcal{Y}_M(t,X,U)}$ and $e^{\mathcal{Z}_M(t,X,u)}$ at the formal level. 
In \cref{subsec:ym-cbh-error}, we show in \cref{thm:uym-zm-cbh-f} that this formal link entails that $e^{U(t)f_1} e^{\mathcal{Y}_M(t,f,U)}$ is close to $e^{\mathcal{Z}_M(t,f,u)}$.

\subsubsection{An approximation of the auxiliary state}
\label{subsec:ym}

We use the error formula of \cref{Prop:Magnus_2} for the Magnus expansion in the interaction picture to obtain an approximation of the auxiliary state.

\begin{proposition} \label{Prop:Magnus_3}
	Let $\delta,\rho>0$, $f_0,f_1 \in {\CC^{\omega,\rho}(B_{3\delta};\K^d)}$. 
	For every $M\in \N$, there exist $\eta_M, C_M>0$ such that, for every
	$p\in B_\delta$,
	$t\in [0,\eta_M]$,
	$u\in L^1((0,t);\K)$ such that $\|U\|_{L^\infty}\leq\eta_M$,
	\begin{equation} \label{Magnus_3_new}
		\left| x(t;f,u,p) -   e^{U(t) f_1} e^{\mathcal{Y}_M(t,f,U)}  e^{t f_0} p  \right|
		\leq C_M \|U\|_{L^{1}(0,t)}^{M+1}
	\end{equation}
	where $\mathcal{Y}_M(t,f,U):=\mathrm{Log}_M\{G_t\}(t)$,
	and $G_t:[0,t]\times B_{3\delta} \rightarrow \K^d$ is defined by
	\begin{equation} \label{G_t(s)}
		G_t(s,y):=
		\sum_{\substack{k \in\N^* \\ \ell \in \N}} \frac{(s-t)^\ell}{\ell!} \frac{U(s)^k}{k!} \ad_{f_0}^\ell \ad_{f_1}^k(f_0)(y)
	\end{equation}
	and this sum converges absolutely in ${\CC^{\omega,\rho'}(B_{3\delta};\K^d)}$ with $\rho'=\rho/e$. Moreover,
	\begin{equation} \label{def:ZMtilde}
		\begin{aligned}
			\mathcal{Y}_M(t,f,U)= \sum 
			\frac{(-1)^{m-1}}{r m}
			&
			\int_{{\Delta^{\mathbf{r}}(t)}} 
			{
			\frac{(\tau_1-t)^{\ell_1}}{\ell_1!} \frac{U(\tau_1)^{k_1}}{k_1!}\dotsm \frac{(\tau_r-t)^{\ell_r}}{\ell_r!} \frac{U(\tau_r)^{k_r}}{k_r!}} \dd\tau
			\\ & 
			{
			\left[ \dotsb  \left[ \ad_{f_0}^{\ell_1}(\ad_{f_1}^{k_1}(f_0)) ,\ad_{f_0}^{\ell_2}(\ad_{f_1}^{k_2}(f_0))\right],\dotsc,\ad_{f_0}^{\ell_r}(\ad_{f_1}^{k_r}(f_0))\right]},
		\end{aligned}
	\end{equation}
	where the sum is taken over 
	$r \in \intset{1,M}$,
	$m \in \intset{1, r}$,
	$\mathbf{r}\in\N^m_r$,
	$\ell_1,\dotsc,\ell_r \in \N$,
	$k_1,\dotsc,k_r \in \N^*$
	and the sum converges absolutely in ${\CC^{\omega,\rho'}(B_{3\delta};\K^d)}$. 
\end{proposition}

\begin{proof}
	\noindent \emph{Step 1: Convergence in (\ref{G_t(s)}) and (\ref{def:ZMtilde}).}
	By (\ref{eq:bracket.analytic}), for every $s\in[0,t]$,
	\begin{equation}
		\opnorm{\frac{(s-t)^\ell}{\ell!} \frac{U(s)^k}{k!} \ad_{f_0}^\ell \ad_{f_1}^k(f_0) }_{\rho'}  \leq t^\ell \|U\|_{L^\infty}^k \frac{(k+\ell)!}{\ell! k!} 
		\left( \frac{9}{\rho} \right)^{k+\ell} \opnorm{f}_\rho^{k+\ell+1}
	\end{equation}
	thus the sum in (\ref{G_t(s)}) converges absolutely in ${\CC^{\omega,\rho'}(B_{3\delta};\K^d)}$ when $t$ and $\|U\|_{L^\infty}$ are {smaller than} $\frac{\rho}{18 \opnorm{f}_\rho}$.

	For every $r \in \intset{1,M}$,
	$m \in \intset{1, r}$,
	$\mathbf{r}\in\N^m_r$,
	$\ell_1,\dotsc,\ell_r \in \N$,
	$k_1,\dotsc,k_r \in \N^*$,
	using (\ref{factorielle_multiple}) 
	and the non-decreasing of $q \in \intset{1, \infty} \mapsto \|\cdot\|_{{L^q(0,t)}}$ for $t \in [0,1]$, we get
	\begin{equation} \label{CVA_YM(t,f,U)}
		\begin{split}
			\bigg| \int_{{\Delta^{\mathbf{r}}(t)}} & {
			\frac{(\tau_1-t)^{\ell_1}}{\ell_1!} \frac{U(\tau_1)^{k_1}}{k_1!}\dotsm \frac{(\tau_r-t)^{\ell_r}}{\ell_r!} \frac{U(\tau_r)^{k_r}}{k_r!}} \dd\tau \bigg| (r+|\ell|+|k|-1)! \left(\frac{9\|f\|}{\rho} \right)^{r+|\ell|+|k|-1}
			\\ & \leq (2^{r-1} t)^{|\ell|}  
			(2^{r-1} \|U\|_{{L^{|k|}(0,t)}})^{|k|} 
			\left(\frac{36\|f\|}{\rho} \right)^{r+|\ell|+|k|-1}
			(r-1)!
		\end{split}
	\end{equation}
	Thus, by (\ref{eq:bracket.analytic}), the sum in (\ref{def:ZMtilde}) converges absolutely in ${\CC^{\omega,\rho'}(B_{3\delta};\K^d)}$ when $t$ and $\|U\|_{L^\infty}$ are {smaller than} $\frac{\rho 2^{-M}}{18 \opnorm{f}_\rho}$.
	
	\medskip
	
	\emph{Step 2: Proof of (\ref{Magnus_3_new}).}
	Let $T, \eta^*$ and $F$ as in \cref{Prop:trick}. We introduce the function $F_1:[0,T] \times B_{2\delta} \rightarrow \K^d$ defined by
	\begin{equation}\label{def:F1_Magnus1.2}
		F_1(t,y):=F(y,U(t))-f_0(y)=\sum_{j=1}^{+\infty} \frac{U(t)^j}{j!} \ad_{f_1}^j(f_0)(y)
	\end{equation}
	where the sum converges in ${\CC^{\omega,\rho'}(B_{2\delta};\K^d)}$ when $\|U\|_{L^\infty} < \frac{\rho}{9 \opnorm{f}_\rho}$.
	Let $M\in\N$. There exists $C>0$ such that,
	for every $t \in [0,T]$, $U \in \CC^0([0,T];\K)$ with $\|U\|_{L^\infty}\leq\eta^*$,
	the function $F_1$ defined by (\ref{def:F1_Magnus1.2}) satisfies
	\begin{equation}
		\|F_1\|_{L^1((0,t);\CC^{M^2})} \leq C \|U\|_{L^1(0,t)} \leq C T \|U\|_{L^\infty(0,t)}.
	\end{equation}
	Let $C_M>0$ and $\gamma = \gamma(M,\delta,\|f_0\|_{\CC^{M^2+1}})>0$ be as in \cref{Prop:Magnus_2} and 
	\begin{equation} \label{def:etaM_refined}
		\eta_M := 
		\min \left\{ 1 , \eta^* , \frac{\rho 2^{-M}}{36 \opnorm{f}_\rho} ,  \frac{\gamma}{C T } \right\}.\end{equation}
	Let
	$p\in B_\delta$,
	$t\in [0,\eta_M]$ and
	$u\in L^1((0,t);\K)$ such that $\|U\|_{L^\infty}\leq \eta_M$. 
	Then, the convergences of Step~1 hold and
	$\|F_1\|_{L^1((0,t);\CC^{M^2})} \leq \gamma$ thus we can apply \cref{Prop:Magnus_2} and \cref{Prop:expansion_ZM_1.1_analytic} to the equation $\dot{x}_1=f_0(x_1)+F_1(t,x_1)$
	\begin{equation} \label{Magnus1.1_sur_x1}
		\left|x_1(t;F,U,p)- e^{\mathcal{Y}_M(t,f,U)}  e^{t f_0} p  \right|
		\leq C_M \|G_t\|_{L^{1}((0,t);\CC^{M^2})}^{M+1}. 
	\end{equation}
	Moreover, there exists $C'$ (depending only on $\eta^*, f_0,f_1$) such that
	\begin{equation}\|G_t\|_{L^{1}((0,t);\CC^{M^2})} \leq C' \|U\|_{L^1(0,t)}.\end{equation}
	Thus, we get (\ref{Magnus_3_new}) by applying the $e^{\eta^* \|f_1\|_{\CC^1}}$-Lipschitz map $e^{U(t) f_1}$ to (\ref{Magnus1.1_sur_x1}).
\end{proof}

In the next paragraphs, we will use the following technical result about $\mathcal{Y}_M(t,f,U)$ and its decomposition in homogeneous components with respect to $U$.

\begin{lemma} \label{Prop:YMj}
	Let $\delta,\rho>0$, $f_0,f_1 \in {\CC^{\omega,\rho}(B_{3\delta};\K^d)}$. 
	For every $M\in\N^*$, there exists $\eta_M, C_M>0$ such that, for every $j\in\N^*$, $t \in [0,\eta_M]$, $u \in L^1((0,t),\K)$ such that $\|U\|_{L^\infty} \leq \eta_M$,
	the sum in the right-hand side of (\ref{def:ZMtilde}) taken over 
	$r \in \intset{1,M}$,
	$m \in \intset{1, r}$,
	$\mathbf{r}\in\N^m_r$,
	$\ell_1,\dotsc,\ell_r \in \N$ and
	$k_1,\dotsc,k_r \in \N^*$ such that $k_1+\dotsc+k_r=j$, converges absolutely
	in ${\CC^{\omega,\rho'}(B_{3\delta};\K^d)}$ and its sum, denoted $\mathcal{Y}_M^{j}(t,f,U)$, satisfies
	\begin{equation} \label{opnorm:YMj}
		\opnorm{ \mathcal{Y}_M^{j}(t,f,U) }_{\rho'}  \leq C_M  \left( \frac{\|U\|_{{L^j(0,t)}}}{2\eta_M} \right)^j
	\end{equation}
	where $\rho'=\rho/e$. Moreover, $\mathcal{Y}_M(t,f,U)=\sum_{j\in\N^*}\mathcal{Y}_M^{j}(t,f,U)$
	where the sum converges absolutely in ${\CC^{\omega,\rho'}(B_{3\delta};\K^d)}$. 
\end{lemma}

\begin{proof}
	Let $\eta_M>0$ be as in \cref{Prop:Magnus_3}, $t \in [0,\eta_M]$ and $u \in L^1((0,t);\K)$ such that $\|U\|_{L^\infty}<\eta_M$.
	The sum involved in $\mathcal{Y}_M^{j}(t,f,U)$ converges absolutely in ${\CC^{\omega,\rho'}(B_{3\delta};\K^d)}$ because it is a subfamily of the one considered in \cref{Prop:Magnus_3}. By (\ref{CVA_YM(t,f,U)}), there exists $C_M>0$ (independent of $t$ and $U$)  such that, for every $j\in\N^*$, (\ref{opnorm:YMj}) holds. The non-decreasing of $q \in \intset{1, \infty} \mapsto \|\cdot\|_{{L^q(0,t)}}$ (since $t \leq 1$) gives the last conclusion. 
\end{proof}

\subsubsection{Identification procedure at the formal level}
\label{subsec:identification}

In this paragraph, we highlight at the formal level the link between $e^{U(t)X_1} e^{\mathcal{Y}_M(t,X,U)}$ and $e^{\mathcal{Z}_M(t,X,u)}$ in $\widehat{\mathcal{L}}(X)$.
We start with a new formal factorization, well adapted to estimates with respect to the primitive of the scalar input.

\begin{proposition} \label{Magnus_1.2_formal}
	Let $X = \{ X_0, X_1 \}$ and $u \in L^1(\R_+;\K)$.
For every $x^\star \in \widehat{\mathcal{A}}(X)$, the solution $x$ to the formal differential equation
\begin{equation}
    \begin{cases}
         \dot{x}(t) = x(t) (X_0 + u(t) X_1), \\
         x(0) = x^\star,
    \end{cases}
\end{equation}
satisfies, for every $t\in\R_+$,
\begin{equation} \label{eq:X0-Yinf-UX1}
    x(t)=x^\star \exp \left( t X_0 \right) \exp \left( \mathcal{Y}_\infty(t,X,U) \right) \exp \left( U(t) X_1 \right)
\end{equation}
where $\mathcal{Y}_\infty(t,X,U)\in\widehat{\mathcal{L}}(X)$ is defined by
$\mathcal{Y}_\infty(t,X,U) = \mathrm{Log}_\infty \{\beta_t\} (t)$ and $\beta_t:[0,t] \rightarrow \widehat{\mathcal{L}}(X)$ is defined by
\begin{equation}
    \beta_t(s)=e^{-(t-s)X_0} \left(e^{U(s)X_1} X_0 e^{-U(s) X_1} - X_0 \right) e^{(t-s) X_0}
    = \sum_{\substack{k\in\N^*\\ \ell \in \N }}\frac{(s-t)^\ell}{\ell!} \frac{U(s)^k}{k!}  \ad_{X_0}^\ell \ad_{X_1}^k (X_0)
\end{equation}
    i.e.\
\begin{equation}
    \begin{split}
\mathcal{Y}_\infty(t,X,U)=
 \sum 
\frac{(-1)^{m-1}}{r m}
&
\int_{{\Delta^{\mathbf{r}}(t)}} 
{\frac{(\tau_1-t)^{\ell_1}}{\ell_1!} \frac{U(\tau_1)^{k_1}}{k_1!}\dotsm \frac{(\tau_r-t)^{\ell_r}}{\ell_r!} \frac{U(\tau_r)^{k_r}}{k_r!}} \dd\tau
\\ & 
{\left[ \dotsb \left[ \ad_{X_0}^{\ell_1}(\ad_{X_1}^{k_1}(X_0)) ,\ad_{X_0}^{\ell_2}(\ad_{X_1}^{k_2}(X_0))\right],\dotsc,\ad_{X_0}^{\ell_r}(\ad_{X_1}^{k_r}(X_0))\right]}
    \end{split}
\end{equation}
where the sum is taken over 
$r \in \N^*$,
$m \in \intset{1,r}$,
$\mathbf{r}\in\N^m_r$,
$\ell_1,\dotsc,\ell_r \in \N$,
$k_1,\dotsc,k_r \in \N^*$.
\end{proposition}

\begin{proof}
    First, in the same way as \cref{thm:formal.log} has been generalized to an infinite alphabet in the proof of \cref{thm:Magnus1.0_formel}, it is possible to generalize \cref{thm:Magnus1.0_formel} to an infinite alphabet.

    The function $x_1:[0,T] \rightarrow \widehat{\mathcal{A}}(X)$ defined by
$x_1(t):=x(t)e^{-U(t)X_1}$ satisfies $x_1(0)=x^\star$ and
\begin{equation}
    \dot{x}_1= x_1(t) e^{U(t) X_1} X_0 e^{-U(t) X_1} = x_1(t) \left( X_0 + \sum_{k\in\N^*} \frac{U(t)^k}{k!} \ad_{X_1}^k(X_0) \right).
\end{equation}
This equation is of the form $\dot{x}_1(t) = x_1(t) (X_0 + \sum_{k \in \N^*} a_k(t) Y_k)$ for some indeterminates $Y_k$. Thus, \cref{thm:Magnus1.0_formel} (adapted to an infinite alphabet) and the homomorphism {of algebras} sending $Y_k$ to $\ad^k_{X_1}(X_0)$ prove that 
\begin{equation}x_1(t)=x^\star\exp(t X_0) \exp(\mathcal{Y}_\infty(t,X,U)).\end{equation}
which gives the conclusion.
\end{proof}

We now use the formal expansion \eqref{eq:X0-Yinf-UX1} to obtain an alternative formula for $\mathcal{Z}_\infty(t,X,u)$ defined by \cref{thm:Magnus1.0_formel}, in terms of the primitive of the scalar input. For $r,\nu \in \N$, we introduce the finite dimensional subspace of $\mathcal{L}(X)$ 
\begin{equation}
	\mathcal{L}_{r,\nu} (X):=\vect\{\eval(b); \enskip b\in \Br(X), n_0(b)=\nu, n_1(b)=r\}
\end{equation} 
and $P_{r,\nu}: \widehat{\mathcal{L}}(X) \rightarrow \mathcal{L}_{r,\nu}(X)$ the associated canonical projection. 

\begin{proposition} \label{Prop:Magnus1.1/1.2/CBH_formel}
	Let $X = \{ X_0, X_1 \}$, $T>0$, $u \in L^1((0,T);\K)$, $t\in [0,T]$, $\mathcal{Y}_\infty(t,X,U)$ defined by \cref{Magnus_1.2_formal} and $\mathcal{Z}_\infty(t,X,u)$  defined by \cref{thm:Magnus1.0_formel}. 
	Then, in $\widehat{\mathcal{L}}(X)$,
	\begin{equation} \label{Zinfty=CBH}
		\mathcal{Z}_\infty(t,X,u)=\CBHD_\infty\left(\mathcal{Y}_\infty(t,X,U) , U(t) X_1 \right).
	\end{equation}
	In particular, for every $M \in \N^*$, $r \in \intset{1,M}$ and $\nu \in \N$, 
	\begin{equation} \label{Zinfty=CBH_rnu}
		P_{r,\nu} \mathcal{Z}_{M}(t,X,u)=P_{r,\nu} \CBHD_M\left(\mathcal{Y}_M(t,X,U) , U(t) X_1 \right).
	\end{equation}
	In this statement, $\CBHD_\infty$ is defined in \cref{Cor:CBH_formel}, $\CBHD_M$ is its truncation used in \cref{Prop:CBH} and $\mathcal{Z}_M(t,X,u)$ is defined in  \cref{thm:Magnus1.0_formel} and used in \cref{Prop:Magnus_2}.
\end{proposition}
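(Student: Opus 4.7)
The identity \eqref{Zinfty=CBH} is obtained by comparing two factorizations of the same formal solution. Applying \cref{thm:Magnus1.0_formel} and \cref{Magnus_1.2_formal} (both with $x^\star = 1$) to the formal ODE $\dot x(t) = x(t)(X_0 + u(t)X_1)$, one gets
\begin{equation*}
\exp(tX_0)\exp(\mathcal{Z}_\infty(t,X,u)) = x(t) = \exp(tX_0)\exp(\mathcal{Y}_\infty(t,X,U))\exp(U(t)X_1).
\end{equation*}
Since $\exp(tX_0)$ has constant term $1$ and is therefore invertible in $\widehat{\mathcal{A}}(X)$, left-multiplying by $\exp(-tX_0)$ gives $\exp(\mathcal{Z}_\infty(t,X,u)) = \exp(\mathcal{Y}_\infty(t,X,U))\exp(U(t)X_1)$. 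Applying \cref{Cor:CBH_formel} to $y_1 = \mathcal{Y}_\infty(t,X,U)$ and $y_2 = U(t)X_1$ (both lie in $\widehat{\mathcal{L}}(X)$ and have zero constant term) rewrites the right-hand side as $\exp(\CBHD_\infty(\mathcal{Y}_\infty(t,X,U), U(t)X_1))$, and \cref{lemma:log_is_unique} then yields \eqref{Zinfty=CBH}.

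For \eqref{Zinfty=CBH_rnu}, the plan is to argue by homogeneity in the number of occurrences of $X_1$, relying on three elementary observations. First, inspection of formula \eqref{Loginfini(bt)} shows that every summand of $\mathcal{Z}_\infty(t,X,u)$ with outer index $r$ is an $r$-fold bracket of basic elements $\ad_{X_0}^{k_i}(X_1)$ each containing exactly one $X_1$, so the outer-index-$r$ part lies in $\bigoplus_{\nu}\mathcal{L}_{r,\nu}(X)$; hence $P_{r,\nu}\mathcal{Z}_M = P_{r,\nu}\mathcal{Z}_\infty$ for every $r \in \intset{1,M}$. Second, inspection of \eqref{def:ZMtilde} shows that every basic element $\ad_{X_0}^{\ell}\ad_{X_1}^{k}(X_0)$ arising in $\mathcal{Y}_\infty(t,X,U)$ satisfies $n_1 = k \geq 1$, so a summand with outer index $r'$ has $n_1 \geq r'$; this yields $P_{s,\nu}\mathcal{Y}_M = P_{s,\nu}\mathcal{Y}_\infty$ for every $s \leq M$. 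Third, any iterated bracket appearing in $\CBHD_\infty(\mathcal{Y}_\infty, U(t)X_1)$ with $n_A$ copies of $\mathcal{Y}_\infty$ and $n_B$ copies of $U(t)X_1$ has $n_1 \geq n_A + n_B$, since each copy of $\mathcal{Y}_\infty$ contributes at least one $X_1$ and each copy of $U(t)X_1$ contributes exactly one; so the projection onto $P_{r,\nu}$ with $r \leq M$ kills every bracket with $n_A + n_B > M$, that is, the truncation from $\CBHD_\infty$ to $\CBHD_M$ is invisible to $P_{r,\nu}$. Combining these three observations with \eqref{Zinfty=CBH} delivers \eqref{Zinfty=CBH_rnu}.

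The main obstacle is purely bookkeeping: three different notions of truncation (outer-index truncation in $\mathcal{Z}_M$ and $\mathcal{Y}_M$, and argument-degree truncation in $\CBHD_M$) must all be shown compatible with the fixed projection $P_{r,\nu}$. A secondary subtlety is that substituting the formal series $\mathcal{Y}_\infty$ (and later its truncation $\mathcal{Y}_M$) as an argument of $\CBHD_M$ has to be justified: this is handled by the observation that, within each bracket contributing to $P_{r,\nu}$ with $r \leq M$, the outer index of each individual copy of $\mathcal{Y}_\infty$ is forced to be at most $r \leq M$, so replacing $\mathcal{Y}_\infty$ by $\mathcal{Y}_M$ after projection is a tautology.
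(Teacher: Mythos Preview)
Your proof is correct and follows essentially the same approach as the paper's own proof: derive \eqref{Zinfty=CBH} by comparing the two exponential factorizations of the formal solution and invoking \cref{Cor:CBH_formel}, then obtain \eqref{Zinfty=CBH_rnu} by the three homogeneity observations that $\mathcal{Z}_\infty-\mathcal{Z}_M$, $\mathcal{Y}_\infty-\mathcal{Y}_M$, and the tail $\CBHD_\infty-\CBHD_M$ applied to $(\mathcal{Y}_\infty,U(t)X_1)$ all land in $\bigoplus_{r'>M}\mathcal{L}_{r',\nu}(X)$ and are therefore killed by $P_{r,\nu}$ when $r\leq M$. Your bookkeeping is slightly more explicit than the paper's (you spell out the cancellation of $\exp(tX_0)$ and the invocation of \cref{lemma:log_is_unique}), but the logical structure is identical.
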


\begin{proof}
	We deduce from \cref{Magnus_1.2_formal} and \cref{thm:Magnus1.0_formel} that 
	\begin{equation}\exp(\mathcal{Z}_\infty(t,X,u))=\exp(\mathcal{Y}_\infty(t,X,U)) \exp(U(t) X_1).\end{equation}
	Thus \cref{Cor:CBH_formel} proves (\ref{Zinfty=CBH}). 
	Let  $M \in \N^*$, $r \in \intset{1,M}$, $\nu \in \N$. We deduce from (\ref{Zinfty=CBH}) that
	\begin{equation}P_{r,\nu} \mathcal{Z}_\infty(t,X,u)= P_{r,\nu} \CBHD_\infty\left(\mathcal{Y}_\infty(t,X,U) , U(t) X_1 \right).\end{equation}
	By definition, $\mathcal{Z}_\infty(t,X,u)-\mathcal{Z}_{M}(t,X,u)$ is a linear combination of brackets all involving at least $(M+1)$ occurrences of $X_1$, thus $P_{r,\nu} \mathcal{Z}_\infty(t,X,u)=P_{r,\nu} \mathcal{Z}_{M}(t,X,u)$. By definition, $\mathcal{Y}_\infty(t,X,U)$ is a sum of brackets involving all at least one occurrence of $X_1$, thus
	\begin{equation}
		P_{r,\nu} \CBHD_\infty\left(\mathcal{Y}_\infty(t,X,U) , U(t) X_1 \right)
		=P_{r,\nu} \CBHD_M\left(\mathcal{Y}_\infty(t,X,U) , U(t) X_1 \right).
	\end{equation}
	Moreover $\mathcal{Y}_\infty(t,X,U)-\mathcal{Y}_M(t,X,U)$ is a linear combination of brackets involving all at least $(M+1)$ occurrences of $X_1$ thus
	\begin{equation}P_{r,\nu} \CBHD_M\left(\mathcal{Y}_\infty(t,X,U) , U(t) X_1 \right)=P_{r,\nu} \CBHD_M\left(\mathcal{Y}_M(t,X,U) , U(t) X_1 \right),\end{equation}
	which ends the proof of (\ref{Zinfty=CBH_rnu}).
\end{proof}

 \subsubsection{Error formula for analytic vector fields}
 \label{subsec:ym-cbh-error}
 
We prove in \cref{thm:uym-zm-cbh-f} an error bound between $e^{U(t)f_1} e^{\mathcal{Y}_M(t,f,U)}$ and $e^{\mathcal{Z}_M(t,f,u)}$.

\begin{proposition} \label{thm:uym-zm-cbh-f}
	Let $\delta, \rho > 0$, $f_0, f_1 \in {\CC^{\omega,\rho}(B_{3\delta};\K^d)}$.
	For every $M\in \N$, there exist $\eta_M, C_M>0$ such that, for every
	$t\in [0,\eta_M]$, $p \in B_\delta$ and
	$u\in L^1((0,t);\K)$ such that $\|U\|_{L^\infty}\leq\eta_M$,
	\begin{equation}
		\left| e^{U(t)f_1}e^{\mathcal{Y}_M(t,f,U)}e^{tf_0} p
		- e^{\mathcal{Z}_M(t,f,u)} e^{tf_0} p 
			\right|
			\leq C_M \left(|U(t)|^{M+1}+ \int_0^t |U|^{M+1} \right).
	\end{equation} 
\end{proposition}

\begin{proof}
	We split the difference as 
	\begin{equation}
		\begin{split}
			e^{U(t) f_1} & e^{\mathcal{Y}_M(t,f,U)}  e^{t f_0} p -
			e^{\CBHD_M(\mathcal{Y}_M(t,f,U) ,U(t) f_1 )} e^{t f_0} p
			\\ &  
			+e^{\CBHD_M(\mathcal{Y}_M(t,f,U) ,U(t) f_1 )} e^{t f_0} p
			- e^{\mathcal{Z}_M(t,f,u)}  e^{t f_0} p.
		\end{split}
	\end{equation}
	Taking into account that $\|\mathcal{Y}_M(t,f,U)\|_{\CC^{M^2}} \leq C \|U\|_{L^1(0,t)}$, the first line is bounded by \cref{Prop:CBH}.
	Using Gr\"onwall's lemma and \cref{Prop:Magnus1.1/1.2/chpvect} bounds the second line.
\end{proof}
 
\begin{proposition} \label{Prop:Magnus1.1/1.2/chpvect}
    Let $\delta, \rho > 0$, $f_0, f_1 \in {\CC^{\omega,\rho}(B_{3\delta};\K^d)}$ and $\rho':=\rho/e$.
    For every $\rho'' \in (0,\rho')$, $M\in \N$, there exist $\eta_M, C_M>0$ such that, for every
$t\in [0,\eta_M]$,
$u\in L^1((0,t);\K)$ such that $\|U\|_{L^\infty}\leq\eta_M$,
\begin{equation} \label{Majo:ZM-CBHDM}
\opnorm{ \mathcal{Z}_{M}(t,f,u) - \CBHD_M\left(\mathcal{Y}_M(t,f,U) , U(t) f_1 \right) }_{\rho''} \leq C_M \left(|U(t)|^{M+1}+ \int_0^t |U|^{M+1} \right).
\end{equation} 
In particular, $\mathcal{Z}_{M}(t,f,u)$ is the sum of the terms homogeneous with degree at most $M$ with respect to $U$ in $\CBHD_M\left(\mathcal{Y}_M(t,f,U) , U(t) f_1 \right)$.
\end{proposition}

\begin{proof}
	\emph{Step 1: Finite approximation of $\mathcal{Y}_M(t,f,U)$.}
	First, by \cref{Prop:YMj}, one can write
	\begin{equation}
		\mathcal{Y}_M(t,f,U) = \sum_{j=1}^{M} \mathcal{Y}^j_M(t,f,U)  + \sum_{j>M} \mathcal{Y}^j_M(t,f,U)
		=: \overline{\mathcal{Y}}_M(t,f,U)  + \mathcal{R}_M(t,f,U),
	\end{equation}
	where the remainder satisfies $\opnorm{\mathcal{R}_M(t,f,U)}_{\rho'} \leq C \| U \|_{{L^{M+1}(0,t)}}^{M+1}$.
	By the triangular and Young inequalities, it is therefore sufficient to prove \eqref{Majo:ZM-CBHDM} with $\mathcal{Y}_M$ replaced by the finite truncation $\overline{\mathcal{Y}}_M(t,f,U)$.
	
	\medskip
		
\noindent \emph{Step 2: Identification at the free level.}
Let $\Lambda:\mathcal{L}(X) \rightarrow {\CC^{\omega}(B_{3\delta};\K^d)}$ be the homomorphism of Lie {algebras} such that $\Lambda(X_i)=f_i$. The relation (\ref{Zinfty=CBH_rnu}) is made of finite linear combinations of brackets of $X_0$ and $X_1$. Let $M\in\N$. By applying $\Lambda$ to this equality, we get, for every $r \in \intset{1,M}$, $\nu \in \N$
\begin{equation}P_{r,\nu} \mathcal{Z}_{M}(t,f,u)=P_{r,\nu} \CBHD_M\left(\overline{\mathcal{Y}}_M(t,f,U) , U(t) f_1 \right).
\end{equation}
By definition
\begin{equation}\mathcal{Z}_{M}(t,f,u) = \sum_{\nu\in\N} \sum_{r=1}^M P_{r,\nu} \mathcal{Z}_{M}(t,f,u) 
\end{equation}
where the sum converges in ${\CC^{\omega,\rho'}(B_{3\delta};\K^d)}$ for appropriate $\rho'\in(0,\rho)$, by \cref{Prop:expansion_ZM_1.1_analytic}.
Thus, with the notations of \eqref{eq:CBH-Fqh},
\begin{equation} \label{eq:zmcbh-jh1h2}
	\mathcal{Z}_{M}(t,f,u) - \CBHD_M\left(\overline{\mathcal{Y}}_M(t,f,U) , U(t) f_1 \right)
	= \sum_{j h_1 + h_2 > M} F_{2,h}(\mathcal{Y}^j_M(t,f,U), U(t) f_1),
\end{equation}
where the sum is taken over $j, h_1, h_2 \in \intset{1,M}$.

\medskip

\noindent \emph{Step 3: Proof of (\ref{Majo:ZM-CBHDM}).}
From now on, $\eta_M>0$ is given by \cref{Prop:Magnus_3} and \cref{Prop:YMj}, $t \in [0,\eta_M]$, $u \in L^1((0,t);\K)$ is such that $\|U\|_{L^\infty}<\eta_M$ and $\rho'' \in (0,\rho')$.
For each term in the finite sum \eqref{eq:zmcbh-jh1h2}, one has, thanks to \cref{Prop:YMj},
\begin{equation} 
	\begin{split}
		\opnorm{F_{2,h}(\mathcal{Y}^j_M(t,f,U), U(t) f_1)}_{\rho''}
		& \leq 
		C \opnorm{\mathcal{Y}^j_M(t,f,U)}_{\rho'}^{h_1} 
		\opnorm{U(t)f_1}_{\rho'}^{h_2} \\
		& \leq C' \| U \|_{{L^j(0,t)}}^{jh_1} |U(t)|^{h_2}
		\leq C' \| U \|_{{L^{M+1}(0,t)}}^{jh_1} t^{h_1 - \frac{jh_1}{M+1}} |U(t)|^{h_2}
	\end{split}
\end{equation}
which concludes the proof thanks to Young's inequality since $jh_1+h_2 \geq M+1$.
\end{proof}

\subsection{Sussmann's infinite product expansion}

When the input is scalar, the estimates of the coordinates obtained in \cref{Lem:Coord2.0:borne_Sk} can be enhanced to involve only the primitive of the input, at least for \GHBS\ where $X_1$ is minimal, which in turn improves the estimate of \cref{Prop:error_u_Prod} (see \cref{Prop:error_u1_Prod} below).
The hypothesis that $X_1$ is the minimal element can be seen as the formal counterpart of the auxiliary system trick of \cref{sec:trick-x1}.

\begin{lemma} \label{Lem:Coord2.0:borne_Sk_u1}
    Let $X=\{X_0,X_1\}$, $\mathcal{B}$ be a \GHB\ of $\mathcal{L}(X)$ for which $X_1$ is the minimal element and $(\xi_b)_{b\in\mathcal{B}}$ the associated coordinates of the second kind.
    For every $k \geq 1$, there exists $c_k \geq 1$ such that, for each $b\in\mathcal{B}\setminus X$ with $n(b)=k$, $T>0$, $u \in L^1((0,T);\K)$ and $t \in [0,T]$,
\begin{equation} \label{Coord2.0:borne_Sk_xi_u1}
     | \xi_b(t,1,u) | \leq c_k \|U\|_{{L^k(0,t)}}^{k}
     \frac{(c_k t)^{n_0(b)-1}}{(n_0(b)-1)!}
\end{equation}
and
\begin{equation}
    \label{Coord2.0:borne_Sk_xi'_u1}
    | \dot{\xi}_b(t;1,u) | \leq
    \begin{cases}
        c_k |U(t)|^k & \textrm{when } n_0(b) = 1, \\
        c_k |U(t)|^k \frac{(c_k t)^{n_0(b)-1}}{(n_0(b)-1)!}
        + c_k^2 \|U\|_{{L^k(0,t)}}^{k} \frac{(c_k t)^{n_0(b)-2}}{(n_0(b)-2)!} & \textrm{when } n_0(b) \geq 2.
    \end{cases} 
\end{equation}
\end{lemma}

\begin{proof}
    As for \cref{Lem:Coord2.0:borne_Sk}, estimate \eqref{Coord2.0:borne_Sk_xi_u1} is obtained, for each $b$, by time integration of \eqref{Coord2.0:borne_Sk_xi'_u1}.
    Moreover, still as in \cref{Lem:Coord2.0:borne_Sk}, both estimates are invariant by right-bracketing with $X_0$, and also by arbitrary long left-bracketing with $X_0$, up to $c_k \leftarrow 2 c_k$. Let us prove (\ref{Coord2.0:borne_Sk_xi_u1}) and (\ref{Coord2.0:borne_Sk_xi'_u1}) by induction on $k$.

\medskip

\noindent \emph{Initialization for $k=1$.} We have $\xi_{X_1}(t)=U(t)$ and $\dot{\xi}_{[X_1,X_0]}(t)=U(t)$.
Hence $[X_1,X_0] \in \mathcal{B}$ (because $X_1 < X_0$) satisfies both estimates. By \cref{lm:malabar}, when $n(b) = 1$, there exist $m, \overline{m} \in \N$ such that $b = \ad^m_{X_0} \dad^{\overline m}_{X_0} (X_1)$. 
Since $X_1$ is minimal, if $b \neq X_1$, $\overline m > 0$.
Thus, by the previous invariant properties, we get the conclusion with $c_1:=2$.

\medskip

\noindent \emph{Induction $(k-1)\rightarrow k$.} Let $k \geq 2$ and let us assume that the two estimates are proved for every $b \in \mathcal{B} \setminus X$ with $n(b) \leq (k-1)$. 
Let $b \in \mathcal{B}$ with $n(b)=k$.
By \cref{lm:malabar} and the previous invariant properties, we may assume that $b = \ad^m_{b_1}(b_2)$ with $m \in \N^*$, $b_1 < b_2 \in \mathcal{B}$, $b_1 \neq X_0$, ($b_2 \in X$ or $\lambda(b_2) < b_1$) and $(b_2 \neq X_0$ or $m > 1$).

\begin{itemize}
    \item If $b_1 = X_1$, then $b_2 = X_0$ (otherwise, if $b_2 \notin X$, $\lambda(b_2) < X_1$, which is impossible since $X_1$ is minimal). Thus
    \begin{equation}
        |\dot \xi_b(t)| = \frac{|U(t)|^m}{m!}
    \end{equation}
    so \eqref{Coord2.0:borne_Sk_xi'_u1} with $c_k = 1$ holds since $n_0(b) = 1$ and $k = m$.
    
    \item If $b_1 \neq X_1$, then $b_1$ satisfies \eqref{Coord2.0:borne_Sk_xi'_u1} for some $k_1 \in \intset{1, k-1}$. Moreover, either $b_2 = X_0$ or $b_2 \notin X$ (because it cannot be $X_1$). The case ($b_2 = X_0$ and $m > 1$) is easier and left to the reader. Thus we are left with the case where $b_2$ satisfies \eqref{Coord2.0:borne_Sk_xi'_u1} for some $k_2 \in \intset{1, k-1}$. One has $k = m k_1 + k_2$ and $\nu := n_0(b) = m n_0(b_1) + n_0(b_2) =: m \nu_1 + \nu_2$.
    Thus,
    \begin{equation}
        |\dot \xi_b(t)|
        \leq \frac{c_{k_1}^m \| U \|_{L^{k_1}}^{m k_1}}{m!} \frac{(c_{k_1} t)^{m \nu_1-m}}{(\nu_1-1)!^m}
        \left(c_{k_2} |U(t)|^{k_2} \frac{(c_{k_2} t)^{\nu_2-1}}{(\nu_2-1)!}
        + c_{k_2}^2 \|U\|_{L^{k_2}}^{k_2} \frac{(c_{k_2} t)^{\nu_2-2}}{(\nu_2-2)!}\mathbf{1}_{\nu_2 \geq 2}\right).
    \end{equation}
    Thanks to Hölders' inequality,
    \begin{equation}
        \| U \|_{L^{k_1}}^{m k_1} \| U \|_{L^{k_2}}^{k_2} 
        \leq \| U \|_{L^k}^{k} t^m.
    \end{equation}
    Thanks to Hölder's inequality and Young's inequality,
    \begin{equation}
        \| U \|_{L^{k_1}}^{m k_1} |U(t)|^{k_2}
        \leq t^m \left( \| U \|_{L^k}^k t^{-1} + |U(t)|^k \right).
    \end{equation}
    Moreover, thanks to \eqref{factorielle_multiple}, for $i \in \intset{1,2}$,
    \begin{equation}
        \frac{1}{m!}\frac{1}{(\nu_1-1)!^m}\frac{1}{(\nu_2-i)!} 
        \leq 2^{(m+1)(\nu-i)} \frac{1}{(\nu-i)!}.
    \end{equation}
    Combining these inequalities proves \eqref{Coord2.0:borne_Sk_xi'_u1} with $c_k := 2^{k+2} \max \{ c_j; j \in \intset{1,k-1} \}$. 
\end{itemize}
\end{proof}

These enhanced estimates yield the following result.

\begin{proposition} \label{Prop:error_u1_Prod}
    Let $X=\{X_0,X_1\}$, $\mathcal{B}$ a \GHB\ of $\mathcal{L}(X)$ for which $X_1$ is the minimal element and $(\xi_b)_{b\in\mathcal{B}}$ the associated coordinates of the second kind. 
    Let $r,\delta>0$, $f_0,f_1 \in {\CC^{\omega,r}(B_{4\delta};\K^d)}$.
    For each $M \in \N^*$, there exist $\eta_M, C_M > 0$ such that, for every $u \in L^1((0,T);\K)$ with $T \leq \eta_M$ and $\|U\|_{L^{M+1}(0,T)} \leq \eta_M$, the ordered product of the $e^{\xi_b(t,1,u) f_b}$ over the infinite set {$\mathcal{B} \cap S_M = \{ b \in \mathcal{B} ; n(b) \leq M \}$ (using \cref{def:SM})} converges uniformly on $B_\delta$ and, for each $t \in [0,T]$ and $p\in B_\delta$,
 	\begin{equation} \label{eq:Prop:error_u_Prod_U1}
 		\left| x(t;f,u,p) - \underset{b \in \mathcal{B} \cap S_M}{\overset{\rightarrow}{\Pi}} e^{\xi_b(t,1,u) f_b } p \right| \leq C_M \|U\|_{L^{M+1}(0,t)}^{M+1}. 
 	\end{equation}
\end{proposition}

\begin{proof}
    The proof is the same as the proof of \cref{Prop:error_u_Prod}. 
    The only difference is that we use estimates of \cref{Lem:Coord2.0:borne_Sk_u1} instead of those of \cref{Lem:Coord2.0:borne_Sk}.
    The fact that these enhanced estimates are not valid for $b \in X$ doesn't come into play. 
    Indeed, neither $X_0$ nor $X_1$ are involved in the final error term \eqref{eq:final-error}.
\end{proof}

\subsection{Failure of the primitive estimate for multiple inputs}
\label{sec:u1.multi}

\cref{Magnus3_Cor1} relying only on the primitive of the input is specific to the scalar-input case and fails for multiple inputs. As an illustration, for $\delta > 0$ and $f_0, f_1 \in {\CC^\infty(B_\delta;\K^d)}$, in the degenerate case $M = 0$ and the particular case $f_0(0) = 0$, $p = 0$, estimate \eqref{Magnus_2_Cor} implies that, for every $T > 0$, there exists $C_T > 0$ such that, for $t \in [0,T]$ and $u \in L^1(0,T)$ with $\| U \|_{L^\infty} \leq 1$,
\begin{equation}
 | x(t; u, 0) | \leq C_T \| U \|_{L^\infty}.
\end{equation}
As illustrated by the following example, even this very crude estimate fails for multiple inputs, because the $W^{-1,\infty}$ norms are not sufficient to bound the nonlinear terms arising in the dynamic.

\begin{example}
 Let $T > 0$ and consider the following system on $\R^2$:
 \begin{equation}
  \left\{ \begin{aligned}
   \dot x_1 & = u, \\
   \dot x_2 & = v x_1,
  \end{aligned} \right.
 \end{equation}
 where $u$ and $v$ are two scalar inputs. There exists $u_n, v_n \in L^1(0,T)$ such that
 \begin{equation} \label{eq:failure_unvn}
  \| U_n \|_{L^\infty} + \| V_n \|_{L^\infty} \to 0
  \quad \text{and} \quad 
  | x(t;(u_n,v_n),0) | \not\to 0,
 \end{equation}
 where $U_n$ is the primitive of $u_n$ and $V_n$ the primitive of $V_n$.
 Indeed, let $n \in \N^*$ and define $u_n(t) := n \cos n^2 t$ and $v_n(t) := n \sin n^2 t$. Then one has
 \begin{equation}
  \|U_n\|_{L^\infty} + \|V_n\|_{L^\infty} \leq \frac{2}{n}.
 \end{equation}
 Moreover, $x_1(t) = U_n(t) = (\sin n^2 t) / n$ and
 \begin{equation}
  x_2(T) = \int_0^T v_n(t) U_n(t) \dd t
  = \int_0^T \sin^2 (n^2 t) \dd t
  \to \frac{T}{2},
 \end{equation}
 as $n \to +\infty$. This proves \eqref{eq:failure_unvn}.
\end{example}

\begin{remark}
 Although \cref{Magnus3_Cor1} does not hold for multiple inputs, we expect that the proof method can be adapted to obtain asymmetric estimates, involving for example $\|U\|_{L^\infty} + \|v\|_{L^\infty}$ in the two-inputs case (or the converse). Such asymmetric estimates have been used successfully to obtain sharp results for particular control systems in \cite{giraldi2019necessary}.
\end{remark}

\section{On direct intrinsic representations of the state}
\label{sec:diffeo}

The expansions studied above in this article unfortunately don't provide a direct intrinsic representation of the state. 
The Magnus and Sussmann expansions are given with intrinsic quantities (Lie brackets of the vector fields) but they require to compute one or multiple flows in order to recover the state. 
The Chen-Fliess expansion gives directly a formula for the state, but it depends on non-intrinsic quantities (see \cref{rk:cf-drawbacks} and \cref{rk:chen.gb.fb}). 
In this section, we investigate the possibility of finding a direct intrinsic formula for the state. 
We discuss this possibility in the context of affine systems.

\subsection{Approximate direct intrinsic representations}
\label{subsec:X/Z}
 
We prove in this section approximate  direct intrinsic representations which achieve the desired goal up to a small error.
We believe that the formulas we derive can be of interest for applications to control theory as they give approximate expressions for the state in terms of the inputs and Lie brackets of the vector fields evaluated at the origin.

We start with an elementary result, which bounds the error when replacing a flow by the value of the vector field. 

\begin{lemma} \label{Lem_oZ}
 Let $\delta>0$ and $z \in {\CC^1(B_{\delta};\K^d)}$  such that $\|z\|_{\CC^0} \leq \delta$. Then
 \begin{equation}
     |e^{z}(0)-z(0)| \leq |z(0)| \|Dz\|_{\CC^0} e^{\|Dz\|_{\CC^0}}.
 \end{equation}
 \end{lemma}
 
 \begin{proof}
    Let $x(t):=e^{tz}(0)$ for $t \in [0,1]$. 
    Then, for every $t\in[0,1]$, 
 \begin{equation}
     |x(t)-tz(0)| \leq 
 \int_0^t |z(x(\tau)) - z(0)| \dd\tau \leq
 \frac{t^2}{2} \|Dz\|_{\CC^0} |z(0)| +
 \int_0^t \|Dz\|_{\CC^0} |x(\tau)-\tau z(0)| \dd\tau
 \end{equation}
 and by Gr\"onwall's lemma,
 $|x(t)-tz(0)|\leq \frac{t^2}{2} \|Dz\|_{\CC^0} |z(0)| e^{t \|Dz\|_{\CC^0}}$.
 \end{proof}
 
This elementary estimate allows to obtain approximate direct intrinsic representations from the various Magnus expansions described above.

\begin{proposition} \label{Prop:Approx_intrinsic_repr}
Let $M\in\N^*$, $\delta>0$ and $q\in\N^*$.
 \begin{enumerate}
     \item Let $I = \intset{0,q}$ or $I = \intset{1,q}$. 
     Let $f_i \in {\CC^{M^2}(B_{\delta};\K^d)}$ for $i \in I$. 
     For $T>0$ and $u \in L^\infty((0,T);\K^q)$, if $x(t;f,u,0)$ denotes the solution to \eqref{affine_syst_q} with $p=0$ and $Z_M(t,f,u)$ denotes the vector field defined in \cref{Prop:Magnus_1} (called $Z_M(t,\sum_{i\in I} u_i f_i)$ in this statement), then, as $T \to 0$,
     \begin{equation}
        x(t;f,u,0)=Z_M(t,f,u)(0)+O\left(t^{M+1} + t |x(t;f,u,0)| \right). 
    \end{equation}
     in the following sense: there exist $C, \eta>0$ such that, for every $T \in (0,\eta]$ and $u \in L^\infty((0,T);\K^q)$ with $\|u\|_{L^\infty}\leq 1$, for each $t \in [0,T]$,
     \begin{equation} \label{approx_repr_1.0}
     \left|x(t;f,u,0)-Z_M(t,f,u)(0)\right|\leq C \left( t^{M+1} + t |x(t;f,u,0)| \right).\end{equation}
     \item Let $T>0$, $f_0,\dotsc,f_q \in {\CC^{M^2+1}(B_{2\delta};\K^d)}$ with $f_0(0)=0$ and $T\|f_0\|_{\CC^0} \leq \delta$. 
     For $u \in L^1((0,T);\K^q)$, if $x(t;f,u,0)$ denotes the solution to \eqref{eq:affine} with $p=0$ and $\mathcal{Z}(t;f,u)$ denotes the vector field defined in \cref{thm:Magnus_2_u}, then, as $\|u\|_{L^1} \to 0$,
     \begin{equation}x(t;f,u,0)=\mathcal{Z}_M(t,f,u)(0)+O\left(\|u\|_{{L^1(0,t)}}^{M+1} + |x(t;f,u,0)|^{{1+\frac{1}{M}}} \right).
     \end{equation}
     \item Let $f_0, f_1 \in {\CC^{\omega}(B_{3\delta};\K^d)}$ with $f_0(0)=0$.
     Let $T > 0$ as in \cref{Prop:trick}.
     For $u \in L^1((0,T);\K)$, if $x(t;f,u,0)$ denotes the solution to \eqref{Scalar_affine_syst} with $p=0$ and $\mathcal{Z}(t,f,u)$ denotes the vector field defined in \cref{thm:Magnus_2_u} (with $q=1$), then, as $(T,\|U\|_{L^\infty}) \to 0$,
     \begin{equation} \label{eq:xzmUMespilon}
     x(t;f,u,0)=\mathcal{Z}_M(t,f,u)(0)+O\left( \|U\|_{{L^{M+1}(0,t)}}^{M+1} + |x(t;f,u,0)|^{{1+\frac{1}{M}}} \right),
     \end{equation}
 \end{enumerate}
\end{proposition}
 
 \begin{proof}
 \noindent \emph{Proof of the first statement.}
 By \cref{Prop:Magnus_1}, there exists $C_1>0$ and $T^* >0$ 
such that for every $u\in L^\infty((0,T^*);\K^q)$ with $\|u\|_{L^\infty} \leq 1$ and $t \in [0,T^*]$,
 \begin{equation}
 \left|x(t;f,u,0)-e^{Z_M(t,f,u)}(0) \right| \leq C_1 t^{M+1}. 
 \end{equation}
By the explicit expression of $Z_M(t,f,u)$, there exists $C_2>0$ such that for every $u\in L^\infty((0,T^*);\K^q)$ with $\|u\|_{L^\infty} \leq 1$ and $t \in [0,T^*]$,
\begin{equation}\|Z_M(t,f,u)\|_{\CC^1} \leq C_2 t.\end{equation}
Thus, by \cref{Lem_oZ}, there exists $C_3>0$ such that, for every for every $u\in L^\infty((0,T^*);\K^q)$ with $\|u\|_{L^\infty} \leq 1$ and $t \in [0,T^*]$,
\begin{equation}\left| e^{Z_M(t,f,u)}(0)- Z_M(t,f,u)(0)\right| \leq C_3 t \left|Z_M(t,f,u)(0) \right|. \end{equation}
Then, by triangular inequality, for every $u\in L^\infty((0,T^*);\K^q)$ with $\|u\|_{L^\infty} \leq 1$ and $t \in [0,T^*]$
\begin{equation}\left|x(t;f,u,0)- Z_M(t,f,u)(0)\right| \leq C_1 t^{M+1} + C_3 t |Z_M(t,f,u)(0)|\end{equation}
and in particular, for $t \leq T \leq 1/(2C_3)$
 \begin{equation}\left|Z_M(t,f,u)(0)\right| \leq 2 \left|x(t;f,u,0)\right| + 2 C_1 t^{M+1}.
 \end{equation}
This gives (\ref{approx_repr_1.0}) with $C=\max\{2C_1;2C_3\}$ and $\eta:=\min\{T^*,1/(2C_3)\}$.
 
 \medskip
 
 \noindent \emph{Proof of the second statement.}
 The strategy is the same: one starts from the estimate in \cref{thm:Magnus_2_u}, then applies \cref{Lem_oZ} to $\mathcal{Z}_M(t,f,u)$ and concludes thanks to the following estimate, implied by the explicit expressions of the vector field
 \begin{equation}
 \| \mathcal{Z}_M(t,f,u) \|_{\CC^1}=\underset{\|u\|_{L^1} \to 0}{O}\left( \|u\|_{L^1(0,t)} \right),
 \end{equation}
 {and Young's inequality.}

\medskip

\noindent \emph{Proof of the third statement.}  
First, one can assume that $f_1(0) \neq 0$. 
Indeed, otherwise, both $x$ and $\mathcal{Z}_M$ vanish identically, so the desired estimate is void.
Using \cref{Prop:Magnus1.1/1.2/chpvect} and the explicit expression of the vector field $\CBHD_M\left(\mathcal{Y}_M(t,f,U) , U(t) f_1 \right)$, we obtain in the asymptotics $(t,\|U\|_{L^\infty})\rightarrow 0$
\begin{equation}
    \left\| \mathcal{Z}_{M}(t,f,u) \right\|_{\CC^1} = O \left( |U(t)|+\|U\|_{L^1(0,t)} \right).
\end{equation}
Thus, using $f_0(0)=0$, \cref{Magnus3_Cor1} and the same strategy as above, we obtain in the asymptotics $(t,\|U\|_{L^\infty})\rightarrow 0$
\begin{equation}x(t;f,u,0)=\mathcal{Z}_M(t,f,u)(0)+O\left(|U(t)|^{M+1} + \int_0^t |U|^{M+1} +  \left( |U(t)|+\|U\|_{L^1} \right)|x(t;f,u,0)| \right). \end{equation}
The following proposition and {Young's} inequality give the conclusion.
\end{proof}

\begin{proposition}
    Let $\delta>0$, $f_0,f_1\in {\CC^{\omega}(B_{\delta};\K^d)}$ with $f_0(0)=0$ and $f_1(0)\neq 0$. There exists $T,\eta,C>0$ such that, for every $u \in L^1((0,T),\K)$ with $\|U\|_{L^\infty}<\eta$ and $t \in [0,T]$,
    \begin{equation}
        |U(t)| \leq C \left( |x(t;f,u,0)|+ \|U\|_{{L^1(0,t)}} \right).
    \end{equation}
\end{proposition}

\begin{proof}
    With the notations of \cref{Prop:trick},  $x(t;f,u,0)=e^{U(t) f_1} x_1(t;F,U,0)$ tends to zero when $\|U\|_{L^\infty} \rightarrow 0$. 
A Taylor expansion of order 2 in $x(t;f,u,0)=e^{U(t) f_1} x_1(t;F,U,0)$ provides $C_1>0$ such that, for every $t\in[0,T]$ and $u \in L^1((0,T);\K)$ such that $\|U\|_{L^\infty} \leq \eta^*$,
\begin{equation} \label{x(t)=x1+Uf10}
\left|x(t;f,u,0)-x_1(t;F,U,0)-U(t) f_1(0) \right| \leq C_1|U(t)|^2
+ C_1 |U(t)| |x_1(t;F,U,0)|.
\end{equation}
Moreover, by Gr\"onwall's lemma, there exists $C_2>0$ such that
\begin{equation} \label{x1<UL1}
\left|x_1(t;F,U,0)\right| \leq C_2 \|U\|_{L^1(0,t)}.
\end{equation}
Let $P:\K^d \rightarrow \K^d$ defined by $P(y)=\langle y , f_1(0) \rangle/|f_1(0)|^2$. Applying $P$ to the vector in the left-hand side of (\ref{x(t)=x1+Uf10}) and using (\ref{x1<UL1}), we get the conclusion, when $\|U\|_{L^\infty}$ is small enough.
\end{proof}

{
Under additional nilpotency assumptions, one can omit the truncation errors in the representation formulas of \cref{Prop:Approx_intrinsic_repr}.

\begin{corollary}
    Under the same assumptions as in \cref{Prop:Approx_intrinsic_repr}.
    \begin{enumerate}
        \item Assume moreover that $\mathcal{L}(\{ f_i ; i \in I \})$ is nilpotent of index at most $M+1$. Then, as $T \to 0$,
        \begin{equation}
            x(t;f,u,0) = Z_M(t,f,u)(0) + O\left(t|x(t;f,u,0)|\right).
        \end{equation}
        
        \item Assume moreover that $f_i \in \CC^\omega(B_{4\delta};\K^d)$ for $i \in I := \intset{0,q}$ and that $\{ f_i ; i \in \intset{1,q} \}$ is semi-nilpotent of index at most $M+1$ with respect to $f_0$. 
        Then, as $\|u\|_{L^1} \to 0$,
        \begin{equation}
            x(t;f,u,0) = \mathcal{Z}_M(t,f,u)(0) + O\left(\|u\|_{L^1(0,t)} |x(t;f,u,0)|\right).
        \end{equation}
        
        \item Assume moreover that $f_0,f_1 \in \CC^\omega(B_{4\delta};\K^d)$ and that $\{ f_1 \}$ is semi-nilpotent of index at most $M+1$ with respect to $f_0$. Then, as $(T,\|U\|_{L^\infty}) \to 0$,
        \begin{equation}
            x(t;f,u,0) = \mathcal{Z}_M(t,f,u)(0) + O\left((\|U\|_{L^1(0,t)}+|U(t)|)|x(t;f,u,0)|\right).
        \end{equation}
    \end{enumerate}
\end{corollary}

\begin{proof}
    These are straightforward consequences of \cref{thm:magnus1.nilpotent} (for the first item) and \cref{thm:analytic+brackets-nilpotent=>magnus11-equality} (for the second and third item, thanks to the analyticity assumption), using the same approach as in the proof of \cref{Prop:Approx_intrinsic_repr}.
\end{proof}
}
 
\begin{remark}
    Estimate \eqref{eq:xzmUMespilon} proves that, for a situation in which $\int_0^t |U|^{M+1}$ is negligible, the state is well approximated by $\mathcal{Z}_M(t,f,u)(0)$, which is a convergent series of iterated Lie brackets of $f_0$ and $f_1$ evaluated at $0$. 
    We expect that this representation can be useful for applications to control theory, where one tries to relate controllability of the system with geometric relations on the Lie brackets evaluated at zero.
\end{remark}

\subsection{Diffeomorphisms and Lie brackets}
\label{sec:diffeo.invariant}

Lie brackets behave very nicely with respect to local changes of coordinates.
Let $f_i$ be smooth vector fields for $i \in I$, $p \in \K^d$ and $\theta$ be a smooth local diffeomorphism near $p$.
If $x(t)$ denotes the solution to \eqref{affine_syst_q}, we define $y(t) := \theta(x(t))$.
Then, one checks that $y$ is the solution to
\begin{equation} \label{eq:yg0g1}
    \dot{y}(t) = \sum_{i \in I} u_i(t) g_i(y(t))
    \quad \textrm{and} \quad
    y(0) = {p'},
\end{equation}
where $g_i := \theta_* f_i$ and ${p'} := \theta(p)$.
By iterating \cref{thm:push.lie}, Lie brackets of the vector fields defining the dynamics for~$y$ can be computed explicitly from those of $x$.
More precisely, for every $b \in \Br(X)$,
\begin{equation}
    g_b = \theta_* f_b
\end{equation}
with the notation of \cref{Def:evaluated_Lie_bracket}. 
In particular, there exists a linear invertible map $L_p : \K^d \to \K^d$, $L_p := D\theta(p)$, such that, for every $b \in \Br(X)$,
\begin{equation} \label{eq:gb=Lfb}
    g_b({p'}) = L f_b(p).
\end{equation}

Conversely, if the $f_i$ and $g_i$ for $i \in I$ are analytic vector fields, the existence of points $p$ and ${p'}$ and a linear invertible map $L_p$ such that~\eqref{eq:gb=Lfb} holds is a sufficient condition for the existence of a local smooth diffeomorphism $\theta$ with $\theta(p) = {p'}$ and such that, for {all} controls $u_i$, there holds $y(t) = \theta(x(t))$ where $x$ and $y$ denote the solutions to \eqref{affine_syst_q} and \eqref{eq:yg0g1} for the same set of controls. 
This nice property is proved in \cite[Theorem 1]{zbMATH03385496} and was then extended with a more general geometric viewpoint in \cite{zbMATH03471325} (see also \cite[Theorem 5.5]{zbMATH02105696} for a modern presentation).

When \eqref{eq:gb=Lfb} only holds for brackets up to some length $M \in \N$ and the controls are uniformly bounded in $L^\infty$, one can prove (see \cite{zbMATH03435355}) the existence of a local smooth diffeomorphism $\theta$ and a constant $C$ such that 
\begin{equation} \label{eq:yxttm}
    |y(t) - \theta(x(t))| \leq C t^{M+1}.
\end{equation}
Up to our knowledge, the converse, which is conjectured to be true in \cite{zbMATH03435355}, is a nice open problem.

\begin{open}
    Let $I = \intset{1,q}$ and $X = \{ X_1, \dotsc X_q \}$.
    Let $p, {p'} \in \K^d$. 
    Assume that there exists a smooth diffeomorphism $\theta$ from a neighborhood of $p$ to a neighborhood of ${p'}$ and $M \in \N$ such that, for {all} controls $u_1, \dotsc u_q \in L^\infty(0,T)$ with $\|u_i\| \leq 1$, estimate \eqref{eq:yxttm} holds for {the} trajectories $x$ and $y$ corresponding to the same controls. 
    Does this imply that there exists a linear invertible map such that, for each $b \in \Br(X)$ with $|b|\leq M$, \eqref{eq:gb=Lfb} holds?
\end{open}

\begin{open}
    Same question in the context of affine systems with drift, i.e.\ when $I = \intset{0,q}$, $X = \{ X_0, X_1, \dotsc X_q \}$ and the first control $u_0$ is constrained to be identically equal to $1$. 
    This question might be harder because one gets less information from \eqref{eq:yxttm} as it is valid for less choices of controls since $u_0$ is heavily constrained.
\end{open}

\begin{remark} \label{rk:chen.gb.fb}
    Property \eqref{eq:gb=Lfb} is specific to Lie brackets and does not hold for products of differential operators. 
    {As an illustration, consider the case $\K = \R$, $d = 2$, $X = \{ X_0, X_1 \}$, $p = p' = 0$ with $L_0 := D\theta(0) = \operatorname{Id}_2$.
    Then, for every $\phi \in \CC^\infty(\R^2;\R)$, $(g_b \phi)(0) = (f_b \phi)(0)$, but this relation does not extend to a similar relation between products of $f_0$ and $f_1$ and those of $g_0$ and $g_1$.
    For example, with the vector fields $f_0(x) := (0,x_1)$ and $f_1(x) := (1,0)$ and the smooth diffeomorphism $\theta(x) := (x_1, x_2+x_1^2)$, one has $g_0(y) = (0,y_1)$ and $g_1(y) = (1, 2y_1)$.
    In particular, $(f_1^2 \phi)(0) = \partial_{11} \phi(0)$ but $(g_1^2 \phi)(0) = \partial_{11} \phi(0) + 2 \partial_{12} \phi(0)$.}
    This explains why we consider that the Chen-Fliess expansion is not an intrinsic representation of the state, as it depends on quantities which are not invariant through local changes of coordinates.
\end{remark}

\subsection{Replacing the Magnus flow by a diffeomorphism}

Let $f_i$ for $i \in I$ be smooth vector fields.
We consider the solution~$x(t;u)$ to \eqref{affine_syst_q} with $p=0$.
Let $Z_M(t,u)$ be the vector field defined in \cref{Prop:Magnus_1}
(and called $Z_M(t,\sum_{i\in I} u_i f_i)$ in this statement).
By \cref{Prop:Magnus_1}, for each $M \in \N$, $x(t;u)$ is given by the time-one flow of the autonomous vector field $Z_M(t,u)$, up to an error scaling like $t^{M+1}$ when the controls $u_i$ are uniformly bounded in $L^\infty$. 

In this paragraph, inspired by the nice properties of Lie brackets with respect to diffeomorphisms recalled above, we attempt to replace the computation of the time-one flow by a diffeomorphism.
This can be seen as {being related with the} converse of the classical question of whether a given diffeomorphism can be represented as the time-one flow of an autonomous vector field (see e.g.\ \cite{zbMATH01274714,zbMATH01882113} {for positive answers in particular cases, \cite[Section~2]{zbMATH01274714} for an elementary necessary condition, and \cite{zbMATH04090406} or
\cite{zbMATH03464540} for statements highlighting that the answer is only rarely positive}).

This also corresponds to replacing the terms $x(t;u)+o(|x(t;u)|)$ in \cref{Prop:Approx_intrinsic_repr} by $\theta(x(t;u))$, where $\theta$ is a smooth local diffeomorphism of $\K^d$.

We start with a definition.

\begin{definition} \label{def:homo.time}
    Let $T > 0$ and $n \in \N$. 
    We say that a functional $\beta : [0,T] \times L^\infty((0,T);\K^q) \to \K$ is \emph{homogeneous of degree $n$ with respect to time} when, for every $u \in L^\infty((0,T);\K^q)$, $\lambda \in (0,1]$ and $t \in [0,T]$,
    \begin{equation}
        \beta(\lambda t, u^\lambda) = \lambda^n \beta(t, u)
    \end{equation}
    where $u^\lambda$ is defined by $u^\lambda(\lambda t) := u(t)$ for $t \in [0,T]$ and $u^\lambda(\lambda t) := 0$ for $t > T$.
\end{definition}

In particular, the product of two homogeneous functionals of degree $n$ and $m$ with respect to time is an homogeneous functional of degree $n+m$.
The coordinates of the first kind $\zeta_b(t,u)$, pseudo-first kind $\eta_b(t,u)$ and second kind $\xi_b(t,u)$ are all homogeneous of degree $|b|$ with respect to time. An interesting property of homogeneous functionals is given by the following statement.

\begin{lemma} \label{thm:homo.equal}
    Let $T > 0$, $n \in \N$ and $\beta : [0,T] \times L^\infty((0,T);\K^q) \to \K$, homogeneous of degree $n$ with respect to time. Assume that there exists $C > 0$ such that, for every $u \in L^\infty((0,T);\K^q)$ with $\|u\|_{L^\infty(0,T)} \leq 1$ and each $t \in [0,T]$,
    \begin{equation}
        | \beta(t,u) | \leq C t^{n+1}.
    \end{equation}
    Then $\beta \equiv 0$.
\end{lemma}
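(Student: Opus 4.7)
The plan is to exploit the homogeneity via a time-rescaling argument: the bound $|\beta(t,u)| \leq C t^{n+1}$ gives one extra power of $t$ compared to the degree of homogeneity, and rescaling time to zero will convert this gap into a factor $\lambda \to 0^+$.

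More precisely, fix $u \in L^\infty((0,T);\K^q)$ with $\|u\|_{L^\infty(0,T)} \leq 1$ and $t \in [0,T]$. For any $\lambda \in (0,1]$, I would consider the rescaled control $u^\lambda$ introduced in \cref{def:homo.time}. A direct inspection of the definition shows that $u^\lambda \in L^\infty((0,T);\K^q)$ with $\|u^\lambda\|_{L^\infty(0,T)} \leq \|u\|_{L^\infty(0,T)} \leq 1$, and that $\lambda t \in [0,T]$. Hence the hypothesis applies to $(\lambda t, u^\lambda)$ and yields
\begin{equation}
    |\beta(\lambda t, u^\lambda)| \leq C (\lambda t)^{n+1}.
\end{equation}

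Combining this bound with the time-homogeneity of $\beta$, we compute
\begin{equation}
    |\beta(t,u)| = \lambda^{-n} |\beta(\lambda t, u^\lambda)| \leq \lambda^{-n} C (\lambda t)^{n+1} = C \lambda\, t^{n+1}.
\end{equation}
Letting $\lambda \to 0^+$ gives $\beta(t,u) = 0$, which proves the lemma.

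There is no real obstacle here: the only point that must be checked carefully is that the rescaling $u \mapsto u^\lambda$ preserves the domain of the hypothesis (both the time interval and the $L^\infty$ bound on the control), which is immediate from \cref{def:homo.time}. The argument is a standard blow-up/blow-down trick: a function bounded by a strictly higher power of its homogeneity scale must vanish identically.
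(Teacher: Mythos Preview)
Your proof is correct and follows exactly the same approach as the paper: apply the homogeneity relation $\beta(\lambda t, u^\lambda) = \lambda^n \beta(t,u)$ together with the bound $|\beta(\lambda t, u^\lambda)| \leq C(\lambda t)^{n+1}$ (valid since $\|u^\lambda\|_{L^\infty} \leq 1$), obtain $|\beta(t,u)| \leq C\lambda\, t^{n+1}$, and let $\lambda \to 0^+$.
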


\begin{proof}
    Let $t \in [0,T]$ and $u \in L^\infty((0,T);\K^q)$ such that $\|u\|_{L^\infty(0,T)} \leq 1$. On the one hand, for each $\lambda \in (0,1]$, $\beta(\lambda t, u^\lambda) = \lambda^n \beta(t,u)$. On the other hand, $|\beta(\lambda t, u^\lambda)| \leq C \lambda^{n+1} t^{n+1}$ because $\|u^\lambda\|_{L^\infty} = \|u\|_{L^\infty}\leq 1$. Hence $|\beta(t,u)| \leq C \lambda t^{n+1}$ for each $\lambda \in (0,1]$ so $\beta(t,u)=0$. 
\end{proof}

One could wonder if the following proposition holds.

\begin{false}
    Let $X=\{X_i;i\in I\}$, $\mathcal{B}$ be a monomial basis of $\mathcal{L}(X)$. 
    Let $T > 0$. 
    There exists a family $(\beta_b)_{b \in \mathcal{B}}$ of functionals from $[0,T] \times L^\infty((0,T);\K^q)$ to $\K$, with $\beta_b$ homogeneous of degree $|b|$ with respect to time, such that the following statement holds.
    Let $\delta > 0$ and $f_i \in {\CC^\infty(B_{\delta};\K^d)}$ for $i \in I$. 
    There exists a smooth diffeomorphism $\theta$ of $\K^d$ near $p  = 0$ such that, for each $M > 0$, there exists $C_M, T_M > 0$ such that, for every $u \in L^\infty((0,T);\K^q)$ with $\|u\|_{L^\infty} \leq 1$, for each $t \in [0,T_M]$,
    \begin{equation} \label{eq:xthetayctm}
        |\theta(x(t;u)) - y_M(t;u)| \leq C_M t^{M+1},   
    \end{equation}
    and
    \begin{equation} \label{eq:y.betab.fb}
        y_M(t;u) = \theta(0)+\sum_{|b| \leq M} \beta_b(t,u) g_b(\theta(0)),
    \end{equation}
    where $g_b = \theta_* f_b$ and $x(t;u)$ is the solution to \eqref{affine_syst_q} starting from $p = 0$.
\end{false}

The functionals $\beta_b$ would be the analog of the coordinates of the first and second kind described earlier.
A formula such as \eqref{eq:y.betab.fb} would be ideal for applications to control theory for example, since it is expressed on intrinsic quantities (Lie brackets) and allows to compute $x(t;u)$ directly without solving for flows (one recovers $x(t;u) \approx \theta^{-1}(y(t;u))$). 
In some sense, it corresponds to asking if there exists a local change of coordinates for which the Chen-Fliess expansion only involves Lie bracket terms (and all the non-Lie bracket terms vanish).

Unfortunately, it is impossible in general, as illustrated by the following counter-example.

\begin{proposition} \label{Prop:c-ex_diffeo}
    Let $X = \{ X_0, X_1 \}$.
    Let $T > 0$ and consider, in $\R^3$, $f_0(x) := (0, x_1 + x_1^2, x_1x_2)$ and $f_1(x) := (1, 0, 0)$, i.e.\ the following affine system with drift
    \begin{equation} \label{eq:x.cex.formenormale}
        \begin{cases}
             \dot{x}_1 = u, \\
             \dot{x}_2 = x_1 + x_1^2, \\
             \dot{x}_3 = x_1 x_2,
        \end{cases}
    \end{equation}
    together with the initial data $x(0) = 0$.
    There exists a monomial basis $\mathcal{B}$ of $\mathcal{L}(X)$, such that, for {all} functionals $\beta_b : [0,T] \times L^\infty((0,T);\R) \to \R$ for $b \in \mathcal{B}$, {homogeneous} of degree $|b|$ with respect to time and for every local $\CC^6$ diffeomorphism~$\theta$ of $\R^3$, there exists $M \in \llbracket 1, 6 \rrbracket$ and a control $u \in L^\infty((0,T);\R)$ with $\|u\|_{L^\infty} \leq 1$ such that \eqref{eq:xthetayctm} does not hold, even for small times.
\end{proposition}

\begin{proof}
    Let $\mathcal{B}$ be a length-compatible Hall basis of $\mathcal{L}(X)$ with $X_0<X_1$. 
    
    \medskip
    
    \noindent \emph{Step 1: Computation of $y_6(t)$.} 
    We define $\dot{\mathcal{B}}_\ell=\{ b\in\mathcal{B} ; n_1(b)= \ell\}$ for every $\ell\in\N$.
    Then $\dot{\mathcal{B}}_1=\{\ad_{X_0}^k(X_1);k\in\N\}$. The computation shows that the only elements $b \in \dot{\mathcal{B}}_1$ such that $f_b \neq 0$ are
    \begin{align}
        b_1 &= X_1, 
        & b_2 &= [X_0,X_1], 
        & c_1 &= [X_0,[X_0,X_1]], \\
        f_{b_1}(x) &= e_1, 
        & f_{b_2}(x) &= -(1+2x_1)e_2 - x_2 e_3,
        & f_{c_1}(x) &= x_1^2 e_3.
    \end{align}
    Thus, the only elements $b\in \dot{\mathcal{B}}_2$ that could satisfy $f_b \neq 0$ are $[b_1,b_2]$, $[b_1,c_1]$, $[b_2,c_1]$. The computation shows that, among them, only the two first ones do satisfy the condition:
    \begin{align}
        b_3&=[X_1,[X_0,X_1]], 
        & c_2&=[X_1,\ad_{X_0}^2(X_1)],  \\
        f_{b_3}(x) &= -2 e_2,
        & f_{c_2}(x) &= 2x_1e_3.
    \end{align}
    Thus, the only elements $b\in \dot{\mathcal{B}}_3$ with length at most $6$ that could satisfy $f_b \neq 0$ are $[b_1,b_3]$, $[b_1,c_2]$, $[b_2,b_3]$, $[b_2,c_2]$, $[c_1,b_3]$. The computation shows that, among them, only the second and the third ones do satisfy the condition:
    \begin{align}
        b_4&=\ad_{X_1}^2\ad_{X_0}^2(X_1) ,
        & b_5&=[[X_0,X_1],[X_1,[X_0,X_1]]] ,  \\
        f_{b_4}(x)&= 2e_3,
        & f_{b_5}(x)&=-2e_3.
    \end{align}
    Thus the only elements $b\in \dot{\mathcal{B}}_4$ with length at most $6$ that could satisfy $f_b \neq 0$ are $[b_1,b_4]$ and $[b_1,b_5]$, but the computation shows that they satisfy $f_b=0$. Therefore, for every $b \in \dot{\mathcal{B}}_4 \cup \dot{\mathcal{B}}_5 \cup \dot{\mathcal{B}}_6$, $f_b=0$. In conclusion, $b_1, \dotsc, b_5$ are the only elements $b \in \mathcal{B}$ such that $f_b(0) \neq 0$. In particular, none of them have length $4$ or $6$, thus 
    \begin{equation} \label{eq:y6}
        y_6(t)= \theta(0)+ D\theta(0) \Big(\beta_1(t,u) e_1 -\beta_2(t,u) e_2 - 2\beta_3(t,u) e_2+2(\beta_4(t,u)-\beta_5(t,u))e_3\Big)
    \end{equation}
    is the sum of 4 homogeneous functionals of degree $1, 2, 3$ and $5$.
    Here and below we write $\beta_j$ instead of $\beta_{b_j}$ for brevity.
    
    \medskip
    
    \noindent \emph{Step 2: Computation of homogeneous terms with degree $4$ and $6$ in $\theta(x(t))$.}
    In this step, we consider a local $\CC^6$ diffeomorphism $\theta$ of $\R^3$ defined on a neighborhood of $p=0$.
    For $u \in L^\infty((0,T);\R)$, we denote by $U$ the primitive of $u$ such that $U(0) = 0$ and $V$ the primitive of $U$ such that $V(0) = 0$. Straightforward explicit integration of \eqref{eq:x.cex.formenormale} yields
    \begin{equation}
        x(t;u) = U(t) e_1 + V(t) e_2 + \int_0^t U^2(s) \dd s e_2 + \frac{1}{2} V^2(t) e_3 + \int_0^t U(s) \int_0^s U^2(s') \dd s' \dd s e_3,
    \end{equation}
    where the five terms are respectively functionals homogeneous of degree $1$ through $5$ with respect to time in the sense of \cref{def:homo.time}. Using a Taylor expansion of $\theta$ at $0$, one obtains (vector-valued) functionals $\gamma_k$ for $k \in \llbracket 1, 6 \rrbracket$, homogeneous of degree $k$ with respect to time such that for every $M\in \llbracket 1, 6\rrbracket$
        \begin{equation}
            \theta(x(t)) = \theta(0)+ \sum_{k=1}^M \gamma_k(t,u) + \mathcal{O}(t^{M+1}).
        \end{equation}
    In particular
     \begin{equation}
            \begin{split}
                 \gamma_4(t,u) & = {\frac 1 2} V^2(t) \partial_3\theta(0)
             + U(t) \int_0^t U^2 \partial_{12} \theta(0)
             + {\frac 1 2} V^2(t)  \partial_{22} \theta(0) 
            \\ &  + {\frac 1 2} U^2(t) V(t)  \partial_{112} \theta(0)
            + \frac{1}{4!} U^4(t)  \partial_{1}^4 \theta(0)
            \end{split}
        \end{equation}
    and
    \begin{equation}
            \begin{split}
                \gamma_6(t,u) & = 
                U(t) \int_0^t U(s) \int_0^s U(s')^2 ds' ds \partial_{13} \theta(0) 
                + \frac{1}{2} V^3(t) \partial_{23}\theta(0)
                + \frac{1}{2} \left(\int_0^t U^2\right)^2 \partial_{22}\theta(0)
                \\
                & + \frac{1}{4} U^2(t)V^2(t) \partial_{113} \theta(0)
                + \frac{1}{2} U(t)V(t) \int_0^t U^2 \partial_{122}\theta(0)
                + \frac{1}{6} V^3(t) \partial_{222} \theta(0)
                \\
                & + \frac{1}{6} U^3(t) \int_0^t U^2 \partial_{1112}\theta(0)
                + \frac{1}{4} U^2(t) V^2(t) \partial_{1122}\theta(0)
                \\
                & + \frac{1}{4!} V(t)U^4(t) \partial_{1}^4 \partial_2 \theta(0)
                + \frac{1}{6!} U^6(t) \partial_1^6 \theta(0)
                .
            \end{split}
        \end{equation}
    
    \medskip
    
\noindent \emph{Step 3: Denying \eqref{eq:xthetayctm}.}
We proceed by contradiction, assuming that there exists a local $\CC^6$ diffeomorphism $\theta$ of $\R^3$ such that, for each $M \in \llbracket 1, 6 \rrbracket$,  there exists $C_M,T_M>0$ such that \eqref{eq:xthetayctm} holds for every $t \in [0,T_M]$ and $u \in L^\infty((0,T_M);\R)$ with $\|u\|_{L^\infty} \leq 1$. 

By induction on $M$, estimate \eqref{eq:xthetayctm}, \cref{thm:homo.equal} and (\ref{eq:y6}) imply that 
$\gamma_1 = \beta_1 \partial_1 \theta(0)$, 
$\gamma_2 = - \beta_2 \partial_2 \theta(0)$, 
$\gamma_3 = - 2 \beta_3 \partial_2 \theta(0)$, 
$\gamma_4 = 0$, 
$\gamma_5 = 2(\beta_4-\beta_5) \partial_3 \theta(0)$ and
$\gamma_6 = 0$.
        
On the one hand, by choosing $u$ such that $U(t) = 0$ but $V(t) \neq 0$, the relation $\gamma_4(t,u)=0$ implies that $\partial_{22}\theta(0) = - \partial_3 \theta(0) \neq 0$ because $\theta$ is a local diffeomorphism. On the other hand, by choosing $u$ such that $U(t)=V(t)=0$ but $\int_0^t U^2 \neq 0$, the relation $\gamma_6(t,u)=0$ implies that $\partial_{22}\theta(0) = 0$. This concludes the proof, since we have found incompatible conditions on $\partial_{22} \theta(0)$.
\end{proof}

\begin{remark}
    This section is written with a focus on time-based estimates. 
    However, a similar ``false proposition'' could be stated for control-based estimates. The same counter-example also negates this possibility.
\end{remark}


\subsection*{Acknowledgements}

The authors would like to thank Matthias Kawski for the time taken to read carefully this long article, many relevant bibliographical and notational suggestions, and his enthusiasm concerning some of the results contained in this work.

The authors {also} thank Joackim Bernier for discussions about the numerical literature and the importance of the convergence issues discussed in this article for numerical splitting schemes.

The authors benefit from the support of ANR project LEBESGUE, grant ANR-11-LABX-0020.
Karine Beauchard and Fr\'ed\'eric Marbach benefit from the support of ANR project TRECOS, grant ANR-20-CE40-0009-01.

\appendix

\section{Proving that the logarithm of the flow is a Lie series}

\subsection{Using shuffle relations and Ree's theorem}
\label{sec:ree}

{In this paragraph, we describe a proof of \cref{thm:log-lie} relying on Ree's theorem and shuffle relations satisfied by the coefficients of the Chen series.
This approach is notably used in \cite{MR1972790,MR1603219}.
We start with some definitions.

\begin{definition}[Shuffle product]
    The \emph{shuffle product} is the map from $I^* \times I^*$ to the free vector space over $I^*$ defined by induction on the length of the words by $\emptyset \shuffle \sigma = \sigma \shuffle \emptyset := \sigma$ for every $\sigma \in I^*$ (where $\emptyset$ denotes the empty word) and, for every $\sigma,\sigma' \in I^*$ and $\ell, \ell' \in I$, 
    \begin{equation} \label{eq:def-shuffle}
        (\sigma \ell) \shuffle (\sigma' \ell') := (\sigma \shuffle (\sigma' \ell')) \ell + ((\sigma \ell) \shuffle \sigma') \ell'
    \end{equation}
\end{definition}

Intuitively, the shuffle product of two words is the sum of all the ways of riffle shuffling these two words together, interleaving their letters (exactly as one would riffle shuffle two packets of a card deck).
For example, the shuffle product of the words $ab$ and $cd$ (over the Latin alphabet) is $abcd + acbd + acdb + cabd + cadb + cdab$.

The following result was introduced in \cite[Theorem~2.5]{MR100011} to prove \cref{thm:log-lie}.

\begin{lemma}[Ree's theorem]
	Let $\gamma : I^* \to \K$ with $\gamma_{\emptyset} = 1$.
	We still denote by $\gamma$ its linear extension to the free vector space over $I^*$.
	Consider the formal series $x := \sum_{\sigma \in I^*} \gamma_\sigma X_\sigma$.
	Then $\log x \in \widehat{\mathcal{L}}(X)$ iff (the linear extension of) $\gamma$ satisfies the so-called ``shuffle relations'', i.e.\ iff for every $\sigma, \sigma' \in I^*$,
	\begin{equation} \label{eq:shuffle-rel}
		\gamma_{\sigma \shuffle \sigma'} = \gamma_\sigma \gamma_{\sigma'}.
	\end{equation}
\end{lemma}

\begin{proof}
    This statement is item \emph{(iii)} in \cite[Theorem~3.2]{MR1231799}.
\end{proof}

Therefore, to show that $\log x(t)$ is a Lie series, it suffices to check that the coefficients $\int_0^t a_\sigma$ (defined in \eqref{a.sigma}) of the Chen series satisfy these shuffle relations.
We proceed as in \cite[Section~2]{MR100011}, by induction on $|\sigma|+|\sigma'|$ in \eqref{eq:shuffle-rel}.
Definition \eqref{a.sigma} can also be written as, for every $\sigma \in I^*$ and $\ell \in I$,
\begin{equation} \label{eq:a.sigma-rec}
    \int_0^t a_{\sigma \ell} = \int_0^t a_{\sigma}(s) a_{\ell}(s) \dd s.
\end{equation}

Since we set $\int_0^t a_\emptyset = 1$ by definition for the empty word $\emptyset$, \eqref{eq:shuffle-rel} holds for every $\sigma, \sigma' \in I^*$ when $|\sigma|+|\sigma'| = 1$.
Assume now that it holds for $|\sigma|+|\sigma'|\leq n$ for some $n \in \N^*$.
Let $\sigma, \sigma' \in I^*$ and $\ell, \ell' \in I$ such that $|\sigma \ell|+|\sigma' \ell'| = n+1$.
Applying successively, \eqref{eq:def-shuffle} and the linearity of the extension of $\gamma$, \eqref{eq:a.sigma-rec}, the induction hypothesis and eventually \eqref{eq:a.sigma-rec} again, we obtain, for every $t \geq 0$,
\begin{equation}
    \begin{split}
        \gamma_{(\sigma \ell) \shuffle (\sigma' \ell')}(t)
        & = \gamma_{(\sigma \shuffle (\sigma' \ell')) \ell}(t)
        + \gamma_{((\sigma \ell) \shuffle \sigma') \ell'}(t) \\
        & = \int_0^t \gamma_{\sigma \shuffle (\sigma' \ell')}(s) a_{\ell}(s) \dd s 
        + \int_0^t \gamma_{(\sigma\ell) \shuffle \sigma'}(s) a_{\ell'}(s) \dd s \\
        & = \int_0^t \gamma_{\sigma}(s) \gamma_{\sigma' \ell'}(s) a_{\ell}(s) \dd s 
        + \int_0^t \gamma_{\sigma\ell}(s) \gamma_{\sigma'}(s) a_{\ell'}(s) \dd s \\
        & = \int_0^t \dot{\gamma}_{\sigma \ell}(s) \gamma_{\sigma' \ell'}(s) + \gamma_{\sigma \ell}(s) \dot{\gamma}_{\sigma' \ell'}(s) \dd s,
    \end{split}
\end{equation}
which proves that $\gamma_{(\sigma \ell) \shuffle (\sigma' \ell')} = \gamma_{\sigma \ell} \gamma_{\sigma' \ell'}$.
}

\subsection{Using Friedrich's criterion}
\label{sec:friedrichs}

{
In this paragraph, we describe a proof of \cref{thm:log-lie} relying on Friedrich's criterion.
This approach is notably used in \cite[Section~3]{MR886816}.
We start with some definitions.
}

Let $\mathcal{A}(X) \otimes \mathcal{A}(X)$ be the tensor product of {the} algebra $\mathcal{A}(X)$ with itself (i.e.\  the tensor product of $\mathcal{A}(X)$ and $\mathcal{A}(X)$, endowed with the product rule $(a\otimes b)(a'\otimes b') := (aa') \otimes (bb')$, see \cite[Chapter~3, Section~4.1, Definition~1]{MR1727844} for a precise construction).
{The algebra $\mathcal{A}(X)$ is the universal enveloping algebra of the Lie algebra $\mathcal{L}(X)$, and as such is a Hopf algebra (see \cite{zbMATH03747352}). 
The coproduct homomorphism $\Delta$ : $\mathcal{A}(X) \to \mathcal{A}(X) \otimes \mathcal{A}(X)$ is defined by setting the values $\Delta(1) := 1 \otimes 1$ and $\Delta(X_i) := X_i \otimes 1 + 1 \otimes X_i$ for $1 \leq i \leq q$. 
This defines a unique homomorphism because $\mathcal{A}(X)$ is freely generated by $X$ as an algebra (see \cite[Proposition 1.2]{MR1231799} for more detail).} 
The {coproduct} $\Delta$ can then be used to characterize Lie elements, as in the following result, which was proposed by Friedrichs in~\cite{MR0056467}, then proved by multiple authors in the same period \cite{MR0066359,MR0082643,MR0083103,MR0067873}.

\begin{lemma}[Friedrichs' criterion]
 \label{thm:friedrichs}
 For $a \in \mathcal{A}(X)$, $a \in \mathcal{L}(X)$ if and only if the condition $\Delta(a) = a \otimes 1 + 1 \otimes a$ holds.
\end{lemma}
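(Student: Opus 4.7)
The proof naturally splits into two implications.

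\textbf{Forward direction.} Set $P := \{ a \in \mathcal{A}(X) : \Delta(a) = a \otimes 1 + 1 \otimes a \}$. Since $\Delta$ is an algebra homomorphism and the tensor product uses the rule $(u \otimes v)(u' \otimes v') = uu' \otimes vv'$, a direct computation gives, for $a, b \in P$,
\[
\Delta([a,b]) = \Delta(a)\Delta(b) - \Delta(b)\Delta(a) = [a,b] \otimes 1 + 1 \otimes [a,b],
\]
the cross terms $a \otimes b$ and $b \otimes a$ cancelling pairwise. Hence $P$ is a Lie subalgebra of $\mathcal{A}(X)$. Since each generator $X_i$ lies in $P$ by definition of $\Delta$, and $\mathcal{L}(X)$ is the smallest Lie subalgebra of $\mathcal{A}(X)$ containing $X$, we deduce $\mathcal{L}(X) \subseteq P$.

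\textbf{Reverse direction: reduction to a computation of $\beta$.} Conversely, take $a \in P$ and decompose $a = \sum_{n \geq 0} a_n$ with $a_n \in \mathcal{A}_n(X)$. The coproduct $\Delta$ is graded, in the sense that $\Delta(\mathcal{A}_n(X)) \subseteq \bigoplus_{k+\ell = n} \mathcal{A}_k(X) \otimes \mathcal{A}_\ell(X)$. Comparing homogeneous components of $\Delta(a)$ and $a \otimes 1 + 1 \otimes a$ yields $a_n \in P$ for every $n$. The degree zero component $a_0 \in \K$ satisfies $a_0 \otimes 1 = 2 a_0 \otimes 1$, so $a_0 = 0$. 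By Dynkin's theorem (\cref{thm:dynkin}), it then suffices to prove, for every $n \geq 1$ and every primitive $a_n \in \mathcal{A}_n(X)$, the identity $\beta(a_n) = n a_n$, which will automatically place $a_n$ inside $\mathcal{L}(X)$ (as $\beta$ takes values there by construction).

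\textbf{Key computation and main obstacle.} To establish the identity $\beta(a_n) = n a_n$ on primitive elements, I would proceed by induction on $n$. The base case $n=1$ is immediate since $\beta$ is the identity on $\mathcal{A}_1(X)$. For the induction step, I would use on the one hand the recursive formula $\beta(u X_i) = [\beta(u), X_i]$ available for polynomials $u$ of positive degree, and on the other hand the explicit expansion of the coproduct on a monomial $w = X_{i_1} \cdots X_{i_n}$,
\[
\Delta(w) = \sum_{S \subseteq \intset{1,n}} w|_S \otimes w|_{S^c},
\]
where $w|_S$ denotes the subword indexed in increasing order by $S$. Writing $a_n = \sum_\sigma c_\sigma X_\sigma$ and comparing the expansion of $\beta(a_n)$ as a signed sum over iterated commutators against the expansion of $na_n$ reduces the claim to a shuffle-type combinatorial identity among the coefficients $c_\sigma$. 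The primitivity condition kills precisely every tensor component of $\Delta(a_n)$ of bidegree $(k,\ell)$ with $k, \ell \geq 1$, which is exactly the set of relations needed to force the non-diagonal contributions to $\beta(a_n)$ to cancel. This last combinatorial matching between iterated-commutator and shuffle coefficients is the main technical obstacle of the proof, and is where the primitivity hypothesis is fully exploited.
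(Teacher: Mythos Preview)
The paper does not give a self-contained argument here; it simply cites \cite[Theorem~1.4]{MR1231799}. Your forward direction is complete and standard. The reverse direction, however, is left unfinished: you reduce to showing $\beta(a_n) = n a_n$ for homogeneous primitive $a_n$ and then describe a plan, but you explicitly stop at what you call ``the main technical obstacle'' without carrying it through. That obstacle is the entire content of the implication, so as written the proof has a genuine gap. There is also a structural issue with the proposed induction: the recursion $\beta(uX_i) = [\beta(u), X_i]$ requires controlling $\beta(u)$, but when you write $a_n = \sum_\sigma c_\sigma X_\sigma$ and strip the last letter, the resulting degree-$(n-1)$ pieces are not themselves primitive, so the induction hypothesis does not apply to them directly. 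You would need an auxiliary identity relating $\beta$, the coproduct, and the right-multiplication by generators, which you have not stated.

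The route taken in the cited reference bypasses this induction: one invokes the Poincar\'e--Birkhoff--Witt theorem to obtain a basis of $\mathcal{A}(X)$ consisting of ordered monomials $b_{j_1}^{m_1} \cdots b_{j_k}^{m_k}$ in a fixed basis $(b_j)$ of $\mathcal{L}(X)$, uses the forward direction to compute $\Delta$ on such a monomial (each $b_j$ being primitive, $\Delta(b_j^m) = \sum_\ell \binom{m}{\ell} b_j^\ell \otimes b_j^{m-\ell}$), and observes that the only PBW monomials that are primitive are those with $k=1$ and $m_1=1$. This is shorter and avoids the shuffle combinatorics entirely, at the cost of importing PBW.
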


\begin{proof}
 This statement is the equivalence between \emph{(i)} and \emph{(iii)} in \cite[Theorem~1.4]{MR1231799}.
\end{proof}

\begin{example}
 The element $X_1 X_2$ does not belong to $\mathcal{L}$. And indeed,
 \begin{equation}
  \begin{split}
    \Delta(X_1 X_2) = \Delta(X_1) \Delta(X_2)
    & = (X_1 \otimes 1 + 1 \otimes X_1) (X_2 \otimes 1 + 1 \otimes X_2) \\
    & = X_1 X_2 \otimes 1 + X_1 \otimes X_2 + X_2 \otimes X_1 + 1 \otimes X_1 X_2 \\
    & \neq X_1 X_2 \otimes 1 + 1 \otimes X_1 X_2.
  \end{split}
 \end{equation}
 On the contrary, the element $[X_1, X_2] = X_1 X_2 - X_2 X_1$ belongs to $\mathcal{L}$. And indeed,
 \begin{equation}
  \begin{split}
    \Delta([X_1, X_2]) & = \Delta(X_1 X_2) - \Delta(X_2 X_1) \\
    & = (X_1 X_2 \otimes 1 + X_1 \otimes X_2 + X_2 \otimes X_1 + 1 \otimes X_1 X_2) \\
    & \quad \quad - (X_2 X_1 \otimes 1 + X_2 \otimes X_1 + X_1 \otimes X_2 + 1 \otimes X_2 X_1) \\
    & = [X_1, X_2] \otimes 1 + 1 \otimes [X_1, X_2].  
  \end{split}
 \end{equation}
\end{example}

The tensor product $\mathcal{A}(X) \otimes \mathcal{A}(X)$ also has a graded structure, with $(\mathcal{A}(X) \otimes \mathcal{A}(X))_n = \bigoplus_{i=0}^n \mathcal{A}_i(X) \otimes \mathcal{A}_{n-i}(X)$. Since the homomorphism $\Delta$ is linear and degree preserving, it can be extended as an homomorphism from $\widehat{\mathcal{A}}(X)$ to $\widehat{\mathcal{A}(X) \otimes \mathcal{A}(X)}$, the formal power series over $\mathcal{A}(X) \otimes \mathcal{A}(X)$. For such series with zero constant term, one can define, as in \eqref{def:exp}, an exponential, say $\exp_\otimes$, which also verifies a uniqueness property such as \cref{lemma:log_is_unique}. One can then derive a criterion to determine whether the logarithm of a formal power series is a Lie element.

\begin{corollary} \label{thm:friedrichs2}
 Let $a \in \widehat{\mathcal{A}}(X)$ with $a_0 = 1$. Then $\log (a) \in \widehat{\mathcal{L}}(X)$ if and only if $\Delta(a) = a \otimes a$. 
\end{corollary}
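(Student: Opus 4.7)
The plan is to reduce the statement degree by degree to Friedrichs' criterion (\cref{thm:friedrichs}) by passing through the exponential/logarithm correspondence. Write $L := \log(a) \in \widehat{\mathcal{A}}(X)$, which has zero constant term by construction, and recall that $a = \exp(L)$ by the defining identity $\exp \circ \log = \mathrm{id}$ on series with constant term $1$.

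First I would extend the two basic algebraic facts to the formal setting. The homomorphism $\Delta$ extends continuously to an algebra homomorphism $\widehat{\mathcal{A}}(X) \to \widehat{\mathcal{A}(X) \otimes \mathcal{A}(X)}$, and since the definition of $\exp$ is a formal power series in its argument, the homomorphism property gives $\Delta(\exp(L)) = \exp_\otimes(\Delta(L))$. In parallel, the elements $L \otimes 1$ and $1 \otimes L$ commute in $\widehat{\mathcal{A}(X) \otimes \mathcal{A}(X)}$, since $(L \otimes 1)(1 \otimes L) = L \otimes L = (1 \otimes L)(L \otimes 1)$. Therefore the standard exponential identity for commuting elements yields
\begin{equation}
    a \otimes a = \exp_\otimes(L \otimes 1) \, \exp_\otimes(1 \otimes L) = \exp_\otimes(L \otimes 1 + 1 \otimes L).
\end{equation}

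Second I would invoke the uniqueness of the logarithm. The proof of \cref{lemma:log_is_unique} used only the graded structure and a triangular induction on the homogeneous components, so it transfers verbatim to the tensor product algebra $\widehat{\mathcal{A}(X) \otimes \mathcal{A}(X)}$ and to $\exp_\otimes$. Combining this with the two identities above, the equation $\Delta(a) = a \otimes a$ is equivalent to
\begin{equation}
    \Delta(L) = L \otimes 1 + 1 \otimes L.
\end{equation}
Since $\Delta$ preserves degree, this equality holds in $\widehat{\mathcal{A}(X) \otimes \mathcal{A}(X)}$ if and only if it holds componentwise, i.e.\ $\Delta(L_n) = L_n \otimes 1 + 1 \otimes L_n$ for every $n \in \N^*$ (the component $L_0 = 0$ is trivial).

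Finally, I would apply Friedrichs' criterion (\cref{thm:friedrichs}) at each degree: $\Delta(L_n) = L_n \otimes 1 + 1 \otimes L_n$ is equivalent to $L_n \in \mathcal{L}_n(X)$. Collecting these equivalences over all $n$, we conclude that $\Delta(a) = a \otimes a$ if and only if $L \in \widehat{\mathcal{L}}(X)$, which is the desired statement. The only delicate point is the continuous extension of $\Delta$ and $\exp$ to formal series and justifying the identity $\Delta \circ \exp = \exp_\otimes \circ \Delta$, but this is routine once one uses the $\mathrm{val}$-adic topology introduced after \cref{-def:formalseries}, under which both sides are continuous and agree on polynomials.
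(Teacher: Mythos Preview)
Your proof is correct and follows essentially the same route as the paper's: both use $\Delta \circ \exp = \exp_\otimes \circ \,\Delta$, the commutation of $L \otimes 1$ and $1 \otimes L$ to factor $\exp_\otimes(L \otimes 1 + 1 \otimes L)$, the injectivity of $\exp_\otimes$ (\cref{lemma:log_is_unique}), and then Friedrichs' criterion applied degree by degree. The paper presents this as a single chain of equivalences, while you unpack the same steps more explicitly, but there is no substantive difference.
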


\begin{proof}
 We follow \cite[Theorem 3.2]{MR1231799}. By linearity and degree preservation, \cref{thm:friedrichs} implies that, for $a \in \widehat{\mathcal{A}}(X)$, $a \in \widehat{\mathcal{L}}(X)$ if and only if $\Delta(a) = a \otimes 1 + 1 \otimes a$. For $a \in \widehat{\mathcal{A}}(X)$ with constant term $1$,
 \begin{equation}
  \begin{split}
   \log a \in \widehat{\mathcal{A}}(X) 
   & \Longleftrightarrow \Delta(\log(a)) = \log(a) \otimes 1 + 1 \otimes \log(a) \\
   & \Longleftrightarrow \exp_\otimes \left(\Delta(\log(a))\right) = \exp_\otimes \left(\log(a) \otimes 1 + 1 \otimes \log(a)\right) \\
   & \Longleftrightarrow \Delta\left(\exp(\log(a))\right) = \exp_\otimes (\log(a) \otimes 1) \exp_\otimes (1 \otimes \log(a)) \\
   & \Longleftrightarrow \Delta (a) = ((\exp \log a) \otimes 1) (1 \otimes (\exp \log a)) = a \otimes a,
  \end{split}
 \end{equation}
 where we used the equality $\Delta (\exp (\cdot)) = \exp_\otimes (\Delta(\cdot))$, because $\Delta$ is an homomorphism, and the fact that $\exp_\otimes (b \otimes 1 + 1 \otimes c) = \exp_\otimes (b \otimes 1) \exp_\otimes(1 \otimes c)$, because $b \otimes 1$ and $1 \otimes c$ commute.
\end{proof}

{
Therefore, to show that $\log(x(t))$ is a Lie series, it suffices to check that $\Delta(x(t)) = x(t) \otimes x(t)$. 
This can be checked as in \cite[Section~3]{MR886816} using the following argument.}
At the initial time $\Delta(x(0)) = \Delta(1) = 1 \otimes 1 = x(0) \otimes x(0)$. 
On the one hand
\begin{equation}
 \frac{\dd}{\dd t} \Delta(x) = \Delta( \dot{x} ) = \Delta(xa)
 = \Delta(x) \Delta(a)
 = \Delta(x) (a\otimes 1 + 1 \otimes a).
\end{equation}
On the other hand,
\begin{equation}
 \frac{\dd}{\dd t} (x \otimes x)
 = \dot{x} \otimes x + x \otimes \dot{x}
 = (x a) \otimes x  + x \otimes (x a)
 = (x \otimes x) (a \otimes 1 + 1 \otimes a).
\end{equation}
Hence, both quantities satisfy the same formal differential equation with the same initial condition, so they are equal for every $t \geq 0$.

\section{Elementary numerical identities}

\subsection{Bernoulli numbers}

We use the notation $(B_n)_{n\in\N}$ to denote the Bernoulli numbers, which are defined (using the modern NIST sign and indexing convention) by the identity 
\begin{equation} \label{eq:def:bernoulli}
 \forall z \in \C, |z|<2\pi, \qquad 
 \frac{z}{e^{z}-1} = \sum_{n=0}^{+\infty} B_n \frac{z^n}{n!} = 1 - \frac{z}{2} + \sum_{n=1}^{+\infty} B_{2n} \frac{z^{2n}}{(2n)!}.
\end{equation}

\begin{lemma}
 The Bernoulli numbers satisfy, for every $n \geq 2$
 \begin{align}
  \label{eq:bernoulli.1}
  \sum_{k=0}^{n-1} \binom{n}{k} B_k
  & = 0, \\
  \label{eq:bernoulli.2}
  \sum_{k=0}^{n} \binom{n}{k} \frac{B_k}{n+1-k}
  & = 0, \\
  \label{eq:bernoulli.4}
  \sum_{k=0}^n \frac{B_{n-k}}{(n-k)!(k+2)!}
  & = - \frac{B_{n+1}}{(n+1)!}.
 \end{align}
 Moreover, the odd Bernoulli numbers except $B_1$ vanish and, for every $n \geq 1$,
 \begin{equation} \label{eq:bernoulli.3}
   B_{2n}
   = (-1)^{n+1} \frac{2 (2n)!}{(2\pi)^{2n}} \zeta(2n)
   \sim
   (-1)^{n+1} 2 \sqrt{2\pi n} \left(\frac{n}{\pi}\right)^{2n},
 \end{equation}
 where $\zeta$ is the Riemann zeta function.
\end{lemma}

\begin{proof}
    {The first two identities} are classical and can be proved using the generating series of the Bernoulli numbers of \eqref{eq:def:bernoulli}, respectively by identification in $z = (e^z-1) \times (z / (e^z-1))$ for \eqref{eq:bernoulli.1} and in $1 = ((e^z-1)/z) \times (z/(e^z-1))$ for \eqref{eq:bernoulli.2}.
    
    {The third identity} \eqref{eq:bernoulli.4} follows from \eqref{eq:bernoulli.2} and the computation
    \begin{equation}
     \begin{split}
     \sum_{k=0}^n \frac{B_{n-k}}{(n-k)!(k+2)!}
     & = \frac{1}{(n+1)!} \sum_{\ell=0}^n
     \binom{n}{n-\ell}
     \frac{n+1}{n+1-\ell} \frac{B_\ell}{n-\ell+2} \\
     & = \frac{1}{(n+1)!} \sum_{\ell=0}^n
     \binom{n+1}{\ell}
     \frac{B_\ell}{(n+1)-\ell+1} 
     = - \frac{B_{n+1}}{(n+1)!}.
     \end{split}
    \end{equation}
 
    {Eventually, the} relationship with the Riemann zeta function is proved in \cite[equation (12.38)]{zbMATH05948059}. The asymptotic is a consequence of the Stirling's approximation and $\zeta(s) \to 1$ as $s \geq 1$ tends to $+ \infty$ (which is a direct consequence of the formula $\zeta(s) = \sum n^{-s}$). 
\end{proof}

\bibliographystyle{plain}
\bibliography{biblio}

\end{document}